\renewcommand{\subsection}{%
	\@startsection{subsection}
	{2}
	{\z@}
	{-21dd plus-8pt minus-4pt}
	{10.5dd}
	{\normalsize\bfseries\boldmath}%
}
\newtheorem{theorem}{Theorem}[section]
\newtheorem{thm}{Theorem}
\newtheorem{lemma}[thm]{Lemma}
\newtheorem{prop}[thm]{Proposition}
\theoremstyle{definition}
\newtheorem{definition}[theorem]{Definition}
\newtheorem{rmk}{Remark}
\newcommand{\be}{\begin{equation}}
\newcommand{\ee}{\end{equation}}
\newcommand{\bsubeq}{\begin{subequations}}
	\newcommand{\esubeq}{\end{subequations}}
\renewcommand{\div}{\text{div}}
\newcommand{\ds}{\displaystyle}
\newcommand{\cd}{{\, \cdot \, }}
\newcommand{\calL}{{\mathcal{L}}}
\newcommand{\calN}{{\mathcal{N}}}
\newcommand{\calF}{{\mathcal{F}}}
\newcommand{\calD}{{\mathcal{D}}}
\newcommand{\calA}{{\mathcal{A}}}
\newcommand{\calC}{{\mathcal{C}}}
\newcommand{\calM}{{\mathcal{M}}}
\newcommand{\calV}{{\mathcal{V}}}
\newcommand{\BA}{\mathbb{A}}
\newcommand{\BR}{\mathbb{R}}
\newcommand{\BC}{\mathbb{C}}
\newcommand{\BF}{\mathbb{F}}
\newcommand{\wti}{\widetilde}
\newcommand{\bpm}{\begin{pmatrix}}
	\newcommand{\epm}{\end{pmatrix}}
\newcommand{\bbm}{\begin{bmatrix}}
	\newcommand{\ebm}{\end{bmatrix}}
\numberwithin{equation}{section}
\numberwithin{thm}{section}
\numberwithin{rmk}{section}
\newtheorem{clr}[thm]{Corollary}
\newcommand{\lso}{L^{q}_{\sigma}(\Omega)}
\newcommand{\lo}[1]{L^{#1}_{\sigma}(\Omega)}
\newcommand{\ls}{L^{q}_{\sigma}}
\newcommand{\Bso}{B^{2-\rfrac{2}{p}}_{q,p}(\Omega)}
\newcommand{\Bsos}{B^{2-\rfrac{2}{p}}_{q',p}(\Omega)}
\newcommand{\Bt}{\widetilde{B}^{2-\rfrac{2}{p}}_{q,p}}
\newcommand{\Bto}{\widetilde{B}^{2-\rfrac{2}{p}}_{q,p}(\Omega)}
\newcommand{\Btos}{\widetilde{B}^{2-\rfrac{2}{p}}_{q',p}(\Omega)}
\newcommand{\xtpq}{X^T_{p,q}}
\newcommand{\xtpqs}{X^T_{p,q,\sigma}}
\newcommand{\xipqs}{X^{\infty}_{p,q,\sigma}}
\newcommand{\ytpq}{Y^T_{p,q}}
\newcommand{\yipq}{Y^{\infty}_{p,q}}
\newcommand{\xipq}{X^{\infty}_{p,q}}
\newcommand\rfrac[2]{{}^{#1}\!/_{#2}}
\newcommand{\Fs}{F_{\sigma}}
\newcommand{\lqaq}{\big( \lso, \calD(A_q) \big)_{1-\frac{1}{p},p}}
\newcommand{\lqaqs}{\big( \lo{q'}, \calD(A_q^*) \big)_{1-\frac{1}{p},p}}
\newcommand{\lqafq}{\big( \lso, \calD(\BA_{_{F,q}}) \big)_{1-\frac{1}{p},p}}
\newcommand{\lqafqs}{\big( \lo{q'}, \calD(\BA_{_{F,q}}^*) \big)_{1-\frac{1}{p},p}}
\newcommand{\lplsq}{L^p \big( 0,\infty; \lso \big)}
\newcommand{\lplqd}{L^p \big( 0,\infty; \lo{q'} \big)}
\newcommand{\norm}[1]{\left\lVert#1\right\rVert}
\newcommand{\abs}[1]{\left\lvert#1\right\rvert}
\newcommand{\lplqs}{L^p \big( 0,\infty; \lso \big)}
\newcommand{\ip}[2]{\left\langle #1, #2 \right\rangle}
\newcommand{\thistheoremname}{}
\newtheorem*{genericthm*}{\thistheoremname}
\newenvironment{namedthm*}[1]
{\renewcommand{\thistheoremname}{#1}%
	\begin{genericthm*}}
	{\end{genericthm*}}
\begin{document}

	\title{Uniform stabilization of 3D Navier-Stokes equations in critical Besov spaces with finite dimensional, tangential-like boundary, localized feedback controllers}
	
	\author{Irena Lasiecka, Roberto Triggiani}
	\affil{Department of Mathematical Sciences, University of Memphis, Memphis, TN 38152, USA.}
	
	\author{Buddhika Priyasad}
	\affil{Institute for Mathematics and Scientific Computing, 621, Heinrichstrasse 36, 8010 Graz, Austria.}
	
	\date{}
	\maketitle

	\begin{abstract}
	\noindent \noindent The present paper provides a solution in the affirmative to a recognized open problem in the theory of uniform stabilization of 3-dimensional Navier-Stokes equations in the vicinity of an unstable equilibrium solution, by means of a `minimal' and `least' invasive feedback strategy which consists of a control pair $\{ v,u \}$ \cite{LT2:2015}. Here $v$ is a tangential boundary feedback control, acting on an arbitrary small part $\wti{\Gamma}$ of the boundary $\Gamma$; while $u$ is a localized, interior feedback control, acting tangentially on an arbitrarily small subset $\omega$ of the interior supported by $\wti{\Gamma}$. The ideal strategy of taking $u = 0$ on $\omega$ is not sufficient. A question left open in the literature was: Can such feedback control $v$ of the pair $\{ v,u \}$ be asserted to be finite dimensional also in the dimension $d = 3$? We here give an affirmative answer to this question, thus establishing an optimal result.
	To achieve the desired finite dimensionality of the feedback tangential boundary control $v$, it is here then necessary to abandon the Hilbert-Sobolev functional setting of past literature and replace it with a critical Besov space setting. These spaces are `close' to $L^3(\Omega)$ for $d=3$. This functional setting is significant. It is in line with recent critical well-posedness in the full space of the non-controlled N-S equations. A key feature of such Besov spaces with tight indices is that they do not recognize compatibility conditions. The proof is constructive and is ``optimal" also regarding the ``minimal" number of tangential boundary controllers needed. The new setting requires the solution of novel technical and conceptual issues. These include establishing maximal regularity in the required critical Besov setting for the overall closed-loop linearized problem with tangential feedback control applied on the boundary. This result is also a new contribution to the area of maximal regularity. It escapes perturbation theory. Finally, the minimal amount of tangential boundary action is linked to the issue of unique continuation of over-determined Oseen eigenproblems. 
	\end{abstract}
	
	
	\section{Introduction}\label{B-Sec-1}
	
	\subsection{Controlled Dynamic Navier-Stokes Equations}
		Let $\Omega$ be an open  connected bounded domain in $\mathbb{R}^d$, $d=2,3$, with boundary $\Gamma = \partial\Omega$. Unless otherwise stated, $\Gamma$ will be assumed of class $C^2$ throughout the paper. Some results will require less/different boundary assumptions, as noted. For purposes of illustration, let $\omega$ be at first an arbitrary collar (layer) of the boundary $\Gamma$ in the interior of $\Omega$, $\omega \subset \Omega$ [Fig.\,1].  For each point $\xi \in \omega$, we consider the (sufficiently smooth)  curve ($d=2$) or surface ($d=3$) $\Gamma_\xi$, which is the parallel translation of the boundary $\Gamma$, passing through $\xi \in \omega$ and lying in $\omega$.  Let $\tau(\xi)$ be a unit tangent vector to the oriented curve $\Gamma_\xi$ at $\xi$, if $d=2$; and let $\tau(\xi) = [\tau_1(\xi),\tau_2(\xi)]$ be an orthonormal system of oriented tangent vectors lying on the tangent plane to the surface $\Gamma_\xi$ at $\xi$, if $d=3$, and obtained as isothermal parametrization via a 1-1 conformal mapping of a suitable open set in $\mathbb{R}^2$ with canonical basis $e_1=\{1,0\}, e_2=\{0,1\}$. See \cite[Appendix]{LT1:2015} for details and references.  We shall in particular focus on and study the case where $\omega$ is a localized collar based on an arbitrarily small, connected portion $\wti{\Gamma}$ of the boundary $\Gamma$ [Fig.\,2]. Let $m$ denote the characteristic function of the collar set $\omega: \ m \equiv 1$ in $\omega$, $m \equiv 0$ in $\Omega/\omega$.
	\begin{center}
		\begin{tabular}{c c}
			\begin{tikzpicture}[x=10pt,y=10pt,>=stealth, scale=.53]
			\draw[thick]
			(-15,5)
			.. controls (-20,-9) and (-5,-18) .. (15,-7)
			.. controls  (24,-1) and (15,13).. (6,6)
			.. controls (3,3) and (-3,3) .. (-6,6)
			.. controls (-9,9) and (-13,9) .. (-15,5);
			
			\draw[white,ultra thin,opacity=.5]
			(-14.9,4.9)
			.. controls (-20,-8) and (-7,-13.15) .. (0,-11.7)
			.. controls (,) and (,) .. (6,6)
			.. controls (3,3) and (-3,3) .. (-6,6)
			.. controls (-9,9) and (-13,9) .. (-14.9,4.9);
			
			\draw[thick]
			(-14.3,4)
			.. controls (-18.7,-8.3) and (-4.5,-16.7) .. (14,-6.5)
			.. controls  (22.8,-1.75) and (15,13).. (5,4.25)
			.. controls (2,2.5) and (-2,2.5) .. (-5,4.25)
			.. controls (-10,8.5) and (-13,8) .. (-14.3,4);
			\draw[thick,fill=white]
			(-5,3.3)
			.. controls (-2,1.8) and (2,1.8) .. (5,3.3)
			.. controls (10,7) and (13,7.3) .. (15.8,3.2)
			.. controls (17,1) and (17.2,-1.5) .. (15.6,-4)
			.. controls (10,-9.5) and (-8,-14) .. (-13,-5)
			.. controls (-14.5,-2) and (-14.5,1) .. (-13.5,4)
			.. controls (-12.5,7) and (-10,7.5) .. (-5,3.3);
			\draw[thick,->] (17.4,2) -- (15.5,8);
			\draw[thick] (17.4,2) -- (21,3);
			\draw[thick,->] (21,-2) -- (17.73,-2);
			\draw[thick,->] (21,-6) -- (17.2,-5);
			\draw[thin,->](-10,10) -- (-10,7.2);
			\draw (17.4,2) node {$\bullet$};
			\draw (-10,11) node[scale=1.5] {$\omega$};
			\draw (18.5,7.5) node[scale=1.25] {$\tau(\xi)$};
			\draw (22,3) node[scale=1.2] {$\xi$};
			\draw (22.5,-2) node[scale=1.2] {$\Gamma_{\xi}$};
			\draw (22,-6) node[scale=1.2] {$\Gamma$};
			\end{tikzpicture} &
			
			\begin{tikzpicture}[x=10pt,y=10pt,>=stealth, scale=.53]
			\draw[thick]
			(-15,5)
			.. controls (-20,-9) and (-5,-18) .. (15,-7)
			.. controls  (24,-1) and (15,13).. (6,6)
			.. controls (3,3) and (-3,3) .. (-6,6)
			.. controls (-9,9) and (-13,9) .. (-15,5);
			\draw[thick,shade,opacity=.5]
			(-15.8,-2)
			.. controls (-11,-4) and (-10,-7) .. (-10,-10)
			.. controls (-15,-7) and (-15.5,-4) .. (-15.8,-2);
			\draw[thick] (-15.8,-2) .. controls (-15.5,-4) and (-15,-7) .. (-10,-10);
			\draw[thick] (-14.3,-2.7) .. controls (-14.5,-4) and (-13,-7) .. (-10,-9);
			\draw[thick] (-13.2,-3.4) .. controls (-13.5,-4) and (-12,-7) .. (-10.2,-8);

			\draw[->,thin]  (-13.2,-5.9) -- (-18.9,4);
			\draw[thin]  (-13.2,-5.9) -- (-6.5,-1.8);
			\draw(-18,5) node {$\tau(\xi)$};
			\draw(-6.5, -0.5) node {$\xi$};
			
			\draw (-19.25,-7.75) node[scale=1] {$\omega$};
			\draw (-14.5,-8) node[scale=1] {$\wti{\Gamma}$};
			\draw (-15.8,-2) node {$\bullet$};
			\draw (-10,-10) node {$\bullet$};
			\draw[<-]  (-14,-5) -- (-19,-7);
			\end{tikzpicture}\\
			\mbox{Fig. 1: Internal Collar $\omega$ of Full Boundary $\Gamma$}&
			\mbox{Fig. 2: Internal Localized Collar $\omega$ of} \\
				& \mbox{Subportion $\wti{\Gamma}$ of Boundary $\Gamma$}
		\end{tabular}
		\end{center}

	\noindent We consider the following Navier-Stokes equations perturbed by a force $f$ and subject to the action of a pair $\{v,u\}$ of controls, to be described below	
	\begin{subequations}\label{B-1.1}
		\begin{align}
		y_t(t,x) - \nu_o \Delta y(t,x) +  (y \cdot \nabla)y + \nabla \pi(t,x) - (m(x)u)\tau &= f(x)  &\text{ in } Q \label{B-1.1a}\\ 
		\text{div} \ y &= 0  &\text{ in } Q \label{B-1.1b}\\
		y &= v &\text{ on } \Sigma \label{B-1.1c}\\
		y(0,x) &= y_0(x) &\text{ in } \Omega \label{B-1.1d}
		\end{align}	
	
	\noindent where $Q = (0,\infty) \times \Omega, \ \Sigma = (0,\infty) \times \Gamma$ and the constant $\nu_o > 0$ is the viscosity coefficient. In (\ref{B-1.1c}), $v$ is a $d$-dimensional \textit{tangential} boundary control $v \cdot \nu \equiv 0$ on $\Gamma$, possibly supported on an arbitrarily small connected part $\wti{\Gamma}$ of the boundary,  where $\nu$ is the unit outward normal to $\Gamma$.  Instead, $u$ is a scalar ($d=2$) or a two dimensional vector $u = [u^1,u^2]$ ($d=3$) interior ``tangential" control acting in the `tangential direction' $\tau$ (that is, parallel to the boundary) in the small boundary layer $\omega$: $(mu) \tau$, where for $d=3$ (Fig. 2), 	
	\begin{equation} \label{B-1.1e}
		(mu)\tau = [( mu^1)\tau_1+(mu^2)\tau_2] \ \mbox{ for short}, \quad m \equiv 1 \mbox{ on } \omega; \quad m \equiv 0 \mbox{ in } \Omega \backslash \omega. 
	\end{equation}	
	\end{subequations}
	\noindent See \cite[Appendix]{LT1:2015}. The scalar function $\pi$ is the unknown pressure. \\

	\noindent \textbf{Notation:} As already done in the literature, for the sake of simplicity, we shall adopt the same notation for function spaces of scalar functions and function spaces of vector valued functions. Thus, for instance, for the vector valued ($d$-valued) velocity field $y$ or external force $f$, we shall simply write say $y,f \in L^q(\Omega)$ rather than $y,f \in (L^q(\Omega))^d$ or $y,f \in \mathbf{L}^q(\Omega)$. This choice is unlikely to generate confusion. The initial condition $y_0$ and the body force $f \in L^q(\Omega)$ are given. The scalar function $\pi$ is the unknown pressure.	
	\subsection{Stationary Navier-Stokes equations}\label{B-Sec-1.2}
	The following result represents our basic starting point.
	
	\begin{thm}\label{B-Thm-1.1}
		Consider the following steady-state Navier-Stokes equations in $\Omega$
		\begin{subequations}\label{B-1.2}
			\begin{align}
			-\nu_o \Delta y_e +  (y_e.\nabla)y_e + \nabla \pi_e &= f &\text{ in }  \Omega \label{B-1.2a}\\ 
			div \ y_e &= 0 &\text{ in }  \Omega\label{B-1.2b} \\
			y_e &= 0 &\text{ on } \Gamma. \label{B-1.2c}
			\end{align}
		\end{subequations}
		Let $1 < q < \infty$. For any $f \in L^q(\Omega)$ there exits a solution (not necessarily unique) $(y_e,\pi_e) \in (W^{2,q}(\Omega) \cap W^{1,q}_{0}(\Omega)) \times W^{1,q}(\Omega)$.
	\end{thm}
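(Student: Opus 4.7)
The plan is to proceed in two stages: (i) construct a Leray-Hopf weak solution in the Hilbert setting $H^1_0(\Omega)$, and (ii) bootstrap its regularity up to $W^{2,q}(\Omega)$ by repeated application of the classical $L^r$-regularity theory for the stationary Stokes system on the $C^2$ bounded domain $\Omega$ (Cattabriga--Solonnikov--Amrouche-Girault). Since $\Omega$ is bounded, the embedding $L^p(\Omega) \hookrightarrow L^q(\Omega)$ for $p \geq q$ means that once the case of large $q$ is handled, the case $q \leq 2$ follows a fortiori; so the effort should concentrate on obtaining high integrability.

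For stage (i), I would use a Galerkin approximation on the solenoidal space $V = \{y \in H^1_0(\Omega): \text{div } y = 0\}$, for instance via an orthonormal basis of eigenfunctions of the Stokes operator. Existence of the finite-dimensional approximations $y_N \in V_N$ is guaranteed by Brouwer's fixed-point theorem applied to the projected nonlinear map. The a priori estimate is the energy identity obtained by testing with $y_N$ itself: the convective contribution vanishes by the skew-symmetry $((y_N \cdot \nabla) y_N, y_N) = 0$, and Poincar\'e's inequality yields $\nu_o \|\nabla y_N\|_{L^2}^2 \leq \|f\|_{H^{-1}} \|y_N\|_{H^1_0}$, a uniform bound independent of $N$. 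Passing to the limit is standard: weak convergence in $V$ combined with the compact embedding $V \hookrightarrow L^2(\Omega)$ handles the nonlinearity, and the pressure $\pi_e \in L^2(\Omega)/\mathbb{R}$ is recovered by de Rham's theorem applied to the residual divergence-free functional.

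For stage (ii), I would bootstrap the regularity. Taking $d = 3$ as the harder case, Sobolev gives $y_e \in L^6(\Omega)$, so by H\"older $(y_e \cdot \nabla) y_e \in L^{3/2}(\Omega)$. Rewriting \eqref{B-1.2} as the linear Stokes problem with right-hand side $g := f - (y_e \cdot \nabla) y_e$ and invoking $L^r$-Stokes regularity with $r = \min(q, 3/2)$ yields $y_e \in W^{2,r}(\Omega) \cap W^{1,r}_0(\Omega)$ and $\pi_e \in W^{1,r}(\Omega)$. Sobolev embedding then raises the integrability of $\nabla y_e$, which in turn raises the integrability of $(y_e \cdot \nabla) y_e$; re-applying the Stokes estimate and iterating finitely many times produces $y_e \in W^{2,q}(\Omega) \cap W^{1,q}_0(\Omega)$ with $\pi_e \in W^{1,q}(\Omega)$. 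The case $d = 2$ is entirely analogous and simpler, since $H^1_0(\Omega) \hookrightarrow L^p(\Omega)$ for every $p < \infty$.

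The main obstacle is the \textbf{first bootstrap step}: the nonlinearity $(y_e \cdot \nabla) y_e$ is initially only available in a low-integrability space ($L^{3/2}$ in $3$D), and crossing from $H^1_0$-type information to $W^{2,3/2}$-type information requires the full strength of the stationary $L^r$-Stokes regularity theorem on $C^2$ domains. The remaining iterations are then benign. Uniqueness is \emph{not} claimed in the statement, reflecting the well-known non-uniqueness of the steady Navier--Stokes system at large data.
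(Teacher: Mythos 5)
Your two-stage scheme (Leray--Hopf weak solution via Galerkin plus de Rham, followed by a finite elliptic bootstrap through the stationary $L^r$-Stokes regularity theory) is the standard argument, and it is sound for $d=2$ with any $q>1$ and for $d=3$ with $q\geq \rfrac{6}{5}$ -- in particular for the range $q>d$ that the paper actually uses. Note that the paper itself offers no proof: it simply cites \cite[Thm 7.3, p.~59]{CF:1980} for $q=2$ and \cite[Thm 5.iii, p.~58]{AR:2010} for general $1<q<\infty$, so your write-up supplies an argument the paper delegates entirely to the literature. Your bootstrap bookkeeping is correct: in $3$D one passes from $(y_e\cdot\nabla)y_e\in L^{3/2}$ to $y_e\in W^{2,3/2}\hookrightarrow W^{1,3}\cap L^p$ (all $p<\infty$), then to $W^{2,3-\varepsilon}\hookrightarrow C^0(\overline{\Omega})\cap W^{1,p}$ with $p$ large, and one further Stokes application lands in $W^{2,q}$, so the iteration terminates in at most three steps.

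There are, however, two genuine defects. First, your reduction ``once the case of large $q$ is handled, the case $q\leq 2$ follows a fortiori'' is backwards: the inclusion $L^p(\Omega)\hookrightarrow L^q(\Omega)$ for $p\geq q$ lets you weaken the \emph{conclusion}, not the \emph{hypothesis}; a datum $f\in L^q(\Omega)$ with $q$ small need not lie in any $L^p(\Omega)$ with $p>q$, so the small-$q$ statement cannot be deduced from the large-$q$ one. Second, and more substantively, for $d=3$ and $1<q<\rfrac{6}{5}$ your stage (i) breaks down: since $H^1_0(\Omega)\hookrightarrow L^6(\Omega)$ is sharp, one has $L^q(\Omega)\subset H^{-1}(\Omega)$ only for $q\geq\rfrac{6}{5}$, so for rougher $f$ the pairing $\langle f, y_N\rangle$ in your energy estimate is not defined and no Leray--Hopf weak solution is produced to feed into the bootstrap. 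This is precisely the regime the paper's second citation is designed for -- the title of \cite{AR:2010} is ``\ldots Navier--Stokes equations with \emph{singular data}'' -- and it is handled there by a transposition/very-weak-solution framework rather than by energy methods. Either restrict your theorem to $q\geq\rfrac{6}{5}$ (harmless for this paper, which needs $q>d$) or add the duality argument for the remaining range.
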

	
	\noindent For the Hilbert case $q=2$, see \cite[Thm 7.3 p 59]{CF:1980} . For the general case $1 < q < \infty$, see \cite[ Thm 5.iii p 58]{AR:2010}.
	
	\begin{rmk}\label{B-rmk-1.1}
		It is well-known \cite{Lad:1969}, \cite{Li:1969}, \cite{Te:1979} that the stationary solution is unique when ``the data is small enough, or the viscosity is large enough" \cite[p 157; Chapt 2]{Te:1979} that is, if the ratio $\ds \rfrac{\norm{f}}{\nu_o^2}$ is smaller than some constant that depends only on $\Omega$ \cite[p 121]{FT:1984}. When non-uniqueness occurs, the stationary solutions depend on a finite number of parameters \cite[Theorem 2.1, p 121]{FT:1984} asymptotically, in the time dependent case.
	\end{rmk}
	
	\begin{rmk}\label{B-rmk-1.2}
		The case where $f(x)$ in (\ref{B-1.1a}) is replaced by $\ds \nabla g(x)$ is noted in the literature as arising in certain physical situations, where $f$ is a conservative vector field. In this case, a solution of the stationary problem (\ref{B-1.2}) is $y_e \equiv 0, \pi_e = g$. The analysis of this relevant case will be discussed in the Orientation, Case 3 in Section \ref{B-Sec-1.5}; in Remark \ref{B-rmk-2.2} at the end of Section \ref{B-Sec-2}; and in Problem \#3, Appendix \ref{B-app-C}.
	\end{rmk}

	\subsection{The stabilization problem.}\label{B-Sec-1.3}
	
	\subsubsection{Its physical and mathematical importance}
	With reference to the viscous Navier-Stokes fluid in a bounded region of the two- or three-dimensional space under the action of a given time-independent driving mechanism such a body force, an attractive physical description of the feedback stabilization problem in fluid dynamics has been given in the report of Referee \#1 of paper \cite{LT2:2015}. We quote from such review (see also \cite[Check the page below Eq (1.4) ?]{LT2:2015}):\\
	\textit{``It is experimentally observed, and analytically and numerically validated, that if the magnitude of the driving mechanism, call it $\abs{\calD}$, is below a certain critical value, $\calC$, then the corresponding flow of the liquid is time-independent as well, and it is also unique and stable. However, if $\abs{\calD} > \calC$, then another motion (not necessarily of steady nature) appears and is stable.Eventually, when $\abs{\calD}$ becomes very large, the corresponding motion is of chaotic nature, and turbulence sets in. The stabilization problem consists in avoiding the occurrence of the above process by forcing the flow to keep its original steady-state regime though a suitable feedback control."}\\
	
	\noindent Stated in other words: For large Reynolds numbers $\ds \rfrac{1}{\nu_0}$, the steady state solution $y_e$ becomes unstable in a quantitative sense to be made more precise below in \eqref{B-1.3}, and may cause turbulence: it is therefore important to be able to suppress turbulence asymptotically in time by selecting a suitable feedback control action. As to turbulence theory, one of its main features is the so called
	phenomenon of energy cascade, dating back to Kolmogorov, whereby the average energy at any given scale is governed by three elements: the	input from the driving force; the inertial effects that transfer energy
	toward lower scales; and dissipation due to viscosity. A recent	contribution aimed at a better understanding of the mathematical	mechanisms due to turbulence is \cite{DG:2016}. Our goal in this paper is to	suppress turbulence - potentially caused by an external force - asymptotically in time.\\
	
	\noindent \uline{Assumption of instability.} Let $d = 2,3$.  We label ``unstable" to mean that the corresponding Oseen operator $\calA_q$ in \eqref{B-1.11}, which depends on $y_e$ via \eqref{B-1.10}, has a finite number, say $N$, of not necessarily distinct eigenvalues $\lambda_1, \lambda_2, \lambda_3, \dots, \lambda_N$ on the complex half plane $\{ \lambda \in \BC: Re \ \lambda \leq 0 \}$ which we then order according to their real parts, so that
	\begin{equation}\label{B-1.3}
	\ldots \leq Re~\lambda_{N+1} < 0 \leq Re~\lambda_N \leq \ldots \leq Re~\lambda_1,
	\end{equation}
	\noindent each $\lambda_i, \ i=1,\dots,N$, being an unstable eigenvalue repeated according to its geometric multiplicity $\ell_i$. We shall next indicate with $M (\leq N)$ the number of distinct unstable eigenvalues of $\calA_q$. Thus, the corresponding uncontrolled linearized $w$-problem \eqref{B-1.28} ($v = 0, u = 0$) is described by a strongly continuous analytic semigroup generated by such Oseen operator, Appendix \ref{B-app-A}, which moreover is unstable. We seek to counteract such instability by devising a suitable feedback [acting on the state $y(t)$ only] control strategy $\{v,u\}$, that not only produces the corresponding linearized $w$-problem to be (globally) uniformly stable, but - in addition - forces the overall nonlinear problem \eqref{B-1.1} to be uniformly stable in the vicinity of such `unstable' equilibrium solution $y_e$. Henceforth, unless otherwise stated, we consider the pair $\{ \wti{\Gamma}, \omega \}$, where $\wti{\Gamma}$ is {\it an arbitrarily small connected portion of the boundary} $\Gamma = \partial \Omega$, and {\it $\omega$ is an arbitrarily small interior subset} of $\Omega$, supported by $\wti{\Gamma}$ as in Fig 2. Moreover, following \cite{LT2:2015}, the pair $\{v,u\}$ of stabilizing feedback controls will be acting on $\{ \wti{\Gamma}, \omega \}$. This means that: the boundary feedback control $v$ will be taken as acting tangentially (no normal component) on the arbitrary small connected portion $\wti{\Gamma}$ of the boundary $\Gamma = \partial \Omega$; the interior control $u$ will be taken as acting tangential-like (parallel to the boundary) on the arbitrary small interior subset $\omega$ of $\Omega$, supported by $\wti{\Gamma}$ (Fig. 2).
	
	\subsubsection{Main features of present solution.} \label{B-Sec-1.3.2} The following features of the stabilization problem solved in this paper - both state-of-the-art of the literature and new contributions - need to be stressed at the outset. A more extensive explanation is postponed to Section \ref{B-Sec-1.4.4}, following the main Theorem \hyperref[B-Thm-A]{A}.
	
	\begin{enumerate}[(1)]
		\item \uline{Insufficiency of the pair $\{ v, u \equiv 0 \}$ on $\{ \wti{\Gamma}, \omega \}$.} Use of the \uline{sole} tangential boundary feedback control $v$ as localized on $\wti{\Gamma}$ (that is, $u \equiv 0$ on $\omega$) is \uline{not} sufficient as a stabilizing control, regardless of whether $v$ is finite or infinite dimensional. This is due to the counterexample in \cite{FL:1996}, to be discussed in the Orientation Section \ref{B-Sec-1.5} and in Problem \#1 of Appendix \ref{B-app-C}. 
		
		\item \uline{Minimal extra condition to be added to $v$.} Thus, the addition of an interior, tangential-like feedback control $u$, as localized on the companion domain $\omega$ is a \uline{minimal extra condition}, with which  to supplement such $v$. 
		
		\begin{rmk}\label{B-rmk-1.3}
			To put the above point (2) in proper perspective, we recall that uniform stabilization of the Navier-Stokes equations $d = 2,3,$ by means of a localized feedback interior control (with no constraint on being tangential-like, if supported on a patch of the boundary) was solved in \cite{BT:2004} in a Hilbert space setting. The recent solution \cite{LPT.1} in the Besov setting of the present paper $d = 2,3,$ improves \cite{BT:2004} in both results and methods; for instance on the test for the finite dimensional controllability condition and the nonlinear analysis.
		\end{rmk}
	
	\item \uline{Main contribution: finite dimensionality of $v$.} Regarding the described pair $\{ v, u \}$ on $\{ \wti{\Gamma}, \omega \}$ present state-of-the-art has succeeded \cite{LT1:2015}, \cite{LT2:2015} in establishing local exponential stabilization (asymptotic turbulence suppression) near an unstable equilibrium solution $y_e$ by means of a \textit{localized finite-dimensional tangential} feedback boundary control $v$ of the pair $\{ v, u \}$ on $\{ \wti{\Gamma}, \omega \}$, in the Hilbert setting in two cases: (i) when the dimension $d = 2$, (ii) when the dimension $d = 3$ but the initial condition $y_0$ in (\ref{B-1.1d}) is compactly supported. In the general $d = 3$ case, handling of the non-linearity of the N-S problem forces a Hilbert space setting with a high-topology $H^{\rfrac{1}{2} + \varepsilon}(\Omega), \ \varepsilon > 0$, whereby the compatibility conditions kick in. These then cannot allow the boundary stabilizing feedback control $v$ to be finite dimensional in general for $d=3$. In the case $d = 3$, the obstruction due to the compatibility conditions in the Hilbert setting of all the past literature was recognized also by other authors. In \cite{JP:2007}, the compatibility condition between the initial state and the feedback controller at $t = 0$ is achieved by choosing a \textit{time-varying control operator}. Same in \cite{BT:2004}: this is the \textit{dynamic} controller of the title of these authors' paper. Moreover, in both these works, the boundary control has tangential as well as normal components. In contrast, in our present paper, we take of course the feedback boundary control $v = F(\cdot)$ on $\wti{\Gamma}$ to be tangential with $F$ a \uline{static} operator. The main goal and contribution of the present work is to remove the deficiency noted in (ii) on the localized tangential stabilizing boundary control $v$ of the pair $\{ v, u \}$ on $\{ \wti{\Gamma}, \omega \}$ in the case $d = 3$, and thus obtain constructively local uniform feedback stabilization of (\ref{B-1.1}) near an unstable equilibrium solution $y_e$, by means of a control pair $\{ v, u \}$ on $\{ \wti{\Gamma}, \omega \}$ with a stabilizing tangential, boundary localized static feedback control operator $v = F(\cdot)$ in (\ref{B-1.1d}) \underline{which is also finite dimensional} in the case $d = 3$. \uline{This involves the affirmative an established and recognized open problem in this area.}
	
	\item \uline{Strategy: from the Hilbert setting of the literature to a new tight Besov space setting.} To this end, we need therefore to go beyond the Hilbert setting of the literature and thus achieve local uniform stabilization near an equilibrium solution $y_e$ in the case $d=3$ in a space enjoying the following two features: on the one hand, it must accommodate the N-S nonlinearity for $d=3$; and on the other hand, it must not recognize the boundary conditions, in order not to be subject to compatibility conditions. Thus, the present paper will provide a feedback stabilization pair $\{v,u\}$, in (\ref{B-1.1c}) and in (\ref{B-1.1a}) respectively, \underline{both finite-dimensional} also in the case $d = 3$ (in the case of $u$, this is already known \cite{LT1:2015}, \cite{BT:2004}) and spectral based, this time however within a critical Besov-setting. In particular, local exponential stability for the velocity field $y$ near an unstable equilibrium solution $y_e$ will be achieved for $d = 3$ in the topology of the Besov space $\Bto$ in \eqref{B-1.15b} which does not recognize compatibility conditions. See Remark \ref{B-rmk-1.4}.
	
\end{enumerate}
	
	\subsection{Main Results} \label{B-Sec-1.4}
	Before stating the main results, we need to introduce the necessary mathematical setting.
	
	\subsubsection{Preliminaries: Helmholtz decomposition} \label{B-Sec-1.4.1}
	A first difficulty one faces in extending the local exponential stabilization result near an equilibrium solution $y_e$ with tangential control pair $\{ v,u \}$ of the original problem (\ref{B-1.1}) from the Hilbert-space setting in \cite{BT:2004}, \cite{BLT1:2006}, \cite{LT2:2015} to the $L^q$/Besov setting is the question of the existence of a Helmholtz (Leray) projection for the domain $\Omega$ in $\mathbb{R}^d$. More precisely: Given an open set $\Omega \subset \mathbb{R}^d$, the Helmholtz decomposition answers the question as to whether $L^q(\Omega)$ can be decomposed into a direct sum of the solenoidal vector space $\lso$ and the space $G^q(\Omega)$ of gradient fields. Here,
	
	\begin{equation}\label{B-1.4}
	\begin{aligned}
	\lso &= \overline{\{y \in C_c^{\infty}(\Omega): \div \ y = 0 \text{ in } \Omega \}}^{\norm{\cdot}_q}\\
	&= \{g \in L^q(\Omega): \div \ g = 0; \  g\cdot \nu = 0 \text{ on } \partial \Omega \},\\
	& \hspace{3cm} \text{ for any locally Lipschitz domain } \Omega \subset \mathbb{R}^d, d \geq 2 \ \cite[p \ 119]{Ga:2011}\\
	G^q(\Omega) &= \{y \in L^q(\Omega):y = \nabla p, \ p \in W_{loc}^{1,q}(\Omega) \ \text{where } 1 \leq q < \infty \}.
	\end{aligned}
	\end{equation}
	
	Both of these are closed subspaces of $L^q$.
	
	\begin{definition}\label{B-Def-1.1}
		Let $1 < q < \infty$ and $\Omega \subset \mathbb{R}^n$ be an open set. We say that the Helmholtz decomposition for $L^q(\Omega)$ exists whenever $L^q(\Omega)$ can be decomposed into the direct sum
		\begin{equation}
		L^q(\Omega) = \lso \oplus G^q(\Omega).\label{B-1.5}
		\end{equation}
		The unique linear, bounded and idempotent (i.e. $P_q^2 = P_q$) projection operator $P_q:L^q(\Omega) \longrightarrow \lso$ having $\lso$ as its range and $G^q(\Omega)$ as its null space is called the Helmholtz projection.
	\end{definition}

	Here below we collect a subset of known results about Helmholtz decomposition. We refer to \cite[Section 2.2]{HS:2016}, in particular for the comprehensive Theorem 2.2.5 in this reference, which collects domains for which the Helmholtz decomposition is known to exist. These include the following cases:
	
	\begin{enumerate}[(i)]
		\item any open set $\Omega \subset \mathbb{R}^d$ for $q = 2$, i.e. with respect to the space $L^2(\Omega)$; more precisely, for $q = 2$, we obtain the well-known orthogonal decomposition (in the standard notation, where $\nu=$unit outward normal vector on $\Gamma$) \cite[Prop 1.9, p 8]{CF:1980}
		\begin{subequations}\label{B-1.6}
			\begin{align}
			L^2(\Omega) &= H \oplus H^{\perp} \label{B-1.6a}\\
			H &= \{ \phi \in L^2(\Omega): \div \ \phi \equiv 0 \text{ in } \Omega; \ \phi \cdot \nu \equiv 0 \text{ on } \Gamma \} \label{B-1.6b}\\
			H^{\perp} &= \{ \psi \in L^2(\Omega): \psi = \nabla h, \ h \in H^1(\Omega) \}; \label{B-1.6c}
			\end{align}
		\end{subequations}
		\item a bounded $C^1$-domain in $\mathbb{R}^d$ \cite{FMM:1998}, $1 < q < \infty $, or \cite[Theorem 1.1 p 107, Theorem 1.2 p 114]{Ga:2011} for $C^2$-boundary;
		\item a bounded Lipschitz domain $\Omega \subset \mathbb{R}^d \ (d = 3)$ and for $\frac{3}{2} - \epsilon < q < 3 + \epsilon$ sharp range \cite{FMM:1998};
		\item a bounded convex domain $\Omega \subset \mathbb{R}^d, d \geq 2, 1 < q < \infty$ \cite{FMM:1998}.
	\end{enumerate}
	 
		On the other hand, on the negative side, it is known that there exist domains $\Omega \subset \mathbb{R}^d$ such that the Helmholtz decomposition does not hold for some $q \neq 2$ \cite{MB:1986}.\\
	
	\noindent \textbf{Assumption (H-D)} Henceforth in this paper, we assume that the bounded domain $\Omega \subset \mathbb{R}^d$ under consideration admits a Helmholtz decomposition for the values of $q, \ 1 < q < \infty$, here considered at first, for the linearized problem (\ref{B-1.28}) below. This is the case for domains of class $C^2$, as assumed. The final results Theorems \hyperref[B-Thm-A]{A} of Section \ref{B-Sec-1.4.4} for the non-linear problem (\ref{B-1.1}) will require $q > 3$, see (\ref{B-7.19}), in the case of interest $d = $ dim $\Omega = 3$.\\ 
	
	\noindent Let $\Omega \subset \BR^d$ be an open set and let $1 < q < \infty$. The Helmholtz decomposition exists for $L^q(\Omega)$ if and only if it exists for $L^{q'}(\Omega)$, and we have: (adjoint of $P_q$) = $P^*_q = P_{q'}$ (in particular $P_2$ is orthogonal), where $P_q$ is viewed as a bounded operator $\ds L^q(\Omega) \longrightarrow L^q(\Omega)$, and $\ds P^*_q = P_{q'}$ as a bounded operator $\ds L^{q'}(\Omega) \longrightarrow L^{q'}(\Omega), \ \rfrac{1}{q} + \rfrac{1}{q'} = 1$. See \cite[Prop 2.2.2 p6]{HS:2016}, \cite[Ex. 16 p115]{Ga:2011}, \cite{FMM:1998}, \cite{LPT.1}. Through out the paper we shall use freely that \cite[Appendix A]{LPT.1}
	\begin{equation}\label{B-1.7}
	(\lso)' = \lo{q'}, \quad \frac{1}{q} + \frac{1}{q'} = 1.
	\end{equation}
	
	\subsubsection{Preliminaries: The Stokes and Oseen Operators.}\label{B-Sec-1.4.2}
	
	\noindent First, for $1 < q < \infty$ fixed, the Stokes operator $A_q$ in $\lso$ with Dirichlet boundary conditions  is defined by \cite[p 1404]{GGH:2012}, \cite[p 1]{HS:2016}
	\begin{equation}\label{B-1.8}
	A_q z = -P_q \Delta z, \quad
	\mathcal{D}(A_q) = W^{2,q}(\Omega) \cap W^{1,q}_0(\Omega) \cap \lso.
	\end{equation}
	The operator $A_q$ has a compact inverse $A_q^{-1}$ on $\lso$, hence $A_q$ has a compact resolvent on $\lso$. Next, we introduce the first order Oseen perturbation $L_e$ is given by 
	\begin{equation}\label{B-1.9}
	L_e(z) = (y_e \cdot \nabla) z + (z \cdot \nabla) y_e.
	\end{equation}	
	\noindent Accordingly we define, the first order operator $A_{o,q}$, via (\ref{B-1.8}) and (\ref{B-1.9})
	\begin{equation}\label{B-1.10}
	A_{o,q} z = P_q L_e(z) = P_q[(y_e \ . \ \nabla )z + (z \ . \ \nabla )y_e], \ \mathcal{D}(A_{o,q}) = \mathcal{D}(A_q^{\rfrac{1}{2}}) = W^{1,q}_0 (\Omega) \cap \lso \subset \lso,
	\end{equation}
	where $\ds A_q^{\rfrac{1}{2}}$ is defined in (\ref{B-A.6}) below. Thus, $A_{o,q}A_q^{-\rfrac{1}{2}}$ is a bounded operator on $\lso$, and thus $A_{o,q}$ is bounded on $\ds \calD \big(A_q^{\rfrac{1}{2}} \big)$. This leads to the definition of the Oseen operator
	\begin{equation}\label{B-1.11}
	\calA_q  = - (\nu_o A_q + A_{o,q}), \quad \calD(\calA_q) = \calD(A_q) \subset \lso
	\end{equation}
	also with compact resolvent.	
		
	\subsubsection{Preliminaries: Definition of Besov spaces $B^s_{q,p}$ on domains of class $C^1$ as real interpolation of Sobolev spaces:}\label{B-Sec-1.4.3} 
	Let $m$ be a positive integer, $m \in \mathbb{N}, 0 < s < m, 1 \leq q < \infty,1 \leq p \leq \infty,$ then we define \cite[p 1398]{GGH:2012}
	
		\begin{equation}\label{B-1.12}
		B^{s}_{q,p}(\Omega) = (L^q(\Omega),W^{m,q}(\Omega))_{\frac{s}{m},p}
		\end{equation}
		\noindent \cite[p. xx]{W:1985}. This definition does not depend on $m \in \mathbb{N}$.
		This clearly gives
		\begin{equation}\label{B-1.13}
		W^{m,q}(\Omega) \subset B_{q,p}^s(\Omega) \subset L^q(\Omega) \quad \text{ and } \quad \norm{y}_{L^q(\Omega)} \leq C \norm{y}_{B_{q,p}^s(\Omega)}.
		\end{equation}

	\noindent We shall be particularly interested in the following special real interpolation space of $L^q$ and $W^{2,q}$ spaces $\Big( m = 2, s = 2 - \frac{2}{p} \Big)$:
	\begin{equation}\label{B-1.14}
	B^{2-\frac{2}{p}}_{q,p}(\Omega) = \big(L^q(\Omega),W^{2,q}(\Omega) \big)_{1-\frac{1}{p},p}.
	\end{equation}
	Our interest in (\ref{B-1.14}) is due to the following characterization \cite[Theorem 3.4]{HA:2000}, \cite[p1399]{GGH:2012}: if $A_q$ denotes the Stokes operator introduced in (\ref{B-1.8}), then 
	\begin{subequations}\label{B-1.15}
		\begin{align}
		\Big( \lso,\mathcal{D}(A_q) \Big)_{1-\frac{1}{p},p} &= \Big\{ g \in \Bso : \text{ div } g = 0, \ g|_{\Gamma} = 0 \Big\} \quad \text{if } \frac{1}{q} < 2 - \frac{2}{p} < 2 \label{B-1.15a}\\
		\Big( \lso,\mathcal{D}(A_q) \Big)_{1-\frac{1}{p},p} &= \Big\{ g \in \Bso : \text{ div } g = 0, \ g\cdot \nu|_{\Gamma} = 0 \Big\} \equiv \Bt(\Omega) \label{B-1.15b}\\
		&\hspace{4cm} \text{ if } 0 < 2 - \frac{2}{p} < \frac{1}{q}; \text{ or } 1 < p < \frac{2q}{2q - 1}.\nonumber
		\end{align}	
	\end{subequations}
	
	\begin{rmk}\label{B-rmk-1.4} The intended goal of the present paper is to obtain the sought-after stabilization result in a function space that does not recognize boundary conditions of the I.C. Thus, we need to avoid the case in (\ref{B-1.15a}), as this implies a Dirichlet homogeneous B.C. Instead, we need to fit into the case (\ref{B-1.15b}), where the conditions div $g \equiv 0$ and $\ds g \cdot \nu|_{\Gamma} = 0$ are just features of the underlying space $\lso$, see (\ref{B-1.4}). We shall then impose the condition $\ds 2 - \rfrac{2}{p} < \rfrac{1}{q} $ Moreover, for the linearized feedback $w$-problem (\ref{B-5.3}) below of the translated non-linear feedback $z$-problem (\ref{B-1.29}), the \uline{final well-posedness and global uniform stabilization result}, Theorems \ref{B-Thm-5.1} and \ref{B-Thm-5.4} hold in general for $2 \leq q < \infty$. However, in the analysis of well-posedness and stabilization of the nonlinear N-S translated feedback $z$-problem (\ref{B-1.29}), we shall need to impose a constraint $q > d = $ dim $\Omega$, in particular $q > 3$, see Eq \eqref{B-7.19}, to obtain the embedding $\ds W^{1,q} \hookrightarrow L^{\infty}(\Omega)$ in case of interest $d = 3$. In conclusion, via \eqref{B-1.15b}, the range of $p$ is $\ds 1 < p < \rfrac{6}{5}$ for $d = 3$; and $\ds 1 < p < \rfrac{4}{3}$ for $d = 2$.In such setting, the compatibility conditions on the boundary of the initial conditions are not recognized. This feature is precisely our key objective within the stabilization problem and removes the shortcoming of the prior Hilbert space setting noted below point (ii) above. For $d = 3$, the space is (\ref{B-1.15b}) in `close' to $L^3(\Omega)$. To appreciate this relationship, we note that $L^3(\BR^3)$ is a `critical space' for the $3-d$ \uline{uncontrolled} Navier-Stokes equations on the full space. See Remark \ref{B-rmk-1.6} below.
	\end{rmk}
	
		\subsubsection{Main contributions of the present paper: for dim $\Omega = d = 2, 3,$ local-in-space well-posedness on the space of maximal regularity $\ds \xipqs(\BA_{_{F,q}})$ of the N-S dynamics (\ref{B-1.1}) as well as local  exponential uniform stabilization near $y_e$ on the space $\ds \Bto, \ q > d, \ 1 < p < \rfrac{2q}{2q-1}$ with finite dimensional feedback control pair $\{ v,u \}$ on $\{ \wti{\Gamma}, \omega \}$.}\label{B-Sec-1.4.4}
	
	\noindent In line with Section \ref{B-Sec-1.3.2}, the present main contributions are obtained by means of the feedback control pair $\{ v, u \}$ acting on $\{ \wti{\Gamma}, \omega \}$, see Fig. 2, with $v$ (as well as $u$) finite dimensional also for $d = 3$. This is the content of the next main Theorem \hyperref[B-Thm-A]{A}.
	
	\begin{namedthm*}{Main Theorem A}\label{B-Thm-A}
		(On problem (\ref{B-1.1})). Let $\Omega$, dim $\Omega = d = 2,3,$ be a bounded domain of class $C^2$, thus satisfying the Helmholtz decomposition assumption of Definition \ref{B-1.1}. Let $\wti{\Gamma}$ be an arbitrary small, open connected subset of $\Gamma = \partial \Omega$, of positive measure, supporting the corresponding arbitrary small interior collar $\omega$ (Fig. 2). With reference to the N-S dynamics (\ref{B-1.1}), consider a given equilibrium solution $y_e$ of problem (\ref{B-1.2}), as guaranteed by Theorem \ref{B-Thm-1.1}. Assume $y_e$ is unstable, that is, the eigenvalues of the corresponding Oseen operator $\calA_q$ satisfy condition \eqref{B-1.3}. Let $\ds q > d, \ 1 < p < \rfrac{2q}{2q-1}$. Thus $\ds 1 < p < \rfrac{6}{5}$ for $d  = 3$ and $1 < p < \rfrac{4}{3}$ for $d = 2$. For $\rho > 0$, define the ball $\calV_{\rho}$ in $\Bto$ by
		\begin{equation}\label{B-1.16}
		\calV_{\rho} \equiv \Big\{ y_0 \in \Bto: \norm{y_0 - y_e}_{\Bto} < \rho \Big\}, \quad \rho > 0.
		\end{equation}		
		\noindent There exists $\rho_0 > 0$ sufficiently small, such that, if $0 < \rho < \rho_0$, for each initial condition $y_0 \in \calV_{\rho}$ there exist a tangential boundary feedback controller $v$ and a tangential-like interior feedback controller $u$, defined respectively by 
		\begin{equation}\label{B-1.17}
		v = F(y - y_e), \ v \cdot \nu |_{\Gamma} = 0; \quad u = \wti{G}(y - y_e) 
		\end{equation}
		\noindent through bounded operators $\ds F \in \calL \big(\lso, L^q(\wti{\Gamma}) \big)$, and $\ds \wti{G} \in \calL \big( \lso \big)$, defined explicitly in \eqref{B-1.23}, \eqref{B-1.24} below, both finite-dimensional, with $v$ supported on $\wti{\Gamma}$ and tangential along $\wti{\Gamma}$, and $u$ with tangential-like internal action $\ds (mu) \tau = (mu^1)\tau_1 + (mu^2)\tau_2$ for $d=3$, supported on a collar $\omega$ of $\wti{\Gamma}$, such that the corresponding closed loop system (\ref{B-1.1}) due to the action of such pair $\{ v,u \}$ 		
		\begin{subequations}\label{B-1.18}
		\begin{align}
		y_t(t,x) - \nu_o \Delta y(t,x) +  (y \cdot \nabla)y + \nabla \pi(t,x) - (m(x) \wti{G}(y - y_e))\tau &= f(x)  &\text{ in } Q\\ 
		\text{div} \ y &= 0  &\text{ in } Q\\
		\begin{picture}(275,0)
		\put(-35,10){ $\left\{\rule{0pt}{38 pt}\right.$}\end{picture}
		y &= F(y - y_e) &\text{ on } \Sigma\\
		y(0,x) &= y_0(x) &\text{ in } \Omega
		\end{align}
		\end{subequations}
		\noindent has the following two properties:\\
		
		\noindent (a) the feedback system (\ref{B-1.18}) is well-posed as a non-linear s.c. semigroup on the space of maximal regularity, see \eqref{B-6.8}, of the operator $\ds \BA_{_{F,q}}$:
		\begin{align}
		\xipqs\big( \BA_{_{F,q}} \big) &= L^p \big( 0, \infty; \calD \big( \BA_{_{F,q}} \big)  \big) \cap W^{1,p} \big( 0, \infty; \lso \big) \label{B-1.19}\\
		&\subset \xipq = L^p \big( 0, \infty; W^{2,q}(\Omega) \big) \cap W^{1,p} \big( 0, \infty; \lso \big) \label{B-1.20}\\
		\calD \big( \BA_{_{F,q}} \big) &\equiv \big\{ \varphi \in W^{2,q}(\Omega) \cap \lso : \varphi|_{\Gamma} = F \varphi \big\}, \label{B-1.21}
		\end{align}
		\noindent where $\ds \BA_{_{F,q}}$ defined in (\ref{B-5.4}) or \eqref{B-6.2}, is a-fortiori the generator of a strongly continuous analytic uniformly stable semigroup on either $\lso$ or $\Bto$, describing the linearized $w$-problem in (\ref{B-5.3}) in feedback form.\\
		
		\noindent (b) such closed loop system (\ref{B-1.18}) is exponentially stable on $\ds \Bto$: by taking, if necessary, $\rho_0 > 0$ further smaller, there exists a constant $\wti{\gamma}, \ 0 < \wti{\gamma} < \abs{Re \ \lambda_{N+1}}$, and a constant $C_{\wti{\gamma}} \geq 1$, depending on $q$, such that
		\begin{equation}\label{B-1.22}
		\norm{y(t) - y_e}_{\Bto} \leq C_{\wti{\gamma}} e^{- \wti{\gamma} t}\norm{y_0 - y_e}_{\Bto}, \quad t \geq 0, \ y_0 \in \calV_{\rho}.
		\end{equation}
		\noindent The bounded finite-dimensional operators $\ds F \in \calL \big( \lso, L^q(\wti{\Gamma}) \big)$ and $\ds \wti{G} \in \calL \big( \lso \big)$ have the following form:
		\begin{align}
		F(y - y_e) &= \sum_{k=1}^K \big< P_N (y-y_e), p_k \big>_{_{W^u_N}} f_k, \quad \text{supported on } \wti{\Gamma} \label{B-1.23}\\
		\wti{G}(y - y_e) &= \sum_{k=1}^K \big< P_N (y-y_e), q_k \big>_{_{W^u_N}} u_k; \ K = \sup \{ \ell_i: i = 1,\dots,M \}. \label{B-1.24}
		\end{align}
		\noindent Here, $P_N$ is the projector, given explicitly in (\ref{B-3.3a}), of $\lso$ onto $W^u_N, \ \lso = W^u_N \oplus W^s_N, \ W^u_N = P_N \lso$ being the finite dimensional subspace of $\lso$, spanned by the generalized eigenvectors corresponding to the unstable eigenvalues in \eqref{B-1.12} of the Oseen operator. Here, $\ds \ip{ \ }{ \ }_{_{W^u_N}}$ denotes the duality paring between $\ds W^u_N \in \lso$ and $(W^u_N)^* \in (\lso)' = \lo{q'}$, by (\ref{B-1.7}): $\ds \ip{h_1}{h_2}_{_{W^u_N}} = \int_{\Omega} h_1 h_2 \ d \Omega$. The vectors $p_k, q_k$ in $(W^u_N)^* \subset \lo{q'}$ as well as the boundary vectors $f_k$ are \underline{constructed explicitly} in Section \ref{B-Sec-4}, in the proof of Theorem \ref{B-Thm-4.1}. In particular (see Appendix \ref{B-app-B}, in particular (\ref{B-B.5}))
		\begin{equation}\label{B-1.25}
		f_k \in \calF = \text{span } \Bigg\{ \frac{\partial \varphi^*_{ij}}{\partial \nu} : \ i = 1, \dots, M;\ j = 1, \dots, \ell_i \Bigg\} \in W^{2-\rfrac{1}{q},q}(\Gamma), \ q \geq 2,
		\end{equation}
		\noindent $\rfrac{1}{q} + \rfrac{1}{q'} = 1$, where $\varphi^*_{ij} \in W^{3,q}(\Omega)$, see (\ref{B-B.5}) in Appendix \ref{B-app-B}, are the eigenfunctions of the adjoint of the Oseen operator, see (\ref{B-4.8a}), corresponding to the $M$ distinct unstable eigenvalue $\lambda_i$, with geometric multiplicity $\ell_i$. Finally, $K = \sup \{ \ell_i; \ i = 1, \dots, M \}$. \qedsymbol 
	\end{namedthm*}
		
		\noindent The \uline{key}, \uline{new} feature of the above Theorem \hyperref[B-Thm-A]{A} is that the localized tangential boundary feedback control $v$ (the operator $F$ supported on $\wti{\Gamma}$) can be chosen to be \uline{finite dimensional} also for $d = 3$, as documented by \eqref{B-1.23}. This was a recognized open problem in the literature, even for a boundary feedback control $v$, let alone a tangential one, see Section \ref{B-Sec-1.3.2}. To obtain such finite dimensionality, it was critical to abandon the prior Hilbert-Sobolev setting of the literature and descend to a lower level regularity setting of the Besov space in (\ref{B-1.15b}) where the compatibility conditions are not recognized, Remark \ref{B-rmk-1.4}.

	\begin{rmk}\label{B-Rmk-1.5}
		(On the constants $\gamma_0 \approx \abs{Re \lambda_{N+1}}, \wti{\gamma}$ of decay rates) With reference to the instability assumption \eqref{B-1.3}, throughout the paper we let $\gamma_0$ be a preassigned constant, just below $\ds \abs{Re  \lambda_{N+1}}: 0 < \gamma_0 = \abs{Re  \lambda_{N+1} } - \varepsilon, \ \varepsilon > 0$ fixed and arbitrarily small. Next in Theorem \ref{B-Thm-4.1}, we let $\gamma_1 > 0$ be an arbitrarily large constant, in particular $\gamma_1 > \gamma_0$. Then, the conclusion of Theorem \ref{B-Thm-4.1}, Eqts \eqref{B-4.4} and \eqref{B-4.5}, gives the global exponential decay $\ds e^{- \gamma_1 t}$ for the finite dimensional feedback $w_N$-dynamics \eqref{B-4.6} in, respectively, the $\lso$- and $\Bto$-norms. Next, precisely because $\gamma_1 > \gamma_0$, Theorem \ref{B-Thm-5.1}, Eqts \eqref{B-5.17} and \eqref{B-5.19} (where full proof is in \cite{LT2:2015}) yields the global exponential decay $\ds e^{- \gamma_0 t}$ for the feedback linearized $w$-dynamics \eqref{B-5.3}, equivalently the s.c. analytic semigroup generated by the operator $\BA_{_{F,q}}$ on either $\lso$ or $\Bto$. Finally, the proof of Theorem \hyperref[B-Thm-B]{B} given in Section \ref{B-Sec-8} yields the local exponential decay $\ds e^{- \wti{\gamma} t}$ for $z(t) = y(t) - y_e$ in feedback form, \eqref{B-1.18}, \eqref{B-1.23}, \eqref{B-1.24} for all I.C. sufficiently close to zero for the variable $z$, respectively to $y_e$ for the variable $y$. Remark \ref{B-rmk-8.1} supports the intuitive expectation that ``the larger the constant $\gamma_0 \approx \abs{Re \lambda_{N+1}}$,  the larger $\wti{\gamma}$". In conclusion, the desired exponential decay rate is dictated by $\gamma_0 \approx \abs{Re \lambda_{N+1}}$. See also \eqref{B-C.10} in Appendix \ref{B-app-C}.
	\end{rmk}

	\begin{rmk}\label{B-rmk-1.6}
		\noindent (Criticality of the space $L^3$) In the case of the uncontrolled N-S equations defined on the full space $\BR^3$, extensive research efforts have recently lead to the discovery that the space $L^3(\BR^3)$ is a `critical' space for the issue of well-posedness. Assuming that some divergence free initial data in $L^3(\BR^3)$ would produce finite time singularity, then there exists a so-called minimal blow-up initial data in $L^3(\BR^3)$ \cite{GKP:2010},\cite{JS:2013}. More precisely, let $y$ now be a solution of the N-S equations (\hyperref[B-1.1]{1.1a-b-c-d}) with $m \equiv 0, f \equiv 0$, as defined on the whole space $\BR^3$. For any divergence free I.C. $y_0 \in L^3(\BR^3)$, denote by $T_{max} (y_0)$ the maximal time of existence of the mild solution starting from $y_0$. Define	
		\begin{equation}
		\rho_{max} = \sup \big\{ \rho: \ T_{max}(y_0) = \infty \text{ for every divergence free } y_0 \in L^3(\BR^3), \text{ with } \norm{y_0}_{L^3(\BR^3)} < \rho \big\}.  \nonumber
		\end{equation}
		
		\noindent The following result holds \cite[Theorem 4.1, p 14]{JS:2013}: Suppose $\rho_{max} < \infty$. Then there exists some $\ds y_0 \in L^3(\BR^3)$, $\norm{y_0}_{L^3(\BR^3)} = \rho_{max}$, whose $T_{max}(y_0) < \infty$, i.e. the corresponding solution blows up in finite time. Of the numerous works that followed the pioneering work of \cite{Ler:1934} along this line of research, we quote in addition \cite{Ser:1962}, \cite{Ser:1963}, \cite{ESS:1991}, \cite{RS:2009}. Frequency localized regularity criteria for the N-S in $\BR^d$, $d = 3$, related also to \cite{ESS:1991} are given in \cite[p 127]{BG:2017}.
		
	\end{rmk}		
	\subsection{Orientation.}\label{B-Sec-1.5}
	Given the complexity of the problem, whose solution proceeds through various phases, we insert the present encompassing orientation at the very outset of our treatment, even though its full content can be documented and understood only after considerable further reading of the present paper. One may wish to refer back to it as reading proceeds.\\
	
	\noindent \textbf{The Stabilization Feedback Control Paradigm: purely boundary control action versus arbitrarily short portion of the boundary.} 
	
	\noindent \uline{Case 1: tangential boundary feedback control action on an arbitrarily small portion of the boundary.}  First, ideally, one would like to establish uniform stabilization of the above problem (\ref{B-1.1}) by use of \underline{only} the boundary feedback control $v$ (thus, with localized interior, tangential-like control $u \equiv 0$), subject to two additional desirable features (regardless, at this stage, of its finite dimensionality): 
	\begin{enumerate}[(i)]
		\item the boundary control $v$ is applied only on an (arbitrarily) small portion $\wti{\Gamma}$ of the boundary $\Gamma$; 
		\item such $v$ acts only tangentially along $\wti{\Gamma}$, so that the normal component is not needed (a sort of minimal control action). Tangential actuation is attractive and technologically feasible: it is described as implementable in the engineering community, by means of jets of air \cite[p1696]{BLK:2001}, \cite{Kee:1998}, \cite{Bo:2016}.
	\end{enumerate}
	
	\noindent \textit{\textbf{Is such idealized purely boundary, tangential control $v$ acting only on a portion $\wti{\Gamma}$ of the boundary $\Gamma$ a possible stabilizing control? The answer is in the negative.}} As is known since the studies of boundary feedback stabilization of classical parabolic equations with Dirichlet boundary traces in the feedback loop as acting on the Neumann boundary conditions \cite{LT:1983}, a critical potential obstruction arises already at the level of the finite-dimensional analysis: more precisely, at the level of enforcing feedback stabilization with large decay rate of the (assumed unstable) \uline{finite-dimensional projected $w_N$-system} (\hyperref[B-3.8a]{3.8a-b}) of the linearized $w$-problem (\ref{B-1.28}) (with $u = 0$). To achieve this requirement, one needs to verify the algebraic Kalman (or Hautus) rank conditions, corresponding to the unstable eigenvalues $\lambda_i$ in (\ref{B-1.3}) with geometric multiplicity $\ell_i$ of the linearized Oseen operator; actually, equivalently, of its adjoint. In the present case, these turn out to be: rank $W_i = \ell_i$, see the matrix $W_i$ in (\ref{B-4.9}), with entries restricted only on $\wti{\Gamma}$, for each distinct unstable eigenvalue $\lambda_i, i = 1, \dots, M$ in (\ref{B-1.12}). Such entries involve the normal derivatives $\ds \{ \partial_\nu \varphi_{i j}^*, \ j = 1, \dots, \ell_i \}$ on $\Gamma$ of the eigenvectors $\ds \{ \varphi_{i j}^*, \ j = 1, \dots, \ell_i \}$ of the adjoint $\calA_q^*$ of the Oseen operator $\calA_q$ in \eqref{B-1.11}. See \eqref{B-4.8a}. In turn, such algebraic controllability conditions are equivalent to the unique continuation property of the Oseen eigenproblem (\hyperref[B-C.1a]{C.1a,b,c})$\implies$(\ref{B-C.2}) of Appendix \ref{B-app-C}. Actually, and equivalently, of its adjoint eigenproblem. Such unique continuation property with over-determined conditions $\varphi|_{\wti{\Gamma}} \equiv 0, \ \partial_{\nu} \varphi|_{\wti{\Gamma}} \equiv 0$ only on a portion $\wti{\Gamma}$ of the boundary $\Gamma$ as in \eqref{B-C.1c} is \underline{false}. In fact, as collected in Appendix \ref{B-app-C}, reference \cite{FL:1996} provides a simple counterexample to such unique continuation property even for the Stokes problem $(y_e = 0)$ on the 2-dimensional half-space $\{ (x, y) :x \in \BR^+, y \in \BR \}$, with over-determination on the infinite boundary $\{ x = 0 \}$. Such counterexample on the half-space can then be transformed in a counterexample of the unique continuation property on a bounded domain $\Omega$ with over-determination on any sub-portion of its boundary $\partial \Omega$. Thus, stabilization (with large decay rate) of the (assumed unstable) finite dimensional projected $w_N$-system (\hyperref[B-3.8a]{3.8a,b}) - hence of the linearized $w$-problem (\ref{B-1.28}) - by means only of the boundary feedback control $v$ \uline{active} only on  the small portion $\wti{\Gamma}$ of the boundary $\Gamma$  is \underline{not possible} [and thus with localized interior, tangent-like control $u = 0$ on $\omega$]. This is \uline{in contrast} with purely parabolic (heat-type) stabilization, where the required unique continuation result is available \cite{RT:2008}, \cite{RT1:2009}.\\
	
	\noindent \uline{Case 2: the necessity of complementing the localized tangential boundary control $v$ with a corresponding localized interior tangential-like control $u$. See Fig. 2}. If one insists on a boundary control action $v$ active only on a portion of the boundary $\wti{\Gamma}$, one then needs an \uline{extra condition}. \uline{A weakest extra condition} is to complement such $v$ (as it was introduced in \cite{LT1:2015}, \cite{LT2:2015}) with a localized, interior, tangential-like control $u$, acting on an arbitrarily small patch $\omega$, supported by $\wti{\Gamma}$. This is a sort of minimal extra requirement for keeping $v$ acting only on $\wti{\Gamma}$. The role of this additional localized interior, tangential-like control $u$ is to guarantee that the corresponding unique continuation property \eqref{B-4.12}$\implies$\eqref{B-4.14} of Lemma \ref{B-lem-4.3}, augmented this time with the interior condition $\varphi^* \cdot \tau \equiv 0$ on $\omega$ in \eqref{B-4.12c}, now holds true. This is equivalent to the UCP of Problem \#2: (\hyperref[B-C.6a]{C.6a,b,c}) $\implies$ \eqref{B-C.7} in Appendix \ref{B-app-C}. In short: the unique continuation property of Problem \#1 (\hyperref[B-C.1a]{C.1a,b,c}) $\implies$ (\ref{B-C.2}) without the extra condition $\varphi \cdot \tau = 0$ on $\omega$, is \uline{false}, and this is then ``corrected" by falling into the unique continuation of Problem \#2, (\hyperref[B-C.6a]{C.6a,b,c}) $\implies$ \eqref{B-C.7} augmented with the interior condition $\varphi \cdot \tau \equiv 0$ on $\omega$, which is \uline{true}. Technically, $\ds rank \ [-\nu_o W_i | U_i] = \ell_i$ as in \eqref{B-4.11b} is \uline{true}; while $\ds rank \ [-\nu_o W_i] = \ell_i$ is \uline{false}. Consequently the correspondingly \uline{augmented controllability matrix} in (\ref{B-4.11b}) satisfies the required Kalman rank conditions. In this sense, therefore, \uline{the results of the present paper} (ultimately, Theorem \hyperref[B-Thm-B]{B} yielding tangential null-feedback stabilization in the vicinity of the unstable origin, of the translated $z$-problem (\ref{B-1.27})) are \uline{optimal, also in terms of the smallness of the required control action for $v$ and $u$}. Moreover, $v$ is shown here for the first time to be \uline{finite dimensional} also in the case $d = 3$. This is the key new contribution of the present work (finite dimensionality of the internal tangential-like control $u$ is not an issue, see \cite{LT2:2015}). Recall also Remark \ref{B-rmk-1.3}.\\	
	
	\noindent \uline{Case 3: tangential boundary control $v$ on the whole boundary $\Gamma$.} If, on the other hand, one insists on \uline{only} exercising tangential boundary control action $v$ - and thus dispensing altogether with the localized, interior, tangential-like control $u$ - then such boundary control action $v$ will have to be applied, as a first preliminary attempt, to the \uline{entire} boundary $\Gamma$. \underline{Would then be possible to establish uniform } \uline{stabilization with only a feedback control $v$ acting tangentially on the entire boundary $\Gamma$} (regardless of its finite dimensionality)? The answer is Yes, \textit{as long as} the corresponding unique continuation property (UCP) with over-determination on the whole boundary $\Gamma$ \textit{holds true}: by duality Problem \#3 (\hyperref[B-C.8a]{C.8a-b-c})$\implies$\eqref{B-C.9} in Appendix \ref{B-app-C}. The proof of such version of uniform stabilization with only feedback control $v \ (u \equiv 0)$ acting tangentially on the whole boundary $\Gamma$ and being finite dimensional is unchanged, subject only to invoking said UCP for the corresponding unstable equilibrium solution $y_e$. Thus, the obstruction is again the validity of the unique continuation property of the Oseen eigenproblem (corresponding to the unstable distinct eigenvalues $\lambda_i, i = 1,\dots,M$, in (\ref{B-1.12}), with - this time - over-determination $\varphi|_{\Gamma} \equiv 0, \ \partial_{\nu} \varphi|_{\Gamma} \equiv 0$ on the entire boundary $\Gamma$: that is Problem \#3, implication (\ref{B-C.8})$\implies$(\ref{B-C.9}) in Appendix \ref{B-app-C}. Is such UCP always true? Only partial results are presently known.
	
	\begin{enumerate}[a)]
		\item Such required unique continuation property is true in dimension $d=2,3,$ if the equilibrium solution $y_e = 0$ (Stokes eigenproblem) or, more generally,  $y_e$ is in a sufficiently small ball of the origin in the $W^{1, \infty}$-norm. Several very different proofs are given in \cite{RT:2008} and \cite{RT1:2009}: As noted in Remarks \ref{B-rmk-1.2} and \ref{B-rmk-2.2}, the case $y_e = 0$ is actually physically quite important as it occurs for instance when the forcing function $f$ in (\ref{B-1.1a}) or (\ref{B-1.2a}) is a conservative vector field $f = \nabla g$ (say an electrostatic field): in which case a solution of problem (\hyperref[B-1.2a]{1.2a,b,c}) is  $y_e = 0$ and $\pi = g$, modulo constant. Moreover, the ``good" equilibrium solutions (which yield the required unique continuation property with over- determination on the entire boundary $\Gamma$) form an open set in, say, the $W^{1,\infty}$ space topology: if $y_e$ is ``good", then there is a full ball in the $W^{1, \infty}$-topology that contains ``good" $y_e$ \cite{RT:2008}, \cite{RT1:2009}.\\
		
		\noindent What is the implication, in any, of the validity of the corresponding UCP in the case $y_e = 0$ on the problem of the present paper?\\
		
		\noindent \textbf{Enhancement of decay rate: See Remark \ref{B-rmk-2.2}.} Of course, with $y_e = 0$, the corresponding Stokes problem (which now replaces the general Oseen problem) is already uniformly stable, with, say a decay rate $- \abs{Re (\lambda_1)}$ where $Re \ \lambda_1 < 0$ for the Stokes operator $-A_q$ in \eqref{B-1.8}. A most valuable variation of the problem under investigation, whose solution is contained in the treatment of the present paper, is as follows: \uline{enhance the stability} of the linearized (uniformly stable) uncontrolled $w$-problem (\hyperref[B-1.28a]{1.28a,b,c,d}) (with $u \equiv 0, v \equiv 0$) from the given natural margin $-|Re(\lambda_1)|$ to an arbitrarily preassigned decay rate $-k^2$, by means of only a tangential boundary finite dimensional feedback control $v$ of the same form as the operator $F$ in  (\ref{B-1.23}) as applied this time to the entire boundary $\Gamma$. To this end, it suffices to apply the procedure of the present paper (with $u \equiv 0$) to a finite dimensional projected space spanned by the eigenvectors of the Stokes operator corresponding to its finitely many eigenvalues $\lambda_i$ with $ \abs{Re(\lambda_i)} \leq k^2$. This, in turn, will provide \underline{stability enhancement} of the non-linear problem (\ref{B-1.1}) in the vicinity of $y_e = 0$ (or small $y_e$), with only $v = F(y - y_e)$ on all of $\Gamma$ ($u \equiv 0$). 
		
		\item In the two dimensional case, $d=2$, there is a genericity result \cite{BL:2012} about the validity of the required unique continuation problem  (\ref{B-C.8})$\implies$(\ref{B-C.9}) in Appendix \ref{B-app-C} with over-determination on the whole boundary.
	\end{enumerate}
	
	\subsection{Comparison with the literature}\label{B-Sec-1.6}
	
	\noindent To put the present paper in the context of the literature, we first summarize its main contributions, expanding on the Orientation of Section \ref{B-Sec-1.5}.\\
	
	\noindent \textbf{Contributions of the present paper.}
	\begin{enumerate}[1.] 
		\item The stabilizing control strategy of the present paper is an \uline{optimal} result for the feedback uniform stabilization of the N-S dynamics \eqref{B-1.1} for $d = 3$. It consists of a pair $\{v,u\}$ of \uline{finite} dimensional feedback controls requiring a \uline{``minimum" control action} or \textit{support} $\{\wti{\Gamma}, \omega \}$ and minimal number $K$: a localized $K$-dimensional tangential boundary feedback control $v$ in (\ref{B-1.1d}) acting on an arbitrarily small open connected portion $\wti{\Gamma}$ of the boundary $\Gamma$, $v \cdot \nu = 0$ on $\wti{\Gamma}$, implemented as $v = F(\cdot)$, with $F$ a static operator, and a localized interior $K$-dimensional feedback control $u$ in (\ref{B-1.1a}) acting tangential-like (parallel to the boundary) on an arbitrarily small interior patch $\omega$ supported by $\wti{\Gamma}$ (Fig. 2). The number $K$ is $K = \sup \{ \ell_i: i = 1,\dots,M \}$, the maximal geometric multiplicity of the distinct unstable eigenvalues of the Oseen problem. As documented in the Orientation of Section \ref{B-Sec-1.5}, the interior tangential-like controller $u$ \uline{cannot be dispensed with}, if one insists in controlling from an arbitrarily small portion $\wti{\Gamma}$ of the boundary. This is due to the counter-example in \cite{FL:1996} to the unique continuation property of the over-determined Oseen eigenproblem in (\hyperref[B-C.1]{C.1a-b-c}), (\ref{B-C.2}) of Appendix \ref{B-app-C}, leading to the implication noted in (\ref{B-C.5}). Thus: minimal support $\{\wti{\Gamma}, \omega \}$, minimal number $K$, no normal component for $v$ and $u$. 
		
		\item The main contribution of the present paper over the literature is in asserting that the tangential boundary feedback control $v$ is \textit{finite dimensional also for $d = 3$ in full generality}, in fact $K$-dimensional, through a constructive algorithm. This is an affirmative solution to a recognized open problem. To achieve this desired goal, it was necessary to abandon the Hilbert-Sobolev setting of all prior literature on this problem and employ, for the first time, save for \cite{LPT.1}, a critical Besov space framework $\ds \Bto$ in \eqref{B-1.15b} with tight indices, $\ds 1 < p < \rfrac{6}{5}, \ q > 3$ for $d = 3$, `close' to $L^3(\Omega)$, which does not recognize compatibility conditions as explained in Remark \ref{B-rmk-1.4}.
		
		
		
		\item We have noted in 1 the positive feature in that the finite dimensionality $K$ of the feedback stabilizing controllers $v$ in (\ref{B-1.1c}) and $u$ in (\ref{B-1.1a}) is equal to the max of the \textit{geometric} multiplicity – not the \underline{algebraic} multiplicity as in \cite{BT:2004}, \cite{BLT1:2006}, \cite{BLT2:2007}, \cite{BLT3:2006}, \cite{B:2011}, \cite{B:2018} -- of the distinct unstable eigenvalues of the Oseen operator. This is due to the proof, given originally in \cite{LT1:2015}, for checking the controllability condition (\ref{B-4.11b}) of the finite dimensional projected system $w_N$ in (\ref{B-3.8}). Not only does this proof rest on the max geometric rather than the max algebraic multiplicity of the unstable eigenvalues, but it also much simplifies the somewhat complicated and unnecessary Gram-Schmidt orthogonalization process of \cite{BT:2004}, \cite{B:2011} by employing direct, explicit, sharp tests.
		
		\item Finally, the present work offers a much more attractive and preferable proof over past literature of the ultimate non-linear result: the well-posedness and uniform stabilization of the original (modulo translation) non-linear $z$-problem (\ref{B-7.1}), given in Sections \ref{B-Sec-7} and \ref{B-Sec-8}. This new proof now rests on the fundamental preliminary property of \uline{maximal regularity of the linearized boundary feedback problem} (\ref{B-5.3}) or \eqref{B-6.2a}, or generator $\BA_{_{F,q}}$, as stated in Theorem \ref{B-Thm-6.1}. Such \uline{maximal regularity-based proof} is much cleaner and effective over the original proof for the non-linear boundary stabilization result as given in \cite{BLT1:2006}; and even more so over the approximation argument of the nonlinear operator $\calN_q$ in (\ref{B-2.23}) given in the case of localized feedback control given in \cite{BT:2004}, \cite[Chapter 4]{B:2011}. For maximal regularity literature, see \cite{KW:2001}, \cite{KW:2004}, \cite{We:2001}, \cite{Dore:2000}, \cite{PW:2002}, following the original contributions \cite{CV:1986}, \cite{DPV:2002}.
	\end{enumerate}
	
	\noindent \textbf{The origin of the studies on the uniform stabilization problem of Navier-Stokes equations.} The problem of boundary feedback stabilization of unstable linear classical \underline{parabolic} equations was investigated extensively in the period, say 1974-1983, see \cite{RT:1975}, \cite{RT:1980}, \cite{RT:1980:2}, \cite{LT:1983}. The study of uniform stabilization of Navier-Stokes equations apparently initiated with the pioneering work of Fursikov \cite{F.1}, \cite{F.2}, \cite{F.3}, first in $2d$, next in $3d$. However, this work used \textit{open-loop boundary controls not closed loop feedback controls}. The nature and dimensionality of the obtained boundary controllers (whether finite or infinite dimensional, whether tangential or otherwise) was not an issue covered by the method of these papers. Fursikov's work was soon followed by paper \cite{BT:2004} which tackled and solved, instead, the (preliminary) problem of uniform stabilization of the Navier-Stokes equations, $d = 2, 3$, by means of a \underline{localized interior} finite dimensional control. This was implemented as a high-gain, Riccati-based feedback control. All these studies-and the subsequent ones till the present work, some of which are noted below - were carried out in a Hilbert-Sobolev-settings [An improvement over \cite{BT:2004} in both content of results and effectiveness of proofs with \uline{spectral based}, explicit interior localized controllers on the same \underline{interior} uniform stabilization problem is contained in the authors' paper \cite{LPT.1}. For the first time, its analysis is carried out in the same critical Besov setting of the present boundary stabilization study].\\
	
	\noindent \textbf{Tangential Boundary feedback stabilization.} Paper \cite{BT:2004} on uniform stabilization by localized \uline{interior} feedback control opened then the way to a first analysis of the \uline{tangential boundary} stabilization problem in \cite{BLT1:2006} via a high gain, Riccati-based boundary control, followed by an axiomatic approach, still Riccati-based, in \cite{BLT2:2007}, both low and high gain, as well as a complementary, spectral-based approach in \cite{BLT3:2006}. These works required some spectral assumptions of the Oseen eigenvalue problem, equivalent to   a unique continuation property for a corresponding overdetermined Oseen eigenproblem. See Appendix \ref{B-app-C}. It was only in \cite{LT1:2015}, \cite{LT2:2015} that uniform stabilization with a localized feedback control pair $\{v,u\}$, as described above, was resolved in an ``optimal" way regarding the minimal amount of their support $\{ \wti{\Gamma}, \omega \}$. Moreover, this setting of \cite{LT1:2015}, \cite{LT2:2015} had the advantage of not requiring any property or assumptions on the distinct unstable eigenvalues of the Oseen operator, as it was the case in prior literature, since the required corresponding unique continuation property can be shown in this context to hold true (Lemma \ref{B-4.3}), [equivalently, by duality, Problem \#2: (\hyperref[B-C.6a]{C.6a-b-c})$\implies$\eqref{B-C.7}] due to an extra condition $\varphi^* \cdot \tau \equiv 0$ in (\ref{B-4.12c}); or $\varphi \cdot \tau \equiv 0$ in $\omega$, in (\ref{B-C.6c}), dictated by the employment of the interior localized tangential-like control $u$. As noted in Section \ref{B-Sec-1.3.2}, in reference \cite{LT2:2015} the issue of finite dimensionality of the tangential boundary feedback controller component was resolved positively only for $d = 2$ and for $d = 3$ only in the case of Initial Conditions being compactly supported. The general case for $d = 3$ was left open. It is resolved here in the affirmative.\\
	
	\noindent \textbf{Oblique boundary stabilization; dynamic boundary feedback.} We have already noted references \cite{JP:2007}, \cite{BT:2011} which use for $d = 3$, oblique (rather than tangential) boundary feedbacks, which moreover are dynamic rather than static. 
	 Finally, reference \cite{B:2018} investigates stabilization with an oblique boundary control-that is one with an additional normal component. The normal component however is not expressed in feedback form. In addition, two strong assumptions K1 and K.2 are made. The first is the simplifying assumption that the distinct unstable eigenvalues of the Oseen operators be semisimple (geometric = algebraic multiplicity). The second assumes that \underline{all} $N$ unstable eigenvalues have dual eigenvectors whose normal derivatives are linearly independent as $L^2$ functions on the whole boundary $\Gamma$. This is much stronger than the conditions (already given in \cite{BLT1:2006}) that require the much weaker property that for each distinct unstable eigenvalue $\lambda_i$ with geometric multiplicity $\ell_i$, only the traces $\ds \partial_{\nu} \varphi_{ij}^*$ as in (\ref{B-C.5}) of Appendix \ref{B-app-C} be linearly independent, $i = 1,2,\dots ,M$; $j = 1, \dots, \ell_i$. In both \cite{B:2011} and \cite{B:2018}, the number of controls equals the max \underline{algebraic} multiplicity of the unstable eigenvalues of the Oseen operator, see \cite[Eq (3.19)]{B:2011}. 

	\subsection{Beginning of the proof of Theorem \hyperref[B-Thm-A]{A}: translated Nonlinear Navier-Stokes $z$-problem and corresponding linearized $w$-problem. Reduction to zero equilibrium}\label{B-Sec-1.7}
	\noindent We return to Theorem \ref{B-Thm-1.1} which provides an equilibrium pair $\{y_e, \pi_e\}$. Then, as in \cite{BLT1:2006}, \cite{LT1:2015} we translate by $\{y_e, {\pi}_e\}$ the original N-S problem (\ref{B-1.1}). Thus we introduce new variables
	\begin{align}\label{B-1.26}
	z = y - y_e, \quad \chi = \pi - \pi_e
	\end{align}
	and obtain the translated problem in $\{z, \chi \}$		
	\begin{subequations}\label{B-1.27}
		\begin{align}
		z_t - \nu_o \Delta z + L_e(z) + (z \cdot \nabla) z + \nabla \chi - (m(x)u)\tau &= 0  &\text{ in } Q \label{B-1.27a}\\ 
		\begin{picture}(0,0)
		\put(-220,-6){ $\left\{\rule{0pt}{33pt}\right.$}\end{picture}
		\text{div} \ z &= 0   &\text{ in } Q \label{B-1.27b}\\
		z &= v &\text{ on } \Sigma \label{B-1.27c}\\
		z(0,x) = z_0 (x) &= y_0(x) - y_e(x) &\text{ on } \Omega \label{B-1.27d}
		\end{align}
		where $v \cdot \nu = 0$ on $\Sigma$ and the first order Oseen perturbation $L_e$ is given by $\ds L_e(z) = (y_e \cdot \nabla)z +(z \cdot \nabla)y_e$ as defined in \eqref{B-1.9}. We shall accordingly first study the local null feedback stabilization of the $z$-problem (\ref{B-1.27}), that is, feedback stabilization in a neighborhood of the origin.\\
	\end{subequations}

	\noindent Our strategy will be to select \textit{constructively} feedback control operators $v = F(z)$ and $u = \wti{G}(z)$, with $v$ tangential $v \cdot \nu = 0$ on $\Gamma$ and supported only on $\wti{\Gamma}$, and $u$ supported only on $\omega$, and both $F$ and $\wti{G}$ bounded and finite dimensional also for $d = 3$.  For $d = 2$ this was achieved in the Hilbert space setting \cite{LT2:2015}. To this end, it will be critical to show global uniform stabilization of the following
	
	\noindent linearization of the non-linear $z$-problem (\ref{B-1.27}) near the equilibrium solution $y_e$
	\begin{subequations}\label{B-1.28}
		\begin{align}
		w_t - \nu_o \Delta w + L_e(w) + \nabla \chi - (m(x)u)\tau &= 0  &\text{ in } Q \label{B-1.28a}\\ 
		\begin{picture}(0,0)
		\put(-180,-5){ $\left\{\rule{0pt}{33pt}\right.$}\end{picture}
		\text{div} \ w &= 0   &\text{ in } Q \label{B-1.28b}\\
		w &= v &\text{ on } \Sigma \label{B-1.28c}\\
		w(0,x) & = w_0 (x) &\text{ on } \Omega \label{B-1.28d}
		\end{align}
	\end{subequations}
	\noindent $v \cdot \nu = 0$ on $\Sigma$. The above main Theorem \hyperref[B-Thm-A]{A} for problem (\ref{B-1.1}) is an immediate corollary of the following main Theorem \hyperref[B-Thm-B]{B} for the translated non-linear $z$-problem (\ref{B-1.27}). 
	
	\noindent \begin{namedthm*}{Main Theorem B}\label{B-Thm-B}(On problem (\ref{B-1.27})) Under the same assumptions and in the same notation of Theorem \hyperref[B-Thm-A]{A}, in particular, $q > 3, 1 < p < \rfrac{6}{5}$ for dim $\Omega = 3$, consider the following feedback version of the translated non-linear $z$-problem (\ref{B-1.27}), corresponding to the abstract version (\ref{B-7.1})
		\begin{subequations}\label{B-1.29}
		\begin{align}
		z_t - \nu_o \Delta z + L_e(z) + (z \cdot \nabla) z + \nabla \chi - \Bigg(m \Bigg(\sum^K_{k=1} \big< P_Nz,q_k \big>_{_{W^u_N}} u_k \Bigg) \Bigg) \cdot \tau &= 0  &\text{in } Q \label{B-1.29a}\\ 
		\begin{picture}(0,0)
		\put(-320,-6){ $\left\{\rule{0pt}{55pt}\right.$}\end{picture}
		\text{div} \ z &= 0   &\text{in } Q \label{B-1.29b}\\
		z = \sum^K_{k=1} \big<P_Nz,p_k \big>_{_{W^u_N}} & f_k  &\text{on } \Sigma \label{B-1.29c}\\
		z(0,x) = z_0 (x) = y_0(x) -& y_e(x). &\text{on } \Omega \label{B-1.29d}
		\end{align}
		\end{subequations}		
		\noindent There exists a positive constant $r_1 > 0$ (identified in (\ref{B-7.27}), (\ref{B-8.18}) such that if 
		\begin{equation}\label{B-1.30}
		\norm{z_0}_{\Bto} \leq r_1,
		\end{equation}  
		\noindent then, the following local well-posedness and uniform feedback stabilization results hold true: 
		\begin{enumerate}[(i)]
			\item the feedback problem (\ref{B-1.29}) admits a unique (fixed point nonlinear semigroup) solution $z$ in the space $\ds \xipqs\big( \BA_{_{F,q}} \big)$ of maximal regularity in \eqref{B-1.19}.						
			\item Moreover, if the constant $r_1 > 0$ in (\ref{B-1.30}) is sufficiently small as in \eqref{B-8.18}, then there exists a constant $\wti{\gamma} > 0$ and a corresponding constant $C_{\wti{\gamma}} \geq 1$, (depending on $q$) such that the guaranteed solution $z$ satisfies the exponential decay
			\begin{equation}\label{B-1.31}
			\norm{z(t)}_{\Bto} \leq C_{\wti{\gamma}} e^{-\wti{\gamma}t} \norm{z_0}_{\Bto}, \quad t \geq 0. \ \qedsymbol
			\end{equation}
		\end{enumerate} 	
		
	\end{namedthm*}  	
	\noindent Remark \ref{B-rmk-8.1} at the end of Section \ref{B-Sec-8} supports qualitatively the intuitive expectation that ``the larger the global decay rate $\gamma_0 \approx \abs{Re \lambda_{N+1}},\ \gamma_0 > 0$ in (\ref{B-5.17}) of Theorem \ref{B-Thm-5.4} of the linearized $w$-problem (\ref{B-1.11}) in feedback form as in (\ref{B-5.3}), the larger the local decay rate $\wti{\gamma}$ in (\ref{B-1.31}).\\	
	
	\noindent The proof of the well-posedness part in $\xipqs$, see \eqref{B-1.19}, of Theorem \hyperref[B-Thm-B]{B} is given (in its concluding arguments) in Section \ref{B-Sec-7}, while the exponential decay (\ref{B-1.31}) is established (in its concluding arguments) in Section \ref{B-Sec-8}. Recalling $z = y - y_e, \chi = \pi - \pi_e$ from (\ref{B-1.9}), we see at once that Theorem \hyperref[B-Thm-B]{B} implies Theorem \hyperref[B-Thm-A]{A}.

 	\section{Abstract models for the non-linear $z$-problem (\ref{B-1.27}) and the linearized $w$-problem (\ref{B-1.28}) in the $L^q$-setting}\label{B-Sec-2}
 	
 	\noindent We shall next provide abstract models for the translated non-linear $z$-problem (\ref{B-1.27}) and its corresponding linearized $w$-problem (\ref{B-1.28}) in the $L^q$-setting. This will be the counterpart (extensions) of these introduced in \cite{BLT1:2006} and used in \cite{BLT3:2006} \cite{LT2:2015}. The $L^q$-setting will require a wealth of non-trivial additional results: from the well-posedness and regularity from the boundary of the stationary Oseen problem (that is, the definition of the Dirichlet map $D$ with range in $\lso: g \longrightarrow Dg = \psi$ in (\ref{B-2.1}) below) to the definition of the adjoint $\ds (\calA_q)^* = \calA_q^*$ for short, (in the $L^{q'} \longrightarrow L^q$ sense) of the Oseen operator $\calA_q$ in \eqref{B-1.11}, to the critical meaning of $D^* \calA_q^* \varphi, \ \varphi \in \calD(\calA_q^*)$. These results will be provided below. They will be the perfect counterpart of those obtained in \cite{BLT1:2006}, in the Hilbert setting. For the original idea of Dirichlet map we refer to \cite{L-T.1}.
 	
 	\subsection{Well-posedness in the $L^q$-setting of the non-homogeneous stationary Oseen problem: the Dirichlet map $D:$ boundary $\longrightarrow$ interior}\label{B-Sec-2.1}
 	
 	\noindent Recalling the first order operator $L_e(\psi) = (\psi \cdot \nabla) y_e + (y_e \cdot \nabla) \psi $ from (\ref{B-1.9}) and introducing the differential expression $\BA \psi = -\nu_0 \Delta \psi + L_e(\psi)$, we consider the stationary, boundary non-homogeneous Oseen problem on $\Omega$: 	
 	\begin{subequations}\label{B-2.1}
 		\begin{align}
 		\BA \psi + \nabla \pi^* &= -\nu_o \Delta \psi + L_e(\psi) + \nabla \pi^* = 0 \label{B-2.1a}\\
 		\begin{picture}(0,0)
 		\put(-35,10){ $\left\{\rule{0pt}{20pt}\right.$}\end{picture}
 		\text{ div } \psi &= 0 \text{ in } \Omega; \quad \psi = g \text{ on } \Gamma, \ g \cdot \nu = 0 \text{ on } \Gamma \label{B-2.1b}
 		\end{align}
 	\end{subequations}

	\begin{rmk}
		Postponing regularity issues to the second part of the present sub-section, our purpose here is to introduce the Dirichlet map $g \longrightarrow \psi$, from the boundary datum to the interior solution of the above Oseen problem, following \cite[Chapter 3]{BLT1:2006}
	\end{rmk}

	\noindent As noted and discussed in \cite[Ch 3, Orientation at p. 21; Appendix A.2, pp 99-102]{BLT1:2006}, problem (\ref{B-2.1}) may not define a unique solution $\psi$; that is, the operator $g \to \psi$ may have a nontrivial (finite dimensional) null space. To overcome this, one replaces in (\ref{B-2.1}) the differential expression $\BA \psi= -\nu_o \Delta \psi + L_e (\psi)$  with its translation $ k +\BA$, for a positive constant $k$, sufficiently large as to obtain a unique solution $\psi$. As seen in the subsequent results below, we can take $k=0$ whenever the Stokes operator is perturbed \underline{only} by  a first order operator such as $\BA \psi= -\nu_o \Delta \psi + (a.\nabla) \psi$, div $a = 0$, with $a$ sufficiently regular. Moreover, as documented in \cite{BLT1:2006} in the Hilbert setting $q=2$ and restated below in the general $L^q$-setting, the expression $D^*(k) \calA_q^*(k)$ does not depend on the translation parameter $k$. Thus, at the end, also in name of simplicity of notation, we are here justified to admit henceforth that problem (\ref{B-2.1}) (with $k=0$) defines a unique solution $\psi$. We shall then denote the `Dirichlet' map $g \longrightarrow \psi$ by $D: \ Dg = \psi$ in the notation of (\ref{B-2.1}).\\
	
	\noindent The following two regularity results of the Oseen equation below are critical for our subsequent development. The are the perfect counterpart of the results given in \cite{BLT1:2006} in the Hilbert setting. To state properly the conclusion of uniqueness, they will refer to the Oseen equation with \underline{only} a first order term, such as
	\begin{subequations}\label{B-2.2}
		\begin{align}
		- \nu_o \Delta \psi + (a \cdot \nabla) \psi + \nabla \pi^* &= 0  \text{ in } \Omega \\ 
		\begin{picture}(0,0)
		\put(-115,0){ $\left\{\rule{0pt}{25pt}\right.$}\end{picture}
		\text{div} \ \psi &= 0 \text{ in } \Omega \\
		\psi &= g \text{ on } \Gamma
		\end{align}
	\end{subequations}
	\begin{thm}\cite[Thm 15, p 37, where a more general result is given]{AR:2010}\label{B-Thm-2.1}
	\ \\Let 
	\begin{equation}
	a \in L^3(\Omega), \ \text{div } a \equiv 0; \ g \in W^{1-\rfrac{1}{q},q}(\Gamma), \ 2 < q < \infty, \ g \cdot \nu = 0 \ on \ \Gamma.
	\end{equation}
	Then problem (\ref{B-2.2}) has a unique solution $(\psi, \pi^*) \in W^{1,q}(\Omega) \times L^q(\Omega)/ \BR$ continuously: there is a constant $C > 0$ such that
	\begin{equation}
		\norm{\psi}_{W^{1,q}(\Omega)} + \norm{\pi^*}_{L^q(\Omega)/ \BR} \leq C \big( 1 + \norm{a}_{L^3(\Omega)} \big)^2 \norm{g}_{ W^{1-\rfrac{1}{q},q}(\Gamma)} \qed
	\end{equation}	
	\end{thm}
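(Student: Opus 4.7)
The strategy is to reduce Problem \eqref{B-2.2} to a perturbed Stokes problem on $\lso$ via a solenoidal lifting of the boundary data, then obtain existence/uniqueness by a Fredholm alternative argument in which the first-order term $(a\cdot\nabla)\psi$ plays the role of a (relatively) compact perturbation of the Stokes operator $A_q$ introduced in \eqref{B-1.8}. The critical $L^3(\Omega)$ integrability hypothesis on $a$ is precisely the scale-invariant regularity for $d=3$ that makes the perturbation subordinate to $A_q$.

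\textbf{Step 1 (Lifting).} Since $g\in W^{1-\rfrac{1}{q},q}(\Gamma)$ with $g\cdot\nu=0$ on $\Gamma$, there exists a solenoidal extension $G\in W^{1,q}(\Omega)\cap \lso$ with $G|_\Gamma=g$ and $\|G\|_{W^{1,q}(\Omega)}\leq C\|g\|_{W^{1-\rfrac{1}{q},q}(\Gamma)}$. This uses the surjectivity of the trace map from divergence-free $W^{1,q}$-fields onto the tangential $W^{1-\rfrac{1}{q},q}(\Gamma)$-data on a $C^2$ domain (Galdi, Bogovskii-type construction). Write $\psi=G+\wti\psi$, where the correction $\wti\psi\in W^{1,q}_0(\Omega)\cap \lso$ must satisfy, in the distributional sense,
\begin{equation*}
-\nu_o\Delta\wti\psi+(a\cdot\nabla)\wti\psi+\nabla\wti\pi = F, \qquad \text{div}\,\wti\psi = 0,\qquad \wti\psi|_\Gamma=0,
\end{equation*}
with $F := \nu_o\Delta G - (a\cdot\nabla)G\in W^{-1,q}(\Omega)$.

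\textbf{Step 2 (Abstract reformulation and Fredholm argument).} Apply the Helmholtz projection $P_q$ from Definition \ref{B-Def-1.1}. Using the Stokes operator $A_q$ in \eqref{B-1.8} and defining the perturbation $T_q u := P_q[(a\cdot\nabla)u]$ on $\mathcal{D}(A_q)$, the equation becomes $\nu_o A_q\wti\psi + T_q\wti\psi = P_q F$. The key claim is that $T_q A_q^{-1}$ is a compact operator on $\lso$: indeed, for $h\in\lso$, $A_q^{-1}h\in W^{2,q}\cap W^{1,q}_0$, so $\nabla A_q^{-1}h\in W^{1,q}$; by Sobolev embedding $W^{1,q}\hookrightarrow L^{q^*}$ (with $q^*>q$ when $q<d$, or $L^\infty$ when $q>d$), H\"older's inequality with $a\in L^3$ yields $(a\cdot\nabla)A_q^{-1}h\in L^r$ for some $r>1$ with compact embedding back into $L^q$ (exploiting $q>2$ and $d\leq 3$). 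Hence $\nu_o A_q + T_q$ is a Fredholm operator of index zero on $\lso$ (from $\mathcal{D}(A_q)$ to $\lso$), and existence will follow from uniqueness via the Fredholm alternative.

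\textbf{Step 3 (Uniqueness).} Suppose $g=0$ and $(\psi,\pi^*)\in W^{1,q}(\Omega)\times L^q(\Omega)/\BR$ solves the homogeneous Oseen problem with $\psi\in W^{1,q}_0(\Omega)\cap\lso$. A bootstrap argument using Stokes regularity in $L^r$ for $r$ slightly below $q$ (together with $a\in L^3$) improves $\psi$ until it lies in $W^{1,2}_0(\Omega)\cap H$. Then the standard energy identity applies: testing against $\psi$, the convective term $\int_\Omega (a\cdot\nabla\psi)\cdot\psi\,dx=\tfrac{1}{2}\int_\Omega a\cdot\nabla(|\psi|^2)dx=0$ by integration by parts using $\text{div}\,a=0$ and $\psi|_\Gamma=0$, leaving $\nu_o\|\nabla\psi\|_{L^2}^2=0$ and hence $\psi\equiv 0$; then $\nabla\pi^*=0$ yields $\pi^*\equiv 0$ in $L^q/\BR$. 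I expect this \emph{bootstrap-to-$L^2$ followed by energy cancellation} to be the main obstacle, since it must be run quantitatively to control dependence on $\|a\|_{L^3}$.

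\textbf{Step 4 (Quantitative estimate).} For the norm bound, combine the Stokes regularity inequality $\|G\|_{W^{1,q}}\lesssim\|g\|_{W^{1-\rfrac{1}{q},q}(\Gamma)}$ with a two-step estimate of the resolvent $(\nu_o A_q+T_q)^{-1}$. A first application of the perturbation identity $(\nu_o A_q+T_q)^{-1}=A_q^{-1}(\nu_o I + T_q A_q^{-1})^{-1}$ together with a Neumann-series / contraction-fixed-point bookkeeping in which each instance of $T_q$ contributes a factor $\|a\|_{L^3}$, and the extension $G$ entering the right-hand side $F$ contributes another such factor, produces the quadratic bound $(1+\|a\|_{L^3})^2$ asserted in the theorem. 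This concludes both well-posedness and the continuous dependence estimate.
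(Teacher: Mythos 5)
First, note that the paper does not actually prove Theorem \ref{B-Thm-2.1}: it is imported verbatim from \cite[Thm 15]{AR:2010}, so there is no internal proof to compare yours against. Judged on its own terms, your argument has two genuine gaps. The first is the functional setting of Step 2. You pose the Fredholm alternative for $\nu_o A_q + T_q$ acting from $\calD(A_q) = W^{2,q}(\Omega)\cap W^{1,q}_0(\Omega)\cap\lso$ to $\lso$, but neither the data nor the solution live there: your own Step 1 produces $F \in W^{-1,q}(\Omega)$ and seeks $\wti\psi$ only in $W^{1,q}_0(\Omega)\cap\lso$ (with $a\in L^3(\Omega)$ one cannot expect $W^{2,q}$ regularity). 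Worse, for $q>3$ the perturbation does not even map $\calD(A_q)$ into $L^q(\Omega)$: if $u\in W^{2,q}(\Omega)$ then $\nabla u\in L^{\infty}(\Omega)$ and $(a\cdot\nabla)u$ is merely in $L^3(\Omega)\not\subset L^q(\Omega)$. The whole argument must be run for the \emph{weak} Stokes operator, viewed as an isomorphism $W^{1,q}_0(\Omega)\cap\lso \to \big(W^{1,q'}_0(\Omega)\cap\lo{q'}\big)'$, with $u\mapsto P_q\,\div(a\otimes u)$ as the perturbation; there the mapping property does hold for all $2<q<\infty$ by the critical Sobolev embedding, and compactness is obtained by approximating $a$ in $L^3(\Omega)$ by bounded fields, not by a (nonexistent) compact embedding $L^r\hookrightarrow L^q$ between Lebesgue spaces on a bounded domain.

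The second gap is Step 4. A Fredholm-alternative existence proof gives no quantitative control whatsoever on $(\nu_o A_q+T_q)^{-1}$, while the Neumann-series bookkeeping you invoke requires $\norm{T_qA_q^{-1}}<\nu_o$, i.e.\ smallness of $\norm{a}_{L^3(\Omega)}$, which is not assumed; the two mechanisms you combine are mutually exclusive, and neither yields the stated bound $C\big(1+\norm{a}_{L^3(\Omega)}\big)^2$ for arbitrary $a$. To obtain the explicit quadratic dependence one needs a different route, e.g.\ an $a$-independent energy estimate in $H^1_0(\Omega)$ (available precisely because $q>2$ places the solution in $H^1_0(\Omega)$ and the convective term cancels, exactly as in your uniqueness step), followed by a single Stokes-regularity bootstrap from the $L^2$ level to the $L^q$ level in which the convective term is estimated through $a\in L^3(\Omega)$, each pass contributing one factor of $\big(1+\norm{a}_{L^3(\Omega)}\big)$; alternatively a splitting $a=a_1+a_2$ with $a_1$ bounded and $\norm{a_2}_{L^3(\Omega)}$ small. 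As written, the estimate is asserted rather than proved. (Your Step 3 is essentially sound, though the bootstrap is superfluous: $q>2$ already places any $W^{1,q}_0$ solution in $H^1_0(\Omega)$.)
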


	\begin{thm}\cite[Thm 2, p 6, where a more general result is given]{AR:2010}\label{B-Thm-2.2}
		\ \\Let 
		\begin{equation}
		a \in L^3(\Omega), \ \text{div } a \equiv 0; \ g \in W^{-\rfrac{1}{q},q}(\Gamma), \ \rfrac{3}{2} < q < \infty, \ g \cdot \nu = 0 \ on \ \Gamma.
		\end{equation}
		Then problem (\ref{B-2.2}) has a unique solution 
		\begin{equation}
		(\psi, \pi^*) \in L^q(\Omega) \times W^{-1,q}(\Omega)/ \BR
		\end{equation}
		continuously: there is a constant $C_a > 0$ (explicitly depending on the norm of $\ds \norm{a}_{L^3(\Omega)} \ $) such that
		\begin{equation}
		\norm{\psi}_{L^q(\Omega)} + \norm{\pi^*}_{W^{-1,q}(\Omega)/ \BR} \leq C_a \norm{g}_{ W^{-\rfrac{1}{q},q}(\Gamma)} \qed
		\end{equation}	
	\end{thm}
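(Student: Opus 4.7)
The plan is to construct $\psi$ by the method of transposition, with Theorem 2.1 (and its interior-source variant from the same reference) applied to the adjoint Oseen system as the essential building block. Fix $f\in L^{q'}(\Omega)$ with $\tfrac{1}{q}+\tfrac{1}{q'}=1$, so $1<q'<3$. First I would solve the adjoint problem
\[
-\nu_o\Delta\phi_f - (a\cdot\nabla)\phi_f + \nabla\rho_f = f \text{ in } \Omega, \quad \text{div } \phi_f = 0 \text{ in } \Omega, \quad \phi_f|_\Gamma = 0,
\]
obtaining a unique $(\phi_f,\rho_f)\in (W^{2,q'}(\Omega)\cap W^{1,q'}_0(\Omega))\times (W^{1,q'}(\Omega)/\mathbb{R})$ with $\|\phi_f\|_{W^{2,q'}(\Omega)}+\|\rho_f\|_{W^{1,q'}(\Omega)/\mathbb{R}}\leq C_a\|f\|_{L^{q'}(\Omega)}$. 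The lower constraint $q>\tfrac{3}{2}$ is exactly what permits this adjoint theory for $a\in L^3(\Omega)$: the Sobolev embedding $W^{2,q'}(\Omega)\hookrightarrow L^{3q'/(3-q')}(\Omega)$ combined with H\"older keeps the convective term $(a\cdot\nabla)\phi$ inside $L^{q'}(\Omega)$, which is what sustains the $W^{2,q'}$-regularity machinery.

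Next I would multiply (2.2a) formally by $\phi_f$ and integrate by parts. Using $\text{div } a=0$, $\text{div } \psi=0$, $g\cdot\nu=0$ and $\phi_f|_\Gamma=0$, the interior terms collapse to $\int_\Omega \psi\cdot f\,dx$ while the only surviving boundary contribution is $-\nu_o\langle g,\partial_\nu\phi_f\rangle_\Gamma$. This motivates \emph{defining} $\psi\in L^q(\Omega)=(L^{q'}(\Omega))'$ by the transposition identity
\[
\langle \psi,f\rangle_\Omega := -\nu_o\langle g,\partial_\nu\phi_f\rangle_\Gamma, \qquad f\in L^{q'}(\Omega).
\]
The right-hand side is a bounded linear functional on $L^{q'}(\Omega)$: by the trace theorem, $\partial_\nu\phi_f\in W^{1-1/q',q'}(\Gamma)=W^{1/q,q'}(\Gamma)$, which is exactly the predual of $W^{-1/q,q}(\Gamma)$, so
\[
|\langle\psi,f\rangle_\Omega|\leq \nu_o\|g\|_{W^{-1/q,q}(\Gamma)}\|\partial_\nu\phi_f\|_{W^{1/q,q'}(\Gamma)} \leq C_a\|g\|_{W^{-1/q,q}(\Gamma)}\|f\|_{L^{q'}(\Omega)},
\]
yielding both the existence of a unique representative $\psi\in L^q(\Omega)$ and the desired norm bound.

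Once $\psi$ is in hand, taking $f=\varphi\in C^\infty_c(\Omega;\mathbb{R}^d)$ with $\text{div }\varphi=0$ (so that $\phi_f=\varphi$, $\rho_f=0$, and the boundary pairing vanishes) shows that $-\nu_o\Delta\psi+(a\cdot\nabla)\psi$ annihilates divergence-free test fields; de Rham's theorem then produces a pressure $\pi^*\in\mathcal{D}'(\Omega)$, unique modulo constants, for which $-\nu_o\Delta\psi+(a\cdot\nabla)\psi+\nabla\pi^*=0$ as distributions. A Ne\v{c}as/Bogovskii-type inequality upgrades this to $\pi^*\in W^{-1,q}(\Omega)/\mathbb{R}$ with the estimate following from the established bound on $\psi$. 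The divergence-free constraint and the boundary identification $\psi|_\Gamma=g$ are then recovered by testing, respectively, against pressure-type data and by exploiting the surjectivity of the adjoint Neumann-trace map $f\mapsto \partial_\nu\phi_f$ onto $W^{1/q,q'}(\Gamma)$. Uniqueness is a direct consequence of the construction: if $g\equiv 0$, then $\langle\psi,f\rangle_\Omega=0$ for every $f\in L^{q'}(\Omega)$, hence $\psi\equiv 0$, after which $\nabla\pi^*\equiv 0$.

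The main obstacle I anticipate is rigorously identifying the boundary trace of the merely $L^q$-integrable field $\psi$ with the prescribed very rough datum $g\in W^{-1/q,q}(\Gamma)$. This requires extending the standard Green-type formula so that pairs $\psi\in L^q(\Omega)$ satisfying $-\nu_o\Delta\psi+(a\cdot\nabla)\psi+\nabla\pi^*=0$ distributionally admit a continuous Dirichlet trace valued in $W^{-1/q,q}(\Gamma)$, together with the surjectivity of the adjoint Neumann-trace map onto $W^{1/q,q'}(\Gamma)$. Both are standard for the pure Stokes system but must be propagated across the Oseen perturbation with merely $a\in L^3(\Omega)$; the explicit dependence of $C_a$ on $\|a\|_{L^3(\Omega)}$ inherited from Theorem 2.1 then transfers through the chain of estimates to the final bound.
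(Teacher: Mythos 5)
The paper does not prove this statement at all: Theorem \ref{B-Thm-2.2} is quoted verbatim from \cite[Thm 2, p.\,6]{AR:2010} and used as an imported black box, so there is no in-paper argument to compare against. Your transposition construction is the standard proof of such very-weak ($L^q$) solvability results and is, in substance, the route taken in the cited reference: solve the adjoint Oseen problem with $L^{q'}$ interior data and homogeneous Dirichlet condition, define $\psi$ by the duality identity $\langle \psi, f\rangle = -\nu_o \langle g, \partial_\nu \phi_f\rangle_\Gamma$, and recover the pressure by de~Rham. Your bookkeeping is correct, including the exponent arithmetic $1-\rfrac{1}{q'}=\rfrac{1}{q}$ identifying $W^{\rfrac{1}{q},q'}(\Gamma)$ as the predual of $W^{-\rfrac{1}{q},q}(\Gamma)$, and the observation that $q>\rfrac{3}{2}$ (i.e.\ $q'<3$) is exactly what keeps $(a\cdot\nabla)\phi$ in $L^{q'}(\Omega)$ via $W^{1,q'}(\Omega)\hookrightarrow L^{3q'/(3-q')}(\Omega)$ and H\"older against $a\in L^3(\Omega)$.

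The one dependency you should make explicit is that the $W^{2,q'}$ strong solvability of the adjoint problem with merely $a\in L^3(\Omega)$ is itself a nontrivial input: it is not literally Theorem \ref{B-Thm-2.1} (which concerns $W^{1,q}$ solutions from boundary data for $q>2$), but a separate interior-source regularity statement that must also be taken from \cite{AR:2010}. Its uniqueness — needed both for the adjoint solve and for your argument that $f=\nabla\theta$ forces $\phi_f=0$, hence $\operatorname{div}\psi=0$ — rests on the skew-symmetry of $(a\cdot\nabla)$ under $\operatorname{div} a=0$ with homogeneous Dirichlet data; this is why no spectral translation $k+\BA$ is needed here, consistent with the paper's remark preceding \eqref{B-2.2}. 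With that input granted, and with the trace-identification step you already flag handled by the generalized Green formula for $L^q$ fields with $\Delta\psi-\nabla\pi^*$ controlled in a negative-order space, your outline is a faithful reconstruction of the omitted proof.
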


	\noindent We note that, in Theorem \ref{B-Thm-2.2}, we have also $\ds \Delta \psi \in \big( Y_{r',p'}(\Omega) \big)' = $ dual of $\ds Y_{r,p}(\Omega) = \big\{ \varphi \in W^{1,r}_0 (\Omega), \text{div } \varphi \in W^{1,q}_0 (\Omega) \big\}, \ 1 < r,q < \infty,$ but we shall not need this result \cite[p 6]{AR:2010}.\\
	
	\noindent Returning to our Oseen problem (\ref{B-2.1}) of interest, we have $y_e \in W^{2,q}(\Omega) \cap W^{1,q}_0(\Omega)$ from Theorem \ref{B-Thm-1.1}, hence the embedding $W^{2,q}(\Omega) \hookrightarrow C(\overline{\Omega})$ holds true for $d = 3, q > \rfrac{3}{2}$ \cite[p 79]{SK:1989}, \cite[p 97]{A:1975}. Thus, we can apply Theorems \ref{B-Thm-2.1} and \ref{B-Thm-2.2} to problem (\ref{B-2.1}) and obtain the following results, where, with $\psi = Dg$, the range of $D$ is in $\lso$, since div$(Dg) \equiv 0$ in $\Omega$, $(Dg)\cdot \nu |_{\Gamma} = g \cdot \nu |_{\Gamma} = 0$, see (\ref{B-1.4}):	
	
	
		
		\begin{equation}\label{B-2.12}
		\begin{matrix}
		g \in W^{\big(1-\frac{1}{q} \big) (1 - \theta) - \frac{\theta}{q},q}(\Gamma)\\
		g \cdot \nu = 0 \text{ on } \Gamma; \ 0 < \theta < 1
		\end{matrix}
		\longrightarrow 
		\begin{matrix}
		Dg = \psi \in W^{(1-\theta),q}(\Omega) \cap \lso\\
		\end{matrix}\hspace{4.5cm}
		\end{equation}		
		\noindent so that, as $\ds \bigg( 1 - \frac{1}{q} \bigg)(1-\theta) - \frac{\theta}{q}  = 0$ for $\ds \theta = 1 - \frac{1}{q}$, we also obtain 
		\begin{equation}\label{B-2.13}
		g \in U_q \equiv \big\{ \wti{g} \in L^q(\Gamma),\ \wti{g}\cdot \nu = 0 \text{ on } \Gamma \big\} \longrightarrow Dg \in W^{\rfrac{1}{q},q}(\Omega) \cap \lso,
		\end{equation}
		\noindent all continuously. This property will be further complemented by additional information in (\ref{B-3.41}) below. In the Hilbert space setting, $q=2$, we re-obtain the regularity results, that were derived in \cite[Theorem A.2.2 p 102]{BLT1:2006}, where we recall (\hyperref[B-1.6a]{1.6a--c})	
		\begin{equation}\label{B-2.16}
		\begin{matrix}
		g \in H^s(\Gamma), \ -\rfrac{1}{2} \leq s \leq \rfrac{1}{2}\\
		g \cdot \nu = 0 \text{ on } \Gamma
		\end{matrix}
		\longrightarrow 
		\begin{matrix}
		Dg = \psi \in H^{s+\rfrac{1}{2}}(\Omega) \cap H. \hspace{5cm}
		\end{matrix}
		\end{equation}

\subsection{Abstract model for the non-linear translated $z$-problem (\ref{B-1.27}).}

	\noindent We re-write Eq (\ref{B-1.27a}) as $\ds z_t + \BA z + (z \cdot \nabla) z + \nabla \chi - (mu) \tau = 0$ recalling the differential expression $\BA$ defined above (\ref{B-2.1a}), and next subtract $\BA \psi = \BA Dg = - \nabla \pi^*$ from (\ref{B-2.1a}), where presently $g = v$ on $\Gamma, v \cdot \nu = 0$ on  $\Gamma$. We obtain 
	\begin{equation}\label{B-2.17}
	z_t + \BA(z-Dv) + (z \cdot \nabla)z + \nabla (\chi - \pi^*) - (m(x)u) \tau = 0 \quad \text{in } Q
	\end{equation}
	Next we apply to (\ref{B-2.17}) the Helmholtz projector $P_q$, and obtain [notice that $P_q z_t = z_t$, since $z_t \in \lso$ [div$z_t \equiv 0, \ z_t \cdot \nu = v_t \cdot \nu = 0$ on $\Gamma$] since $P_q \nabla (\chi - \pi^*) \equiv 0$:
	
	\begin{equation}\label{B-2.18}
	z_t + P_q \BA (z - Dv) + P_q(z \cdot \nabla z) - P_q ((m(x)u)\tau) \equiv 0
	\end{equation} 
	\noindent where via (\ref{B-2.1a}), (\ref{B-1.9})
	\begin{equation}\label{B-2.19}
		P_q \BA f = - \nu_o P_q \Delta f + P_q \big[ (y_e \cdot \nabla)f + (f \cdot \nabla) y_e \big].
	\end{equation}
	\noindent For $1 < q < \infty$ fixed, we recall the Stokes operator $A_q$ in $\lso$, the perturbation operator $A_{o,q}$ and the Oseen operator $\calA_q$, from \eqref{B-1.8}, \eqref{B-1.10}, \eqref{B-1.11}, respectively.
	\begin{align}
	A_q z = -P_q \Delta z, \quad
	\mathcal{D}(A_q) &= W^{2,q}(\Omega) \cap W^{1,q}_0(\Omega) \cap \lso. \label{B-2.20}\\
	A_{o,q} z = P_q L_e(z) = P_q[(y_e \ . \ \nabla )z + (z \ . \ \nabla )y_e], \ \mathcal{D}(A_{o,q}) &= \mathcal{D}(A_q^{\rfrac{1}{2}}) = W^{1,q}_0 (\Omega) \cap \lso \subset \lso. \label{B-2.21}\\
	\calA_q  = - (\nu_o A_q + A_{o,q}), &\quad \calD(\calA_q) = \calD(A_q) \subset \lso. \label{B-2.22}
	\end{align}
	Finally, we define the projection of the nonlinear portion of (\ref{B-1.27a})
	\begin{equation}\label{B-2.23}
	\calN_q(z) = P_q [(z \cdot \nabla) z]	
	\end{equation}
	
	\noindent Thus, after using (\ref{B-2.20})-(\ref{B-2.23}) in (\ref{B-2.18}), the N-S translated problem (\ref{B-2.18}) can rewritten as the following abstract equation on $\lso, 1 < q < \infty$:
	
	\begin{equation}\label{B-2.24}
	z_t - \calA_q(z - Dv) + \calN_q z - P_q ((mu)\tau) = 0, \ \text{on } \lso, \ v \cdot \nu = 0 \ \text{on } \Gamma
	\end{equation}
	
	\noindent in factor form on $\ds \lso$. Next, extending the original Oseen operator $\ds \lso \supset \calD(\calA_q) \longrightarrow \lso$ to $\ds \calA_q: \lso \longrightarrow \big[ \calD(\calA_q^*)' \big]$, by isomorphism, and retaining the same symbol, we arrive at the definitive abstract model 
	
	\begin{equation}\label{B-2.25}
		\begin{aligned}
		z_t - \calA_q z + \calN_q z + \calA_q Dv - P_q \big[ (mu) \tau \big] = 0 &\text{ on } \big[ \calD(\calA_q^*)' \big]\\
		\begin{picture}(0,0)
		\put(-80,8){ $\left\{\rule{0pt}{20pt} \right.$}\end{picture}
		z(x,0) = z_0(x) = y_0(x) - y_e &\text{ in } \lso
		\end{aligned}
	\end{equation}
	\noindent in additive form, on $\ds \big[ \calD(\calA_q^*) \big]'$.
	
	\subsection{Abstract model of the linearized $w$-problem (\ref{B-1.28}) of the translated $z$-model (\ref{B-1.27}).}
	
	\noindent Still for $1 < q < \infty$, the abstract model (in additive form) of the linearized $w$-problem in (\ref{B-1.28}) is obtained from (\ref{B-2.25}) by dropping the nonlinear term
	\begin{equation}\label{B-2.26}
	\begin{aligned}
	w_t - \calA_q w + \calA_q Dv - P_q \big[ (mu) \tau \big] = 0 &\text{ on } \big[ \calD(\calA_q^*) \big]'\\
	\begin{picture}(0,0)
	\put(-50,8){ $\left\{\rule{0pt}{20pt} \right.$}\end{picture}
	w(x,0) = w_0(x) = y_0(x) - y_e &\text{ in } \lso.
	\end{aligned}
	\end{equation}
	
	\subsection{The adjoint operators $D^*, \ (A_q)^* = A_q^*$ and $ (A_{o,q})^* = A^*_{o,q}, \ (\calA_q)^* = \calA_q^* = - (\nu_o A^*_q + A^*_{o,q})$, $1 < q < \infty$.}\label{B-Sec-2.4}
	
	\noindent (i) Regarding the Helmholtz projection $P_q$ and its adjoint $P_q^*$, we recall from the statement above \eqref{B-1.7} that $\ds P_q \in \calL \big(L^q(\Omega)\big)$, while $\ds P_q^* = P_{q'} \in \calL
	 \big( L^{q'}(\Omega) \big), \ \rfrac{1}{q} + \rfrac{1}{q'} = 1$,\\
	
	\noindent (ii) Define as in (\ref{B-2.13})
	\begin{equation}\label{B-2.27}
	U_q = \big\{ g \in L^q(\Gamma): g \cdot \nu = 0 \text{ on } \Gamma \big\}.
	\end{equation} 
	\noindent We have seen in (\ref{B-2.13}) that
	\begin{equation}\label{B-2.28}
	D: U_q = \big\{ g \in L^q(\Gamma): g \cdot \nu = 0 \text{ on } \Gamma \big\} \longrightarrow W^{\rfrac{1}{q},q}(\Omega) \cap \lso,
	\end{equation} 
	\noindent so that the dual $D^*$ satisfies
	\begin{equation}
	D^*: W^{-\rfrac{1}{q},q'} \longrightarrow L^{q'}(\Gamma).
	\end{equation}
	
	\noindent (iii) The adjoint $\ds A^*_q: L^{q'}_{\sigma}(\Omega) \supset \calD(A^*_q) \longrightarrow \lo{q'}, \ \rfrac{1}{q} + \rfrac{1}{q'} = 1$ of the Stokes operator $\ds A_q$ in (\ref{B-2.20})
	\begin{equation}
	\big< A_q f_1, f_2 \big>_{\ls, L^{q'}_{\sigma}} = \big< f_1, A_q^*f_2 \big>_{\ls, L^{q'}_{\sigma}}, \quad f_1 \in L^q_{\sigma}, \ f_2 \in L^{q'}_{\sigma}
	\end{equation}
	\noindent (duality pairing $\ls \longrightarrow L^{q'}_{\sigma}$) is
	\begin{equation}\label{B-2.31}
	A_q^* f_2 = -P_{q'} \Delta f_2, \ \calD(A_q^*) = W^{2,q'}(\Omega)\cap W^{1,q'}_0(\Omega)\cap \lo{q'}.
	\end{equation}
	\begin{proof}
		For $\ds f_1 \in \calD(A_q) \subset \lso \subset L^q(\Omega)$, so that $\ds A_q f_1 \in \lso$ and $f_2 \in \calD(A_q^*) \subset \lo{q'} \subset L^{q'}(\Omega)$ so that $\ds A_q^* f_2 \in \lo{q'},$ and $P_{q'} f_2 = f_2$, we compute from (\ref{B-2.20}): with $\ds P^*_q = P_{q'}$ by the statement above \eqref{B-1.7}
		\begin{align}
			-\big< A_q f_1, f_2 \big>_{\ls, L^{q'}_{\sigma}} &= \big< P_q \Delta f_1, f_2 \big>_{\ls, L^{q'}_{\sigma}}\\
			&= \big< \Delta f_1, P^*_qf_2 \big>_{L^q, L^{q'}} = \big< \Delta f_1, P_{q'}f_2 \big>_{L^q, L^{q'}} = \big< \Delta f_1, f_2 \big>_{L^q, L^{q'}}\\
			&= \int_{\Omega} f_1 \Delta f_2 \ d \Omega + \int_{\Gamma} \cancelto{}{\frac{\partial f_1}{\partial \nu} f_2} \ d \Gamma - \int_{\Gamma}  \cancelto{}{f_1 \frac{\partial f_2}{\partial \nu}} \ d \Gamma \nonumber\\
			&=\big< f_1, \Delta f_2 \big>_{L^q, L^{q'}} = \big< P_q f_1, \Delta f_2 \big>_{L^q, L^{q'}} \nonumber\\
			&=\big< f_1, P_{q'}\Delta f_2 \big>_{L^q, L^{q'}} = -\big< f_1, A_q^*f_2 \big>_{L^q, L^{q'}} \label{B-2.34}
		\end{align}
		\noindent since $f_1 \in W^{1,q}_0(\Omega)$ by (\ref{B-2.20}); and so $f_1|_{\Gamma} = 0;$ and since $ f_2 \in W^{1,q'}_0(\Omega)$ by (\ref{B-2.31}) and so $f_2|_{\Gamma} = 0$. Moreover, $P_q f_1 = f_1$, since $f_1 \in \lso$. Eqt (\ref{B-2.34}) proves (\ref{B-2.31}).
	\end{proof}

	\noindent (iv) Similarly from $\ds A_{o,q} = P_q L_e: \calD(A_{o,q}) = \calD(A_q^{\rfrac{1}{2}}) = W^{1,q}_0(\Omega) \cap \lso \longrightarrow \lso$, in (\ref{B-2.21}), we obtain
	\begin{equation}\label{B-2.35}
		(A_{o,q})^* = A^*_{o,q} \ \text{(for short)} \ = P_{q'} (L_e)^*: W^{-1,q'}(\Omega) \longrightarrow \lo{q'}
	\end{equation} 
	\noindent where the expression of $(L_e)^*$, which is not needed, is given in \cite[p 55]{BLT1:2006}, \cite[below (54)]{LT1:2015}, \cite{F.1}.\\
	
	\noindent (v) As a consequence of (ii), (iii) we have $\ds (\calA_q)^* = \calA_q^* = - (\nu_0 A^*_q + A^*_{o,q}), \ \calD(\calA^*_q) = \calD(A^*_q)$.
	
	\subsection{The operator $\ds D^* \calA_q^*$}
	\begin{thm}\label{B-Thm-2.3}
		Let $1 < q < \infty$. Let $v \in \calD(\calA^*_q) = \calD(A^*_q) = W^{2,q'}(\Omega) \cap W^{1,q'}_0(\Omega) \cap \lo{q'}$, by (\ref{B-2.31}), $\ds \frac{1}{q} + \frac{1}{q'} = 1$ so that $\ds \frac{\partial v}{\partial \nu} \bigg|_{\Gamma} \in W^{1-\rfrac{1}{q'}, q'}(\Gamma) \subset L^{q'}(\Gamma)$. Let $g \in L^q(\Gamma), g \cdot \nu = 0$ on $\Gamma$. Then 
		\begin{equation}\label{B-2.36}
		\big< D^* \calA^*_q v, g \big>_{L^{q'}(\Gamma),L^{q}(\Gamma)} = \nu_o \bigg< \frac{\partial v}{\partial \nu}, g \bigg>_{L^{q'}(\Gamma),L^{q}(\Gamma)}
		\end{equation}
		\noindent where: $q > 3$ for $d = 3$; and $q > 2$ for $d = 2$.
	\end{thm}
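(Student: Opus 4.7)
The strategy is the one used in \cite{BLT1:2006} for $q=2$, reformulated for general $q$. Set $\psi = Dg \in W^{\rfrac{1}{q},q}(\Omega) \cap \lso$ (from \eqref{B-2.13}); by construction $\psi$ is the solution of the stationary Oseen problem \eqref{B-2.1} with boundary datum $g$, so that
\begin{equation*}
-\nu_o \Delta \psi + L_e(\psi) + \nabla \pi^* = 0 \text{ in } \Omega, \quad \psi|_\Gamma = g, \quad \text{div } \psi = 0.
\end{equation*}
By definition of the adjoint, $\langle D^* \calA^*_q v, g\rangle_{L^{q'}(\Gamma), L^q(\Gamma)} = \langle \calA^*_q v, Dg\rangle_{L^{q'}(\Omega), L^q(\Omega)} = \langle \calA^*_q v, \psi \rangle$, so the whole task is to compute the latter scalar.

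Next, I would unfold $\calA_q^* = -\nu_o A_q^* - A_{o,q}^*$ using \eqref{B-2.31} and \eqref{B-2.35}: $\calA_q^* v = \nu_o P_{q'}\Delta v - P_{q'}(L_e)^* v$. Since $\psi \in \lso$ and $P_q^* = P_{q'}$, the projectors $P_{q'}$ pass out of the duality with no effect, yielding
\begin{equation*}
\langle \calA_q^* v, \psi \rangle = \nu_o \int_\Omega (\Delta v) \cdot \psi \, d\Omega - \int_\Omega v \cdot L_e(\psi) \, d\Omega,
\end{equation*}
where the second integral uses that $L_e^*$ is the formal adjoint of $L_e$ in the bulk and $v|_\Gamma = 0$ kills the boundary contributions of the integration by parts (here div $y_e = 0$ is used when converting $(y_e\cdot\nabla)\psi$). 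Next I perform Green's formula on the Laplacian term: since $v|_\Gamma = 0$,
\begin{equation*}
\int_\Omega (\Delta v) \cdot \psi \, d\Omega = \int_\Omega v \cdot \Delta \psi \, d\Omega + \int_\Gamma \frac{\partial v}{\partial \nu} \cdot g \, d\Gamma.
\end{equation*}

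Finally I substitute the stationary Oseen equation $\nu_o \Delta \psi = L_e(\psi) + \nabla \pi^*$ into the interior integral. The pressure piece vanishes because $v \in \calD(\calA_q^*) \subset L^{q'}_\sigma \cap W^{1,q'}_0(\Omega)$, so integration by parts yields $\int_\Omega v \cdot \nabla \pi^* = -\int_\Omega (\text{div } v)\pi^* + \int_\Gamma (v\cdot\nu)\pi^*\,d\Gamma = 0$. What remains is
\begin{equation*}
\langle \calA_q^* v, \psi \rangle = \int_\Omega v \cdot L_e(\psi) \, d\Omega + \nu_o \int_\Gamma \frac{\partial v}{\partial\nu}\cdot g \, d\Gamma - \int_\Omega v \cdot L_e(\psi) \, d\Omega = \nu_o\bigg\langle \frac{\partial v}{\partial\nu}, g\bigg\rangle_{L^{q'}(\Gamma), L^q(\Gamma)},
\end{equation*}
which is the claim \eqref{B-2.36}.

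\textbf{Main obstacle.} The delicate point is justifying the integration by parts given the mismatched regularities: $v \in W^{2,q'}(\Omega)$ with $q' < \rfrac{3}{2}$ when $d=3$, while $\psi = Dg$ is only in $W^{\rfrac{1}{q},q}(\Omega)$ when $g \in L^q(\Gamma)$. In particular $\Delta \psi$ need not be a function. The clean way around this is a density argument: approximate $g \in L^q(\Gamma)$ (with $g\cdot\nu=0$) by smooth tangential $g_n \in W^{1-\rfrac{1}{q},q}(\Gamma)$ so that by Theorem \ref{B-Thm-2.1} the corresponding $\psi_n = Dg_n$ lies in $W^{1,q}(\Omega)$. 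At that regularity level the above chain of identities is entirely classical, yielding the identity $\langle D^*\calA_q^* v, g_n\rangle = \nu_o\langle \partial_\nu v, g_n\rangle$. One then passes to the limit using continuity of $D^*\calA_q^*$ on $L^q(\Gamma)$ (inherited from (\ref{B-2.13}) and from $\calA_q^* v \in L^{q'}_\sigma$ combined with a duality estimate) on the left, and continuity of $g\mapsto \langle\partial_\nu v, g\rangle_{L^{q'}(\Gamma),L^q(\Gamma)}$ on the right, which is immediate since $\partial_\nu v \in L^{q'}(\Gamma)$. This furnishes \eqref{B-2.36} in its stated generality.
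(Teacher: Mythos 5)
Your proposal is correct and follows essentially the same route as the paper: reduce to $\langle \calA_q^* v, Dg\rangle$, split $\calA_q^*$ into the Stokes and Oseen-perturbation parts, apply Green's formula using $v|_\Gamma = 0$ and $Dg|_\Gamma = g$, substitute the stationary Oseen equation so that only $\nabla\pi^*$ remains in the bulk, kill the pressure term via $\operatorname{div} v = 0$ and $v|_\Gamma=0$, and finally extend from $g \in W^{1-\rfrac{1}{q},q}(\Gamma)$ to $g \in L^q(\Gamma)$ by density. The only point the paper makes explicit that you leave implicit is where the restriction $q>3$ ($d=3$) enters — namely in verifying (via multiplier or embedding theory, the paper's Lemma \ref{B-Lem-2.4}) that the products $(y_e\cdot\nabla)(Dg)$ and $((Dg)\cdot\nabla)y_e$ land in $L^q(\Omega)$ so that the duality pairings with $v$ are well defined.
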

	
	\begin{proof} We shall first prove (\ref{B-2.36}) with $g \in W^{1-\rfrac{1}{q},q}(\Gamma), \ g \cdot \nu  = 0 $ on $\Gamma$; and then extend the validity of (\ref{B-2.36}) to $g \in L^q(\Gamma), \ g \cdot \nu = 0$ on $\Gamma$ by density.
		\noindent By (\ref{B-2.22}), $\ds \calA_q = -(\nu_o A_q + A_{o,q})$. Accordingly, we consider $\ds D^* A^*_q$ in Step 1 and $\ds D^* A^*_{o,q}$ in Step 2.\\
		
		\noindent \underline{Step 1:}
		Let $v \in \calD(A^*_q) = W^{2,q'}(\Omega) \cap W^{1,q'}_0(\Omega) \cap \lo{q'}$, so that $\ds A^*_q v \in \lo{q'}$, and let initially $g \in W^{1 - \rfrac{1}{q}, q}(\Gamma) \subset L^q(\Gamma), g \cdot \nu = 0$ on $\Gamma$, so that $\ds Dg \in W^{1,q}(\Omega) \cap \lso$ by (\ref{B-2.12}) with $\theta = 0$. Our first step is to show
		\begin{equation}\label{B-2.37}
		-\big< D^* A^*_q v, g \big>_{L^{q'}(\Gamma),L^{q}(\Gamma)} = \int_{\Omega} v \Delta (Dg) \ d \Omega + \int_{\Gamma} \frac{\partial v}{\partial \nu} g \ d \Gamma,
		\end{equation}
		\noindent where the integral term under $\Omega$ is well-defined as a duality pairing with $v \in W^{1,q'}_0(\Omega)$ and $\Delta (Dg) \in W^{-1,q}(\Omega)$; while the integral term under $\Gamma$ is well-defined as a duality paring with $\ds \frac{\partial v}{\partial \nu} \bigg|_{\Gamma}$ in $L^{q'}(\Gamma)$ and $\ds g \in L^q(\Gamma)$.\\
		
		\noindent In fact, we compute - and the computations in (\ref{B-2.38}) through (\ref{B-2.40}) below actually work even for $\ds g \in W^{\rfrac{-1}{q},q}(\Gamma)$ so that $\ds Dg \in \lso$ by (\ref{B-2.12}) with $\theta = 1$, and hence $\ds P_q Dg = Dg$
		\begin{align}
		-\big< D^* A^*_q v, g \big>_{L^{q'}(\Gamma),L^{q}(\Gamma)} &= -\big< A^*_q v, Dg \big>_{\lo{q'},\lso} \label{B-2.38}\\
		\text{(by (\ref{B-2.31}))} \hspace{1cm}&= \big< P_{q'} \Delta v, Dg \big>_{L^{q'},L^q} = \big< \Delta v, P_{q'}^* Dg \big>_{L^{q'},L^q} \label{B-2.39}\\
		&= \big< \Delta v, P_q Dg \big>_{L^{q'},L^q} = \big< \Delta v, Dg \big>_{L^{q'},L^q} \label{B-2.40}
		\end{align}
		\noindent where in going from (\ref{B-2.39}) to (\ref{B-2.40}) we have recalled $\ds P^*_{q'} = P_q$ by the statement above \eqref{B-1.7}. Next, we apply Green's theorem in (\ref{B-2.40}) and get
		\begin{equation}
		- \big< D^* A^*_qv, g \big>_{L^{q'}(\Gamma),L^q(\Gamma)}= \int_{\Omega} \Delta v Dg \ d \Omega = \int_{\Omega} v \Delta(Dg) \ d \Omega + \int_{\Gamma} \frac{\partial v}{\partial \nu} g \ d \Gamma - \int_{\Gamma} \cancelto{}{v \frac{\partial Dg}{\partial \nu}} \ d \Gamma \label{B-2.41}
		\end{equation}
		\noindent where we have used $Dg|_{\Gamma} = g$ by definition of $D$, and $v|_{\Gamma} = 0$ as $v \in W^{1,q'}_0(\Omega)$. Then (\ref{B-2.41}) proves (\ref{B-2.37}).\\
		
		\noindent \underline{Step 2:} Let $\ds v \in \calD(A_{o,q}^*) = \calD \big((A_q^*)^{\rfrac{1}{2}} \big) = W^{1,q'}_0(\Omega) \cap \lo{q'}$ by (\ref{B-2.21}) and let $\ds g \in W^{1 - \rfrac{1}{q},q}(\Gamma)$ and $g \cdot \nu = 0$ on $\Gamma$ so that  $\ds Dg \in W^{1,q}(\Omega) \cap \lso$ by (\ref{B-2.12}) with $\theta = 0$. Recall from Theorem \ref{B-Thm-1.1} that $\ds y_e \in W^{2,q}(\Omega) \cap W^{1,q}_0(\Omega)$.\\
		
		\noindent Our second step is to show that 
		\begin{equation}\label{B-2.42}
		\big< D^* A^*_{o,q} v, g \big>_{L^{q'}(\Gamma),L^{q}(\Gamma)} = \big< (y_e \cdot \nabla)(Dg) + ((Dg) \cdot \nabla)y_e, v\big>_{L^{q}(\Omega),L^{q'}(\Omega)}
		\end{equation}
		\noindent Proof of (\ref{B-2.42}).\\
		
		\noindent \textit{Step (2a)}: Let initially $\ds h \in \calD(A_{o,q}) = \calD(A^{\rfrac{1}{2}}_q) = W^{1,q}_0(\Omega) \cap \lso$ by (\ref{B-2.21}). Recalling (\ref{B-2.21}) compute 
		\begin{align}
		\big< A_{o,q} h, v \big>_{\lso,\lo{q'}} &= \big< P_q [(y_e \cdot \nabla) h], v \big>_{\lso,\lo{q'}} + \big< P_q [(h \cdot \nabla) y_e], v \big>_{\lso,\lo{q'}}\\
		&= \big<  [(y_e \cdot \nabla) h], P_q^* v \big>_{L^{q},L^{q'}} + \big< [(h \cdot \nabla) y_e], P_q^* v \big>_{L^{q},L^{q'}}\\
		&= \big<  [(y_e \cdot \nabla) h], P_{q'} v \big>_{L^{q},L^{q'}} + \big< [(h \cdot \nabla) y_e], P_{q'} v \big>_{L^{q},L^{q'}}\\
		&= \big<  [(y_e \cdot \nabla) h], v \big>_{L^{q},L^{q'}} + \big< [(h \cdot \nabla) y_e], v \big>_{L^{q},L^{q'}} \label{B-2.46}
		\end{align}
		\noindent where we have recalled $\ds P_q^* = P_{q'}$ from the statement above \eqref{B-1.7} and $\ds P_{q'} v = v$, as $v \in \lo{q'}$.\\
		
		\noindent \textit{Step (2b)}: In the next lemma, we show that the terms in (\ref{B-2.46}) are well-defined in an appropriate range of $q$, at any rate for $q > d$, which is our goal, $d = 2, d = 3$ .
	
		\begin{lemma}\label{B-Lem-2.4}
			With reference to (\ref{B-2.46}) we have \\
				\noindent (i) \begin{equation}
				 \ (y_e \cdot \nabla)h \in L^q(\Omega) = W^{0,q}(\Omega) \text{ for }
				\begin{cases}
				d = 3, \ q > \rfrac{3}{2} \\
				d = 2, \ q > 1 \\
				\end{cases}
				\end{equation} 
				\noindent (ii)\begin{equation}
				(h \cdot \nabla)y_e \in W^{1,q}(\Omega) \text{ for }
				\begin{cases}
				d = 3, \ q > 3 \\
				d = 2, \ q > 2. \\
				\end{cases}
				\end{equation}				
		\end{lemma}	
	\begin{proof}
		\noindent \underline{First way:} We may use multiplier theory \cite[Theorem 3, p 252]{MS:1985}. We have by Theorem \ref{B-Thm-1.1} on $y_e$ and the assumption on $\ds h \in \calD(A_{o,q}) = W^{1,q}_0(\Omega) \cap \lso$:
		\begin{enumerate}[(i)]
			\item 
			\begin{equation}
			y_e \in W^{2,q}(\Omega),\ \abs{\nabla h} \in L^q(\Omega) = W^{0,q}(\Omega).
			\end{equation}
			\noindent Then \cite[Theorem 3, p 252 with $m=2 > \ell = 0$]{MS:1985} yields the multiplier space 
			\begin{equation}
				\calM \big(W^{2,q} \longrightarrow W^{0,q} \big) = W^{0,q}(\Omega).
			\end{equation}
			\noindent for $ mq = 2q > d$ or $q > \rfrac{3}{2}$ for $d = 3; q > 1$ for $d = 2;$ and part (i) of Lemma \ref{B-Lem-2.4} established.
			\item We start with \begin{equation}
			h \in W^{1,q}_0(\Omega), \ \abs{\nabla y_e} \in W^{1,q}(\Omega).
			\end{equation}
			\noindent Then \cite[Theorem 3, p 252; with $m = \ell = 1$]{MS:1985} yields the multiplier space 
			\begin{equation}
			\calM (W^{1,q} \longrightarrow W^{1,q}) = W^{1,q}(\Omega)
			\end{equation}
			\noindent for $mq = 1.q > d$ or $q > 3$ for $d = 3, q > 2$ for $d = 2$ and part (ii) of Lemma \ref{B-Lem-2.4} is established.
		\end{enumerate}
	\noindent \underline{Second way:} We use embedding theory \cite[p 79]{SK:1989}
	\begin{equation}\label{B-2.53}
	W^{m,q}(\Omega) \hookrightarrow C^k(\overline{\Omega}), \ m > \frac{d}{q}, \ k \text{ integer part of } \bigg[ m - \frac{d}{q} \bigg]
	\end{equation}
	\noindent Thus (i) \[ y_e \in W^{2,q}(\Omega) \hookrightarrow y_e \in C^0(\overline{\Omega}) \text{ for } 
		\begin{cases}
		m = 2, d = 3, q > \rfrac{3}{2}, k = 0 \\
		m = 2, d = 2, q > 1, k = 0 \\
		\end{cases}\]	
		\noindent and since $| \nabla h | \in L^q(\Omega)$, then 
		\begin{equation}
		(y_e \cdot \nabla) h \in L^q(\Omega), d = 3, q > \rfrac{3}{2}; \text{ or } d = 2, q > 1
		\end{equation}	
		\noindent and (i) of Lemma \ref{B-Lem-2.4} is reproved.\\
		\noindent (ii) Similarly, (\ref{B-2.53}) gives for $m = 1$
		\begin{equation}
		h \in W^{1,q}(\Omega) \hookrightarrow h \in C^0(\overline{\Omega}) \text{ for }
		\begin{cases}
			d = 3, q > 3, k = 0 \\
			d = 2, q > 2, k = 0 \\
		\end{cases}
	\end{equation} 
	\noindent and since $\ds \abs{\nabla y_e} \in W^{1,q}(\Omega)$, then 
	\begin{equation}
	(h \cdot \nabla) y_e \in W^{1,q}(\Omega), \ d = 3, q > 3; d = 2, q > 2
 	\end{equation}
 	\noindent and (ii) of Lemma \ref{B-Lem-2.4} is reproved.\\
	\noindent Lemma \ref{B-Lem-2.4} is proved.
	\end{proof}
	\noindent \textit{Step (2c)}: Using Lemma \ref{B-Lem-2.4} in (\ref{B-2.46}) we see that the two terms are well-defined with $v \in \lo{q'}$. We rewrite (\ref{B-2.46}) as
	\begin{equation}\label{B-2.57}
	\big< h, A^*_{o,q} v \big>_{L^q_{\sigma},L^{q'}_{\sigma}} = \big< (y_e \cdot \nabla)h + (h \cdot \nabla) y_e, v \big>_{L^{q},L^{q'}},
	\end{equation}
	\noindent which shows that it can be extended to all $\ds h \in W^{1,q}(\Omega) \cap \lso$: the condition $h|_{\Gamma} = 0$ is not used. With $\ds g \in W^{1-\rfrac{1}{q},q}(\Gamma), \ g \cdot \nu = 0$ on $\Gamma$, so that $Dg \in W^{1,q}(\Omega) \cap \lso$ by (\ref{B-2.9}), we may apply such extended version (\ref{B-2.57}) to $Dg$ and obtain
	\begin{align}
	\big< Dg, A^*_{o,q}v \big> &= \big< g, D^* A^*_{o,q}v \big>_{L^q(\Gamma),L^{q'}(\Gamma)} \nonumber \\
	&= \big< (y_e \cdot \nabla)(Dg) + ((Dg) \cdot \nabla) y_e, v \big>_{L^{q}(\Omega),L^{q'}(\Omega)}
	\end{align}
	\noindent and (\ref{B-2.42}) is established.\\
	
	\noindent \underline{Step 3:}
	\noindent In view of $\ds \calA_q = - (\nu_o A_q + A_{o,q})$ by (\ref{B-2.22}), we now combine (\ref{B-2.37}) of Step 1, with (\ref{B-2.42}) of Step 2. Let again $\ds g \in W^{1-\rfrac{1}{q},q}(\Gamma), \ g \cdot \nu = 0$ on $\Gamma$, so that $\ds Dg \in W^{1,q}(\Gamma) \cap \lso$ by (\ref{B-2.12}) with $\theta = 0$, and $v \in \calD(A_q^*) = \calD(\calA_q^*) = W^{2,q'}(\Omega) \cap W^{1,q'}_0(\Omega) \cap \lo{q'}$ via (\ref{B-2.31}). We shall establish the following final relation
	
	\begin{equation}
		\big< D^* \calA_q^* v, g\big> = \nu_0 \int_{\Gamma} \frac{\partial v}{\partial \nu}g \ d \Gamma = \nu_0 \bigg< \frac{\partial v}{\partial \nu},g\bigg>_{L^{q'}(\Gamma),L^{q}(\Gamma)}.
	\end{equation}
	\noindent In fact, we start from $\ds - \calA^*_q = \nu_0 A^*_q + A^*_{o,q}$ via part (iv) of Section \ref{B-Sec-2.4} and next recall (\ref{B-2.37}) and (\ref{B-2.42}) to obtain 
	\begin{align}
	-\big< D^* \calA_q^* v, g\big>_{L^{q'}(\Gamma),L^{q}(\Gamma)} & = \nu_o \big< D^* A_q^* v, g \big>_{L^{q'}(\Gamma),L^{q}(\Gamma)} + \big< D^* A_{o,q}^* v, g \big>_{L^{q'}(\Gamma),L^{q}(\Gamma)}
	\end{align}  
	\vspace{-0.6cm}
	\begin{align}
	&= - \nu_o \int_{\Omega} v \Delta (Dg) \ d \Omega  - \nu_o \int_{\Gamma} \frac{\partial v}{\partial \nu} g \ d \Gamma + \big< v, (y_e \cdot \nabla)(Dg) + ((Dg) \cdot \nabla)y_e \big>_{L^{q'}(\Omega),L^{q}(\Omega)} \label{B-2.61}\\
	&=\big< v, - \nu_o \Delta (Dg) + (y_e \cdot \nabla)(Dg) + ((Dg) \cdot \nabla)y_e \big>_{L^{q'}(\Omega),L^{q}(\Omega)} - \nu_o \bigg< \frac{\partial v}{\partial \nu},g\bigg>_{L^{q'}(\Gamma),L^{q}(\Gamma)} \label{B-2.62}\\
	&= \big< v, -\nu_o \Delta (Dg) + L_e(Dg)\big>_{L^{q'}(\Omega),L^{q}(\Omega)} - \nu_o \bigg< \frac{\partial v}{\partial \nu},g\bigg>_{L^{q'}(\Gamma),L^{q}(\Gamma)}, \label{B-2.63} 
	\end{align}
	\noindent recalling $\ds L_e(\psi = Dg) = (y_e \cdot \nabla) (Dg) + ((Dg) \cdot \nabla) y_e$ by (\ref{B-1.9}). We next invoke the definition $\ds \psi = Dg$ in Eq (\ref{B-2.1a}) of the Stationary Oseen Equation (\ref{B-2.1}). This way we rewrite (\ref{B-2.63}) as 
	\begin{align}
	\big< D^* \calA_q^* v, g \big>_{L^{q'}(\Gamma),L^{q}(\Gamma)} &= \big< v, \nabla \pi^* \big>_{L^{q'}(\Omega),L^{q}(\Omega)} + \nu_o \bigg< \frac{ \partial v}{\partial \nu}, g\bigg>_{L^{q'}(\Gamma),L^{q}(\Gamma)} \label{B-2.64}\\
	&= \nu_o \bigg< \frac{ \partial v}{\partial \nu}, g\bigg>_{L^{q'}(\Gamma),L^{q}(\Gamma)},\label{B-2.65}
	\end{align}	
	\noindent since 
	\begin{equation}
	\int_{\Omega} v \cdot \nabla \pi^* = \int_{\Gamma} \pi^* v \cdot \nu \ d \Gamma - \int_{\Omega} \pi^* \text{div } v \ d \Omega \equiv 0 \label{B-2.66}
	\end{equation}
	\noindent where $v|_{\Gamma} = 0$ as $v \in W^{1,q'}_0{(\Omega)}$ and div $v \equiv 0$ since $v \in \lso$, recall (\ref{B-1.4}). Thus, (\ref{B-2.65}) shows (\ref{B-2.36}) so far for $\ds g \in W^{1-\rfrac{1}{q},q}(\Gamma), \ g \cdot \nu = 0 $ on $\Gamma$.\\
	
	\noindent By density, we extend the validity of (\ref{B-2.36}) to $g \in L^q(\Gamma), \ g \cdot \nu  = 0$ on $\Gamma$. Theorem \ref{B-Thm-2.3} is proved.	
	\end{proof}

	\begin{prop}\cite[Lemma 3.3.1 p35]{BLT1:2006}\label{B-Prop-2.5}
		Let $\varphi \in C^1(\Omega)$ be a $d$-function satisfying the following properties:
		\begin{enumerate}[(i)]
			\item $\varphi|_{\Gamma} = 0$;
			\item div $\varphi = 0$ in $\overline{\Omega}$ (actually only on an interior collar of $\Gamma$)
		\end{enumerate}
	\noindent Then we have that 
	\begin{equation}
	\begin{cases}
	\text{the boundary vector }	\nabla \varphi \cdot \nu = \frac{\partial \varphi}{\partial \nu} \text{ is tangential to } \Gamma \\[2mm]
	\text{i.e} \ (\nabla \phi \cdot \nu) \cdot \nu \equiv 0 \text{ on } \Gamma.
	\end{cases}	
	\end{equation}
	\end{prop}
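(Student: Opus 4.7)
The plan is to work pointwise on $\Gamma$ and exploit two basic facts: (a) if a (vector) function vanishes on $\Gamma$, then all of its tangential derivatives along $\Gamma$ vanish as well, and (b) the divergence is a coordinate-independent scalar, so we may compute it in any convenient orthonormal frame at a chosen boundary point. The point is that condition (i) kills \emph{all} tangential components of the full Jacobian $\nabla \varphi$ on $\Gamma$, and then condition (ii) forces the one remaining diagonal entry (the normal-normal one) to vanish as well, which is precisely the claim.

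I would proceed as follows. Fix an arbitrary $x_0 \in \Gamma$ and choose a Cartesian frame $\{e_1,\dots,e_{d-1},e_d\}$ in $\BR^d$ so that $e_d = \nu(x_0)$ and $\{e_1,\dots,e_{d-1}\}$ span the tangent plane $T_{x_0}\Gamma$. Write $\varphi = \sum_{j=1}^d \varphi^{(j)} e_j$ in these coordinates. From (i), for any vector field $\tau$ tangent to $\Gamma$ one has $(\tau \cdot \nabla)\varphi \equiv 0$ on $\Gamma$; in particular, evaluating at $x_0$ and taking $\tau = e_k$ for $k=1,\dots,d-1$, we get $\partial_k \varphi^{(j)}(x_0) = 0$ for every $j$. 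Plugging this into the coordinate expression
\[
0 = \text{div}\,\varphi(x_0) \;=\; \sum_{k=1}^{d-1}\partial_k\varphi^{(k)}(x_0) \;+\; \partial_d \varphi^{(d)}(x_0),
\]
which is valid by (ii), makes the first sum vanish and leaves $\partial_d \varphi^{(d)}(x_0) = 0$. Since $e_d = \nu(x_0)$ is a constant vector in this fixed Cartesian frame, this last identity reads
\[
\frac{\partial \varphi}{\partial \nu}(x_0) \cdot \nu(x_0) \;=\; 0,
\]
which is exactly the desired conclusion at $x_0$. As $x_0 \in \Gamma$ was arbitrary, we obtain $(\nabla \varphi \cdot \nu)\cdot \nu \equiv 0$ on $\Gamma$.

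The only potentially delicate point is justifying that $(\tau \cdot \nabla)\varphi = 0$ on $\Gamma$ when $\varphi|_\Gamma = 0$ and $\tau$ is merely tangent at the point $x_0$, not a globally tangential vector field. This is handled by observing that tangent derivatives of $\varphi$ depend only on the restriction of $\varphi$ to $\Gamma$ near $x_0$: one can take any smooth curve $\gamma(t)\subset \Gamma$ with $\gamma(0)=x_0$ and $\dot\gamma(0)=e_k$ (which exists since $e_k \in T_{x_0}\Gamma$ and $\Gamma \in C^2$), and differentiate $\varphi(\gamma(t))\equiv 0$ at $t=0$ to get $\partial_{e_k}\varphi(x_0)=0$. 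This is the one step that uses the $C^2$ regularity of $\Gamma$ already assumed throughout the paper. No further technicalities are expected; conditions (i) and (ii) alone, together with the identity for div in an $x_0$-adapted Cartesian frame, yield the statement directly.
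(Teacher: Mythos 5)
Your proof is correct: the paper itself does not reproduce an argument for this proposition but simply cites \cite[Lemma 3.3.1, p.~35]{BLT1:2006}, and your local adapted-frame computation (tangential derivatives of $\varphi$ vanish on $\Gamma$ because $\varphi|_{\Gamma}=0$, so the divergence identity at $x_0$ isolates and kills the normal--normal entry $\partial_\nu(\varphi\cdot\nu)$) is precisely the standard argument given in that reference. The only cosmetic remark is that one should read $\varphi\in C^1(\overline{\Omega})$ (so that $\operatorname{div}\varphi=0$ on the interior collar extends to $\Gamma$ by continuity), which you implicitly and legitimately use.
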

	\noindent For $\ds v \in \calD(\calA^*_q) = W^{2,q'}(\Omega) \cap W^{1,q'}_0{\Omega} \cap \lo{q'}$, we have $v|_{\Gamma} = 0$ and div $v = 0$ in $\Omega$. Thus extending Proposition \ref{B-Prop-2.5} to $v \in W^{2,q'}(\Omega)$, we have $\ds \frac{\partial v}{\partial \nu} \bigg|_{\Gamma} = $ tangential on $\Gamma$. Returning to Theorem \ref{B-Thm-2.3}, and recalling that $g$ is tangential. $g \cdot \nu = 0$ on $\Gamma$, we then obtain from (\ref{B-2.36}) the following
	\begin{clr}\label{B-Clr-2.6}
	With reference to Theorem \ref{B-Thm-2.3} we have
	\begin{equation}\label{B-2.68}
	\begin{Bmatrix}
	\text{tangential}\\
	\text{component of }  D^* \calA^* v 
	\end{Bmatrix}
	= (D^* \calA_q^* v) \tau = \nu_o \frac{\partial v}{\partial \nu}, \ v \in \calD(\calA_q^*) = W^{2,q'}(\Omega) \cap W^{1,q'}_0{\Omega} \cap \lo{q'}
	\end{equation}
	\noindent $q > 3$ for $d = 3$; $q > 2$ for $d = 2$. \qedsymbol
	\end{clr}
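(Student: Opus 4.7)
The plan is to derive Corollary \ref{B-Clr-2.6} as an immediate consequence of Theorem \ref{B-Thm-2.3} combined with an $L^q$-extension of Proposition \ref{B-Prop-2.5}. The identity \eqref{B-2.36} gives
\begin{equation*}
\big\langle D^* \calA_q^* v, g \big\rangle_{L^{q'}(\Gamma),L^q(\Gamma)} \;=\; \nu_o \bigg\langle \frac{\partial v}{\partial \nu}, g \bigg\rangle_{L^{q'}(\Gamma),L^q(\Gamma)}
\end{equation*}
for every test field $g \in L^q(\Gamma)$ with $g \cdot \nu = 0$. Because admissible test fields are exactly the tangential boundary vectors, this pairing recovers only the tangential component of $D^* \calA_q^* v$; the normal component is undetermined by it. So the task reduces to identifying what sits on the right-hand side as an element of $L^{q'}(\Gamma)$ with vanishing normal component.

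Next, I would verify that $\frac{\partial v}{\partial \nu}$ is itself tangential on $\Gamma$. Both hypotheses of Proposition \ref{B-Prop-2.5} are in force for our $v$: the condition $v|_{\Gamma}=0$ follows since $\calD(\calA_q^*) \subset W^{1,q'}_0(\Omega)$ by \eqref{B-2.31}, while $\text{div } v = 0$ follows since $v \in \lo{q'}$, cf.\ \eqref{B-1.4}. Proposition \ref{B-Prop-2.5} is stated for $C^1$ fields, but its Sobolev extension to $v \in W^{2,q'}(\Omega)$ is routine: the components of $\nabla v$ have boundary traces in $W^{1-\rfrac{1}{q'},q'}(\Gamma) \hookrightarrow L^{q'}(\Gamma)$, so the pointwise identity $\big(\frac{\partial v}{\partial \nu}\big)\cdot \nu \equiv 0$ on $\Gamma$ passes to the limit along any smooth approximating sequence $\{v_n\} \subset C^1(\overline{\Omega})$ with $v_n|_{\Gamma}=0$ and $\text{div } v_n = 0$, whose existence follows from standard density of smooth solenoidal fields vanishing on $\Gamma$ inside $W^{2,q'}(\Omega) \cap W^{1,q'}_0(\Omega) \cap \lo{q'}$.

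With both facts established, \eqref{B-2.36} now pairs tangential vector fields against tangential vector fields: the right-hand side is the action, on any tangential $g$, of the tangential element $\nu_o \frac{\partial v}{\partial \nu} \in L^{q'}(\Gamma)$. Hence $\nu_o \frac{\partial v}{\partial \nu}$ is precisely the tangential component of $D^* \calA_q^* v$, which yields \eqref{B-2.68}. The constraints on $q$ (namely $q > 3$ for $d=3$ and $q > 2$ for $d=2$) are inherited directly from Theorem \ref{B-Thm-2.3}. The only non-trivial step, which nevertheless I do not expect to be a real obstacle, is the extension of Proposition \ref{B-Prop-2.5} from $C^1$ fields to the Sobolev class $W^{2,q'}$; but this amounts to an application of trace continuity together with a standard density argument preserving the two structural constraints $v|_{\Gamma}=0$ and $\text{div } v = 0$.
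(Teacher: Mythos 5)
Your proposal is correct and follows essentially the same route as the paper: invoke the identity \eqref{B-2.36} from Theorem \ref{B-Thm-2.3}, observe that the tangential test fields $g$ determine only the tangential component of $D^*\calA_q^* v$, and use the extension of Proposition \ref{B-Prop-2.5} to $W^{2,q'}$ (which the paper asserts and you sketch via trace continuity and density) to identify $\nu_o\,\partial v/\partial\nu$ as that tangential component.
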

	
	\noindent We return to the Dirichlet map $D$ introduced in \eqref{B-2.13}, and extract an important result [to be used e.g. in \eqref{B-2.13} to claim that the feedback generator $\BA_F$ generates a s.c. analytic semigroup in $\lso$]. All this is a perfect counterpart of results in Hilbert spaces $(q = 2)$, which have been used in \cite{LT2:2015} etc. We first quote a known result.

	\begin{prop}
		With reference to the Stokes operator $A_q$ in (\ref{B-2.20}) on $\lso$, we have, for $1 < q < \infty$.\\
		\begin{subequations}\label{B-2.69}
				(i)\begin{equation}
				\ W^{s,q}(\Omega) \equiv W^{s,q}_0(\Omega), \quad 0 \leq s \leq \frac{1}{q} \label{B-2.69a}
				\end{equation}
				(ii)
				\begin{equation}
				W^{2s,q}_0(\Omega) \cap \lso \subset \calD \big( A^{\gamma}_q \big), \ 0 \leq \gamma < s, \ 0 \leq s \leq 1, \ q \geq 2, 2s \neq \frac{1}{q}, \ 2s \neq \frac{1}{q} + 1 \label{B-2.69b}
				\end{equation}
				(iii) In particular, for $\varepsilon > 0$ arbitrary, $q \geq 2$, via (\hyperref[B-3.40a]{i}): 
				\begin{equation}
				W^{\rfrac{1}{q},q}(\Omega) \cap \lso = W^{\rfrac{1}{q},q}_0(\Omega) \cap \lso \subset \calD \Big( A_q^{\rfrac{1}{2q} - \varepsilon} \Big) \qed \label{B-2.69c}
				\end{equation}		
		\end{subequations}
	\end{prop}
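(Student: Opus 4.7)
My plan is to treat the three parts in order. The central tools are the standard density/trace theory for Sobolev spaces below the trace threshold $s = 1/q$ for part (i), complex interpolation combined with bounded imaginary powers of $A_q$ for part (ii), and a careful combinatorial reconciliation of (i) and (ii) -- stepping around the excluded critical exponent -- for part (iii).

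For \textbf{part (i)}, I would invoke the classical fact (Grisvard, Lions--Magenes, Triebel) that the trace operator fails to be continuous $W^{s,q}(\Omega) \to L^q(\Gamma)$ when $s \le 1/q$: in this range $C_c^\infty(\Omega)$ is dense in $W^{s,q}(\Omega)$, forcing $W^{s,q}_0(\Omega) = W^{s,q}(\Omega)$. The endpoint $s = 1/q$ requires the sharp Lions--Magenes argument (via the averaged Hardy-type seminorm that characterizes $W^{1/q,q}_0$ on a bounded domain and reduces to the $W^{1/q,q}$ norm). This is entirely quotable.

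For \textbf{part (ii)}, the key input is Giga's theorem that $A_q$ admits bounded imaginary powers on $\lso$, so that the complex interpolation identity $[\lso,\calD(A_q)]_\theta = \calD(A_q^\theta)$ holds for $0<\theta<1$. Combining this with $\calD(A_q) = W^{2,q}(\Omega) \cap W^{1,q}_0(\Omega) \cap \lso$ from \eqref{B-2.20}, and invoking the standard Sobolev interpolation identity $[L^q(\Omega), W^{2,q}(\Omega) \cap W^{1,q}_0(\Omega)]_\theta = W^{2\theta,q}_0(\Omega)$ -- valid in $0<\theta<1$ away from the thresholds $2\theta = 1/q$ and $2\theta = 1/q+1$ where homogeneous boundary conditions cross the trace line (Triebel, Lions--Magenes) -- one obtains $W^{2s,q}_0(\Omega) \cap \lso \subset [\lso,\calD(A_q)]_s = \calD(A_q^s)$ for admissible $s$. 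The passage from $\calD(A_q^s)$ to $\calD(A_q^\gamma)$ with $\gamma<s$ is then the standard moment inequality for fractional powers of a positive sectorial operator. The hypothesis $q\ge 2$ enters to guarantee the interpolation/BIP results in the form needed and to secure the Helmholtz projection estimates used in the identification of $\calD(A_q)$.

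For \textbf{part (iii)}, I would combine (i) and (ii): the equality $W^{1/q,q}(\Omega) \cap \lso = W^{1/q,q}_0(\Omega) \cap \lso$ is (i) at the critical value $s=1/q$. For the inclusion into $\calD(A_q^{1/(2q)-\varepsilon})$, (ii) cannot be invoked at $2s=1/q$, so I fix $s_0 := 1/(2q) - \varepsilon/2$ (so that $2s_0 = 1/q - \varepsilon$ avoids the excluded value) and apply (ii) with $\gamma = 1/(2q) - \varepsilon < s_0$, yielding $W^{1/q-\varepsilon,q}_0(\Omega) \cap \lso \subset \calD(A_q^{1/(2q)-\varepsilon})$. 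Applying (i) again at the subcritical exponent $1/q - \varepsilon$ gives $W^{1/q-\varepsilon,q}_0 = W^{1/q-\varepsilon,q}$, and the trivial Sobolev inclusion $W^{1/q,q} \hookrightarrow W^{1/q-\varepsilon,q}$ closes the chain. The main technical obstacle throughout is precisely the criticality of the exponent $2s = 1/q$, where the trace operator changes its character; the arbitrarily small loss $\varepsilon$ in the fractional power in (iii) is exactly the price paid to step back from that critical value.
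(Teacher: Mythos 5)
Your proposal is correct, and for parts (i) and (ii) it actually does more work than the paper, which disposes of both by direct citation to von Wahl (\cite[(0.2.17)]{W:1985} for (i), \cite[Theorem III.2.3]{W:1985} for (ii)) without reconstructing the arguments; your density/trace argument for (i) and your BIP-plus-interpolation argument for (ii) are legitimate independent routes to the same statements. One caveat on (ii): complex interpolation of $L^q(\Omega)$ with $W^{2,q}(\Omega)\cap W^{1,q}_0(\Omega)$ lands in the Bessel-potential scale $H^{2\theta,q}$ rather than the Sobolev--Slobodeckij scale $W^{2\theta,q}$ for $q\neq 2$, so the inclusion $W^{2s,q}_0\cap\lso\subset\calD(A_q^{\gamma})$ really does require the strict loss $\gamma<s$ (embedding $B^{2s}_{q,q}\hookrightarrow H^{2\gamma,q}$) rather than following from an exact identity $\calD(A_q^s)=[\lso,\calD(A_q)]_s$ at level $s$; your sketch glosses over this, but since the statement only asserts $\gamma<s$ the conclusion is unaffected. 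For (iii) your treatment is, if anything, more scrupulous than the paper's: the paper says it applies (ii) ``with $2s=\rfrac{1}{q}$,'' which is literally an excluded exponent in (ii), whereas your detour through $2s_0=\rfrac{1}{q}-\varepsilon$ combined with (i) at the subcritical level and the trivial embedding $W^{\rfrac{1}{q},q}\hookrightarrow W^{\rfrac{1}{q}-\varepsilon,q}$ is exactly the correct way to make that step rigorous, and it is clearly what the paper intends since the final exponent $\rfrac{1}{2q}-\varepsilon$ already carries the $\varepsilon$-loss.
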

	\noindent Indeed, for (\hyperref[B-3.40a]{i}) we invoke \cite[(0.2.17) p XX1]{W:1985}. For (\hyperref[B-3.40b]{ii}), we quote \cite[Theorem III.2.3 p 91]{W:1985} where, in this reference, the space $\ds H_q(\Omega)$ is our space $\lso$, and the space $\ds \mathring{H}^{2s,q}(\Omega)$ can be replaced (see proof) by the space $\ds W^{2s,q}_0(\Omega)$ in our notation. For (\hyperref[B-3.40c]{iii}), we apply (\hyperref[B-3.40a]{i}) and (\hyperref[B-3.40b]{ii}) with $\ds 2s = \rfrac{1}{q}$, hence $\ds \gamma < \rfrac{1}{2q}$. \qedsymbol

	\begin{clr}\label{B-Clr-2.8}
	\noindent For the Dirichlet map $\ds D: g \longrightarrow \psi $ defined in reference to problem (\ref{B-2.1}) and the paragraph below it, we have complementing (\ref{B-2.13}) = (\ref{B-2.28})
	\begin{subequations}\label{B-3.41}
	\begin{multline}\label{B-2.70a}
	g \in U_q = \big\{ g \in L^q(\Gamma); \ g \cdot \nu = 0 \text{ on } \Gamma \big\} \longrightarrow Dg \in W^{\rfrac{1}{q},q}(\Omega) \cap \lso \subset \calD \Big( A_q^{\rfrac{1}{2q} - \varepsilon} \Big)
	\end{multline}
	\begin{equation}\label{B-2.70b}
	\text{or } A_q^{\rfrac{1}{2q} - \varepsilon} D \in \calL(U_q, \lso)
	\end{equation}
	\end{subequations}		
	\end{clr}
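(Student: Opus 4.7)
\textbf{Proof proposal for Corollary \ref{B-Clr-2.8}.} The statement is essentially a composition of two results already established in the section, so the plan is to chain them together and then invoke boundedness.

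First, I would recall the regularity result for the Dirichlet map obtained in \eqref{B-2.13} (equivalently \eqref{B-2.28}), namely that for any $q$ in the range of interest (in particular $q\geq 2$ as required below),
\begin{equation*}
D:U_q \longrightarrow W^{\rfrac{1}{q},q}(\Omega)\cap \lso \text{ is bounded,}
\end{equation*}
as a continuous linear operator. This is the extremal case $\theta = 1 - \rfrac{1}{q}$ in the interpolation range \eqref{B-2.12}, for which the boundary exponent degenerates to $0$.

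Second, I would invoke Proposition 2.7(iii), i.e. \eqref{B-2.69c}: for $q\geq 2$ and any $\varepsilon>0$ small enough, one has
\begin{equation*}
W^{\rfrac{1}{q},q}(\Omega)\cap \lso = W^{\rfrac{1}{q},q}_0(\Omega)\cap \lso \subset \calD\bigl( A_q^{\rfrac{1}{2q}-\varepsilon}\bigr),
\end{equation*}
with continuous inclusion (this is the real content, which combines (i) the identification $W^{s,q}=W^{s,q}_0$ for $0\leq s \leq \rfrac{1}{q}$, and (ii) the characterization of fractional domains of the Stokes operator via $W^{2s,q}_0(\Omega)\cap \lso \subset \calD(A_q^\gamma)$ for $\gamma<s$). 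Composing these two continuous maps immediately yields $Dg \in \calD\bigl(A_q^{\rfrac{1}{2q}-\varepsilon}\bigr)$ for every $g\in U_q$, which is \eqref{B-2.70a}.

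Finally, to obtain \eqref{B-2.70b}, I would apply the operator $A_q^{\rfrac{1}{2q}-\varepsilon}$ on both sides. Since both $D:U_q \to W^{\rfrac{1}{q},q}(\Omega)\cap \lso$ and the embedding into $\calD\bigl( A_q^{\rfrac{1}{2q}-\varepsilon}\bigr)$ are bounded, and since $A_q^{\rfrac{1}{2q}-\varepsilon}$ is an isomorphism from $\calD\bigl( A_q^{\rfrac{1}{2q}-\varepsilon}\bigr)$ (equipped with the graph norm) onto $\lso$, the composition $A_q^{\rfrac{1}{2q}-\varepsilon} D$ is a bounded operator from $U_q$ into $\lso$, as claimed. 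There is no genuine obstacle here: the only subtlety worth checking is that the restriction $q\geq 2$ inherited from \eqref{B-2.69b} is compatible with the final application of the corollary (namely in the context $q>d$ for $d=2,3$ used later in the paper, which is more than enough).
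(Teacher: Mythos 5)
Your proposal is correct and follows exactly the route the paper intends: the corollary is obtained by chaining the mapping property \eqref{B-2.13} $=$ \eqref{B-2.28} of $D$ with the embedding \eqref{B-2.69c}, and the boundedness statement \eqref{B-2.70b} is just the closed-graph/composition reformulation; the paper in fact states the corollary without a separate proof precisely because it is this immediate composition. Your remark on the restriction $q\geq 2$ inherited from \eqref{B-2.69b} is apt and consistent with the standing hypotheses of the later sections where the corollary is invoked.
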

	\noindent We shall invoke this property in Theorem \ref{B-Thm-5.1}, \eqref{B-5.7} and Proposition \ref{B-Prop-6.2}, \eqref{B-6.14b}.

	\begin{rmk}\label{B-rmk-2.2}
	As noted in Remark \ref{B-rmk-1.2}, The literature reports physical situations where the volumetric force $f$ in (\ref{B-1.1a}) or (\ref{B-1.2a}), is actually replaced by $\nabla g(x)$; that is, $f$ is a conservative vector field. In this case, a solution to the stationary problem (\ref{B-1.2}) is: $y_e \equiv 0, \pi_e = g$. Taking $y_e \equiv 0 $ (hence $L_e(\phi) = 0$) and returning to Eq (\ref{B-1.1a}) with $f(x)$ replaced now by $\nabla g(x)$ and applying to the resulting equation the projection operator $P_q$, one obtains in this case the projected equation 
	\begin{equation}\label{B-2.71}
		y_t - \nu_o P_q \Delta y + P_q \big[ (y \cdot \nabla) y \big] = P_q (mu) \quad \text{in } Q.
	\end{equation}
	\noindent This, along with the solenoidal and boundary conditions (\ref{B-1.1b}), (\ref{B-1.1c}), yields the corresponding abstract form 
	\begin{equation}\label{B-2.72}
		y_t + \nu_o A_q(y-Dv) + \calN_q y = P_q (mu) \quad \text{in } \lso.
	\end{equation}
	\noindent Then $y$-problem (\ref{B-2.72}) is the same as the $z$-problem (\ref{B-2.24}) or (\ref{B-2.25}), except without the Oseen term $A_{o,q}$ see (\ref{B-2.22}). The linearized version of problem (\ref{B-2.72}) is then 
	\begin{equation}\label{B-2.73}
		\eta_t + \nu_o A_q (\eta - Dv) = P_q (mu) \quad \text{in } \lso,
	\end{equation}
	\noindent which is the same as the $w$-problem (\ref{B-2.26}), except without the Oseen term $A_{o,q}$ in \eqref{B-1.10}. The s.c. analytic semigroup $\ds e^{-\nu_o A_q t}$ driving the linear equation (\ref{B-2.73}) is uniformly stable in $\lso$, see \eqref{B-A.4} of Appendix \ref{B-app-A}, as well as in $\ds \Bto$, \eqref{B-1.15b} with decay rate $-\delta$, \eqref{B-A.9} of Appendix \ref{B-app-A}. Then, in the case of the present Remark and as anticipated in the Orientation, the present paper may be used to \underline{enhance at will the uniform stability} of the corresponding problem by use \uline{only} of the tangential boundary feedback finite dimensional control $v$, as acting on the entire boundary $\Gamma$. It is of the \underline{form} given by (\ref{B-5.6a}), with boundary vectors $f_k$ now acting on the entire boundary $\Gamma$. Thus one can take the interior tangential-like control $u \equiv 0$, or vectors $q_k \equiv 0$ in (\ref{B-5.3}). Given the original decay rate $-\delta$ of the Stokes semigroup in $\ds \Bto$ in (\ref{B-A.9}) of Appendix \ref{B-app-A}, and preassigned a desirable decay rate $-k^2$ (arbitrary), the procedure of the present paper can be adopted to construct such a \uline{tangential} boundary \uline{finite} dimensional feedback control $v$ \uline{on all} of $\Gamma$ that yields the decay rate $-k^2$. Thus there is no need to perform the translation $y \longrightarrow z$ of Section \ref{B-Sec-1.5}, when $f$ in (\ref{B-1.2a}) is replaced by $\nabla g(x)$; i.e. $y_e = 0$ in this case. The corresponding required ``unique continuation property" holds true for the Stokes problem ($y_e = 0$), see Problem \#3 of Appendix \ref{B-app-C}, \cite{RT:2008}, \cite{RT1:2009}.
	\end{rmk}		
	\section{Introducing the Problem of Feedback Stabilization of the Linearized $w$-Problem (\ref{B-2.26}) on the Complexified $\lso$-space.}\label{B-Sec-3}
	
	\noindent \textbf{Preliminaries:} In this subsection we take $q$ fixed, $1 < q < \infty$ throughout. Accordingly, to streamline the notation in the preceding setting of Section \ref{B-Sec-2}, we shall drop the dependence on $q$ of all relevant quantities and thus write $P, A, A_o, \calA$ instead of $P_q, A_q, A_{o,q}, \calA_q$. We return to the linearized system (\ref{B-2.26}). Moreover, as in \cite{BT:2004}, \cite{BLT1:2006}, we shall henceforth let $\lso$ denote the complexified space $\lso + i\lso$, whereby then we consider the extension of the linearized problem (\ref{B-2.26}) to such complexified space. Thus, henceforth, $w$ will mean $w + i \wti{w}$, $u$ will mean $u + i \wti{u}$, $v$ will mean $v + i \wti{v}$, $w_0$ will mean $w_0 + i \wti{w}_0$. Thus, henceforth, the abstract model (\ref{B-2.26}) is rewritten with the same symbols as 	
	\begin{equation}\label{B-3.1}
		w_t - \calA w = - \calA Dv + P((mu)\tau) \in [\calD(\calA^*)]', \quad w(0) \in \lso, \quad v \cdot \nu = 0 \text{ on } \Sigma
	\end{equation}	
	\noindent to mean however the complexified version of (\ref{B-2.26}). As noted in Theorem \hyperref[B-A.1]{A.1(iii)}, the Oseen operator $\calA$ has compact resolvent on $\lso$. It follows that $\calA$ has a discrete point spectrum $\sigma(\calA) = \sigma_p(\calA)$ consisting of isolated eigenvalues $\{ \lambda_j\}_{j = 1}^{\infty}$, which are repeated according to their (finite) geometric multiplicity $\ell_j$. However, since $\calA$ generates a $C_0$ analytic semigroup on $\lso$, Theorem \hyperref[B-Thm-A.1]{A.1(ii)}, its eigenvalues $\{ \lambda_j\}_{j = 1}^{\infty}$ lie in a triangular sector of a well-known type. We recall the underlying assumption \eqref{B-1.12} of instability of the equilibrium solution $y_e$ under consideration, which is the prerequisite for investigating the present uniform stabilization problem. This means that the corresponding Oseen operator $\calA = \calA_q$ has a finite number, say $N$, of eigenvalues $\lambda_1, \lambda_2 ,\lambda_3 , \dots, \lambda_N$ on the complex half plane $\{ \lambda \in \mathbb{C} : Re~\lambda \geq 0 \}$ which we then order according to their real parts, so that \eqref{B-1.12} holds, repeated here for the convenience,	
	\begin{equation}\label{B-3.2}
	\ldots \leq Re~\lambda_{N+1} < 0 \leq Re~\lambda_N \leq \ldots \leq Re~\lambda_1,
	\end{equation}	
	\noindent each $\lambda_i, \ i=1,\dots,N$ being an unstable eigenvalue repeated according to its geometric multiplicity $\ell_i$. Let $M$ denote the number of distinct unstable eigenvalues $\lambda_j$ of $\calA$. Denote by $P_N$ and $P_N^*$ the projections given explicitly by \cite[p 178]{TK:1966}, \cite{BT:2004}, \cite{BLT1:2006}
	\begin{subequations}\label{B-3.3}
		\begin{align}
		P_N &= -\frac{1}{2 \pi i}\int_{\Gamma}\left( \lambda I - \calA \right)^{-1}d \lambda : \lso \text{ onto } W^u_N \subset \lso \label{B-3.3a} \\
		P_N^* &= -\frac{1}{2 \pi i}\int_{\bar{\Gamma}}\left( \lambda I - \calA^* \right)^{-1}d \lambda : (\lso)^* = \lo{q'} \text{ onto } (W^u_N)^* \subset \lo{q'}, \label{B-3.3b} 
		\end{align}
	\end{subequations}
	
	\noindent $\ds \rfrac{1}{q} + \rfrac{1}{q'} = 1$, recall (\ref{B-1.7}), where $\Gamma$ (respectively, its conjugate counterpart $\bar{\Gamma}$) is a smooth closed curve that separates the unstable spectrum from the stable spectrum of $\calA$ (respectively, $\calA^*$).\\
	
	\noindent As in \cite[Sect 3.4, p 37]{BLT1:2006}, following \cite{RT:1975},we decompose the space $\lso$ into the sum of two complementary subspaces (not necessarily orthogonal):
	
	\begin{equation}\label{B-3.4}
	\lso = W^u_N \oplus W^s_N; \quad W^u_N \equiv P_N \lso ;\quad W^s_N \equiv (I - P_N) \lso; \quad \text{ dim } W^u_N = N, 
	\end{equation}
	
	\noindent where each of the spaces $W^u_N$ and $W^s_N$ is invariant under $\calA$, and let	
	\begin{equation}\label{B-3.5}
	\calA^u_N = P_N \calA = \calA |_{_{W^u_N}} ; \quad \calA^s_N = (I - P_N) \calA = \calA |_{W^s_N}
	\end{equation}
	
	\noindent be the restrictions of $\calA$ to $W^u_N$ and $W^s_N$ respectively. The original point spectrum (eigenvalues) $\{ \lambda_j \}_{j=1}^{\infty} $ of $\calA$ is then split into two sets	
	\begin{equation}\label{B-3.6}
	\sigma (\calA_N^u) = \{ \lambda_j \}_{j=1}^{N}; \quad  \sigma (\calA_N^s) = \{ \lambda_j \}_{j=N+1}^{\infty},
	\end{equation}	
	\noindent and $W_N^u$ is the generalized eigenspace of ($\calA$, hence of) $\calA^u_N$, corresponding to its unstable eigenvalues. The system (\ref{B-3.1}) on $\lso$ with $v \cdot \nu = 0$ on $\Sigma$ can accordingly be decomposed as	
	\begin{equation}\label{B-3.7}
	w = w_N + \zeta_N, \quad w_N = P_N w, \quad \zeta_N = (I-P_N)w. 
	\end{equation}	
	\noindent After applying $P_N$ and $(I-P_N)$ (which commute with $\calA$) to (\ref{B-3.1}), we obtain via (\ref{B-3.5})	
	\begin{subequations}\label{B-3.8} 
		\begin{align}
		\text{on } W_N^u: w'_N - \calA^u_N w_N &= -P_N(\calA Dv) + P_N P ((mu)\tau) \label{B-3.8a}\\
		&= -\calA^u_N P_N Dv + P_N P ((mu)\tau); \  w_N(0) = P_N w_0 \label{B-3.8b}
		\end{align}
	\end{subequations}
	\begin{subequations}\label{B-3.9}
		\begin{align}
		\text{on } W_N^s: \zeta'_N - \calA^s_N \zeta_N &= - (I-P_N)(\calA Dv) + (I-P_N) P ((mu) \tau ) \label{B-3.9a}\\
		&= - \calA^s_N(I-P_N) Dv + (I-P_N) P ((mu) \tau ); \  \zeta_N(0) = (I - P_N) w_0 \label{B-3.9b}
		\end{align}
	\end{subequations}
	\noindent respectively.  [In (\ref{B-3.8a}), (\ref{B-3.8b}), actually $P_N$ is the extension from  original $\lso$ to $[\calD(\calA^*)]'$	\cite[Appendix A.1]{BLT1:2006}]. For each distinct $\lambda_i, i=1, \ldots, M$, let $P_{N,i}, P_{N,i}^*$ be the projection corresponding to $\lambda_i$ and $\bar{\lambda}_i$, respectively, given by a similar integral of   $(\lambda I-\calA)^{-1}$, or $(\lambda I-\calA^*)^{-1}$, respectively, as in (\hyperref[B-3.3]{3.3a-b}), this time over a curve that encircles only $\lambda_i$, or $\bar{\lambda}_i$, respectively, and no other eigenvalue. Let $(W_N^u)_i=P_{N,i} \lso$, and $(A_N^u)_i=\calA^u\big|_{(W_N^u)_i}$.\\
	
	\noindent We have that, for $1 < p,q < \infty$:	
	\begin{equation}\label{B-3.10}
	W^u_N =
	\begin{Bmatrix}
	\text{space of generalized}\\
	\text{eigenfunctions of } \calA_q (= \calA^u_N)\\
	\text{corresponding to its distinct}\\
	\text{unstable eigenvalues}
	\end{Bmatrix}
	\subset
	\begin{Bmatrix}
	\begin{aligned}
	&\lqaq\\
	&\big[ \calD(A_q), \lso \big]_{1-\alpha} = \calD(A^{\alpha}_q), \ 0 < \alpha < 1
	\end{aligned}
	\end{Bmatrix}
	\subset \lso. 
	\end{equation}
	
	\section{Uniform stabilization with arbitrary decay rate of the finite dimensional $w_N$-dynamics (\ref{B-3.8}) by suitable finite-dimensional tangent-like pair $\{ v_N, u_N \} $ on $\{ \wti{\Gamma}, \omega \}$. Constructive proof with $q \geq 2$.}\label{B-Sec-4}
	
	\noindent All the main results of this paper, Theorems \ref{B-Thm-4.1} through \ref{B-Thm-9.2}, are stated (at first) in the complex state space setting $\ds \lso + i \lso$. Thus, the finitely many stabilizing feedback vectors $p_k \in (W^u_{N})^* \subset \lo{q'}, \ u_k \in W^u_{N} \subset \lso$ constructed in the subsequent proofs are related to the complex finite dimensional unstable subspace $W^u_N$. The question then arises as to transfer back these results into the original real setting. This issue was resolved in \cite{BT:2004}. Here, the translation, taken from \cite{BT:2004}, from the results in the complex setting (Theorems \ref{B-Thm-4.1} through \ref{B-Thm-9.2}) into corresponding results in the original real setting is given in Section \ref{B-Sec-10}. The following is the key desired control theoretic result of the dynamic $w_N$ in (\ref{B-3.8}) over the finite dimensional space $\ds W^u_N \subset \lso$. We shall henceforth impose the condition $q \geq 2$, due to requirement (\ref{B-B.5}) in Appendix \ref{B-app-B}. 
	
	\begin{thm}\label{B-Thm-4.1}
		Let $\lambda_1,\ldots,\lambda_M$ be the unstable distinct eigenvalues of the Oseen operator $\calA \ (= \calA_q)$ as in \eqref{B-1.12} = (\ref{B-3.2}), with geometric multiplicity $\ell_i, \ i = 1, \dots, M$, and set $K = \sup \{ \ell_i; i = 1, \dots, M \}$. Let $\wti{\Gamma} $ be an open connected subset of the boundary $\Gamma$ of positive surface measure and $\omega$ be a localized collar supported by $\wti{\Gamma}$ (Fig.~2). Let $q \geq 2$. Given $\gamma_1 > 0$ arbitrarily large, we can construct two $K$-dimensional controllers: a boundary tangential control $v = v_N$ acting with support on $\wti{\Gamma}$, of the form given by 				
		\begin{equation}\label{B-4.1}
		v= v_N = \sum^K_{k=1} \nu_k(t) f_k, \ f_k \in \calF \subset W^{2-\frac{1}{q},q}(\Gamma), \ q \geq 2,  \mbox{ so that } f_k \cdot \nu = 0, \ \text{hence } v_N \cdot \nu = 0 \mbox{ on } \Gamma
		\end{equation}
		
		\noindent $\calF$ defined in (\ref{B-1.25}), $q \geq 2$, $f_k$ supported on $\wti{\Gamma}$, and an interior tangential-like control $u = u_N$ acting on $\omega$, of the form given by		
		\begin{equation}\label{B-4.2}
		u = u_N = \sum_{k=1}^K \mu_k(t)u_k, \quad u_k \in W^u_N \subset \lso, \quad \mu_k(t) = \text{scalar,}
		\end{equation}
		\noindent thus with interior vectors $[u_1, \dots, u_K]$ in the smooth subspace $W^u_N$ of $\lso, \ 2 \leq q < \infty$, supported on $\omega$, such that, once inserted in the finite dimensional projected $w_N$-system in (\ref{B-3.8}), yields the system
		\begin{equation}\label{B-4.3}
		w'_N - \calA^u_N w_N = - \calA^u_N P_N D \Bigg ( \sum_{k=1}^{K} \nu_k(t) f_k \Bigg) + P_N P \Bigg ( m \Bigg( \sum_{k=1}^{K} \mu_k(t) u_k \Bigg) \tau \Bigg),
		\end{equation}
		\noindent whose solution then satisfies the estimate
		\begin{multline}\label{B-4.4}
		\| w_N(t) \|_{\lso} + \| v_N(t) \|_{L^q(\wti{\Gamma})} + \| v'_N(t) \|_{L^q(\wti{\Gamma})} + \\ \|u_N(t)\|_{L^q_{\sigma}(\omega)} + \|u'_N(t)\|_{L^q_{\sigma}(\omega)} \leq C_{\gamma_{1}} e^{-\gamma_1 t} \|P_Nw_0\|_{\lso}, \quad t \geq 0.
		\end{multline}
		
		\noindent In (\ref{B-4.4}) we may replace the $\lso$-norm, $2 \leq q < \infty$, alternatively either with the $\ds \big(\lso, \calD(A_q) \big)_{1-\frac{1}{p},p}$ norm, $2 \leq q < \infty$; or else with the $\ds \big[ \calD(A_q), \lso \big]_{1-\alpha} = \calD(A^{\alpha}_q)$-norm in \eqref{B-A.5}, $ 0 \leq \alpha \leq 1,\ 2 \leq q < \infty$. In particular, we also have 
		\begin{multline}\label{B-4.5}
		\norm{w_N(t)}_{\Bto} + \norm{v_N(t)}_{L^q(\wti{\Gamma})} + \norm{v'_N(t)}_{L^q(\wti{\Gamma})} + \\ \norm{u_N(t)}_{\Bt(\omega)} + \norm{u'_N(t)}_{\Bt(\omega)} \leq C_{\gamma_{1}} e^{-\gamma_1 t} \norm{P_N w_0}_{\Bto}, \quad t \geq 0,
		\end{multline}
		
		\noindent in the $\Bto$-norm, $2 \leq q < \infty, \ p < \rfrac{2q}{2q - 1}$.[Estimate (\ref{B-4.4}) (in the weaker form \eqref{B-5.20}, i.e. without the derivative terms) will be invoked in the stabilization proof of Section \ref{B-Sec-5.3}.\\
		
		\noindent Moreover, such controllers $v=v_N$ and $u=u_N$ may be chosen in feedback form: that is, with references to the explicit expressions (\ref{B-4.1}) for $v$ and (\ref{B-4.2}) for $u$, of the form $\nu_k(t) = \big< w_N(t),p_k \big>_{_{W^u_N}}$ and $\mu_k(t) = \big<w_N(t),q_k \big>_{_{W^u_N}}$ for suitable vectors $p_k \in (W^u_N)^* \subset \lo{q'}$, $q_k \in (W^u_N)^* \subset \lo{q'}$ depending on $\gamma_1$, where $\big< \ , \ \big>$ denotes the duality pairing $\ds W^u_N \times (W^u_N)^*$.\\
		
		\noindent In conclusion, $w_N$ in (\ref{B-4.5}) is the solution of the equation (\ref{B-4.3}) on $W^u_N$ rewritten explicitly as
		\begin{equation}\label{B-4.6}
		w'_N-\calA^u_N w_N = - \calA^u_N P_N D \Bigg(\sum^K_{k=1}\big<w_N(t),p_k\big>_{_{W^u_N}} f_k \Bigg)
		+ P_NP \Bigg(m \Bigg(\sum^K_{i=1} \big< w_N(t),q_k \big>_{_{W^u_N}} u_k \Bigg) \tau \Bigg),
		\end{equation}
		\noindent $f_k$ supported on $\wti{\Gamma}$, $u_k$ supported on $\omega$, rewritten in turn as
		\begin{equation}\label{B-4.7}
		w'_N = \overline{A}^u w_N, \ w_N(t) = e^{\overline{A}^{u}t} P_N w_0, \ w_N(0) = P_N w_0 \quad \text{on } W^u_N.
		\end{equation}
	\end{thm}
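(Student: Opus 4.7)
The plan is to recast the projected equation \eqref{B-4.3} as a standard linear finite-dimensional control system on the unstable subspace $W^u_N$, verify the Hautus/Kalman controllability criterion at each distinct unstable eigenvalue $\lambda_i$ for the combined boundary/interior input pair, and then invoke a spectral pole-assignment argument to construct the feedbacks with the prescribed decay rate $\gamma_1$.

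As a first step, I would introduce two bounded linear operators $B_1, B_2 : \mathbb{C}^K \to W^u_N$ by
\begin{equation*}
B_1 \underline{\nu} = -\calA^u_N P_N D\Big(\sum_{k=1}^K \nu_k f_k\Big), \qquad B_2 \underline{\mu} = P_N P\Big(m\Big(\sum_{k=1}^K \mu_k u_k\Big)\tau\Big),
\end{equation*}
so that \eqref{B-4.3} becomes $w_N' = \calA^u_N w_N + B_1 \underline{\nu}(t) + B_2 \underline{\mu}(t)$ on $W^u_N$. Since $\calA^u_N$ leaves each generalized eigenspace $(W^u_N)_i = P_{N,i}\lso$ invariant, the Kalman/Hautus test decouples into $M$ independent problems, one at each distinct unstable $\lambda_i$.

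The main (and essentially sole) obstacle is verifying this rank test. By duality, it is equivalent to showing that if $\varphi^* \in ((W^u_N)_i)^* \subset \calD(\calA_q^*)$ satisfies $B_1^* \varphi^* = 0$ and $B_2^* \varphi^* = 0$ along with the adjoint eigenvalue relation at $\overline{\lambda_i}$, then $\varphi^* \equiv 0$. Applying Corollary \ref{B-Clr-2.6}, the identity $D^* \calA_q^* \varphi^* = \nu_o \partial_\nu \varphi^*$ on $\Gamma$, together with $f_k$ being supported on $\wti{\Gamma}$ and spanning the boundary-trace family $\calF$ of \eqref{B-1.25}, reduces $B_1^* \varphi^* = 0$ to $\partial_\nu \varphi^*_{ij}|_{\wti{\Gamma}} \equiv 0$; while freedom in choosing $u_k$ in $W^u_N$ reduces $B_2^* \varphi^* = 0$ to the interior condition $\varphi^*_{ij} \cdot \tau|_{\omega} \equiv 0$. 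Combined with $\varphi^*_{ij}|_\Gamma = 0$ (from $\calD(\calA_q^*) \subset W^{1,q'}_0(\Omega)$), this is precisely the over-determined Oseen eigenproblem of Problem \#2 in Appendix \ref{B-app-C}. The augmented UCP of Lemma \ref{B-lem-4.3} forces $\varphi^* \equiv 0$, yielding the matrix rank condition $\mathrm{rank}[-\nu_o W_i \mid U_i] = \ell_i$ of \eqref{B-4.11b}. It is essential here that the interior tangential-like control be retained: the counterexample \cite{FL:1996} shows that the UCP \emph{without} the interior condition $\varphi^* \cdot \tau|_\omega = 0$ fails in general (Case 1 of Section \ref{B-Sec-1.5}), so the pair $\{v, u\}$ is structurally necessary, not merely convenient.

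Granted the rank condition, I would then apply the spectral form of Wonham's pole-assignment theorem eigenvalue-by-eigenvalue. The refinement (originally in \cite{LT1:2015}) is that the number of scalar controllers needed is the maximal \emph{geometric} multiplicity $K = \sup_i \ell_i$ rather than the maximal algebraic multiplicity: one constructs $K$ dual vectors $p_k, q_k \in (W^u_N)^* \subset \lo{q'}$ directly from the rank identity above, bypassing the Gram--Schmidt orthogonalization of \cite{BT:2004}, \cite{B:2011}. This places the spectrum of the closed-loop generator $\overline{A}^u$ in \eqref{B-4.7} strictly to the left of $-\gamma_1$, giving $\|w_N(t)\|_{\lso} \leq C_{\gamma_1} e^{-\gamma_1 t}\|P_N w_0\|_{\lso}$ by finite dimensionality. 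Since $v_N(t) = \sum_k \big\langle w_N(t), p_k\big\rangle_{_{W^u_N}} f_k$ and $u_N(t) = \sum_k \big\langle w_N(t), q_k\big\rangle_{_{W^u_N}} u_k$ with fixed $f_k, u_k$, the same decay transfers to $\|v_N(t)\|_{L^q(\wti{\Gamma})}$ and $\|u_N(t)\|_{L^q_\sigma(\omega)}$. Time-differentiating gives $w_N' = \overline{A}^u w_N$, again finite-dimensional with the same spectrum, so that the bounds on $v_N', u_N'$ follow identically. Equivalence of all norms on the finite-dimensional space $W^u_N$ (noted in Theorem \hyperref[B-Thm-A.1]{A.1} of Appendix \ref{B-app-A}) then upgrades the estimate to the $\Bto$-norm of \eqref{B-4.5} and to any $\calD(A_q^\alpha)$-norm, completing the proof.
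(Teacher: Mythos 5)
Your proposal is correct and follows essentially the same route as the paper: reduction of \eqref{B-4.3} to a finite-dimensional control system on $W^u_N$, verification of the Kalman/Hautus rank condition $\mathrm{rank}[-\nu_o W_i \mid U_i]=\ell_i$ of Theorem \ref{B-Thm-4.2} by duality through the identity of Corollary \ref{B-Clr-2.6} and the augmented unique continuation property of Lemma \ref{B-lem-4.3}, followed by pole assignment (the paper invokes this as Popov's complete-stabilization criterion from \cite{Za}) and transfer of the decay to the feedback controls, their derivatives, and the equivalent finite-dimensional norms. The only cosmetic difference is nomenclature (Wonham vs.\ Popov) and your slightly more explicit packaging of the input operators $B_1,B_2$.
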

	\begin{proof}
		A (lengthy, technical) proof is given in \cite{LT2:2015} for $q = 2$. As the present Theorem \ref{B-Thm-4.1} is the preliminary pillar upon which the present paper rests, we need to provide more insight. In the present general case $W^u_N$ is the space of generalized eigenfunctions of $\calA_q \ ( = \calA_N^u$) corresponding to its unstable eigenvalues, see \eqref{B-3.10}. For $i = 1,\ldots,M$, we now denote by $\{\varphi_{ij}\}^{\ell_i}_{j=1}$, $\{\varphi^*_{ij}\}^{\ell_i}_{j=1}$ the (normalized) linearly independent (on $\lso$) eigenfunctions corresponding to the (assumed unstable) {\em distinct} eigenvalues $\lambda_1,\ldots,\lambda_M$ of $\calA \ (= \calA_q)$ and $\overline{\lambda}_1,\ldots\overline{\lambda}_M$ of $\calA^* \ ( = \calA_q^*)$, respectively:
		\begin{subequations}{\label{B-4.8}}
			\begin{equation}\label{B-4.8a}
			\begin{aligned}
			\calA_q \varphi_{ij} &= \lambda_i \varphi_{ij} \in \calD(\calA_q) = W^{2,q}(\Omega) \cap W^{1,q}_0(\Omega) \cap \lso \subset \lso, \\
			\calA^* \varphi^*_{ij} &= \overline{\lambda}_i \varphi^*_{ij} \in \calD(\calA^*_q) = W^{2,q'}(\Omega) \cap W^{1,q'}_0(\Omega) \cap \lo{q'} \subset \lo{q'}.
			\end{aligned}		
			\end{equation}
			\noindent The eigenvectors $\varphi_{ij}$ and $\varphi^*_{ij}$ are in $\ds \calD(\calA_q^n)$ and $\ds \calD((\calA^*_q)^n)$, for any $n$, hence they are arbitrarily smooth in $\lso$ and $\lo{q'}$, respectively. For our purposes, it will suffice to take $q \geq 2$, hence $\ds q' \leq 2, \ \rfrac{1}{q} + \rfrac{1}{q'} = 1$, and view eigenvectors henceforth as follows, see Appendix \ref{B-app-B}, Eq (\ref{B-B.5}) 
			\begin{equation}\label{B-4.8b}
			\varphi_{ij} \in W^{3,q}(\Omega) \cap \lso; \quad \varphi^*_{ij} \in W^{3,q}(\Omega) \cap \lso. 
			\end{equation}		
		\end{subequations}
		\noindent Hence, for $i = 1, \dots, M$, we may view $\varphi_{ij}$ and $\varphi^*_{ij}$ as elements of the generalized eigenspace $\ds W^u_N$ in (\ref{B-3.10}) and its dual $\ds (W^u_N)^*$, corresponding to the unstable eigenvalues as in (\ref{B-3.2}). For $h_1 \in W^u_N, h_2 \in (W^u_N)^*$, we set $\ds \big< h_1, h_2 \big>_{_{W^u_N}} = \int_{\Omega} h_1 h_2 \ d \Omega$, as a duality pairing.\\ 
		
		\noindent \uline{Step 1:} The challenging key step of the proof in \cite{LT2:2015} consists in showing that the $N$-dimensional  $w_N$-problem (\ref{B-3.8}) is controllable in $W^u_N$ by using a finite dimensional pair $\{v,u\}$ of localized tangential controllers, in particular in feedback form. To this end, we introduce the $\ell_i \times K$ matrix $W_i$, $i = 1,\ldots,M$:		
		\begin{equation}\label{B-4.9}
		W_i = \left[
		\begin{array}{lcl}
		(f_1,\partial_\nu \varphi^*_{i1}|_\Gamma)_{\wti{\Gamma}},
		& \cdots, &
		(f_K,\partial_\nu \varphi^*_{i1}|_\Gamma)_{\wti{\Gamma}}
		\\[3mm]
		(f_1,\partial_\nu \varphi^*_{i2}|_\Gamma)_{\wti{\Gamma}},
		& \cdots, &
		(f_K,\partial_\nu \varphi^*_{i2}|_\Gamma)_{\wti{\Gamma}}
		\\
		\hspace*{40pt} \vdots &  & \hspace*{40pt} \vdots
		\\
		(f_1,\partial_\nu \varphi^*_{i\ell_{i}}|_\Gamma)_{\wti{\Gamma}},
		& \cdots, &
		(f_K,\partial_\nu \varphi^*_{i\ell_{i}}|_\Gamma)_{\wti{\Gamma}}
		\end{array}
		\right]: \ell_i \times K;  \ \partial_\nu = \frac{\partial}{\partial\nu}, \ (\ ,\ )_{\wti{\Gamma}} = (\ ,\ )_{L^q(\wti{\Gamma}),L^{q'}(\wti{\Gamma})}.
		\end{equation}
		\noindent as well as the $\ell_i \times K$ matrix $U_i$, $i = 1,\ldots,M$;  $K \geq \ell_i$, $i = 1,\ldots,M$:
		\begin{equation}\label{B-4.10}
		U_i = \left[
		\begin{array}{lcl}
		\big<u_1, \varphi^*_{i1}  \cdot \tau\big>_\omega  ,
		& \cdots, &
		\big<u_K,\varphi^*_{i1} \cdot \tau\big>_\omega
		\\[3mm]
		\big<u_1, \varphi^*_{i2} \cdot \tau\big>_\omega
		& \cdots, &
		\big<u_K, \varphi^*_{i2} \cdot \tau\big>_\omega
		\\
		\hspace*{30pt} \vdots &  & \hspace*{30pt} \vdots
		\\
		\big<u_1, \varphi^*_{i\ell_{i}} \cdot \tau \big>_\omega
		& \cdots, &
		\big<u_K,\varphi^*_{i\ell_{i}} \cdot \tau \big>_\omega
		\end{array}
		\right]: \ \ell_i \times K; \
		\quad \big<\cd,\cd\big>_\omega = \big<\cd,\cd\big>_{\ls(\omega)}.
		\end{equation}
		\noindent The following is the main result of the present section - verification of the corresponding Kalman controllability criterion.
		
		\begin{thm}\label{B-Thm-4.2}
			With reference to (\ref{B-4.9}), (\ref{B-4.10}), it is possible to select boundary vectors $f_1,\ldots,f_K$ in $\calF \subset  W^{2-\rfrac{1}{q},q}(\Gamma)$, $\calF$ defined in (\ref{B-1.25}) with support on $\wti{\Gamma}$,  and interior vectors $u_1,\ldots,u_K \in L^q(\omega)$,$K = sup\{\ell_i, i =1 \ldots M\}$, such that for the matrix $[ - \nu_0 W_i \
			\begin{picture}(0,0)\multiput(0,9)(0,-5){3}{\line(0,-1){3}}\end{picture}\ U_i]$ of size $\ell_i \times 2 \ell_i$, we have
			\begin{subequations}\label{B-4.11}			
				\begin{equation}			
				\mbox{rank }[ - \nu_0 W_i \ \begin{picture}(0,0)\multiput(0,9)(0,-5){3}{\line(0,-1){3}}\end{picture}\ U_i] = \mbox{full} = \ell_i, \quad i = 1,\ldots,M. \label{B-4.11a}
				\end{equation}
				In fact, explicitly and more precisely, for each $i = 1,\ldots,M$, we have via (\ref{B-4.9}), (\ref{B-4.10}):
				\begin{equation}
				\mbox{rank}
				\left[
				\begin{array}{ccc}
				\big(f_1,\partial_\nu\varphi^*_{i1}\big)_{\wti{\Gamma}} \cdots \big(f_{\ell_{i}},\partial_\nu\varphi^*_{i1}\big)_{\wti{\Gamma}}
				&
				\begin{picture}(0,0)\multiput(0,12.5)(0,-7.5){9}{\line(0,-1){6}}\end{picture}
				&
				\big<u_1,\varphi^*_{i1} \cdot \tau\big>_\omega \cdots \big<u_{\ell_{i}},\varphi^*_{i1}\cdot \tau\big>_\omega
				\\
				\big(f_1,\partial_\nu\varphi^*_{i2}\big)_{\wti{\Gamma}} \cdots \big(f_{\ell_{i}},\partial_\nu\varphi^*_{i2})_{\wti{\Gamma}}
				&
				&
				\big<u_1,\varphi^*_{i2} \cdot \tau\big>_\omega \cdots \big<u_{\ell_{i}},\varphi^*_{i2}\cdot \tau\big>_\omega
				\\
				\vdots \hspace*{90pt} && \vdots \hspace*{90pt}
				\\
				\big(f_1,\partial_\nu\varphi^*_{i\ell_{i}}\big)_{\wti{\Gamma}} \cdots \big(f_{\ell_{i}},\partial_\nu\varphi^*_{i\ell_{i}}\big)_{\wti{\Gamma}}
				&
				&
				\big<u_1,\varphi^*_{i\ell_{i}} \cdot \tau\big>_\omega \cdots \big<u_{\ell_{i}},\varphi^*_{i\ell_{i}}\cdot \tau\big>_\omega
				\end{array}\right] = \ell_i, \label{B-4.11b}
				\end{equation}
			\end{subequations}
			where the matrix in (\hyperref[B-4.11b]{4.11b}) is $\ell_i \times 2 \ell_i$ and the boundary \underline{terms are only evaluated on $\wti{\Gamma}$}.
		\end{thm}		
		\noindent \uline{Step 2:} Verification of the above algebraic rank conditions of Kalman and Hautus style rests critically on the following unique  continuation property for the adjoint of the Oseen eigenvalue problem \cite{LT2:2015}.
		
		\begin{lemma}\label{B-lem-4.3}
			Let $\bar{\lambda}$ be an unstable eigenvalue of the adjoint Oseen operator, as in \ref{B-4.8a}. Let $\{ \varphi^*, p^* \} \in W^{2,q}(\Omega) \cap W^{1,q}(\Omega)$ where
			\begin{subequations}\label{B-4.12}
				\begin{align}
				-\nu_o \Delta \varphi^*-(L_e)^*(\varphi^*) + \nabla p^* & =  \overline{\lambda} \varphi^* & \text{in } \Omega; \label{B-4.12a}\\
				\begin{picture}(0,0)
				\put(-130,-5){ $\left\{\rule{0pt}{33pt}\right.$}\end{picture}
				\div \ \varphi^* & \equiv  0  & \text{in } \Omega; \label{B-4.12b}\\
				\varphi^*|_{\wti{\Gamma}} = 0; \ \frac{\partial \varphi^*}{\partial\nu} \bigg|_{\wti{\Gamma}} = 0; \ \varphi^* \cdot \tau & =  0 \text{ in } \omega; \label{B-4.12c}
				\end{align}
			\end{subequations}
		\begin{equation}\label{B-4.13}
		(L_e)^*(\varphi^*) = (y_e \cdot \nabla) \varphi^* + (\varphi^* \cdot \nabla)^* y_e,
		\end{equation}
		Then
		\begin{equation}\label{B-4.14}
			\varphi^* \equiv 0, \text{and } p^* \equiv \text{const} \quad \text{in } \Omega \ \qedsymbol
		\end{equation}
		\end{lemma}
	\noindent This result is equivalent to the UCP in the Problem \#2, Appendix \ref{B-app-C}: (\hyperref[B-C.6a]{C.6a-b-c}) $\implies$ \eqref{B-C.7}, of which it is the adjoint version. It is proved in \cite[Lemma 6.2]{LT2:2015}. If one omits the over-determination $\varphi^* \cdot \tau \equiv 0$ in $\omega$ in \eqref{B-4.12c}, then the corresponding UCP is \uline{false}. This is documented in Appendix \ref{B-app-C}, Problem \#1, in view of the counterexample in \cite{FL:1996}. Thus, it is Lemma \ref{B-lem-4.3} that justifies the necessity of using localized, tangential-like interior control $u$ on $\omega$, in addition to the localized boundary, even tangential, boundary control $v$ on $\wti{\Gamma}$. Relying only on $v$ (i.e. $u \equiv 0$) would not establish the required controllability of the $N$-dimensional $w_N$-problem \eqref{B-3.8}.\\
	
	\noindent \uline{Step 2:} Having established the controllability condition for the $N$-dimensional $w_N$-problem \eqref{B-3.8}, then by the well-known Popov's criterion in the finite-dimensional theory [\cite[p44]{Za} under the name of complete stabilization] allows us to obtain the stabilizing controls in feedback form, thus completing the proof of Theorem \ref{B-4.1}. Details are in \cite{LT2:2015}.
	\end{proof}
	
	\section{Global well-posedness and Uniform Exponential Stabilization of the Linearized $w$-problem (\ref{B-1.28}) in $\lso, q \geq 2$, and $\Bto$  by means of the same feedback controls $\{v,u\}$ obtained for the $w_N$-problem in Section \ref{B-Sec-4}.}\label{B-Sec-5}
	
	\subsection{The operator $\BA_{_{F,q}}$ defining the linearized $w$-problem in feedback form.}\label{B-Sec-5.1}	
	
		Let $q \geq 2$. Consider the same K-dimensional feedback controllers constructed in Theorem \ref{B-Thm-4.1} and yielding estimate (\ref{B-4.4}), (\ref{B-4.5}) for the finite-dimensional projected $w_N$-system (\ref{B-3.8}) in feedback form (\ref{B-4.6}); that is, the tangential boundary controller $v = v_N$ supported on $\wti{\Gamma}$, and the tangential-like interior controller $u = u_N$ supported on $\omega$		
		\begin{align}
		v = v_N &= \sum_{k=1}^K \nu_k(t) f_k = \sum_{k=1}^K \big<w_N(t),p_k\big>_{_{W^u_N}} f_k, \quad f_k \in \calF \subset W^{2-\rfrac{1}{q},q}(\Gamma), \ p_k \in (W^u_N)^* \subset \lo{q'}, \ q \geq 2 \nonumber \\ & \hspace{6cm} \ f_k \cdot \nu |_{\Gamma} = 0;  \text{ hence } v \cdot \nu|_{\Gamma} = 0, \  f_k \text{ supported on } \wti{\Gamma} \label{B-5.1}\\
		u = u_N &= \sum_{k=1}^K \mu_k(t) u_k = \sum_{k=1}^K \big<w_N(t),q_k\big>_{_{W^u_N}} u_k, \quad q_k \in (W^u_N)^* \subset \lo{q'},\ u_k \text{ supported on } \omega. \label{B-5.2}
		\end{align}
		\noindent 
			 Once inserted, this time, in the full linear $w$-problem (\ref{B-1.28}) or (\ref{B-2.26}) = (\ref{B-3.1}), such $v$ and $u$ in (\ref{B-5.1}), (\ref{B-5.2}) yield the linearized feedback dynamics ($w_N = P_N w$) driven by the dynamical feedback stabilizing operator $\ds \BA_{_{F,q}}$ below		
			\begin{equation}\label{B-5.3}
			\frac{dw}{dt} = \calA_q w - \calA_q D \Bigg( \sum_{k=1}^{K} \big<P_N w,p_k\big>_{_{W^u_N}} f_k \Bigg) + P_q \Bigg ( m \Bigg( \sum_{k=1}^{K} \big<P_N w,q_k\big>_{_{W^u_N}} u_k \Bigg) \tau \Bigg) \equiv \BA_{_{F,q}} w.
			\end{equation}			
			\noindent More specifically $\ds \BA_{_{F,q}}$ is rewritten as in the subsequent Section \ref{B-Sec-6}, Eqts \eqref{B-6.2}, \eqref{B-6.9a} as			
			\begin{equation}\label{B-5.4}
			\BA_{_{F,q}} = A_{_{F,q}} + G: \lso \supset \calD \big( \BA_{_{F,q}} \big) \longrightarrow \lso, \ q \geq 2 
			\end{equation}
			\begin{subequations}\label{B-5.5}
				\begin{align}
				A_{_{F,q}} &= \calA_q (I - DF) \ : \ \lso \supset \calD(A_{_{F,q}}) \longrightarrow \lso, \ q \geq 2 \label{B-5.5a}\\
				\begin{picture}(0,0)
				\put(-15,9){$\left\{\rule{0pt}{21pt}\right.$}\end{picture}
				\calD(A_{_{F,q}}) &= \big\{ h \in \lso: h - DFh \in \calD(\calA_q) = W^{2,q}(\Omega) \cap W^{1,q}_0(\Omega) \cap \lso \big\} = \calD(\BA_{_{F,q}}) \label{B-5.5b}
				\end{align}
			\end{subequations}
			\begin{subequations}\label{B-5.6}
				\begin{equation}
				F(\cdot)  = \sum_{k=1}^K \big< P_N \ \cdot, p_k \big>_{_{W^u_N}} f_k \in W^{2-\rfrac{1}{q},q}(\wti{\Gamma}); \ G(\cdot)  = P_q \bigg( m \bigg( \sum_{k=1}^K \big< P_N \ \cdot, q_k \big>_{_{W^u_N}} u_k \bigg) \tau \bigg) \in \lso \label{B-5.6a}
				\end{equation}	
				\begin{equation}\label{B-5.6b}
				F \in \calL(\lso, L^q(\wti{\Gamma})); \quad G \in \calL(\lso), \ q \geq 2 .
				\end{equation}				
			\end{subequations}
		
		\subsection{The feedback operator $\BA_{_{F,q}}$ in (\ref{B-5.3}) generates a s.c analytic semigroup in $\lso, \ 2 \leq q < \infty$ or in $\ds \Bto, \ 1 < p < \rfrac{2q}{2q-1}, \ q > d, \ d = 2,3$.}\label{B-Sec-5.2}
		
		\begin{thm}\label{B-Thm-5.1}
			Let $q \geq 2$. With reference to the feedback operator $\BA_{_{F,q}}$ in (\ref{B-5.4}) describing the feedback $w$-system in (\ref{B-5.3}), with $\ds \calD \big( \BA_{_{F,q}} \big) = \calD \big( A_{_{F,q}} \big) = \Big\{ \varphi \in W^{2,q}(\Omega) \cap \lso: \ \varphi \big|_{\Gamma} = F \varphi \Big\}$ we have: $\ds \BA_{_{F,q}}$ generates a s.c. analytic semigroup $\ds e^{\BA_{_{F,q}}t}$ on $\ds \lso, \ t > 0, \ q \geq 2$; $\ds \BA_{_{F,q}}$ has a compact resolvent on $\ds \lso, q \geq 2$.
		\end{thm}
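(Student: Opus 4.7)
\textbf{Proof plan for Theorem \ref{B-Thm-5.1}.} The plan is to split $\BA_{_{F,q}} = A_{_{F,q}} + G$ as in \eqref{B-5.4}--\eqref{B-5.6b}, treat the boundary-feedback term $A_{_{F,q}} = \calA_q(I - DF)$ first as the main piece, and then handle the interior feedback $G$ as a bounded, finite-dimensional perturbation. The compact resolvent assertion will then follow from the regularity characterization of $\calD(\BA_{_{F,q}})$ together with Rellich--Kondrachov. The heart of the argument is showing that $A_{_{F,q}}$ generates an analytic semigroup on $\lso$; once that is in hand, since $G \in \calL(\lso)$ by \eqref{B-5.6b}, the classical bounded-perturbation theorem for analytic semigroups (e.g.\ Pazy) immediately yields that $\BA_{_{F,q}} = A_{_{F,q}} + G$ generates an analytic semigroup on $\lso$ as well.

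The generation for $A_{_{F,q}}$ is the main obstacle. The strategy is to use the Dirichlet-lifting to convert the nonstandard boundary condition $h|_{\Gamma} = Fh$ into an interior fixed-point problem. Given $g \in \lso$ and $\lambda$ in a suitable sector, I would solve the resolvent equation $(\lambda I - A_{_{F,q}})h = g$ by setting $\psi := h - DFh$, which automatically belongs to $\calD(\calA_q)$ (zero boundary trace, solenoidal) whenever $h$ is in the candidate domain. The equation becomes $\lambda \psi - \calA_q \psi = g - \lambda DFh$, so that formally
\begin{equation*}
\psi = (\lambda I - \calA_q)^{-1}\bigl(g - \lambda D F h\bigr), \qquad h = \psi + D F h.
\end{equation*}
Since $F$ has finite-dimensional range spanned by $f_1, \dots, f_K \in \calF \subset W^{2-\frac{1}{q},q}(\Gamma)$, the unknown $c := Fh$ lies in a fixed finite-dimensional subspace, and substituting $h = \psi + Dc$ into $c = Fh = F\psi + (FD)c$ reduces the problem to a finite-dimensional linear system $(I - FD)c = F\psi$. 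This finite system is invertible for $|\lambda|$ sufficiently large in the appropriate sector, because the key fractional-power information from Corollary \ref{B-Clr-2.8} yields $D \in \calL(U_q, \calD(A_q^{\frac{1}{2q}-\varepsilon}))$, so that $\calA_q D F: \lso \to \calD(A_q^{-1+\frac{1}{2q}-\varepsilon})$ is of strictly lower order than $\calA_q$; this provides the needed smallness of $\lambda D F (\lambda I - \calA_q)^{-1}$ as $|\lambda| \to \infty$ in a sector. The resulting resolvent representation inherits the sectorial estimate $\norm{(\lambda I - A_{_{F,q}})^{-1}}_{\calL(\lso)} \leq M/|\lambda - \omega|$ from the known analyticity of $e^{\calA_q t}$ on $\lso$ (Appendix \ref{B-app-A}), establishing analyticity of $A_{_{F,q}}$.

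The hardest technical point here is precisely that $\calA_q D$ is not $\lso$-valued (the Dirichlet map lifts boundary data with trace equal to the data, which is \emph{not} in $\calD(\calA_q)$), so one has to work with the $\calA_q$-extension to $[\calD(\calA_q^*)]'$ introduced just before \eqref{B-2.25}; the finite dimensionality of the range of $F$ together with the smoothness $f_k \in W^{2-1/q,q}(\Gamma)$ rescues the argument by confining all ``bad'' traces to a finite-dimensional subspace on which the fixed-point system $(I - FD)c = F\psi$ can be inverted explicitly. Once analyticity of $\BA_{_{F,q}} = A_{_{F,q}} + G$ is obtained, the compact resolvent statement is immediate: from \eqref{B-5.5b} and \eqref{B-5.6a} the domain satisfies
\begin{equation*}
\calD(\BA_{_{F,q}}) = \calD(A_{_{F,q}}) = \bigl\{\varphi \in W^{2,q}(\Omega) \cap \lso : \varphi|_{\Gamma} = F\varphi\bigr\} \hookrightarrow W^{2,q}(\Omega) \cap \lso
\end{equation*}
continuously in the graph norm, and the embedding $W^{2,q}(\Omega) \hookrightarrow L^q(\Omega)$ is compact by Rellich--Kondrachov (bounded $C^2$ domain), so $(\lambda I - \BA_{_{F,q}})^{-1}: \lso \to \lso$ is compact for any $\lambda$ in the resolvent set.
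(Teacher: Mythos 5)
Your proposal follows essentially the same route as the paper's first proof: split off $G$ as a bounded perturbation, and establish generation for $A_{_{F,q}}=\calA_q(I-DF)$ by a resolvent perturbation argument whose key ingredient is exactly the fractional smoothing $A_q^{\rfrac{1}{2q}-\varepsilon}D\in\calL(U_q,\lso)$ of Corollary \ref{B-Clr-2.8} combined with the sectorial decay $\norm{R(\lambda,\hat{\calA}_q)\hat{\calA}_q^{\theta}}\leq C|\lambda|^{-(1-\theta)}$ — your explicit finite-dimensional reduction for $c=Fh$ is just an unpacked form of the paper's inversion of $[I+R(\lambda,\calA_q)\calA_q DF]$ in (\ref{B-5.9}). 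The only cosmetic difference is that you obtain compactness of the resolvent from the domain characterization plus Rellich--Kondrachov, whereas the paper reads it off the resolvent identity (\ref{B-5.11}) as a bounded operator composed with the compact $R(\lambda,\calA_q)$; both are valid.
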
 
	
	\begin{proof}
		For $q \geq 2$ the finite dimensional feedback operators $\ds F: \lso \longrightarrow L^q(\Gamma)$ and $G$ on $\lso$ are bounded. This in turn, is due to Appendix \ref{B-app-B}, in particular Eq (\ref{B-B.5}): $\ds \varphi^*_{ij} \in W^{3,q}(\Omega), \ q \geq 2$ and		
		\begin{equation*}
		\frac{\partial \varphi_{ij}^*}{\partial \nu} \bigg|_{\Gamma} \in W^{2-\rfrac{1}{q},q}(\Gamma) \subset L^q(\Gamma), \ \text{ and } \calF \subset W^{2-\rfrac{1}{q},q}(\Gamma).
		\end{equation*}		
		\noindent Thus, it suffices to consider the operator $\ds A_{_{F,q}}$, below in (\ref{B-5.5a}), which differs from $\ds \BA_{_{F,q}}$ by the bounded operator $G$. We give two proofs.
		
		\noindent \uline{First proof (after \cite{BLT1:2006} $q = 2$):} We shall critically use property (\ref{B-3.41}) for the Dirichlet map $D$ in the $L^q$-setting, $1 < q < \infty$, just as it was done in these references in the Hilbert setting; namely that, with $\varepsilon > 0$, recalling (\ref{B-2.70b}) of Corollary \ref{B-Clr-2.8}, we have:
		\begin{multline}\label{B-5.7}
		D: \text{ continuous } U_q = \big\{ g \in L^q(\Omega) = W^{0,q}(\Omega), \ g \cdot \nu = 0 \ \text{on } \Gamma \big\} \longrightarrow \\ W^{\rfrac{1}{q},q}(\Omega) \cap \lso \subset \calD \Big(A_q^{\rfrac{1}{2q}-\varepsilon}\Big),
		\end{multline}
		\noindent $1 < q < \infty$, where $A_q$ is the Stokes operator in \eqref{B-1.8}. We next transfer relation (\ref{B-5.7}) to the Oseen operator $\calA_q$ in (\ref{B-1.11}). To do this, we just translate it. Let $k>0$ be suitably large, then, via (\ref{B-5.7})
		\begin{equation}\label{B-5.8}
		\hat{\calA}_q^{\rfrac{1}{2q}-\varepsilon} D = \big( kI - \calA_q \big)^{\rfrac{1}{2q}-\varepsilon} D: \text{ continuous } \big\{ g \in L^q(\Gamma), \ g \cdot \nu = 0 \ \text{on } \Gamma \big\} \longrightarrow \lso
		\end{equation}
		\noindent Both elements - that $\ds A_{_{F,q}}$ generates a s.c. analytic semigroup on $\lso$ and has compact resolvent on it - rely on the perturbation formula \cite{P:1983} written for $\ds A_{_{F,q}}$ in \eqref{B-5.5a}.
		\begin{equation}\label{B-5.9}
		R \big(\lambda, A_{_{F,q}} \big) = [I + R (\lambda, \calA_q) \calA_qDF]^{-1} R(\lambda, \calA_q)
		\end{equation}
		\noindent where by property (\ref{B-5.8}) $\ds \hat{\calA}_q^{\rfrac{1}{2q} - \varepsilon} DF \in \calL(\lso)$. Moreover, since $\calA_q$ generates a s.c. analytic semigroup in $\lso$ (Theorem \hyperref[B-Thm-A.1]{A.1.ii} of Appendix \ref{B-app-A}), a well-known formula \cite{P:1983} gives for $\ds \varepsilon > 0, \ \theta = 1 - \rfrac{1}{2q} - \varepsilon$
		\begin{equation}\label{B-5.10}
		\norm{R(\lambda, \hat{\calA}_q)\hat{\calA}_q^{\theta}}_{\calL(\lso)} \leq \frac{C}{|\lambda|^{1-\theta}} = \frac{C}{|\lambda|^{\rfrac{1}{2q} + \varepsilon}} \rightarrow 0 \text{ as } |\lambda| \rightarrow \infty.
		\end{equation}
		\noindent Then, (\ref{B-5.10}) in (\ref{B-5.9}) yields 
		\begin{equation}\label{B-5.11}
		\norm{R \big(\lambda, A_{_{F,q}} \big)}_{\calL(\lso)} \leq C_{\rho_o} \norm{R(\lambda, \calA_q)}_{\calL(\lso)}, \quad \forall \ \lambda, \ |\lambda| \geq \text{ some }\rho_o > 0
		\end{equation}
		\noindent and hence, via (\ref{B-5.11}) the properties of $R(\lambda, \calA_q)$ of Theorem \ref{B-Thm-A.1} [generation of s.c. analytic semigroup on $\lso$ and, respectively, compact resolvent] transfer into corresponding properties for $R(\lambda, A_{_{F,q}})$. Theorem \ref{B-Thm-5.1} is proved.\\
		
		\noindent \uline{Analyticity: second proof:} One may provide a second proof that $\ds A_{_{F,q}}$ generates a s.c. analytic semigroup on $\lso, q \geq 2$. This is still a perturbation argument, however perturbation of an original analytic generator, not of the resolvent. In fact, the present perturbation argument applies to the adjoint operator $\ds A_{_{F,q}}^*$ on $\lo{q'}$, not to $\ds A_{_{F,q}}$ on $\lso, 1 < q' \leq 2, 2 \leq q$. Eqts (\ref{B-6.13}), (\ref{B-6.14}) in the argument below of Proposition \ref{B-Prop-6.2} dealing with $\ds \BA_{_{F,q}} = A_{_{F,q}} + G$ show that $\ds \BA_{_{F,q}}^*$ can be written as $\ds \BA_{_{F,q}}^* = -A^*_q + \Pi$, where the perturbation $\Pi$ is $\ds \big( A^*_q \big)^{\theta}$-bounded, with $\ds \theta = 1 - \rfrac{1}{2q} + \varepsilon < 1$, see (\ref{B-6.17}). Thus, since $-A^*_q$ generates s.c. analytic semigroup on $\lo{q'}$ (by adjointness on Theorem \hyperref[B-Thm-A.1]{A.1(i)} on $-A_q$), then a standard semigroup result \cite{P:1983} implies that the perturbed operator $\ds \BA_{_{F,q}}^*$ is an analytic semigroup generator on $\lo{q'}, 1 < q' \leq 2$. But this is equivalent ($\lso$ being reflexive, $1 < q < \infty$) to the original operator $\ds \BA_{_{F,q}}$ being an analytic semigroup generator on $\lso, q \geq 2$, as desired. The quoted Proposition \ref{B-Prop-6.2} shows more. In fact: that is, that $\ds A_{_{F,q}}$ (actually $\ds \BA_{_{F,q}} = A_{_{F,q}} + G$) has $L^p$-maximal regularity on $\lso$, in symbols, $\ds A_{_{F,q}} \in MReg \big( L^p (0, \infty; \lso) \big), q \geq 2$ (\ref{B-6.18}). And maximal regularity implies analyticity \cite{Dore:2000}, The argument of Proposition \ref{B-Prop-6.2} is of the perturbation type described above, however tuned to the notion of maximal regularity, which is stronger than analyticity. 		
	\end{proof}
		
	\noindent We next extend Theorem \ref{B-Thm-5.1} to the Besov space $\ds \Bto$ in (\ref{B-1.15b}) $\ds 1 < p < \rfrac{2q}{2q-1}, \ q \geq 2$, of interest. To this end, we need the following result.
	\begin{prop}\label{B-Prop-9.3}
		Let $\ds 1 < p < \frac{2q}{2q - 1}, q \geq 2$. Then
		\begin{align}
		\lqafq &= \Bto \label{B-5.12}\\
		&= \Big\{ g \in \Bso: \text{ div } g \equiv 0, \ g \cdot \nu|_{\Gamma} = 0 \Big\}. \label{B-5.13}
		\end{align}
		\end{prop}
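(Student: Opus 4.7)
The second identity in \eqref{B-5.13} is just the characterization \eqref{B-1.15b} instantiated under the index constraint $0<2-\tfrac{2}{p}<\tfrac{1}{q}$, i.e.\ $1<p<\tfrac{2q}{2q-1}$. The real content of the proposition is \eqref{B-5.12}: swapping the Stokes domain $\calD(A_q)$ for the feedback-perturbed domain $\calD(\BA_{_{F,q}})$ must leave the real-interpolation space invariant. My strategy is to exhibit the similarity $T := I-DF$ on $\lso$, which by \eqref{B-5.5a} intertwines the two operators as $A_{_{F,q}}=\calA_q T$ on $\calD(A_{_{F,q}})$, and then to apply the real-interpolation functor.

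As a preliminary reduction, $G\in\calL(\lso)$ by \eqref{B-5.6b} is a bounded perturbation, so $\calD(\BA_{_{F,q}})=\calD(A_{_{F,q}})$ with equivalent graph norms; hence it is enough to identify $(\lso,\calD(A_{_{F,q}}))_{1-\frac{1}{p},p}$ with $\Bto$. The feedback operator $F$ in \eqref{B-5.6a} has finite-dimensional range in $\operatorname{span}\{f_k\}\subset\calF\subset W^{2-\rfrac{1}{q},q}(\Gamma)$ with $f_k\cdot\nu=0$; invoking the higher-regularity theory for the stationary Oseen problem \eqref{B-2.1} (the natural $W^{2,q}$-upgrade of Theorem \ref{B-Thm-2.1}, available because $\Gamma\in C^2$ and $y_e\in C(\overline\Omega)$), each $Df_k$ lies in $W^{2,q}(\Omega)\cap\lso$. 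Consequently $DF$ is finite-rank (hence compact) from $\lso$ into $W^{2,q}(\Omega)\cap\lso\subset\Bto$.

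The key step is to verify that $T=I-DF$ is simultaneously a topological isomorphism (i) on $\lso$ and (ii) from $\calD(A_{_{F,q}})$ onto $\calD(A_q)$. Item (ii) is built into the definition \eqref{B-5.5b}, with equivalence of graph norms following from $\calA_q T\varphi=A_{_{F,q}}\varphi$ combined with (i). For (i), compactness of $DF$ places $T$ in the Fredholm alternative, so bijectivity reduces to injectivity; the latter in turn reduces to a finite-dimensional non-degeneracy condition (the matrix $M_{kj}=\langle P_N(Df_j),p_k\rangle$ must not have $1$ as an eigenvalue), which can be enforced by the feedback construction of Theorem \ref{B-Thm-4.1}, in the worst case after a harmless resolvent shift of the Oseen operator (analytic-semigroup generation from Theorem \ref{B-Thm-5.1} guarantees that $\lambda I + A_{_{F,q}}$ is invertible for large $\lambda>0$, without altering the domain or the interpolation space). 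The real-interpolation functor then turns (i) and (ii) into a topological isomorphism
\[
T:\ \bigl(\lso,\calD(A_{_{F,q}})\bigr)_{1-\frac{1}{p},\,p}\ \longrightarrow\ \bigl(\lso,\calD(A_q)\bigr)_{1-\frac{1}{p},\,p}=\Bto,
\]
the last equality being \eqref{B-1.15b}. Finally, set-equality of the two interpolation spaces as subsets of $\lso$ is read off from the decomposition $\varphi=T\varphi+DF\varphi$: if $\varphi$ lies in the left-hand space, then $T\varphi\in\Bto$ and $DF\varphi\in\Bto$ force $\varphi\in\Bto$; conversely, for $\varphi\in\Bto$, $T\varphi=\varphi-DF\varphi\in\Bto$, and the interpolation isomorphism $T^{-1}$ returns $\varphi$ to the left-hand space.

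\textbf{Main obstacle.} The principal technical bottleneck is the $W^{2,q}$-elliptic-regularity statement for the Dirichlet map $D$ applied to data in $\calF$: Theorems \ref{B-Thm-2.1}--\ref{B-Thm-2.2} of the paper only deliver $W^{1,q}$ and $L^q$ interior regularity respectively, so one must invoke a genuine higher-regularity theorem for the nonhomogeneous stationary Oseen problem on a $C^2$-domain with smooth drift $y_e$. Once this $W^{2,q}$-regularity is secured, the remainder is a clean combination of similarity, the Fredholm alternative, and real interpolation.
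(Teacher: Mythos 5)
Your route is genuinely different from the paper's, and it has a gap at its central step. The paper never tries to realize $\calD(A_q)$ and $\calD(\BA_{_{F,q}})$ as similar via $T=I-DF$: it only uses the one-sided inclusion $\calD(\BA_{_{F,q}})\subset W^{2,q}(\Omega)\cap\lso$ to get $\lqafq\subset\Bto$ by interpolation monotonicity, and then closes the identity by observing that, for $\theta=1-\rfrac{1}{p}$ with $2-\rfrac{2}{p}<\rfrac{1}{q}$, the interpolation space does not recognize boundary conditions, so the two domains --- which are both $W^{2,q}(\Omega)\cap\lso$ up to the boundary conditions --- yield the same interpolation space, already computed in \eqref{B-1.15b}. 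No invertibility of $I-DF$ enters anywhere.

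Your argument, by contrast, stands or falls with the claim that $T=I-DF$ is an isomorphism of $\lso$, hence of $\calD(A_{_{F,q}})$ \emph{onto} $\calD(A_q)$ (note that \eqref{B-5.5b} only gives that $T$ maps $\calD(A_{_{F,q}})$ \emph{into} $\calD(A_q)$; surjectivity is again the invertibility of $T$). By the Fredholm alternative this reduces, as you say, to $1\notin\sigma(M)$ for the $K\times K$ matrix $M_{kj}=\langle P_N Df_j,p_k\rangle_{_{W^u_N}}$. Nothing in Theorem \ref{B-Thm-4.1} enforces this: the vectors $f_k,p_k$ are chosen to satisfy the Kalman rank condition \eqref{B-4.11b} and the decay estimate \eqref{B-4.4}, and non-degeneracy of $M$ is an independent condition. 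Your fallback --- a ``harmless resolvent shift,'' justified by invertibility of $\lambda I+A_{_{F,q}}$ for large $\lambda$ --- conflates two different questions: invertibility of the resolvent of the generator has no bearing on invertibility of the bounded operator $I-DF$. (Shifting the parameter $k$ in the Dirichlet map would change $D$, hence $M$, but you would then have to show some admissible shift gives $1\notin\sigma(M)$, which you do not do.) So as written the proof is conditional on an unverified hypothesis that the paper's argument is designed to avoid. Separately, the obstacle you flag as principal --- $W^{2,q}$-regularity of $Df_k$ --- is both already asserted in the paper (Corollary \ref{B-Clr-B.2}(v), Eq.\ \eqref{B-B.7}, since $f_k\in W^{2-\rfrac{1}{q},q}(\Gamma)$) and stronger than your argument needs: $Df_k\in W^{1,q}(\Omega)\cap\lso$, which follows from \eqref{B-2.12} with $\theta=0$, already places $DF\varphi$ in $\Bto$ because $2-\rfrac{2}{p}<\rfrac{1}{q}<1$.
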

	\begin{rmk}\label{B-Rmk-5.1}
		This formula should be compared with the original definition of $\ds \Bto$ in \eqref{B-1.15b}. 
	\end{rmk}
		\begin{proof}
		\noindent \underline{Step 1:} From the characterization of $\ds \calD \big( \BA_{_{F,q}} \big) = \calD \big( A_{_{F,q}} \big)$ in (\ref{B-5.5b}) we obtain for $0 < \theta < 1, \ p > 1,\ q \geq 2$		
		\begin{equation}\label{B-5.14}
		\big(\lso, \calD(\BA_{_{F,q}} )\big)_{\theta, p} \subset \big( \lso, W^{2,q}(\Omega) \cap \lso \big)_{\theta, p} = B^{2 \theta}_{q,p}(\Omega) \cap \lso			
		\end{equation}	
		\noindent recalling the definition/characterization (\ref{B-1.13a}) of $\ds B^s_{q,p}(\Omega)$ with $\ds m = 2, \ \rfrac{s}{2} = \theta$. Next we take $\ds 1 < p < \rfrac{2q}{2q-1},\ q \geq 2, \ \theta = 1 - \rfrac{1}{p}$, so that - for these parameters - (\ref{B-5.14}) specializes to 
		\begin{equation}
		\lqafq \subset \Bto = \text{ defined in \eqref{B-1.15b}}  \label{B-5.15}
		\end{equation} 
		\noindent \underline{Step 2:} But $\ds \Bto$ does not recognize boundary conditions [the conditions div $g = 0, \ g \cdot \nu|_{\Gamma} = 0$ are included in the definition of the underlying space $\ds \lso$, see Remark \ref{B-rmk-1.4}].\\			
		\noindent Hence, the space in the LHS of (\ref{B-5.15}) does not recognize boundary conditions. Thus, for the indexes $\ds \big\{ \theta = 1 - \rfrac{1}{p}, p\big\}$, with $\ds 1 < p < \rfrac{2q}{2q-1}, q \geq 2$, we have recalling (\ref{B-1.15b})
		\begin{align}
		\lqafq &= \lqaq = \Bto \label{B-5.16}
		\end{align}
		\noindent as $\ds \calD \big( \BA_{_{F,q}} \big)$ and $\ds \calD(A_q)$ both consist of $\ds W^{2,q}(\Omega) \cap \lso$ functions, subject only to different boundary conditions. Thus (\ref{B-5.16}) proves the desired conclusion (\ref{B-5.12}), (\ref{B-5.13}).
		\end{proof}			
	
	\begin{thm}\label{B-Thm-9.4}
		The operator $\ds \BA_{_{F,q}}$ in (\ref{B-5.4}), where the bounded operators $F$ and $G$ are defined by (\ref{B-5.6}), generates a s.c. analytic semigroup $e^{\BA_{_{F,q}}t}$ on the Besov space $\ds \Bto, \ 1 < p < \rfrac{2q}{2q-1}, \ q \geq 2$ defined in (\ref{B-5.13}).  
	\end{thm}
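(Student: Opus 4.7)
The plan is to lift the $\lso$-analyticity of $\BA_{_{F,q}}$ established in Theorem \ref{B-Thm-5.1} to the Besov space $\Bto$ by interpolation, using the crucial identification
\begin{equation*}
\Bto \;=\; \big(\lso,\calD(\BA_{_{F,q}})\big)_{1-\frac{1}{p},p}
\end{equation*}
supplied by Proposition \ref{B-Prop-9.3}. The point is that $\Bto$ is not an abstract Besov space unrelated to the generator, but precisely a real interpolation space between the base space $\lso$ and the domain of the closed-loop generator.

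First I would recall the following classical principle from the theory of analytic semigroups on interpolation spaces (see, e.g., Lunardi, Amann, Triebel, as well as Da Prato--Grisvard): if $-A$ generates a bounded analytic semigroup on a Banach space $X$, then for each $0<\theta<1$ and $1\le p\le\infty$, the part $A_\theta$ of $A$ in the real interpolation space $X_{\theta,p}=(X,\calD(A))_{\theta,p}$ is again the generator of a bounded analytic semigroup, and this semigroup is the restriction of $e^{-At}$ to $X_{\theta,p}$. In particular $X_{\theta,p}$ is invariant under $e^{-At}$, and the analyticity sector and type constants are inherited from those on $X$.

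Applying this principle to $A = -\BA_{_{F,q}}$ (which is an analytic generator on $\lso$ with $q\ge 2$ by Theorem \ref{B-Thm-5.1}) and specializing to $\theta = 1-\frac{1}{p}$ with $1<p<\frac{2q}{2q-1}$, the general statement produces an analytic semigroup on
\begin{equation*}
\big(\lso,\calD(\BA_{_{F,q}})\big)_{1-\frac{1}{p},p}.
\end{equation*}
By Proposition \ref{B-Prop-9.3} this interpolation space coincides with $\Bto$, so the restriction of $e^{\BA_{_{F,q}}t}$ to $\Bto$ is the desired s.c. analytic semigroup. The compatibility of the feedback boundary trace condition across these spaces - i.e.\ the fact that the realization in $\Bto$ is still governed by the same differential expression with the same boundary datum $\varphi|_\Gamma = F\varphi$ - is automatic from the ``part of $A$'' construction, since no new boundary compatibility is imposed by the Besov topology in the range $1<p<\frac{2q}{2q-1}$ (this is precisely why the indices are chosen in this range; cf.\ Remark \ref{B-rmk-1.4}).

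The main conceptual obstacle is really upstream and has already been resolved: namely, establishing that $\Bto$ is invariant under the closed-loop semigroup and admits the clean interpolation identification of Proposition \ref{B-Prop-9.3}. The Besov indices must be ``tight'' enough that no Dirichlet boundary trace is recognized, yet the interpolation identity must still hold with respect to $\calD(\BA_{_{F,q}})$ rather than $\calD(A_q)$; this is exactly what Proposition \ref{B-Prop-9.3} delivers by showing the two domains produce the same interpolation space in the chosen index range. Once this is in hand, the generation statement on $\Bto$ is then a direct application of the interpolation-of-semigroups principle, with no further perturbation or resolvent estimate needed beyond those already deployed to prove Theorem \ref{B-Thm-5.1}.
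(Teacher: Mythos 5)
Your proposal is correct and matches the paper's own (very terse) argument: the paper likewise deduces generation on $\ds \Bto$ from generation on $\lso$ (Theorem \ref{B-Thm-5.1}) together with generation on $\ds \calD\big(\BA_{_{F,q}}\big)$, concluding by real interpolation and the identification $\ds \lqafq = \Bto$ of Proposition \ref{B-Prop-9.3}. You have simply spelled out the interpolation-of-semigroups principle that the paper leaves implicit.
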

	
	\begin{proof}
		The operator $\ds \BA_{_{F,q}}$ generates a s.c. analytic semigroup $\ds e^{\BA_{_{F,q}}t}$ on $\lso$ by Theorem \ref{B-Thm-5.1} for $q \geq 2$. Then, it generates a s.c. analytic semigroup on $\ds \calD \big( \BA_{_{F,q}} \big)$. Hence the conclusion follows by (\ref{B-5.12}).
	\end{proof}

	\subsection{The analytic semigroup $\ds e^{\BA_{_{F,q}}t}$ is uniformly stable on $\lso$ and $\Bto$.}\label{B-Sec-5.3}
	
	\begin{thm}\label{B-Thm-5.4}
		The s.c. analytic semigroup $\ds e^{\BA_{_{F,q}}t}$ defining the feedback $w$-dynamics in \eqref{B-5.3} is uniformly stable with decay rate $\gamma_0 > 0$ in both the space $\lso, \ 2 \leq q < \infty$, as well as in the space $\lqaq, \ 2 \leq q < \infty$; in particular, in the space $\Bto, \ 2 \leq q < \infty, \ 1 < p < \rfrac{2q}{2q-1}$: there exists $C_{\gamma_0} > 0$ such that 
	\end{thm}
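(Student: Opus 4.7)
The plan is to exploit the spectral decomposition $\lso = W^u_N \oplus W^s_N$ of \eqref{B-3.4} together with the crucial observation that the feedback operators $F$ and $G$ in \eqref{B-5.6} depend only on the finite-dimensional component $w_N = P_N w$. Applying $P_N$ and $I - P_N$ to \eqref{B-5.3} and using the invariance of $W^u_N$ and $W^s_N$ under $\calA_q$, the feedback dynamics decouples into the self-contained finite-dimensional system \eqref{B-4.6} on $W^u_N$ (which by Theorem \ref{B-Thm-4.1} satisfies $\|w_N(t)\|_{\lso} \leq C_{\gamma_1} e^{-\gamma_1 t}\|P_N w_0\|_{\lso}$ with $\gamma_1$ preassigned strictly larger than $\gamma_0$) together with a forced equation for $\zeta_N = (I-P_N)w$,
\begin{equation*}
\zeta_N' = \calA^s_N \zeta_N - \calA^s_N (I - P_N) D F w_N + (I - P_N) P_q\bigl((m G w_N)\tau\bigr), \qquad \zeta_N(0) = (I - P_N) w_0,
\end{equation*}
in which the forcing involves only $w_N$ and hence decays exponentially at the rate $\gamma_1$.

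The restriction $\calA^s_N$ of the Oseen operator to $W^s_N$ has spectrum $\{\lambda_j\}_{j \geq N+1}$ lying strictly in $\{\mathrm{Re}\,\lambda < 0\}$ and generates an analytic semigroup on $W^s_N$. For any preassigned $0 < \gamma_0 < |\mathrm{Re}\,\lambda_{N+1}|$, standard spectral theory yields the estimates $\|e^{\calA^s_N t}\|_{\calL(W^s_N)} \leq M e^{-\gamma_0 t}$ and, by analyticity, $\|\calA^s_N e^{\calA^s_N t}\|_{\calL(W^s_N)} \leq M t^{-1} e^{-\gamma_0 t}$. Writing the variation-of-parameters representation for $\zeta_N$ and using the exponential decay of $w_N$ at the strictly faster rate $\gamma_1$, one estimates the convolution after splitting $\calA^s_N D = \calA_q^{1 - \theta} \cdot \calA_q^{\theta} D$ with $\theta = 1/(2q) - \varepsilon$, where by Corollary \ref{B-Clr-2.8} the factor $\calA_q^{\theta} D$ is bounded from the range of $F$ into $\lso$. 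The remaining analytic-semigroup kernel contributes only the integrable singularity $(t-s)^{\theta - 1}$, so that $\|\zeta_N(t)\|_{\lso} \leq C_{\gamma_0} e^{-\gamma_0 t}\|w_0\|_{\lso}$. Combined with the $w_N$-bound of Theorem \ref{B-Thm-4.1}, this yields the desired exponential decay in $\lso$ with rate $\gamma_0$.

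To upgrade the estimate to $\Bto$ (and to $\lqaq$), invoke Proposition \ref{B-Prop-9.3}, according to which $\Bto = \lqafq$. Since $\BA_{_{F,q}}$ commutes with its own semigroup, the $\lso$-decay just obtained transfers immediately to the graph norm through the identity $\BA_{_{F,q}} e^{\BA_{_{F,q}} t} = e^{\BA_{_{F,q}} t} \BA_{_{F,q}}$, so that $\|e^{\BA_{_{F,q}} t}\|_{\calL(\calD(\BA_{_{F,q}}))} \leq C_{\gamma_0} e^{-\gamma_0 t}$ as well. Real interpolation with index $(1 - 1/p, p)$ between the $\lso$- and $\calD(\BA_{_{F,q}})$-bounds then produces exponential decay at the same rate $\gamma_0$ in the interpolation space $\lqafq = \Bto$. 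The main technical obstacle is precisely the unboundedness of $\calA^s_N D$ in the forcing term for $\zeta_N$, which would naively generate a non-integrable singularity at $s = t$; this is overcome by the fractional regularity \eqref{B-2.70b} of the Dirichlet map together with the smoothness of the boundary vectors $f_k \in \calF \subset W^{2 - 1/q, q}(\Gamma)$, while the gap $\gamma_1 > \gamma_0$ absorbs the algebraic time-factor produced by the convolution.
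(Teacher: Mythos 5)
Your argument is correct, and on the space $\lso$ it is essentially the paper's own proof (which is only sketched there and defers to \cite[Lemma 2.3]{LT2:2015}): decompose along $W^u_N \oplus W^s_N$, observe that $F$ and $G$ see only $w_N = P_N w$ so that the $w_N$-equation is the self-contained system \eqref{B-4.6} decaying at the preassigned rate $\gamma_1 > \gamma_0$, and then estimate the two convolution terms $I_{\rm int}, I_{\rm bry}$ in \eqref{B-5.21}--\eqref{B-5.24}, taming the unbounded factor $\calA^s_N D$ in $I_{\rm bry}$ exactly as you do, via $\hat{\calA}_q^{\rfrac{1}{2q}-\varepsilon}D \in \calL(U_q,\lso)$ from Corollary \ref{B-Clr-2.8} and the integrable kernel $(t-r)^{\theta-1}e^{-\gamma_0(t-r)}$ against the faster-decaying forcing. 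Where you genuinely diverge is the upgrade to $\Bto$: the paper repeats the convolution estimates directly in the $\Bto$-norm using the stable-subspace property \eqref{B-5.25}, whereas you first transfer the $\lso$-decay to the graph norm of $\calD\big(\BA_{_{F,q}}\big)$ via $\BA_{_{F,q}}e^{\BA_{_{F,q}}t} = e^{\BA_{_{F,q}}t}\BA_{_{F,q}}$ and then real-interpolate with index $\big(1-\rfrac{1}{p},p\big)$, invoking Proposition \ref{B-Prop-9.3} to identify the interpolation space with $\Bto$. Your route is cleaner and avoids redoing the singular-integral estimates in the Besov norm; its only limitation is that the identification $\lqafq = \lqaq = \Bto$ holds precisely in the range $1 < p < \rfrac{2q}{2q-1}$, so the interpolation argument delivers the decay in $\lqaq$ only for those indices --- which is, however, the only case the paper actually uses downstream (estimate \eqref{B-5.19}).
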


	\begin{equation}\label{B-5.17}
	\norm{e^{\BA_{_{F,q}}t} w_0}_{\lso} = \norm{w(t;w_0)}_{\lso} \leq C_{\gamma_0} e^{-\gamma_0 t}\norm{w_0}_{\lso}, \quad t \geq 0, \ q \geq 2
	\end{equation} 
	\noindent or for $0 < \theta < 1, \delta > 0$ arbitrarily small, $q \geq 2$
	\begin{subequations}\label{B-5.18}		
		\begin{numcases}{\norm{ A^{\theta}_q \  e^{\BA_{_{F,q}}t} w_0}_{\lso} = \norm{ A^{\theta}_q \ w(t;w_0)}_{\lso}=}
		C_{\gamma_0, \theta} e^{-\gamma_0 t}\norm{A^{\theta}_q \ w_0}_{\lso},  t \geq 0, w_0 \in \calD(A^{\theta}_q). \label{B-5.18a}\\
		C_{\gamma_0, \theta, \delta} e^{-\gamma_0 t}\norm{w_0}_{\lso}, \quad t \geq \delta > 0. \label{B-5.18b}
		\end{numcases}
	\end{subequations}
	\begin{multline}\label{B-5.19}
	\norm{e^{\BA_{_{F,q}}t} w_0}_{\Bto} = \norm{w(t;w_0)}_{\Bto} \leq C_{\gamma_0} e^{-\gamma_0 t}\norm{w_0}_{\Bto}, \ t  \geq 0\\
	2 \leq q < \infty, \ 1 < p < \frac{2q}{2q-1}
	\end{multline}
	
	\begin{proof}
		\noindent \uline{On $\lso$)} A proof of \eqref{B-5.17}-\eqref{B-5.18} on the Hilbert space $H$ in \eqref{B-1.6b} (i.e. $q = 2$) is given in \cite[Lemma 2.3]{LT2:2015}. Essentially the same proof works on $\lso$, using of course the estimate \eqref{B-4.4} now:
		\begin{equation}\label{B-5.20}
		\norm{w_N(t)}_{\lso} + \norm{v_N(t)}_{L^q(\wti{\Gamma})} + \norm{u_N(t)}_{L^q(\omega)} \leq C_{\gamma_{1}} e^{-\gamma_1t}\norm{P_N w_0}_{\lso}, t \geq 0, \ q \geq 2
		\end{equation}
		((\ref{B-4.4}) includes also $v'_N$ and $u'_N$). We just sketch the strategy.\\
		
		\noindent Next one examines the impact of such constructive feedback control pair $\{v_N,u_N \cdot \tau\}$ on the $\zeta_N$-dynamics (\ref{B-3.9}), whose explicit solution is given by the variation of parameter formula
		\begin{align}
		\zeta_N(t) &= e^{\calA^{s}_Nt} \zeta_N(0) + (I_{\rm int})(t) + (I_{\rm bry})(t); \label{B-5.21}\\
		\norm{e^{\calA^{s}_N t}}_{\calL(\lso)} &\leq  C_{\gamma_{0}} e^{-\gamma_0t}, \ 0 \leq t, \ 0 < \gamma_0 < |\mbox{Re } \lambda_{N+1}|; \label{B-5.22}\\
		\big(I_{\rm int} \big)(t) & = -\int^t_0 e^{\calA^s_N(t-r)} (I-P_N)P(m(u_N(r) \cdot \tau(r))dr; \label{B-5.23}\\
		\big(I_{\rm bry} \big)(t) & = -\int^t_0 e^{\calA^s_N(t-r)} \calA^s_N(I-P_N)Dv_N(r)dr; \label{B-5.24}
		\end{align}
		Here, $I_{\rm int}$ is the integral term driven by the interior control $u_N$, while $I_{\rm bry}$ is the integral term driven by the tangential boundary control $v_N$. We omit the details. See \cite{LT2:2015}.\\
		
		\noindent \uline{On $\Bto$.} The proof is similar using the fact that, by \eqref{B-A.3} of Appendix \ref{B-app-A}, the s.c. analytic semigroup of the Oseen operator $\ds e^{\calA_q t}$, once restricted on the stable subspace $W^s_N$, has the property 
		\begin{equation}\label{B-5.25}
		e^{\calA^s_{N,q}t}: \text{ continuous } \Bto \longrightarrow \xipqs, \quad \norm{e^{\calA^s_{N,q}t}}_{\calL \big( \Bto \big)} \leq C e^{-\gamma_0 t} , \ t \geq 0
		\end{equation}
		\noindent counterpart of (\ref{B-5.21}). We now repeat the above proof, except on the space $\ds \Bto$ rather than $\lso$., by using (\ref{B-5.24}) instead of (\ref{B-5.21}).			
	\end{proof}		

	\section{Maximal $L^p$-regularity on $\lso, \ q \geq 2$ and $\Bto$ up to $T = \infty$ of the s.c. analytic semigroup $e^{\BA_{_{F,q}}t}$ yielding uniform decay of the linearized feedback $w$-problem (\ref{B-5.3}) of Theorem \ref{B-5.1}.}\label{B-Sec-6}

\noindent \textbf{Preliminaries}\\
1. \noindent We recall the tangential boundary feedback operator $\ds F \in \calL \big( \lso, L^q(\wti{\Gamma}) \big)$, for $q \geq 2$ and the interior tangential-like feedback operator $\ds G \in \calL \big(\lso \big)$ from (\ref{B-5.6}), $q \geq 2$,
\begin{multline}\label{B-6.1}
F(\cdot)  = \sum_{k=1}^K \big< P_N \ \cdot, p_k \big>_{_{W^u_N}} f_k \in W^{2-\rfrac{1}{q},q}(\wti{\Gamma}); \ G(\cdot)  = P_q \bigg( m \bigg( \sum_{k=1}^K \big< P_N \ \cdot, q_k \big>_{_{W^u_N}} u_k \bigg) \tau \bigg) \in \lso,
\end{multline}

\noindent so that we rewrite the feedback $w$-equation (\ref{B-5.3}) as 	
\begin{subequations}\label{B-6.2}
	\begin{align}
	\frac{dw}{dt} &= \calA_q(I - DF)w + Gw = \BA_{_{F,q}}w \label{B-6.2a}\\[3mm]
	\BA_{_{F,q}}&: \lso \supset \calD \big( \BA_{_{F,q}} \big) \longrightarrow \lso, \quad q \geq 2 \nonumber \\
	\begin{picture}(0,0)\put(-13,8){$\left\{\rule{0pt}{20pt} \right.$}\end{picture}
	\calD \big( \BA_{_{F,q}} \big) &= \big\{ h \in \lso : (h - DFh) \in \calD(\calA_q) = \calD(A_q) \big\}, \label{B-6.2b}
	\end{align}
	\noindent since $G$ is a bounded operator $G \in \calL \big( \lso \big)$. Recall from (\ref{B-1.25}), (\ref{B-4.1}), ultimately Appendix \ref{B-app-B}, \eqref{B-B.7} that the boundary vectors $f_k$ (= linear combinations of normal traces of eigenfunctions of $\calA^* = \calA_q^*$, the adjoint of the Oseen operator \eqref{B-1.11}, have the regularity $\ds f_k \in W^{2-\rfrac{1}{q},q}(\Gamma)$, so that $DFh \in W^{2,q}(\Omega) \cap \lso$ for $h \in \lso, \ q \geq 2$, in light of Corollary \hyperref[B-B.2]{B.2(v)} in Appendix \ref{B-app-B}. Thus, we can more specifically describe $\ds \calD(\BA_{_{F,q}})$ as follows.		
	\begin{align}\label{B-6.2c}
	\BA_{_{F,q}}&: \lso \supset \calD \big( \BA_{_{F,q}} \big) \longrightarrow \lso, \nonumber \\
	\begin{picture}(0,0)\put(-13,8){$\left\{\rule{0pt}{20pt} \right.$}\end{picture}		
	\calD \big( \BA_{_{F,q}} \big) &= \big\{ \varphi \in W^{2,q}(\Omega) \cap \lso: \varphi|_{\Gamma} = F \varphi \big\}, \ q \geq 2
	\end{align}
\end{subequations}
	\noindent see (\ref{B-5.4}). Such characterization of $\ds \calD \big( \BA_{_{F,q}} \big)$ will be critical in using maximal $L^p$ regularity of $\ds \BA_{_{F,q}}$ in the analysis of the non-linear problem in Sections \ref{B-Sec-7} and \ref{B-Sec-8}.\\

	\noindent We also recall that $\ds e^{\BA_{_{F,q}}t}$ is a s.c. analytic semigroup in $\lso$ (Theorem \ref{B-Thm-5.1}), which moreover is uniformly stable here (Theorem \ref{B-Thm-5.4}, Eq (\ref{B-5.17})):

	\begin{equation}\label{B-6.3}
		\norm{e^{\BA_{_{F,q}}t}}_{\calL \big( \lso \big)} \leq C_{\gamma_0} e^{-\gamma_0 t}, \quad t \geq 0, \ q \geq 2. 
	\end{equation}
	\noindent 2. We consider the system
	\begin{align}
		\frac{d \eta}{dt} = \BA_{_{F,q}} \eta + f; \quad \eta(0) = \eta_0 \text{ in } \lso, \ q \geq 2 \label{B-6.4}\\
		\eta(t) = e^{\BA_{_{F,q}}t} \eta_0 + \int_{0}^{t} e^{\BA_{_{F,q}}(t -\tau)}f(\tau) \ d\tau. \label{B-6.5}
	\end{align}
	\noindent \textbf{Goal}: The goal of the present section is to establish maximal $L^p$ regularity on $\lso$ and for $T = \infty$ of the feedback analytic generator $\BA_{_{F,q}}$ in (\ref{B-6.2}), as described in the following result. 

\begin{thm}\label{B-Thm-6.1}
	Let $q \geq 2$. With reference to the dynamics (\ref{B-6.4}), (\ref{B-6.5}) with $\eta_o = 0$, we have: the map
	\begin{subequations}\label{B-6.6}
		\begin{align}
		f \longrightarrow \eta(t) = \int_{0}^{t} e^{\BA_{_{F,q}}(t-\tau)}f(\tau) \ d\tau: &\text{ continuous }  \label{B-6.6a}\\
		L^p(0,\infty; \lso) \longrightarrow &L^p \big( 0,\infty; \calD(\BA_{_{F,q}}) \big), \ 1 < p < \infty, \label{B-6.6b}\\
		L^p(0,\infty; \lso) \longrightarrow & \xipqs \big( \BA_{_{F,q}} \big) \equiv L^p \big(0,\infty; \calD(\BA_{_{F,q}}) \big) \cap W^{1,p}(0,\infty;\lso), \label{B-6.6c}
		\end{align}
	\end{subequations}
	\noindent by (\ref{B-6.2c}), so that, there exists a constant $C = C_{p,q} > 0$ such that
	\begin{subequations}\label{B-6.7}
		\begin{equation}\label{B-6.7a}
		\norm{\eta_t}_{\lplsq} + \norm{\BA_{_{F,q}} \eta}_{\lplsq} \leq C \norm{f}_{\lplsq}.
		\end{equation}
		\noindent In short:
		\begin{equation}\label{B-6.7b}
		\BA_{_{F,q}} \in MReg (\lplsq)
		\end{equation}
	\end{subequations}
	
	\noindent If we introduce the space of maximal regularity for $\{ \eta, \eta_t \}$, with $\eta_0 = 0$, as
	\begin{subequations}\label{B-6.8}
		\begin{align}
		\xipqs \big( \BA_{_{F,q}} \big) &\equiv L^p \big(0,\infty; \calD(\BA_{_{F,q}}) \big) \cap W^{1,p}(0,\infty;\lso) \label{B-6.8a}\\
		&\hspace{5cm}\subset \xipq \equiv  L^p(0, \infty; W^{2,q}(\Omega)) \cap W^{1,p}(0, \infty; L^q(\Omega)), \label{B-6.8b}
		\end{align}
		\noindent we rewrite (\ref{B-6.7}) as
		\begin{equation}\label{B-6.8c}
		f \in \lplsq \longrightarrow  \eta  \in \xipqs\big( \BA_{_{F,q}} \big) \hookrightarrow C \big( [0,\infty); \Bso \big)
		\end{equation}
		where to justify the continuous embedding in (\ref{B-6.8c}), we recall \cite[Theorem 4.10.2; p180]{HA:2000} and the characterization (\ref{B-6.2c}) for $\calD \big( \BA_{_{F,q}} \big)$
	\end{subequations}
\end{thm}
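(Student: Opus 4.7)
The strategy is a two-step argument: first establish $L^p$-maximal regularity of $\BA_{_{F,q}}$ on finite time intervals via a relatively bounded perturbation of the Stokes operator (or rather its adjoint), and then upgrade from finite $T$ to $T = \infty$ using the uniform exponential stability of the feedback semigroup provided by Theorem \ref{B-Thm-5.4}. The natural venue for the perturbation argument is the adjoint side, since there the boundary feedback translates, via Theorem \ref{B-Thm-2.3}, into a classical lower-order trace operator. I would work with $\ds \BA_{_{F,q}}^*$ on $\lo{q'}$ and then transfer back to $\ds \lso$ by reflexivity.

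\textbf{Finite-interval maximal regularity via the adjoint.} The starting point is the well-known fact that the Stokes operator $A_q$ admits $L^p$-maximal regularity on $\lo{q'}$ over every finite interval $[0,T]$: this is the Giga--Sohr result, which can also be read off from the $R$-sectoriality of $A_q^*$ on the UMD space $\lo{q'}$ via the Weis/Kalton--Kunstmann--Weis theorem. I would then decompose
\begin{equation}\label{B-plan-1}
\BA_{_{F,q}}^* = -\nu_o A_q^* + \Pi,
\end{equation}
where $\Pi$ collects the adjoint of $-\calA_q DF$ and of the bounded finite-dimensional operator $G$. The crucial estimate --- already essentially contained in the second proof of Theorem \ref{B-Thm-5.1} --- is that $\Pi$ is $(A_q^*)^\theta$-bounded with $\ds \theta = 1 - \rfrac{1}{2q} + \varepsilon < 1$; this is a consequence of the identity $D^*\calA_q^* v = \nu_o \partial_\nu v$ of Theorem \ref{B-Thm-2.3} together with the regularity $\ds f_k \in W^{2-\rfrac{1}{q},q}(\Gamma)$ from Appendix \ref{B-app-B}, which in turn ensures $DF \in \calL\big( \lso, \calD(A_q^{\rfrac{1}{2q}-\varepsilon}) \big)$ by Corollary \ref{B-Clr-2.8}. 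Because the $(A_q^*)^\theta$-bound has $\theta < 1$, the perturbation theorem for maximal regularity (Kunstmann--Weis, Pr\"uss--Sohr) yields $L^p$-maximal regularity of $\BA_{_{F,q}}^*$ on $(0,T)$ for every finite $T$, and by reflexivity the same holds for $\BA_{_{F,q}}$ on $L^p\big(0,T;\lso\big)$.

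\textbf{Passage to $T=\infty$.} It is a classical and general fact that if $B$ generates an analytic semigroup on a reflexive Banach space $X$ which is exponentially stable ($\|e^{Bt}\| \le C e^{-\gamma t}$ for some $\gamma>0$) and possesses $L^p$-maximal regularity on some $(0,T)$, then maximal regularity extends to $(0,\infty)$. In the present situation the exponential decay is exactly estimate \eqref{B-5.17} of Theorem \ref{B-Thm-5.4}, with rate $\gamma_0>0$. Concretely, for $f \in L^p(0,\infty; \lso)$ and $\eta_0 = 0$ I would split the mild solution $\eta(t) = \int_0^t e^{\BA_{_{F,q}}(t-\tau)}f(\tau)\,d\tau$ along the dyadic decomposition $[k,k+1]$, $k\in\mathbb{N}$, apply the finite-interval maximal regularity bound on each slab, and sum using the exponential factor $e^{-\gamma_0(t-\tau)}$ from \eqref{B-5.17} to recover an $L^p((0,\infty);\lso)$-estimate on both $\eta_t$ and $\BA_{_{F,q}}\eta$, which is precisely \eqref{B-6.7a}. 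The continuous embedding in \eqref{B-6.8c} into $\ds C\big([0,\infty);\Bso\big)$ then follows from the trace theorem of Amann for spaces of maximal regularity, noting the characterization \eqref{B-6.2c} of $\calD(\BA_{_{F,q}})$.

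\textbf{Main obstacle.} The essential difficulty is Step 1, specifically converting the boundary feedback structure --- which is embedded in the domain of $\BA_{_{F,q}}$ through the condition $\varphi|_\Gamma = F\varphi$ rather than as an additive interior operator --- into a perturbation of $A_q^*$ of order strictly less than one. This is impossible to do directly on the level of $\BA_{_{F,q}}$ itself (the map $h \mapsto \calA_q DF h$ is not even $\lso$-valued), which is why the argument has to be run on the adjoint, where $D^*\calA_q^*$ collapses to the bounded normal-derivative trace $\nu_o \partial_\nu$ of Theorem \ref{B-Thm-2.3}. Once this reduction is achieved, the $\theta<1$ relative bound follows cleanly from the smoothing properties of $D$ in Corollary \ref{B-Clr-2.8}, and the rest of the argument is standard perturbation theory for $R$-sectorial operators combined with the stability margin provided by Theorem \ref{B-Thm-5.4}.
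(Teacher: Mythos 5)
Your proposal follows essentially the same route as the paper's proof: pass to the adjoint $\BA_{_{F,q}}^*$ on $\lo{q'}$, write it as $-A_q^* + \Pi$ with $\Pi$ being $(A_q^*)^{\theta}$-bounded for $\theta = 1 - \rfrac{1}{2q} + \varepsilon < 1$ (using Theorem \ref{B-Thm-2.3} and Corollary \ref{B-Clr-2.8}), invoke a perturbation theorem for maximal regularity together with the uniform stability of Theorem \ref{B-Thm-5.4} to get the $(0,\infty)$ statement, and dualize back. The only substantive caveat is that the final transfer is not ``by reflexivity'' alone: maximal regularity does not dualize on a general reflexive space, and the paper makes this step rigorous via Weis's characterization of maximal regularity as $R$-boundedness of $\big\{ tR(it,\BA_{_{F,q}}) \big\}$ on the UMD space $\lso$ together with the complete duality of $R$-boundedness on $L^q$-spaces (Step 4 of the paper's proof), so you should replace that one-word justification with this argument.
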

\begin{proof}
	\noindent \underline{Step 1:} Because of the intrinsic presence of the operator $DF$ (boundary feedback $F$ followed by the Dirichlet map $D$) as a right factor in 
	\begin{subequations}\label{B-6.9}
		\begin{align}
		\BA_{_{F,q}} &= \calA_q (I - DF) + G = (-\nu_o A_q - A_{o,q})(I - DF) + G \label{B-6.9a}\\
		&: \lso \supset \calD(\BA_{_{F,q}}) \text{ in (\ref{B-6.2b})} \longrightarrow \lso, \ 2 \leq q < \infty \label{B-6.9b}
		\end{align}
	\end{subequations}
	\noindent recall \eqref{B-1.11} = (\ref{B-A.2}), we find it is necessary to consider instead the more amenable adjoint/dual operator (with $\nu_o = 1$ wlog)
	\begin{subequations}\label{B-6.10}
		\begin{align}
		\BA_{_{F,q}}^* &= (I - DF)^* \calA_q^* + G^* = -(I - DF)^*A_q^* - (I - DF)^*A_{o,q}^* + G^* \label{B-6.10a}\\
		\calD \big( \BA_{_{F,q}}^* \big) &= \calD \big( \calA_q^* \big) = \calD \big( A^*_q \big) = \big\{ h \in W^{2,q'}(\Omega) \cap W^{1,q'}_0(\Omega) \cap \lo{q'} \big\} \label{B-6.10b}\\
		\BA_{_{F,q}}^* &: \lo{q'} \supset \calD \big( \BA_{_{F,q}}^* \big) \text{ in (\ref{B-6.10b})} \longrightarrow \lo{q'}, \ 1 < q' \leq 2 \label{B-6.10c}
		\end{align}
	\end{subequations}	
	\noindent Here $\ds \rfrac{1}{q} + \rfrac{1}{q'} = 1$, where $q \geq 2$ for $\ds \BA_{_{F,q}}$. In order to have $F$ bounded $\ds \lso \longrightarrow L^q(\Gamma)$, we need to impose $1 < q' \leq 2$, in which case $(I-DF)^* \in \calL(\lo{q'}), \ 1 < q' \leq 2$, see Appendix \ref{B-app-B}, Eqt (\ref{B-B.11}). We rewrite $\ds \BA_{_{F,q}}^*$ in (\ref{B-6.10a}) as 
	\begin{align}
	\BA_{_{F,q}}^* &= -A_q^* + \big[F^*D^*{A_q^*}^{\rfrac{1}{2q}-\varepsilon} \big] {A_q^*}^{1-\rfrac{1}{2q}+\varepsilon} - \big[ (I-DF)^* \big( A_q^{\rfrac{-1}{2}} A_{o,q} \big)^* \big]{A_q^*}^{\rfrac{1}{2}} + G^* \label{B-6.11}\\
	&: \lo{q'} \supset \calD(\BA_{_{F,q}}^*) \longrightarrow \lo{q'}, \quad \frac{1}{q} + \frac{1}{q'} = 1, \ q \geq 2, \ 1 < q' \leq 2 \label{B-6.12}
	\end{align}
	\noindent whereby the adjoint of the right factor becomes now a left factor. In obtaining in (\ref{B-6.10a}) the form of $\ds \BA_{_{F,q}}^*$ from that of $\ds \BA_{_{F,q}}$ in (\ref{B-6.9a}), we have used that $\ds (I-DF) \in \calL(\lso) \ q \geq 2$ \cite[p 14]{Fat.1}. Moreover, to go from (\ref{B-6.9}) to (\ref{B-6.11}), we use $\ds A_{o,q} = A_q^{\rfrac{1}{2}} \big(A_q^{\rfrac{-1}{2}} A_{o,q} \big)$, hence $\ds A_{o,q}^* = \big(A_q^{\rfrac{-1}{2}} A_{o,q} \big)^*{A_q^*}^{\rfrac{1}{2}}$, where the ( )--term is bounded by (\ref{B-1.10}).\\
	
	\noindent \underline{Step 2:} By duality on Theorem \ref{B-Thm-5.1} on a reflexive Banach space, the operator $\ds \BA_{_{F,q}}^*$ in (\ref{B-6.10}) generates a s.c. analytic semigroup $\ds e^{\BA_{_{F,q}}^*t}$ on $\lo{q'}$, which moreover is uniformly stable by Theorem \ref{B-Thm-5.4} in $\ds \calL \big(\lo{q'} \big), \ 1 < q' \leq 2$, with the same decay rate $\gamma_0 > 0$ in (\ref{B-6.3}) = (\ref{B-5.17}) as $\ds e^{\BA_{_{F,q}}t}$ in $\ds \calL \big(\lso \big), \ q \geq 2$ in Theorem \ref{B-Thm-5.4}.\\
	
	\noindent \underline{Step 3:}
	\begin{prop}\label{B-Prop-6.2}
		For the generator $\BA_{_{F,q}}^*$ in (\ref{B-6.10}) of a s.c. analytic, uniformly bounded semigroup $\ds e^{\BA_{_{F,q}}^*t}$ on $\ds \lo{q'}$, we have: $\ds \BA_{_{F,q}}^* \in MReg \big( L^p(0,\infty;\lo{q'}) \big), \ 1 < q' \leq 2$.
	\end{prop}
	\begin{proof}
		\noindent The proof is based on a perturbation argument. For $q \geq 2$, rewrite (\ref{B-6.11}) as
		\begin{equation}\label{B-6.13}
		\BA_{_{F,q}}^* = -A_q^* + \Pi \hspace{10cm}
		\end{equation}
		\vspace{-0.5cm}
		\begin{subequations}\label{B-6.14}
			\begin{equation}\label{B-6.14a}
				\Pi = \big[F^*D^*{A_q^*}^{\rfrac{1}{2q}-\varepsilon} \big] {A_q^*}^{1-\rfrac{1}{2q}+\varepsilon} - \big[ (I-DF)^* \big( A_q^{\rfrac{-1}{2}} A_{o,q} \big)^* \big]{A_q^*}^{\rfrac{1}{2}} + G^*. 
			\end{equation}
		
		\noindent In (\ref{B-6.14}), both terms in square brackets [ ~ ] are bounded in $\ds \lo{q'}$, and so is $G^*, \ 1 < q' \leq 2$. To this end we use critically and recall (\ref{B-2.70b}):		
		\begin{equation}\label{B-6.14b}
		A_q^{\rfrac{1}{2q} - \varepsilon} D \in \calL (U_q, \lso), \ \text{so } D^*{A_q^*}^{\rfrac{1}{2q} - \varepsilon} \in \calL(\lo{q'},L^{q'}(\Gamma)), \ 1 \leq q' \leq 2:
		\end{equation}
	\end{subequations}
		\noindent while $\ds A_q^{-\rfrac{1}{2}}A_{o,q} \in \calL(\lso)$ by \eqref{B-1.10} or (\ref{B-2.21}).\\
		
		\noindent The following estimates then hold, $q \geq 2, 1 < q' \leq 2$:
		\begin{align}
		i. & \ \norm{\big[F^*D^*{A_q^*}^{\rfrac{1}{2q}-\varepsilon} \big] {A_q^*}^{1-\rfrac{1}{2q}+\varepsilon} x}_{\lo{q'}} \leq C_q \norm{{A_q^*}^{1-\rfrac{1}{2q}+\varepsilon} x}_{\lo{q'}}, \quad \forall \ x \in \calD \big( {A_q^*}^{1-\rfrac{1}{2q}+\varepsilon} \big) \label{B-6.15}\\[2mm] 		
		ii. & \ \norm{\big[ (I-DF)^* \big( A_q^{\rfrac{-1}{2}} A_{o,q} \big)^* \big]{A_q^*}^{\rfrac{1}{2}}x}_{\lo{q'}} \leq C_q \norm{{A_q^*}^{\rfrac{1}{2}} x}_{\lo{q'}}, \nonumber \\
		& \hspace{3.8cm} = C_q \norm{ \Big({A_q^*}^{-\rfrac{1}{2} + \rfrac{1}{2q} - \varepsilon} \Big) {A_q^*}^{1-\rfrac{1}{2q} + \varepsilon}x}_{\lo{q'}} \leq \wti{C}_q \norm{{A_q^*}^{1-\rfrac{1}{2q} + \varepsilon}x}_{\lo{q'}} \label{B-6.16}
		\end{align} 
		\noindent Hence, the perturbation $\Pi$ in (\ref{B-6.14}) satisfies $q \geq 2, \ 1 < q' \leq 2$:
		\begin{equation}\label{B-6.17}
		\norm{\Pi x}_{\lo{q'}} \leq C_q \norm{{A_q^*}^{1-\rfrac{1}{2q}+\varepsilon} x}_{\lo{q'}}, \quad x \in \calD \big( {A_q^*}^{1-\rfrac{1}{2q}+\varepsilon} \big)
		\end{equation}
		\noindent $\rfrac{1}{q} + \rfrac{1}{q'} = 1, \ \varepsilon > 0$. We draw now some consequences from (\ref{B-6.13}), (\ref{B-6.17}):
		\begin{enumerate}[(a)]
			\item The perturbation operator $\Pi$ is ${A_q^*}^{\theta}$-bounded on $\lo{q'}$ with $\theta = 1 - \rfrac{1}{2q} + \varepsilon < 1, \ 1 < q' \leq 2 \leq q$.
			\item On the other hand $\ds A_q^* \in  MReg \big( L^p(0,\infty; \lo{q'})\big)$, from Appendix \hyperref[B-A.d]{A(d)}, in particular Theorem \ref{B-A.4}. In fact, while $A_q$ is the Stokes operator on $\lso, \ 1 < q < \infty, \ A_q^*$ is the Stokes operator on $\lo{q'}$ by (\ref{B-2.31}), $\ds \rfrac{1}{q} + \rfrac{1}{q'} = 1$. Then, properties (a), (b) imply -by the abstract perturbation theorem in the \cite[Appendix B]{LPT.1}, see also \cite[Theorem 6.2. p 311]{Dore:2000}, \cite{KW:2001} and \cite[SNP Remark 1i, p 426 for $\beta = 1$]{KW:2004}, for related results, that $\ds \BA_{_{F,q}}^* \in MReg \big( L^p(0,\infty; \lo{q'})\big), \ 1 < q' \leq 2$ and Proposition \ref{B-Prop-6.2} is proved. 
		\end{enumerate}
	\end{proof}
	\noindent \underline{Step 4:} We now prove Theorem \ref{B-Thm-6.1} that $\ds \BA_{F,q}$ satisfies the maximal $L^p$ regularity on $\lso$:
	\begin{equation}\label{B-6.18}
	\BA_{_{F,q}} \in MReg \Big( \lplsq \Big), \quad 2 \leq q < \infty.
	\end{equation}
	\noindent \underline{Step 4.i:} We invoke the fundamental result of L. Weis \cite[Theorem 1.11 p 76]{KW:2004}, \cite[Theorem p 198]{We:2001}. Since $\ds \BA_{_{F,q}}$ generates a bounded analytic semigroup $\ds e^{\BA_{_{F,q}}t}$ on $\ds \lso, \ 2 \leq q < \infty,$ on a UMD-space \cite[p 75]{KW:2004}, then the sought-after property that $\ds \BA_{_{F,q}} \in MReg \big( \lplsq \big)$ is equivalent to the property that the family $\ds \tau \in \calL \big(\lso \big)$
	\begin{equation}\label{B-6.19}
	\tau = \Big\{ tR \big(it,\BA_{_{F,q}} \big), t \in \BR \backslash \{0\} \Big\} \quad \text{be } \text{$R$-bounded}
	\end{equation}
	\noindent \underline{Step 4.ii:} By the complete duality for $R$-boundedness on $L^q(\Omega), \ 2 \leq q < \infty,$ we have \cite[Corollary 2.11 p90]{KW:2004} that the family $\tau$ in (\ref{B-6.19}) is $R$-bounded if and only if the corresponding dual family $\tau'$ in $\ds \calL \big( \lo{q'} \big)$, $\ds (\lso)^* = \lo{q'})$ by (\ref{B-1.7})
	\begin{equation}\label{B-6.20}
	\tau' = \Big\{ tR \big(it,\BA_{_{F,q}}^* \big), t \in \BR \backslash \{0\} \Big\} \quad \text{is } \text{$R$-bounded}
	\end{equation}
	\noindent \underline{Step 4.iii:} But the $R$-boundedness property in (\ref{B-6.20}) is equivalent, by the same result \cite{KW:2004} to the property that $\ds \BA_{_{F,q}}^* \in MReg \big( L^p (0,\infty; \lo{q'}) \big), 1 < q' \leq 2, \ \rfrac{1}{q} + \rfrac{1}{q'} = 1$, and this is true by Proposition \ref{B-Prop-6.2}. In conclusion: $\ds \BA_{_{F,q}} \in MReg \big( \lplsq \big)$, and Theorem \ref{B-Thm-6.1} is proved.
\end{proof}

\noindent We next examine the regularity of the term $\ds e^{\BA_{_{F,q}}t} \eta_0$ due to the initial condition $\eta_0$ in (\ref{B-6.5}). For the same reasons noted in the Theorem \ref{B-Thm-6.1}, Eqts (\ref{B-6.10}) through (\ref{B-6.12}), we shall equivalently examine the regularity of the adjoint semigroup $\ds e^{\BA_{_{F,q}}^*t}$.  To this end, we need the counterpart of Proposition \ref{B-Prop-9.3} this time for the adjoint/dual operator $\ds \BA_{_{F,q}}^*$ on $\lo{q'}, \ 1 < q' \leq 2, \ \rfrac{1}{q} + \rfrac{1}{q'} = 1$.

\begin{prop}\label{B-Prop-6.3}
	Let $\ds 1 < p < \frac{2q'}{2q'-1}, \ 1 < q' \leq 2, \ q \geq 2, \ \frac{1}{q} + \frac{1}{q'} = 1 $. Then
	\begin{align}
	\lqafqs &= \Btos \label{B-6.21}\\
	&= \Big\{ g \in \Bsos: \div \ g \equiv 0, \ g \cdot \nu|_{\Gamma} = 0 \Big\} \label{B-6.22}
	\end{align}
\end{prop}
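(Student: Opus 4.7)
The plan is to reduce (\ref{B-6.21})--(\ref{B-6.22}) to the known characterization (\ref{B-1.15b}) of the real interpolation spaces between $\lo{q'}$ and the Stokes-operator domain. Unlike in the primal Proposition \ref{B-Prop-9.3}, where one only had $\calD(\BA_{_{F,q}}) \subset W^{2,q}(\Omega)\cap \lso$ with the \emph{inhomogeneous} trace $\varphi|_\Gamma = F\varphi$, here the decisive structural fact is already recorded in (\ref{B-6.10b}): as a set,
$$\calD(\BA_{_{F,q}}^*) = \calD(\calA_q^*) = \calD(A_q^*) = W^{2,q'}(\Omega)\cap W^{1,q'}_0(\Omega)\cap \lo{q'},$$
which by (\ref{B-2.31}) is precisely $\calD(A_{q'})$, the domain of the Stokes operator on $\lo{q'}$. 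On the adjoint side the feedback operator $F$ enters $\BA_{_{F,q}}^*$ only through the bounded factor $(I-DF)^*$ on the \emph{left}, hence cannot alter the domain.

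The first step will be to verify that the graph norms induced by $\BA_{_{F,q}}^*$ and by $A_q^*$ are equivalent on this common domain, so that the two real interpolation spaces coincide as Banach spaces. The estimate
$$\|\BA_{_{F,q}}^* x\|_{\lo{q'}} \leq C\bigl(\|x\|_{\lo{q'}} + \|A_q^* x\|_{\lo{q'}}\bigr), \quad x \in \calD(A_q^*),$$
follows at once from $\BA_{_{F,q}}^* = -\nu_o (I-DF)^* A_q^* - (I-DF)^* A_{o,q}^* + G^*$ together with the boundedness of $(I-DF)^*, G^* \in \calL(\lo{q'})$ recalled below (\ref{B-6.10c}), the $(A_q^*)^{1/2}$-boundedness of $A_{o,q}^*$ from (\ref{B-2.21}), and a standard moment inequality for fractional powers. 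The reverse graph-norm estimate then follows from the open mapping theorem, since $\BA_{_{F,q}}^*$ is closed (being the adjoint of the generator in Theorem \ref{B-Thm-5.1}), both graph norms make $\calD(A_q^*)$ into a Banach space, and the identity map is bounded in one direction.

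The second and final step will be a direct invocation of (\ref{B-1.15b}) with $q$ replaced by $q'$. Under the hypothesis $1 < p < \frac{2q'}{2q'-1}$, equivalently $0 < 2-\frac{2}{p} < \frac{1}{q'}$, we land in the regime where the interpolation space does not recognize the Dirichlet trace, and (\ref{B-1.15b}) yields
$$\big(\lo{q'}, \calD(A_{q'})\big)_{1-\frac{1}{p},p} = \Big\{ g \in \Bsos: \div\, g \equiv 0,\ g\cdot\nu|_\Gamma = 0 \Big\} = \Btos.$$
Combined with Step 1, this gives (\ref{B-6.21})--(\ref{B-6.22}). The only mildly technical point is the graph-norm equivalence in Step 1; once that is in hand, the identification is immediate and the proof is notably cleaner than its primal counterpart Proposition \ref{B-Prop-9.3}, precisely because on the dual side the feedback perturbation is \emph{external} rather than hidden inside the domain through a boundary condition.
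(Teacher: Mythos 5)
Your proof is correct, but it justifies the crucial identification $\lqafqs = \lqaqs$ by a different mechanism than the paper. The paper's own proof mimics its primal counterpart (Proposition \ref{B-Prop-9.3}): it first embeds $\lqafqs$ into $B^{2-\rfrac{2}{p}}_{q',p}(\Omega)\cap\lo{q'}$ via $\calD(\BA_{_{F,q}}^*)\subset W^{2,q'}(\Omega)\cap\lo{q'}$, and then invokes the heuristic that, for $2-\rfrac{2}{p}<\rfrac{1}{q'}$, the target Besov space ``does not recognize boundary conditions,'' so the interpolation spaces generated by $\calD(\BA_{_{F,q}}^*)$ and $\calD(A_q^*)$ must coincide and both equal $\Btos$ by \eqref{B-1.15b}. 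You instead exploit the exact set equality $\calD(\BA_{_{F,q}}^*)=\calD(A_q^*)$ recorded in \eqref{B-6.10b} --- valid precisely because on the adjoint side the feedback enters only through the bounded left factor $(I-DF)^*$ --- and upgrade it to a Banach-space isomorphism by the graph-norm equivalence (forward estimate from the perturbation structure plus the moment inequality for $A_{o,q}^*$, reverse estimate from the bounded inverse theorem for the two complete graph norms). The interpolation spaces then coincide tautologically, and \eqref{B-1.15b} with $q$ replaced by $q'$ finishes the argument. Your route is more self-contained at the one point where the paper is informal (the ``does not recognize boundary conditions'' step), at the cost of leaning on \eqref{B-6.10b} and the closedness of $\BA_{_{F,q}}^*$; conversely, the paper's argument is uniform across the primal and dual cases, whereas yours genuinely uses the special structure of the adjoint and, as you correctly observe, would not transfer to Proposition \ref{B-Prop-9.3}, where $\calD(\BA_{_{F,q}})$ and $\calD(A_q)$ carry different boundary conditions.
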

\begin{proof}
	\noindent From (\ref{B-6.10b}) we have: $\ds \calD \big( \BA_{_{F,q}}^* \big) \subset W^{2,q'}(\Omega) \cap \lo{q'}, \ 1 < q' \leq 2$. Thus, as in (\ref{B-5.14})
	\begin{equation}\label{B-6.23}
	\big(\lo{q'}, \calD(\BA_{_{F,q}}^* )\big)_{\theta, p} \subset \big( \lo{q'} , W^{2,q'}(\Omega) \cap \lo{q'} \big)_{\theta, p} = B^{2 \theta}_{q',p}(\Omega) \cap \lo{q'}
	\end{equation}
	\noindent recalling the definition (\ref{B-1.14}). Next we take $\ds 1 < p < \rfrac{2q'}{2q'-1}, \ \theta = 1 - \rfrac{1}{p}$, so that, for these parameters, (\ref{B-6.23}) specializes to 
	\begin{equation}\label{B-6.24}
	\lqafqs \subset \Btos = \text{ defined in (\ref{B-1.15b})}.
	\end{equation} 
	\noindent But $\ds \Btos$ does not recognize the boundary conditions so neither does the space on the LHS of (\ref{B-6.24}). Thus for these parameters $\ds \theta = 1 - \rfrac{1}{p}$, with $\ds 1 < p < \rfrac{2q'}{2q'-1}$, we have 
	\begin{align}
	\lqafqs &= \lqaqs \label{B-6.25}\\
	&= \Btos \label{B-6.26}
	\end{align}
	\noindent recalling (\ref{B-2.31}) ($A^*_q$ is the Stokes operator on $\lo{q'}$) and (\ref{B-1.15b}) as $\ds \calD \big( \BA_{_{F,q}}^* \big)$ and $\ds \calD \big( A_q^* \big)$ both consist of $\ds W^{2,q'}(\Omega) \cap \lo{q'}$ functions, subject only to possibly, different boundary conditions. Thus (\ref{B-6.26}) proves the desired conclusion.
\end{proof}
\noindent We conclude this section with results for the semigroup $\ds e^{\BA_{_{F,q}}t}$ on $\Bto$ that yield the solution in the space $\xtpq$ of maximal regularity. This is the companion result of Theorem \ref{B-Thm-6.1}. It is done by duality on the adjoint semigroup $\ds e^{\BA^*_{_{F,q}}t}$ as in the proof of Theorem \ref{B-Thm-6.1}.

\begin{thm}\label{B-Thm-6.4}
	\noindent (i) Let $\ds 1 < p < \rfrac{2q'}{2q'-1}, \ 1 < q' \leq 2, \ q \geq 2, \ \rfrac{1}{q} + \rfrac{1}{q'} = 1$. Consider the adjoint s.c. analytic semigroup $\ds e^{\BA_{_{F,q}}^*t}$ on $\ds \lo{q'}$, which is uniformly stable here, by duality or (\ref{B-5.17}) of Theorem \ref{B-Thm-5.4}. Then (see (\ref{B-6.26}))
	\begin{align}
	e^{\BA_{_{F,q}}^*t} &: \text{ continuous } \Btos = \lqafqs = \lqaqs\\
	& \longrightarrow X^{\infty}_{p,q'} \equiv L^p \big(0, \infty ; W^{2,q'}(\Omega) \big) \cap W^{1,p} \big( 0, \infty; \lo{q'} \big).
	\end{align}
	\noindent (ii) Consider now the original s.c. analytic  feedback semigroup $e^{\BA_{_{F,q}}t}$ on $\lso$, which is uniformly stable here by (\ref{B-5.7}). Let $\ds 1 < p < \rfrac{2q}{2q-1}, \ q \geq 2$. Then, see (\ref{B-5.16})
	\begin{align}
	e^{\BA_{_{F,q}}t}&: \text{ continuous } \Bto = \lqafq = \lqaq \label{B-6.29}\\
	&\longrightarrow \xipq = L^p \big(0, \infty ; W^{2,q}(\Omega) \big) \cap W^{1,p} \big( 0, \infty; \lso \big). \label{B-6.30}
	\end{align}
\end{thm}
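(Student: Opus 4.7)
The plan is to combine two ingredients already in hand: the $L^p$-maximal regularity of $\BA_{_{F,q}}$ (Theorem \ref{B-Thm-6.1}) and of its adjoint $\BA_{_{F,q}}^*$ (Proposition \ref{B-Prop-6.2}), together with the Besov identifications of the real interpolation trace spaces given by Propositions \ref{B-Prop-9.3} and \ref{B-Prop-6.3}. The general principle I want to invoke is the classical sharp-trace characterization for parabolic maximal $L^p$-regularity: if $A$ generates a uniformly exponentially stable analytic semigroup on a UMD space $X$ and $A\in MReg(L^p(0,\infty;X))$, then the orbit $t\mapsto e^{At}\eta_0$ belongs to $L^p(0,\infty;\calD(A))\cap W^{1,p}(0,\infty;X)$ if and only if $\eta_0\in (X,\calD(A))_{1-\rfrac{1}{p},\,p}$, with equivalence of norms \cite{Dore:2000,KW:2004,HA:2000}. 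This is the inhomogeneous-data counterpart of the embedding already recorded in (\ref{B-6.8c}).

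For part (ii), $\BA_{_{F,q}}$ generates a uniformly exponentially stable analytic semigroup on $\lso$, $q\geq 2$ (Theorems \ref{B-Thm-5.1} and \ref{B-Thm-5.4}), and satisfies maximal $L^p$-regularity on the half-line (Theorem \ref{B-Thm-6.1}). Proposition \ref{B-Prop-9.3} identifies $\big(\lso,\calD(\BA_{_{F,q}})\big)_{1-\rfrac{1}{p},\,p}=\Bto$ for $1<p<\rfrac{2q}{2q-1}$. Plugging these two ingredients into the general principle above delivers at once the continuity $e^{\BA_{_{F,q}}t}:\Bto\longrightarrow\xipqs(\BA_{_{F,q}})\subset\xipq$ asserted in (\ref{B-6.29})--(\ref{B-6.30}).

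For part (i), I would argue by duality on the reflexive space $\lso$, $q\geq 2$. Uniform exponential stability of $e^{\BA_{_{F,q}}^*t}$ on $\lo{q'}$, $1<q'\leq 2$, is the dual of Theorem \ref{B-Thm-5.4}, while analyticity is the dual of Theorem \ref{B-Thm-5.1} (both arguments were already used in Step~2 of the proof of Theorem \ref{B-Thm-6.1}). Proposition \ref{B-Prop-6.2} supplies maximal $L^p$-regularity of $\BA_{_{F,q}}^*$ on $\lo{q'}$, and Proposition \ref{B-Prop-6.3} identifies $\big(\lo{q'},\calD(\BA_{_{F,q}}^*)\big)_{1-\rfrac{1}{p},\,p}=\Btos$ for $1<p<\rfrac{2q'}{2q'-1}$. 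The same trace-space principle then yields the continuity $e^{\BA_{_{F,q}}^*t}:\Btos\longrightarrow X^{\infty}_{p,q'}$.

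The only mildly delicate point I anticipate is the passage from maximal regularity on bounded intervals (where the trace identification is completely classical) to the unbounded interval $[0,\infty)$; this passage is legitimate precisely because Theorem \ref{B-Thm-5.4} provides a strictly positive exponential decay rate $\gamma_0>0$ for both $e^{\BA_{_{F,q}}t}$ and $e^{\BA_{_{F,q}}^*t}$, which is what makes the maximal-regularity inequality close up uniformly as $T\to\infty$. Apart from this standard reduction, Theorem \ref{B-Thm-6.4} is essentially a bookkeeping corollary of the maximal regularity already established in Section \ref{B-Sec-6} and of the Besov identifications of the trace spaces in Propositions \ref{B-Prop-9.3} and \ref{B-Prop-6.3}.
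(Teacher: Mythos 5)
Your proposal is correct, but it follows a genuinely different route from the paper's. The paper does \emph{not} apply an abstract trace-space theorem to the feedback generator itself; instead it argues concretely by perturbation off the Stokes operator: it writes $\BA_{_{F,q}}^* = -A_q^* + B_1 {A_q^*}^{1-\rfrac{1}{2q}+\varepsilon} + B_2{A_q^*}^{\rfrac{1}{2}} + G^*$ with $B_1,B_2,G^*$ bounded, expresses $\chi(t)=e^{\BA_{_{F,q}}^*t}\chi_0$ through the variation-of-parameters formula relative to $e^{-A_q^*t}$, applies $A_q^*$ throughout, estimates the homogeneous term by the Stokes trace estimate (\ref{B-A.20b}) and the three convolution terms by Stokes maximal regularity (\ref{B-A.17}) plus moment inequalities, and then absorbs the small $\varepsilon_1+\varepsilon_2$ contributions and invokes uniform stability of $e^{\BA_{_{F,q}}^*t}$ on $\Btos$ to close the estimate (\ref{B-6.42})--(\ref{B-6.45}); part (ii) is then obtained by duality. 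Your argument instead quotes the classical characterization of $(X,\calD(A))_{1-\rfrac{1}{p},p}$ as the trace space of the class $L^p(0,\infty;\calD(A))\cap W^{1,p}(0,\infty;X)$ for an exponentially stable analytic generator, combined with Theorem \ref{B-Thm-6.1}, Proposition \ref{B-Prop-6.2} and the identifications in Propositions \ref{B-Prop-9.3} and \ref{B-Prop-6.3}. This is legitimate --- indeed, for the homogeneous orbit the only substantive fact needed is the standard equivalence $\eta_0\in\big(X,\calD(A)\big)_{1-\rfrac{1}{p},p}\iff Ae^{tA}\eta_0\in L^p(0,1;X)$ for analytic semigroups, with the tail on $(1,\infty)$ controlled by the decay rate $\gamma_0>0$ of Theorem \ref{B-Thm-5.4}, so maximal regularity and the UMD property are not even strictly required for this part. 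What your route buys is brevity and the avoidance of redoing the perturbation estimates (which the paper has in effect already performed once in Proposition \ref{B-Prop-6.2} to establish maximal regularity of $\BA_{_{F,q}}^*$); it also treats part (ii) directly rather than ``by duality,'' which is arguably cleaner since the paper leaves that duality step unexplained. What the paper's route buys is an explicit, self-contained chain of inequalities tied to the Stokes constants of Theorem \ref{B-Thm-A.4}, at the price of repeating the absorption argument. One small caveat: you should state the trace-space principle with a precise citation (e.g.\ \cite{Dore:2000}, \cite{HA:2000}, \cite{PS:2016}) rather than as folklore, since it is the entire content of your proof.
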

\begin{proof}
	\noindent We shall prove (i) and then (ii) will follow by duality.\\
	
	\noindent \underline{Step 1:} Thus, consider $\ds \BA_{_{F,q}}^*$ in $\lo{q'}, \ 1 < q' \leq 2$. Write 
	\begin{equation}\label{B-6.31}
	\chi(t) = e^{\BA_{_{F,q}}^*t}\chi_{_o}, \quad \chi_{_t} = \BA_{_{F,q}}^*\chi, \ \chi(0) = \chi_{_o}. 
	\end{equation}
	\noindent Recalling $\ds \BA_{_{F,q}}^* = -A_q^* + B_1 {A^*_q}^{1 - \rfrac{1}{2q} + \varepsilon} + B_2 {A^*_q}^{\rfrac{1}{2}} + G^*$ from (\ref{B-6.11}), where $B_1,B_2$ are in $\ds \calL \big( \lo{q'} \big)$, we rewrite the equation in (\ref{B-6.31}) as 
	\begin{equation}\label{B-6.32}
	\chi_{_t} = -A_q^*\chi + B_1 {A^*_q}^{1 - \rfrac{1}{2q} + \varepsilon}\chi + B_2 {A^*_q}^{\rfrac{1}{2}}\chi + G^*\chi
	\end{equation}
	\noindent whose solution is 
	\begin{multline}\label{B-6.33}
	\chi(t) = e^{-A^*_qt} \chi_{_o} + \int_{0}^{t} e^{-A^*_q(t-\tau)}B_1{A^*_q}^{1 - \rfrac{1}{2q} + \varepsilon} \chi(\tau) \ d \tau + \int_{0}^{t}e^{-A^*_q(t-\tau)}B_2 {A^*_q}^{\rfrac{1}{2}} \chi(\tau) \ d \tau \\ +\int_{0}^{t}e^{-A^*_q(t-\tau)} G^* \chi(\tau) \ d \tau.
	\end{multline}
	\noindent Hence apply $\ds A^*_q$ throughout,
	\begin{multline}\label{B-6.34}
	A^*_q\chi(t) = A^*_qe^{-A^*_qt} \chi_{_o} + A^*_q\int_{0}^{t} e^{-A^*_q(t-\tau)}B_1{A^*_q}^{1 - \rfrac{1}{2q} + \varepsilon} \chi(\tau) \ d \tau + A^*_q\int_{0}^{t}e^{-A^*_q(t-\tau)}B_2 {A^*_q}^{\rfrac{1}{2}} \chi(\tau) \ d \tau \\
	+ A^*_q\int_{0}^{t}e^{-A^*_q(t-\tau)} G^* \chi(\tau) \ d \tau.
	\end{multline}
	\noindent \underline{Step 2:} We now recall from (\ref{B-2.31}) that $A_q^*$ is nothing but the Stokes operator on the space $\lo{q'}$. Thus, $A^*_q$ enjoys the maximal regularity properties stated for $\ds A_q$ in Appendix \hyperref[B-A.d]{A(d)}, in particular Theorem \ref{B-Thm-A.4} except on $\ds (\lso)' = \lo{q'}$, see (\ref{B-1.7}). We shall use these for each of the four terms of the RHS of (\ref{B-6.34}).\\
	
	\noindent \underline{First term:} By use of estimate (\ref{B-A.20b}), or (\ref{B-A.17}) in Appendix \ref{B-app-A}, we obtain changing $q$ into $q'$
	\begin{equation}\label{B-6.35}
	\norm{A^*_qe^{-A^*_q \cdot} \chi_{_o}}_{\lplqd} \leq C \norm{\chi_{_o}}_{\Btos}.
	\end{equation}
	\noindent \underline{Second Term:} Again by the maximal regularity property of $A^*_q$ in (\ref{B-A.19}), except in $\lo{q'}$, we estimate since $\ds B_1 \in \calL \big( \lo{q'} \big)$
	\begin{align}
	\norm{A^*_q\int_{0}^{\ \cdot} e^{-A^*_q( \ \cdot \ -\tau)}B_1{A^*_q}^{1 - \rfrac{1}{2q} + \varepsilon} \chi(\tau) \ d \tau}_{\lplqd} &\leq C \norm{B_1 {A^*_q}^{1 - \rfrac{1}{2q} + \varepsilon} \chi}_{\lplqd} \label{B-6.36}\\
	\leq \wti{C} \norm{{A^*_q}^{1 - \rfrac{1}{2q} + \varepsilon} \chi}_{\lplqd}
	\leq \varepsilon_1 &\norm{A^*_q \chi}_{\lplqd} + C_{\varepsilon_1}\norm{\chi}_{\lplqd} \label{B-6.38}
	\end{align}
	\noindent after using an interpolation inequality \cite[Thm 5.3, Eq (3)]{HT:1980}, with $\ds \theta = \rfrac{1}{2}$ as in \cite[Appendix B]{LPT.1}, to go across (\ref{B-6.38}).\\
	
	\noindent \underline{Third term:} Similarly, since $\ds B_2 \in \calL \big( \lo{q'} \big)$, via (\ref{B-A.17}):
	\begin{align}
	\norm{A^*_q \int_{0}^{\ \cdot}e^{-A^*_q(\ \cdot \ -\tau)}B_2 {A^*_q}^{\rfrac{1}{2}} \chi(\tau) \ d \tau}_{\lplqd} &\leq C \norm{B_2 {A^*_q}^{\rfrac{1}{2}}}_{\lplqd} \nonumber\\ 
	\leq \wti{C}\norm{{A^*_q}^{\rfrac{1}{2}}}_{\lplqd}	\leq \varepsilon_2 &\norm{A^*_q \chi}_{\lplqd} + C_{\varepsilon_2} \norm{\chi}_{\lplqd}. \label{B-6.40}
	\end{align}
	\noindent \underline{Fourth term:} Finally, since $\ds G^* \in \calL \big( \lo{q'} \big)$, via (\ref{B-A.17})
	\begin{equation}\label{B-6.41}
	\norm{ A^*_q \int_{0}^{ \ \cdot}e^{-A^*_q(\ \cdot \ -\tau)} G^* \chi(\tau) \ d \tau}_{\lplqd} \leq C \norm{\chi}_{\lplqd}. 
	\end{equation}
	\noindent Invoking (\ref{B-6.35}), (\ref{B-6.38}), (\ref{B-6.40}), (\ref{B-6.41}) in (\ref{B-6.34}), we obtain 
	\begin{equation}\label{B-6.42}
	\norm{A^*_q \chi}_{\lplqd} \leq C \norm{\chi_{_o}}_{\Btos} + (\varepsilon_1 + \varepsilon_2)\norm{A^*_q \chi}_{\lplqd} + \wti{C} \norm{\chi}_{\lplqd}
	\end{equation}
	\noindent from which we obtain $\ds 1 < q' \leq 2, \ q \geq 2, \ 1 < p < \rfrac{2q'}{2q'-1}$, with $\varepsilon_1 + \varepsilon_2$ small,
	\begin{equation}\label{B-6.43}
	\norm{A^*_q \chi}_{\lplqd} \leq C \norm{\chi_{_o}}_{\Btos} + \wti{C} \norm{\chi}_{\lplqd}
	\end{equation}
	\noindent \underline{Step 3:} By returning to (\ref{B-6.31}) with $\chi_{_o} \in \Btos$, since $\ds e^{\BA_{_{F,q}}^*t}$ is a s.c. semigroup, uniformly stable in such space $\ds \Btos \subset \lo{q'}$, see (\ref{B-5.19}) with $q$ replaced by $q'$, we obtain a-fortiori		
	\begin{subequations}\label{B-6.44}
		\begin{align}
		\chi_{_o} \in \Btos & \ \xrightarrow{e^{\BA_{_{F,q}}^*t}} \ \chi \in \lplqd \label{B-6.44a}\\[3mm]
		\begin{picture}(5,0)
		\put(-10,14){$\left\{\rule{0pt}{30pt}\right.$}\end{picture}
		\norm{\chi}_{\lplqd} &\leq C \norm{\chi_{_o}}_{\Btos}. \label{B-6.44b}
		\end{align}
	\end{subequations}
	\noindent Substituting (\ref{B-6.44b}) in (\ref{B-6.43}) yields the desired estimate.
	\begin{align}
	\norm{A^*_q \chi}_{\lplqd} &\leq C \norm{\chi_{_o}}_{\Btos} \label{B-6.45}\\
	\chi_{_o} \in \Btos & \ \xrightarrow{e^{\BA_{_{F,q}}^*t}} \ \chi \in L^p \big( 0, \infty; \calD (A^*_q) \big)  = L^p \big( 0, \infty; W^{2,q'}(\Omega) \cap W^{1,q'}_{0}(\Omega) \cap \lo{q'} \big)  \label{B-6.46}
	\end{align}
	\noindent continuously see (\ref{B-6.10b}). Consequently, (\ref{B-6.46}) gives
	\begin{equation}\label{B-6.47}
	e^{\BA_{_{F,q}}^*t}: \text{ continuous } \Btos \rightarrow X^{\infty}_{q',p} = L^p \big( 0, \infty; W^{2,q'}(\Omega) \big) \cap W^{1,p} \big( 0, \infty; L^{q'}(\Omega) \big)
	\end{equation}
	\noindent Thus (\ref{B-6.47}) shows part (i) for $\ds e^{\BA_{_{F,q}}^*t}$, based on $\ds \lo{q'}$. As noted, part (ii) then follows by duality. Theorem \ref{B-Thm-6.4} is proved.
\end{proof}

	\section{Local well-posedness of the translated nonlinear $z$-problem (\ref{B-1.27}) or (\ref{B-2.25}) by means of a finite dimensional, tangential-like feedback control pair $\{v,u\}$ on $\{ \wti{\Gamma}, \omega \}$. Case $d = 3, q > 3$.}\label{B-Sec-7}

	Starting with the present section, the nonlinearity of problem (\ref{B-1.1}) will impose for $d = 3$ the requirement $q > 3$, while $q > 2$ for $d = 2$, see (\ref{B-7.19}) below. As our deliberate goal is to obtain the stabilization result in the space $\ds \Bto$ defined by \eqref{B-1.15b}, which does not recognize boundary conditions (Remark \ref{B-rmk-1.4}), then the limitation $p < \rfrac{2q}{2q - 1}$, of this space applies. In conclusion, our well-posedness and stabilization results will hold under the restriction $q > 3, 1 < p < \rfrac{6}{5}$ for $d = 3$; $q > 2, 1 < p < \rfrac{4}{3}$ for $d = 2$. As throughout this paper, $\wti{\Gamma}$ is an open connected subset of the boundary $\Gamma$ of positive surface measure and $\omega$ in a localized collar, supported by $\wti{\Gamma}$ (Fig. 2).\\ 

	\noindent Consider the nonlinear $z$-problem (\ref{B-1.27}) or (\ref{B-2.25}) in the following feedback form in the notation of Theorem \ref{B-Thm-5.1}:
	\begin{subequations}\label{B-7.1}
	\begin{equation}\label{B-7.1a}
	\frac{dz}{dt}  - \calA_q ( I - DF) z + \calN_q z - G_qz = 0; \ z_0 = z(0)
	\end{equation}
	explicitly
	\begin{equation}\label{B-7.1b}
	\frac{dz}{dt}  - \calA_q \Bigg[ z - D \Bigg(  \sum_{k=1}^{K}\big<P_Nz, p_k\big>_{_{W^u_N}}f_k \Bigg)\Bigg] + \calN_q z = P_q \Bigg( m \Bigg( \sum_{k = 1}^{K}\big<P_Nz,q_k\big>_{_{W^u_N}}u_k \Bigg) \tau \Bigg); \ z_0 = z(0)
	\end{equation}
	\end{subequations}	
	\noindent i.e. subject to a feedback controls of the same structure as in the linear $w$-dynamics (\ref{B-5.3}) of Theorem \ref{B-Thm-5.1}, 
	\begin{equation}\label{B-7.2}
		v = Fz = \sum_{k=1}^{K}\big<P_Nz, p_k\big>_{_{W^u_N}}f_k, \quad u = G_q z = P_q \Bigg( m \Bigg( \sum_{k = 1}^{K}\big<P_Nz,q_k\big>_{_{W^u_N}}u_k \Bigg) \tau \Bigg).
	\end{equation}	
	Here $p_k,q_k,f_k,u_k$ are the same vectors as those constructed in Theorem \ref{B-Thm-4.1}, and appearing in (\ref{B-4.6}), (\ref{B-5.1})-(\ref{B-5.3}); $f_k$ supported on $\wti{\Gamma}$, $u_k$ supported on $\omega$.
	Recalling from (\ref{B-5.4}), \eqref{B-5.5} the feedback generator $\BA_{F,q}$, we can rewrite (\ref{B-7.1a}) as
	\begin{equation}\label{B-7.3}
	z_t = \BA_{F,q}z - \calN_q z; \quad z(0) = z_0 
	\end{equation}
	\noindent whose variation of parameters formula is 
	\begin{equation}\label{B-7.4}
	z(t) = e^{\BA_{F,q}t}z_0 - \int_{0}^{t} e^{\BA_{F,q}(t - \tau)}\calN_q z(\tau) d \tau.
	\end{equation}

	\begin{thm}\label{B-Thm-7.1}
	(Well-posedness) Let $\ds d = 3, \ 1 < p < \frac{6}{5}$ and $q > 3$ (in order to satisfy the requirement $\ds p < \rfrac{2q}{2q-1}$). There exists a positive constant $r_1 > 0$ (identified in the proof below in \eqref{B-7.27}), such that if the initial condition $z_0$ satisfies
	\begin{equation}\label{B-7.5}
		\norm{z_0}_{\Bto} < r_1,
	\end{equation}
	then problem (\ref{B-7.3}) defines a unique solution $z$ in the space (see \eqref{B-6.8a})
	\begin{equation}\label{B-7.6}
		\xipqs\big( \BA_{_{F,q}} \big) \equiv L^p(0,\infty ; \calD\big( \BA_{_{F,q}} \big)) \cap W^{1,p}(0, \infty; \lso) \hookrightarrow C([0, \infty); \Bto)
	\end{equation}
	of maximal regularity of the operator $\BA_{_{F,q}}$. More precisely the operator $\BF_q$ introduced below in \eqref{B-7.9} has a unique fixed point on $\xipqs \big( \BA_{_{F,q}} \big)$
	\begin{equation}\label{B-7.7}
	\BF_q(z_0,z) = z, \text{ or } z(t) = e^{\BA_{F,q}t}z_0 - \int_{0}^{t} e^{\BA_{F,q}(t - \tau)}\calN_q z(\tau) d \tau
	\end{equation}
	which therefore is the unique (nonlinear semigroup) solution of problem \eqref{B-7.4} = \eqref{B-7.1} in $\xipqs(\BA_{_{F,q}})$.
	\end{thm}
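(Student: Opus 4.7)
The plan is to solve \eqref{B-7.1} by a Banach contraction argument on a closed ball of the maximal regularity space $\xipqs(\BA_{_{F,q}})$, using the three ingredients already prepared in Sections \ref{B-Sec-5} and \ref{B-Sec-6}. First, by Theorem \ref{B-Thm-6.4}(ii), the linear homogeneous contribution satisfies
\[
\norm{e^{\BA_{_{F,q}}\, t}z_0}_{\xipqs(\BA_{_{F,q}})} \leq C_1 \norm{z_0}_{\Bto}
\]
in the prescribed range $q > 3$, $1 < p < 6/5$. Second, by Theorem \ref{B-Thm-6.1}, the Duhamel operator $f \mapsto \int_0^t e^{\BA_{_{F,q}}(t-\tau)} f(\tau)\,d\tau$ is continuous from $L^p(0,\infty;\lso)$ into $\xipqs(\BA_{_{F,q}})$ with some constant $C_2$. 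Third, and most importantly, one needs a quadratic bound on the nonlinearity,
\[
\norm{\calN_q z}_{L^p(0,\infty;\lso)} \leq C_3\,\norm{z}_{\xipqs(\BA_{_{F,q}})}^2,
\]
together with its bilinear Lipschitz version $\norm{\calN_q z_1 - \calN_q z_2}_{L^p(0,\infty;\lso)} \leq C_3(\norm{z_1} + \norm{z_2})\norm{z_1 - z_2}$.

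The main effort goes into this nonlinear estimate. Pointwise in time, the Sobolev embedding $W^{1,q}(\Omega) \hookrightarrow L^\infty(\Omega)$, valid precisely because $q > d = 3$, yields
\[
\norm{(z\cdot\nabla)z}_{L^q(\Omega)} \leq \norm{z}_{L^\infty(\Omega)}\norm{\nabla z}_{L^q(\Omega)} \leq C\,\norm{z}_{W^{1,q}(\Omega)}\norm{z}_{W^{2,q}(\Omega)},
\]
and $P_q$ is bounded on $L^q(\Omega)$. To integrate in time, one factor is taken in $L^p(0,\infty; W^{2,q})$, which is immediate from the definition \eqref{B-6.8b} of $\xipq$; the remaining factor must be controlled in $L^\infty(0,\infty; W^{1,q}(\Omega))$. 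In the range $q > 3$, $1 < p < 6/5$, this follows from a mixed real-interpolation embedding of $W^{1,p}(0,\infty;\lso)\cap L^p(0,\infty; W^{2,q})$ into $L^\infty(0,\infty; W^{1,q})$, i.e.~an Amann-type anisotropic trace embedding exploiting that $W^{1,q}$ lies between $\lso$ and $W^{2,q}$ at an interpolation index compatible with $p$. Hölder in time then closes the quadratic bound, and by bilinearity the Lipschitz version.

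Granted these three ingredients, the contraction is standard. Fix $R > 0$ and consider the closed ball $B_R \subset \xipqs(\BA_{_{F,q}})$. For $z, z_1, z_2 \in B_R$,
\[
\norm{\BF_q(z_0,z)}_{\xipqs(\BA_{_{F,q}})} \leq C_1\norm{z_0}_{\Bto} + C_2 C_3 R^2, \qquad \norm{\BF_q(z_0,z_1)-\BF_q(z_0,z_2)} \leq 2 C_2 C_3 R\,\norm{z_1 - z_2}.
\]
Choosing $R = 2 C_1 r_1$ and then $r_1 > 0$ so small that $4 C_1 C_2 C_3 r_1 < \frac{1}{2}$ simultaneously secures self-mapping of $B_R$ and strict contraction, so the Banach fixed-point theorem yields the asserted unique $z \in \xipqs(\BA_{_{F,q}})$ with $\BF_q(z_0,z) = z$. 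The embedding \eqref{B-6.8c} makes the initial condition $z(0) = z_0$ meaningful in $\Bto$.

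The main obstacle is exactly the nonlinear bound: because the Besov smoothness index $2 - \frac{2}{p}$ of the critical space $\Bto$ is strictly less than $\frac{1}{3}$ in this regime, $\Bto \not\hookrightarrow W^{1,q}$, and one cannot read off $\norm{z(t)}_{W^{1,q}} \leq C\norm{z(t)}_{\Bto}$ from the continuous-in-time embedding alone. Instead, the time-derivative information encoded in $W^{1,p}(0,\infty;\lso)$ must be exploited through a mixed anisotropic interpolation/embedding of the maximal regularity space. This is precisely where the hypotheses $q > d$ and $p < 2q/(2q-1)$ enter critically, and it is the step that pins down the tight window of indices allowed by the theorem.
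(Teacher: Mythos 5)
Your overall architecture---a Banach fixed point for $\BF_q$ on a ball of $\xipqs\big(\BA_{_{F,q}}\big)$, fed by Theorem \ref{B-Thm-6.4}(ii) for the free term, Theorem \ref{B-Thm-6.1} for the Duhamel term, and a quadratic bound on $\calN_q$---is exactly the paper's, and your bookkeeping with $R=2C_1r_1$ closes correctly \emph{if} the quadratic bound holds. The gap is in the one step you yourself identify as the crux. You estimate $\norm{(z\cdot\nabla)z}_{L^q(\Omega)}\leq \norm{z}_{L^\infty(\Omega)}\norm{\nabla z}_{L^q(\Omega)}$ and then need the factor carrying the $L^\infty(\Omega)$-norm to lie in $L^\infty(0,\infty;W^{1,q}(\Omega))$, which you propose to extract from an anisotropic embedding $L^p(0,\infty;W^{2,q}(\Omega))\cap W^{1,p}(0,\infty;\lso)\hookrightarrow L^\infty(0,\infty;W^{1,q}(\Omega))$. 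That embedding is false in the present range of indices. The sharp temporal trace of this intersection space is $\big(L^q(\Omega),W^{2,q}(\Omega)\big)_{1-\frac{1}{p},p}=\Bso$, and for $1<p<\rfrac{6}{5}$ one has $2-\rfrac{2}{p}<\rfrac{1}{3}$, so $\Bso\not\subset W^{1,q}(\Omega)$; equivalently, the mixed-derivative theorem yields at best $H^{\rfrac{1}{2},p}(0,\infty;W^{1,q}(\Omega))$, and a half time-derivative in $L^p$ controls $L^\infty$ in time only when $\rfrac{1}{2}>\rfrac{1}{p}$, i.e. $p>2$---the opposite end of the admissible window. So, as written, the quadratic estimate does not close, and this is precisely the regime where the tightness of the Besov indices bites.

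The repair is to distribute the H\"older exponents the other way, which is what the paper does in \eqref{B-7.15}--\eqref{B-7.22}: write $\norm{(z\cdot\nabla)z}_{L^q(\Omega)}\leq\norm{\nabla z}_{L^\infty(\Omega)}\norm{z}_{L^q(\Omega)}$, put the \emph{gradient} factor in $L^p$ in time---where $\norm{\nabla z(t)}_{L^\infty(\Omega)}\leq C\norm{z(t)}_{W^{2,q}(\Omega)}$ via $W^{1,q}(\Omega)\hookrightarrow L^\infty(\Omega)$, $q>3$, applied to $\nabla z$, and $L^p(0,\infty;W^{2,q}(\Omega))$ is exactly what maximal regularity provides---and put the underived factor in $L^\infty$ in time, where only $\norm{z(t)}_{\lso}$ is needed and this \emph{does} follow from $\xipqs\big(\BA_{_{F,q}}\big)\hookrightarrow C\big([0,\infty);\Bto\big)\subset L^\infty(0,\infty;\lso)$, cf. \eqref{B-7.18}. (Your splitting could alternatively be salvaged by interpolating $L^\infty(0,\infty;\Bto)$ against $L^p(0,\infty;W^{2,q}(\Omega))$ at parameter $\rfrac{1}{2}$ to place $z$ in $L^{2p}(0,\infty;W^{1,q}(\Omega))$, the intermediate smoothness $2-\rfrac{1}{p}$ exceeding $1$; but that is a different argument from the $L^\infty$-in-time claim you made, which must be abandoned.) With the corrected estimate, your contraction step goes through by bilinearity exactly as you indicate, matching Propositions \ref{B-Prop-7.2} and \ref{B-Prop-7.3}.
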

\begin{proof}
	\noindent The proof will be critically based on the maximal regularity property of $\ds \BA_{_{F,q}}$ Section \ref{B-Sec-6}. We already know from (\ref{B-5.19}) of Theorem \ref{B-Thm-5.4} that for $\ds z_0 \in \Bto, 1 < q < \infty, \ 1 < p < \rfrac{2q}{2q-1}$ we have
	\begin{equation}\label{B-7.8}
	\norm{e^{\BA_{F,q}t} z_0}_{\Bto} \leq M_{\gamma_0} e ^{-\gamma_0 t} \norm{z_0}_{\Bto}, \quad t \geq 0,
	\end{equation}
	\noindent with $M_{\gamma_0}$ possibly depending on $p,q$. Maximal regularity properties corresponding to the solution operator formula in (\ref{B-7.4}) were established in section \ref{B-Sec-6}. Accordingly, for $z_0 \in \Bto$ and $f \in \xipqs \equiv L^p(0, \infty; \calD(\BA_{F,q})) \cap W^{1,p}(0, \infty;\lso)$, $\calD(\BA_{F,q})$ given by (\ref{B-6.2b}), we define the operator $\BF_q$ by
	\begin{equation}\label{B-7.9}
	\BF_q(z_0,f)(t) = e^{\BA_{F,q}t}z_0 - \int_{0}^{t} e^{\BA_{F,q}(t - \tau)}\calN_q f(\tau) d \tau.
	\end{equation}
	\noindent \uline{Claim:} We need to show, equivalently, that under the assumptions of Theorem \ref{B-Thm-7.1}, in particular \eqref{B-7.5}, the operator $\BF_q$ in (\ref{B-7.9}) has a unique fixed point on $\xipqs$ in \eqref{B-7.6}, as stated in \eqref{B-7.7}. The proof of the Claim is accomplished in two steps.\\	
	\noindent \underline{Step 1}:
	\begin{prop}\label{B-Prop-7.2}
		Let $d = 3, \ q>3$ and $\ds 1 < p < \rfrac{6}{5}$. There exists a positive constant $r_1 > 0$ (identified below in (\ref{B-7.27})) and a subsequent constant $r>0$ (identified below in (\ref{B-7.25})) depending on $r_1 > 0$ and the constant $C>0$ in (\ref{B-7.23}), such that with $\ds \norm{z_0}_{\Bto} < r_1$, the operator $\BF_q(z_0,f)$ maps the ball $B(0,r)$ in $\xipqs$ into itself.
	\end{prop}
	\noindent The above Claim will then follow from Proposition \ref{B-Prop-7.2} after establishing that\\	
	\noindent \underline{Step 2}:
	\begin{prop}\label{B-Prop-7.3}
		Let $d = 3, \ q>3$ and $\ds 1 < p < \rfrac{6}{5}$. There exists a positive constant $r_1 > 0$, such that if $\ds \norm{z_0}_{\Bto} < r_1$ as in (\ref{B-7.5}), then there exists a constant $0 < \rho_0 < 1$, depending on the constant $r$ of Proposition \ref{B-Prop-7.2}, such that the operator $\BF_q(z_0,f)$ in \eqref{B-7.9} defines a contraction in the ball $B(0,\rho_0)$ of $\xipqs$ in \eqref{B-7.6}.
	\end{prop}
	
	\noindent The Banach contraction principle then establishes the Claim, once we prove Propositions \ref{B-Prop-7.2} and \ref{B-Prop-7.3}.\\
	
	\noindent \textbf{Proof of Proposition \ref{B-Prop-7.2}}.  \textit{Step 1}: We start from definition (\ref{B-7.9}) of $\BF_q$ and invoke the maximal regularity properties (\ref{B-6.29}), (\ref{B-6.30}) for $e^{\BA_{F,q}t}$ and (\ref{B-6.8c}) for $\ds \int_{0}^{t} e^{\BA_{F,q}(t - \tau)} \calN_q f(\tau) d \tau$. We obtain 
	
	\begin{align}
	\norm{\BF_q(z_0,f)(t)}_{\xipqs} &\leq \norm{e^{\BA_{F,q}t}z_0} _{\xipqs}+ \norm{\int_{0}^{t} e^{\BA_{F,q}(t - \tau)}\calN_q f(\tau) d \tau}_{\xipqs} \label{B-7.10}\\
	&\leq C \Big[ \norm{z_0}_{\Bto} + \norm{\calN_q f}_{L^p(0, \infty; \lso)} \Big]. \label{B-7.11} 
	\end{align}
	
	\noindent \textit{Step 2}: By the definition $\calN_q f = P_q [(f.\nabla)f]$ in (\ref{B-2.23}), we estimate ignoring $\ds \norm{P_q}$ and using, $\ds \sup_{\cdot} \ \big[ \abs{g(\cdot)} \big]^r = [\sup_{\cdot} \ (\abs{g(\cdot)})]^r$	
	\begin{align}
	\norm{\calN_q f}^p_{L^p(0,\infty;\lso)} &\leq \int_{0}^{\infty} \norm{P_q [(f.\nabla)f]}^p_{\lso} dt \leq \int_{0}^{\infty} \bigg\{ \int_{\Omega} \abs{f(t,x)}^q \abs{ \nabla f(t,x)}^q d \Omega \bigg\}^{\rfrac{p}{q}} dt\\
	&\leq \int_{0}^{\infty} \bigg\{ \bigg[ \sup_{\Omega} \abs{ \nabla f(t, \cdot)}^q  \bigg]^{\rfrac{1}{q}} \bigg[ \int_{\Omega} \abs{f(t,x)}^{q} d \Omega \bigg]^{\rfrac{1}{q}}  \bigg\}^p dt\\
	&\leq \int_{0}^{\infty} \norm{\nabla f(t,\cdot)}^p_{L^{\infty}(\Omega)} \norm{f(t,\cdot)}^p_{\lso} dt \label{B-7.15}\\
	&\leq \sup_{0\leq t \leq \infty} \norm{f(t,\cdot)}^p_{\lso} \int_{0}^{\infty} \norm{\nabla f(t,\cdot)}^p_{\lo{\infty}} dt\\
	&= \norm{f}^p_{L^{\infty}(0,\infty; \lso)} \norm{\nabla f}^p_{L^p(0,\infty; \lso)}\label{B-7.17}
	\end{align}
	
	\noindent \textit{Step 3}: The following embeddings hold true:
	\begin{enumerate}[(i)]
		\item \cite[Proposition 4.3, p 1406 with $\mu = 0, s = \infty, r = q$]{GGH:2012}  so that the required formula reduces to $1 \geq \rfrac{1}{p}$, as desired
		\begin{subequations}\label{B-7.18}
			\begin{align}
			f \in \xipqs\big( \BA_{_{F,q}} \big) \hookrightarrow f &\in L^{\infty}(0,\infty; \lso) \label{B-7.18a}\\
			\text{ so that, } \norm{f}_{L^{\infty}(0,\infty; \lso)} &\leq C\norm{f}_{\xipqs} \label{B-7.18b}
			\end{align}
		\end{subequations}
		\item \cite[Theorem 2.4.4, p 74 requiring $C^1$-boundary]{SK:1989}
		\begin{equation}\label{B-7.19}
		W^{1,q}(\Omega) \subset L^{\infty}(\Omega) \text{ for q}>\text{dim }\Omega=d, \ d = 2,3,
		\end{equation}
	\end{enumerate}
	
	\noindent so that, with $p>1, q>3$, in case $d = 3$:
	\begin{align}
	\norm{\nabla f}^p_{L^p(0,\infty; \lso)} &\leq C \norm{ \nabla f}^p_{L^p(0,\infty; W^{1,q}(\Omega))} \leq C \norm{f}^p_{L^p(0,\infty; W^{2,q}(\Omega))} \label{B-7.20}\\
	&\leq C \norm{f}^p_{\xipqs} \label{B-7.21}
	\end{align}
	
	\noindent In going from (\ref{B-7.20}) to (\ref{B-7.21}) we have recalled the definition of $f \in \xipqs = \xipqs\big( \BA_{_{F,q}} \big)$ in (\ref{B-6.8a}). Then, the sought-after final estimate of the non-linear term $\calN_q f, f \in \xipqs$, is obtained from substituting (\ref{B-7.18b}) and (\ref{B-7.21}) into the RHS of (\ref{B-7.17}). We obtain 
	
	\begin{equation}\label{B-7.22}
	\norm{\calN_q f}_{L^p(0,\infty; \lso)} \leq C \norm{f}^2_{\xipqs}, \quad f \in \xipqs.
	\end{equation}
	
	\noindent Returning to (\ref{B-7.11}), we finally, obtain by (\ref{B-7.22})
	
	\begin{equation}\label{B-7.23}
	\norm{\BF_q (z_0, f)}_{\xipqs} \leq C \Big\{ \norm{z_0}_{\Bto} + \norm{f}^2_{\xipqs} \Big\}.
	\end{equation}
	
	\noindent \textit{Step 4}: We now impose the restrictions on the data on the RHS of (\ref{B-7.23}): $z_0$ is in a ball of radius $r_1 > 0$ in $\Bto$ and $f$ is in a ball of radius $r>0$ in $\xipqs$. We further demand that the final result $\BF_q(z_0,f)$ shall lie in a ball of radius $r$ in $\xipqs$. Thus we obtain from (\ref{B-7.23})
	
	\begin{equation}\label{B-7.24}
	\norm{\BF_q(z_0,f)}_{\xipqs} \leq C \Big\{ \norm{z_0}_{\Bto} + \norm{f}^2_{\xipqs} \Big\} \leq C(r_1 + r^2) \leq r
	\end{equation}
	
	\noindent This implies 
	\begin{equation}\label{B-7.25}
	Cr^2 - r + Cr_1 \leq 0 \quad \text{or} \quad \frac{1 - \sqrt{1-4C^2r_1}}{2C} \leq r \leq \frac{1 + \sqrt{1-4C^2r_1}}{2C}
	\end{equation}
	\noindent whereby 
	\begin{equation}\label{B-7.26}
	\begin{Bmatrix}
	\text{ range of values of r }
	\end{Bmatrix}
	\longrightarrow \text{ interval } \Big[ 0, \frac{1}{C} \Big], \text{ as } r_1 \searrow 0
	\end{equation}
	\noindent a constraint which is guaranteed by taking 
	\begin{equation}\label{B-7.27}
	r_1 \leq \frac{1}{4C^2},\ C \text{ being the constant in } (\ref{B-7.23}).
	\end{equation}
	\noindent We have thus established that by taking $r_1$ as in (\ref{B-7.27})  and subsequently $r$ as in (\ref{B-7.25}), then the map
	\begin{multline}\label{B-7.28}
	\BF_q(z_0, f) \text{ takes: }
	\begin{Bmatrix}
	\text{ ball in } \Bto \\
	\text{of radius } r_1
	\end{Bmatrix}
	\times 
	\begin{Bmatrix}
	\text{ ball in } \xipqs \\
	\text{of radius } r
	\end{Bmatrix}
	\text{ into }
	\begin{Bmatrix}
	\text{ ball in } \xipqs \\
	\text{of radius } r
	\end{Bmatrix},\\ \ 3 < q, \ 1 < p < \frac{2q}{2q-1}
	\end{multline}
	\noindent This establishes Proposition \ref{B-Prop-7.2}. \qedsymbol \\
	
	\noindent \textbf{Proof of Proposition \ref{B-Prop-7.3}} \underline{Step 1}: For $f_1,f_2$ both in the ball of $\xipqs$ of radius $r$ obtained in (\ref{B-7.25}) of the proof of Proposition \ref{B-Prop-7.2}, subject to $r_1$ chosen as in \eqref{B-7.27}, we estimate from (\ref{B-7.9}):
	\begin{align}
	\norm{ \BF_q(z_0,f_1) - \BF_q(z_0,f_2)}_{\xipqs} &= \norm{\int_{0}^{t} e^{\BA_{F,q}(t-\tau)} \big[ \calN_q f_1(\tau) - \calN_q f_2(\tau) \big] d \tau}_{\xipqs} \label{B-7.29}\\
	&\leq  \widetilde{m} \norm{\calN_q f_1 - \calN_q f_2}_{L^p(0,\infty;\lso)} \label{B-7.30}
	\end{align}
	\noindent after invoking the maximal regularity property \eqref{B-6.6}.\\
	
	\noindent \underline{Step 2}: Next recalling $\calN_qf = P_q [(f \cdot \nabla )f]$ from (\ref{B-2.23}), we estimate the RHS of (\ref{B-7.30}). In doing so, we add and subtract $(f_2 \cdot \nabla) f_1$, set $ \ds A = (f_1 \cdot \nabla) f_1 - (f_2 \cdot \nabla) f_1, \ B = (f_2 \cdot \nabla) f_1 - (f_2  \cdot \nabla) f_2,$ and use $\abs{A+B}^q \leq 2^q[\abs{A}^q + \abs{B}^q]$. \cite[p 12]{TL:1980} We obtain, again ignoring $\norm{P_q}$:
	
	\begin{align}
	\norm{\calN_q f_1 - \calN_q f_2}_{L^p(0,\infty;\lso)} &\leq \int_{0}^{\infty} \bigg\{ \bigg[ \int_{\Omega} \abs{(f_1 \cdot \nabla) f_1 - (f_2 \cdot \nabla) f_2}^q d \Omega \bigg]^{\rfrac{1}{q}}\bigg\}^p dt\\
	&= \int_{0}^{\infty} \bigg[ \int_{\Omega} \abs{A+B}^q d \Omega \bigg]^{\rfrac{p}{q}}dt \leq 2^q \int_{0}^{\infty} \bigg\{ \int_{\Omega} \big[\abs{A}^q + \abs{B}^q \big] d \Omega \bigg\}^{\rfrac{p}{q}}dt\\
	&= 2^q \int_{0}^{\infty} \bigg\{ \Big[ \int_{\Omega} \abs{A}^q d \Omega + \int_{\Omega} \abs{B}^q d \Omega   \Big]^{\rfrac{1}{q}} \bigg\}^p dt\\
	&= 2^q \int_{0}^{\infty} \bigg\{ \Big[ \norm{A}^q_{\lso} + \norm{B}^q_{\lso} \Big]^{\rfrac{1}{q}} \bigg\}^p dt\\
	&\leq 2^q \cdot 2^{\rfrac{1}{q}}\int_{0}^{\infty} \Big\{ \norm{A}_{\lso} + \norm{B}_{\lso} \Big\}^p dt\\
	&\leq 2^{p+q+\rfrac{1}{q}}\int_{0}^{\infty} \Big[ \norm{A}^p_{\lso} + \norm{B}^p_{\lso} \Big] dt\\
	= 2^{p+q+\rfrac{1}{q}}\int_{0}^{\infty} &\Big[ \norm{((f_1 - f_2)\cdot \nabla)f_1}^p_{\lso} + \norm{(f_2 \cdot \nabla)(f_1 - f_2 )}^p_{\lso} \Big] dt\\
	= 2^{p+q+\rfrac{1}{q}}\int_{0}^{\infty} &\Big\{ \norm{f_1 - f_2}^p_{\lso} \norm{\nabla f_1}^p_{\lso} + \norm{f_2}^p_{L^q(\Omega)} \norm{\nabla(f_1 - f_2)}^p_{L^q(\Omega)} \Big\} dt \label{B-7.39}
	\end{align}
	
	\noindent \underline{Step 3}: We now notice that regarding each of the integral term in the RHS of (\ref{B-7.39}) we are structurally and topologically as in the RHS of (\ref{B-7.15}), except that in (\ref{B-7.39}) the gradient terms $\nabla f_1, \nabla(f_1 - f_2)$ are penalized in the $\lso$-norm which is dominated by the $L^{\infty}(\Omega)$-norm, as it occurs for the gradient term $\nabla f$ in (\ref{B-7.15}). Thus we can apply to each integral term on the RHS of (\ref{B-7.39}) the same argument as in going from (\ref{B-7.15}) to the estimates (\ref{B-7.18b}) and (\ref{B-7.21}) with $q >$ dim $\Omega=3$. We obtain
	\begin{align}
	\norm{\calN_q f_1 - \calN_q f_2}^p_{L^p(0,\infty;\lso)} &\leq \text{RHS of (\ref{B-7.39})} \nonumber \\
	\text{(see (\ref{B-7.17}))} \hspace{3cm}&\leq C_{p,q} \Big\{ \norm{f_1 - f_2}^p_{L^{\infty}(0, \infty;L^q(\Omega))} \norm{\nabla f_1}^p_{L^p(0, \infty;L^{\infty}(\Omega))}\nonumber \\ &\hspace{2cm} + \norm{f_2}^p_{L^{\infty}(0, \infty;L^q(\Omega))} \norm{\nabla(f_1 - f_2)}^p_{L^p(0, \infty;L^{\infty}(\Omega)} \Big\}\\
	\text{(see (\ref{B-7.18b}) and (\ref{B-7.21}))} \qquad &\leq C_{p,q} \Big\{ \norm{f_1 - f_2}^p_{\xipqs} \norm{f_1}^p_{\xipqs} + \norm{f_2}^p_{\xipqs} \norm{f_1 - f_2}^p_{\xipqs} \Big\}\\
	&= C_{p,q} \Big\{ \norm{f_1 - f_2}^p_{\xipqs} \big( \norm{f_1}^p_{\xipqs} + \norm{f_2}^p_{\xipqs}\big) \Big\} \label{B-7.43}
	\end{align}
	\noindent with $\ds C_{p,q} = 2^{p+q+\rfrac{1}{q}}$, finally (\ref{B-7.43}) yields
	\begin{align}
	\norm{\calN_q f_1 - \calN_q f_2}_{L^p(0,\infty;\lso)} &\leq C_{p,q} \norm{f_1 - f_2}_{\xipqs} \Big( \norm{f_1}^p_{\xipqs} + \norm{f_2}^p_{\xipqs} \Big)^{\rfrac{1}{p}}\\
	&\leq 2^{\rfrac{1}{p}} C_{p,q} \norm{f_1 - f_2}_{\xipqs} \Big( \norm{f_1}_{\xipqs} + \norm{f_2}_{\xipqs} \Big) \label{B-7.45}
	\end{align}
	\noindent \underline{Step 4}: Using estimate (\ref{B-7.45}) on the RHS of estimate (\ref{B-7.30}) yields
	\begin{equation}\label{B-12.42}
	\norm{\BF_q(z_0,f_1) - \BF_q(z_0,f_2)}_{\xipqs} \leq K_{p,q} \norm{f_1 - f_2}_{\xipqs} \Big( \norm{f_1}_{\xipqs} + \norm{f_2}_{\xipqs} \Big)
	\end{equation}
	\noindent $K_{p,q} = \widetilde{m}C_{p,q} = \wti{m} 2^{p + \rfrac{1}{p} + q + \rfrac{1}{q}}$ ($\widetilde{m}$ as in (\ref{B-7.30})). Next, recall that $f_1,f_2$ are in the ball of $\xipqs$ of radius $r$ obtained in \eqref{B-7.25}:
	\begin{equation}
	\norm{f_1}_{\xipqs},\norm{f_2}_{\xipqs} \leq r.
	\end{equation}
	\noindent Then 
	\begin{equation}
	\norm{\BF_q(z_0,f_1) - \BF_q(z_0,f_2)}_{\xipqs} \leq \rho_0 \norm{f_1 - f_2}_{\xipqs}
	\end{equation}
	\noindent and $\BF_q(z_0,f)$ is a contraction on the space $\xipqs$ as soon as 
	\begin{equation}
	\rho_0 \equiv 2K_{p,q}r < 1 \text{ or } r < \rfrac{1}{2 K_{p,q}}, \ K_{p,q} = \widetilde{m} 2^{p + \rfrac{1}{p} + q + \rfrac{1}{q}} 
	\end{equation}
	\noindent where $\rho_0$ depends on $r$, hence on $r_1$ in \eqref{B-7.27}. In this case, the map $\BF_q(z_0,f)$ defined in (\ref{B-7.9}) has a fixed point $z$ in $\xipqs$
	\begin{equation}
	\BF_q(z_0,z) = z, \text{ or } z = e^{\BA_{F,q}t}z_0 - \int_{0}^{t}e^{\BA_{F,q}(t - \tau)}\calN_q z(\tau) d \tau
	\end{equation}
	\noindent and such fixed point $z \in \xipqs = \xipqs\big( \BA_{_{F,q}} \big)$ is the unique solution of the translated non-linear equation (\ref{B-7.1}), or (\ref{B-7.6}) with finite dimensional control $u$ in feedback form, as described by the RHS of (\ref{B-7.1}). The claim is proved. \qedsymbol 	
\end{proof}

	\section{Local exponential decay of the non-linear translated $z$-dynamics (\ref{B-7.1}) with finite dimensional, localized, tangential-like, feedback control $\{v,u\}$ on $\{ \wti{\Gamma}, \omega \}$. Case $d = 3$.}\label{B-Sec-8}

\begin{thm}\label{B-Thm-8.1}(Uniform Stabilization) Let $d = 3, 1 < p < \rfrac{6}{5}, q > 3$. Consider the setting of Theorem \ref{B-Thm-7.1}, which provides the solution of the $z$-problem \eqref{B-7.1} in the space $\xipqs\big( \BA_{_{F,q}} \big)$ in \eqref{B-7.6} provided the initial condition $z_0$ satisfies the smallness condition \eqref{B-7.5} with $r_1$ given by \eqref{B-7.27}. If $r_1$ is, possibly, further smaller to satisfy condition \eqref{B-8.18} below, then $z(t;z_0)$ is uniformly stable on the space $\Bto$: there exist constants $\widetilde{\gamma} > 0, M_{\widetilde{\gamma}} \geq 1$, such that said solution satisfies 
	\begin{equation}\label{B-8.1}
	\begin{aligned}
	\norm{z(t;z_0)}_{\Bto} &\leq M_{\widetilde{\gamma}} e^{-\widetilde{\gamma} t}\norm{z_0}_{\Bto}.
	\end{aligned}
	\end{equation}
\end{thm}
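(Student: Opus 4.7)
The natural strategy is a weighted fixed-point argument: one reduces exponential decay to mere boundedness in a weighted norm, and then re-runs the nonlinear analysis of Section \ref{B-Sec-7} in that weighted setting, exploiting the linear decay rate $\gamma_0$ already provided by Theorem \ref{B-Thm-5.4}. Fix any $\wti{\gamma}\in(0,\gamma_0)$ and introduce $\zeta(t)\equiv e^{\wti{\gamma}t}\,z(t)$, where $z\in\xipqs(\BA_{_{F,q}})$ is the solution of \eqref{B-7.1} furnished by Theorem \ref{B-Thm-7.1}. A direct computation from \eqref{B-7.4}, using the commutativity of $\wti{\gamma}I$ with $\BA_{_{F,q}}$ together with the quadratic nature of $\calN_q$, converts \eqref{B-7.4} into the closed equation
\begin{equation*}
\zeta(t) \;=\; e^{(\BA_{_{F,q}}+\wti{\gamma}I)t}\, z_0 \;-\; \int_0^t e^{(\BA_{_{F,q}}+\wti{\gamma}I)(t-\tau)}\; e^{-\wti{\gamma}\tau}\, P_q\big[(\zeta(\tau)\cdot\nabla)\zeta(\tau)\big]\, d\tau.
\end{equation*}
The crucial point is that the damping weight now appears \emph{inside} the nonlinear convolution; since $e^{-\wti{\gamma}\tau}\leq 1$, it can only help.

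Next I would verify that the shifted generator $\BA_{_{F,q}}+\wti{\gamma}I$ inherits every structural feature of $\BA_{_{F,q}}$ exploited in Sections \ref{B-Sec-5}--\ref{B-Sec-7}: (i) it still generates a s.c.\ analytic semigroup on $\lso$ and on $\Bto$, $\wti{\gamma}I$ being a bounded perturbation; (ii) that semigroup is uniformly stable, now at the reduced rate $(\gamma_0-\wti{\gamma})>0$, by \eqref{B-5.17}/\eqref{B-5.19}; (iii) it still satisfies maximal $L^p$-regularity on $L^p(0,\infty;\lso)$, since $R$-boundedness of the resolvent is preserved under a scalar shift and the preserved exponential stability permits extending maximal regularity up to $T=\infty$; (iv) its domain is unchanged, so the maximal-regularity space $\xipqs(\BA_{_{F,q}})$ and the embedding $\xipqs(\BA_{_{F,q}})\hookrightarrow C([0,\infty);\Bto)$ issued by \eqref{B-5.12} and \eqref{B-6.8c} remain available.

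Armed with these facts, I would re-run the fixed-point scheme of Theorem \ref{B-Thm-7.1} for the operator
\begin{equation*}
\wti{\BF}_q(z_0,f)(t) \;=\; e^{(\BA_{_{F,q}}+\wti{\gamma}I)t}\, z_0 \;-\; \int_0^t e^{(\BA_{_{F,q}}+\wti{\gamma}I)(t-\tau)}\, e^{-\wti{\gamma}\tau}\, P_q\big[(f(\tau)\cdot\nabla) f(\tau)\big]\, d\tau
\end{equation*}
on $\xipqs(\BA_{_{F,q}})$. Because $|e^{-\wti{\gamma}\tau}|\leq 1$, the bilinear bound \eqref{B-7.22} and the Lipschitz estimate of Proposition \ref{B-Prop-7.3} transfer verbatim, with possibly enlarged constants. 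Solving the resulting quadratic inequality $\wti{C}(\|z_0\|_{\Bto}+r^2)\leq r$ exactly as in \eqref{B-7.25}--\eqref{B-7.27} produces a (possibly smaller) threshold $r_1>0$ --- this is the $r_1$ of \eqref{B-8.1}, with the explicit form in (8.18) --- and a radius $r>0$ such that, for every $\|z_0\|_{\Bto}\leq r_1$, the map $\wti{\BF}_q(z_0,\,\cdot\,)$ is a contraction on the ball $B(0,r)\subset \xipqs(\BA_{_{F,q}})$ with a unique fixed point $\zeta$ obeying $\|\zeta\|_{\xipqs}\leq C\|z_0\|_{\Bto}$. Multiplying this $\zeta$ by $e^{-\wti{\gamma}t}$ yields a solution of the original fixed-point problem \eqref{B-7.7} for $z$; uniqueness in Theorem \ref{B-Thm-7.1} forces this to coincide with $z$, so $\zeta(t)=e^{\wti{\gamma}t}z(t)$, and the embedding $\xipqs(\BA_{_{F,q}})\hookrightarrow C([0,\infty);\Bto)$ finally gives $\sup_{t\geq 0}\|e^{\wti{\gamma}t}z(t)\|_{\Bto}\leq M_{\wti{\gamma}}\|z_0\|_{\Bto}$, which is \eqref{B-8.1}.

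The main obstacle I foresee is item (iii) above: certifying maximal $L^p$-regularity on the full half-line for the shifted operator $\BA_{_{F,q}}+\wti{\gamma}I$ within the same function-space setting and with explicit control on the constants. The scalar shift is invisible to analyticity and to $R$-boundedness, but the extension from finite $T$ to $T=\infty$ rests delicately on the uniform-stability margin $(\gamma_0-\wti{\gamma})>0$, and that margin in turn dictates how much smaller $r_1$ must be chosen than in Theorem \ref{B-Thm-7.1}. This is exactly the qualitative content of Remark \ref{B-rmk-1.3} and Remark \ref{B-rmk-8.1}: the larger the linear decay rate $\gamma_0\approx|\mathrm{Re}\,\lambda_{N+1}|$, the larger the admissible nonlinear decay rate $\wti{\gamma}$.
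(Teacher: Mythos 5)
Your proposal is correct in outline but follows a genuinely different route from the paper. The paper does not rescale the unknown: it works directly with the fixed point $z$ already produced by Theorem \ref{B-Thm-7.1} and derives two inequalities --- the pointwise-in-time bound $\norm{z(t)}_{\Bto} \le M e^{-\gamma_0 t}\norm{z_0}_{\Bto} + C_1\norm{z}^2_{\xipqs}$ (from \eqref{B-5.19}, the embedding \eqref{B-6.8c} and the quadratic estimate \eqref{B-7.22}) and the global bound $\norm{z}_{\xipqs}\le 2M\norm{z_0}_{\Bto}$ (by absorbing the quadratic term for small data). Combining these gives $\norm{z(T)}_{\Bto}\le \beta\norm{z_0}_{\Bto}$ with $\beta = Me^{-\gamma_0 T}+4M^2C_1 r_1<1$ for $T$ large and $r_1$ small --- this is condition \eqref{B-8.18} --- and \eqref{B-8.1} then follows by iterating over the intervals $[nT,(n+1)T]$ via the semigroup property. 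The advantage of the paper's argument is that it reuses the maximal regularity theory of Section \ref{B-Sec-6} exactly as established, for $\BA_{_{F,q}}$ itself; no shifted generator ever appears. Your weighted-norm argument instead requires re-certifying, for $\BA_{_{F,q}}+\wti{\gamma}I$, uniform stability at the reduced rate $\gamma_0-\wti{\gamma}$, the trace-space estimate of Theorem \ref{B-Thm-6.4} and maximal $L^p$-regularity up to $T=\infty$ --- all true, and you correctly flag the half-line maximal regularity as the one step that genuinely consumes the stability margin, but it is additional work. What your route buys is a sharper, more transparent quantitative statement: any $\wti{\gamma}<\gamma_0$ is achievable at the price of shrinking $r_1$ as the margin $\gamma_0-\wti{\gamma}$ closes, which is exactly the content the paper only recovers qualitatively in Remark \ref{B-rmk-8.1} through $\wti{\gamma}\sim\gamma_0-(\ln M)/T$. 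One point to nail down in your write-up: when you invoke uniqueness to identify $e^{-\wti{\gamma}t}\zeta$ with $z$, multiplication by $e^{-\wti{\gamma}t}$ inflates the $W^{1,p}(0,\infty;\lso)$ component of the $\xipqs$-norm by a factor of order $1+\wti{\gamma}$, so you must check that $e^{-\wti{\gamma}t}\zeta$ still lies in the ball where the fixed point of Theorem \ref{B-Thm-7.1} is unique; this is harmless for small data but should be stated.
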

\noindent Remark \ref{B-rmk-8.1} will provide insight on the relationship between $\wti{\gamma}$ in the nonlinear case in \eqref{B-8.1} and $\gamma_0  \approx \abs{Re \lambda_{N+1}}$ in the corresponding linear case in \eqref{B-5.13}.
\begin{proof}
	We return to the feedback problem \eqref{B-7.1} rewritten equivalently as in \eqref{B-7.4}
	\begin{equation}\label{B-8.2}
	z(t) = e^{\BA_{F,q}t}z_0 - \int_{0}^{t}e^{\BA_{F,q}(t - \tau)} \calN_q z(\tau) d \tau.
	\end{equation} 
	\noindent For $z_0$ in a small ball of $\Bto$, Theorem \ref{B-Thm-7.1} provides a unique solution in a ball of $\xipqs$ in \eqref{B-7.6}. We recall from (\ref{B-5.17}) = (\ref{B-7.8})
	\begin{equation}\label{B-8.3}
	\norm{e^{\BA_{F,q}t} z_0}_{\Bto} \leq M_{\gamma_0} e^{-\gamma_0 t} \norm{z_0}_{\Bto}, \ t \geq 0.
	\end{equation}
	\noindent Our goal now is to show that for $z_0$ in a small ball of $\Bto$, problem (\ref{B-8.2}) satisfies the exponential decay 
	\begin{equation}
	\norm{z(t)}_{\Bto} \leq C_a e^{-at} \norm{z_0}_{\Bto}, \ t \geq 0, \ \text{for some constants, } a>0, C_a \geq 1. \nonumber
	\end{equation}
	\noindent \underline{Step 1}: Starting from (\ref{B-8.2}) and using (\ref{B-8.3}), we estimate
	\begin{align}
	\norm{z(t)}_{\Bto} &\leq M e^{-\gamma_0 t} \norm{z_0}_{\Bto} + \sup_{0\leq t \leq \infty} \norm{\int_{0}^{t}e^{\BA_{F,q}(t - \tau)} \calN_q z(\tau) d \tau}_{\Bto} \label{B-8.4}\\
	&\leq M e^{-\gamma_0 t} \norm{z_0}_{\Bto} + C \norm{\int_{0}^{t}e^{\BA_{F,q}(t - \tau)} \calN_q z(\tau) d \tau}_{\xipqs} \label{B-8.5}\\
	&\leq M e^{-\gamma_0 t} \norm{z_0}_{\Bto} + C \norm{\calN_q z}_{L^p(0,\infty;\lso)} \label{B-8.6}\\
	\norm{z(t)}_{\Bto} &\leq M e^{-\gamma_0 t} \norm{z_0}_{\Bto} + C_1 \norm{z}^2_{\xipqs}. \label{B-8.7}
	\end{align}
	\noindent In going from (\ref{B-8.4}) to (\ref{B-8.5}) we have recalled the embedding $\ds \xipqs\big( \BA_{_{F,q}} \big) \hookrightarrow L^{\infty}\big(0,\infty;\Bto \big)$ from (\ref{B-6.8c}) with $T = \infty$. Next, in going from (\ref{B-8.5}) to (\ref{B-8.6}) we have used the maximal regularity property (\hyperref[B-6.6a]{6.6}). Finally, to go from (\ref{B-8.6}) to (\ref{B-8.7}) we have invoked estimate (\ref{B-7.22}).\\
	
	\noindent \underline{Step 2}: We shall next establish that
	\begin{equation}\label{B-8.8}
	\norm{z}_{\xipqs} \leq M \norm{z_0}_{\Bto} + K \norm{z}^2_{\xipqs}, \text{ hence } \norm{z}_{\xipqs} \big(1-K\norm{z}_{\xipqs} \big) \leq M \norm{z_0}_{\Bto}
	\end{equation}
	\noindent In fact, to this end, we take the $\xipqs$ estimate of equation (\ref{B-8.2}). We obtain 
	\begin{equation}
	\norm{z}_{\xipqs} \leq \norm{e^{\BA_{F,q}t}z_0}_{\xipqs} + \norm{\int_{0}^{t} e^{\BA_{F,q}(t-\tau)}\calN_q z(\tau) d \tau}_{\xipqs}
	\end{equation}
	\noindent from which then (\ref{B-8.8}) follows by invoking the maximal regularity property (\ref{B-6.29}), (\ref{B-6.30}) on $e^{\BA_{F,q}t}$ as well as the maximal regularity estimate \eqref{B-6.6} followed by use of (\ref{B-7.22}), as in going from (\ref{B-8.5}) to (\ref{B-8.7})
	\begin{align}
	\norm{\int_{0}^{t} e^{\BA_{F,q}(t-\tau)}\calN_qz(\tau) d \tau}_{\xipqs} &\leq \widetilde{m} \norm{\calN_q z}_{L^p(0,\infty;\lso)} \label{B-8.10}\\
	&\leq \widetilde{m} C \norm{z}^2_{\xipqs}. \label{B-8.11}
	\end{align}
	\noindent Thus (\ref{B-8.8}) is proved with $K = \widetilde{m}C$ where $C$ is the same constant occurring in (\ref{B-7.22}) or \eqref{B-7.25}, hence in (\ref{B-7.24}), (\ref{B-7.25}).\\
	
	\noindent \underline{Step 3}: The well-posedness Theorem \ref{B-Thm-7.1} says that
	\begin{equation}\label{B-8.12}
	\begin{Bmatrix}
	\text{ If } \norm{z_0}_{\Bto} \leq r_1 \\
	\text{for } r_1 \text{ sufficiently small as in \eqref{B-7.27}}
	\end{Bmatrix}
	\implies
	\begin{Bmatrix}
	\text{ The solution } z \text{ satisfies} \\
	\norm{z}_{\xipqs} \leq r
	\end{Bmatrix}
	\end{equation}
	where $r$ satisfies the constraint (\ref{B-7.25}) in terms of $r_1$ satisfying \eqref{B-7.27} and some constant $C$ that occurs for $K = \widetilde{m}C$ in (\ref{B-8.11}). We seek to guarantee that we can obtain
	\begin{equation}\label{B-8.13}
	\begin{cases}
	\norm{z}_{\xipqs} \leq r < \frac{1}{2K} = \frac{1}{2 \widetilde{m} C} \ \Big( < \frac{1}{2C}\Big)\\
	\ \\
	\text{hence } \frac{1}{2} < 1 - K \norm{z}_{\xipqs},
	\end{cases}
	\end{equation}
	\noindent where w.l.o.g. we can take the maximal regularity constant $\widetilde{m}$ in (\ref{B-7.30}) to satisfy $\widetilde{m} \geq 1$. Again, the constant $C$ arises from application of estimate (\ref{B-7.22}). This is indeed possible by choosing $r_1 > 0$ sufficiently small. In fact, as $r_1 \searrow 0$, (\ref{B-7.26}) shows that the interval $r_{min} \leq r \leq r_{max}$ of corresponding values of $r$ tends to the interval $\ds \bigg[ 0, \frac{1}{C} \bigg]$. Thus (\ref{B-8.3}) can be achieved as $r_{min} \searrow 0$: $\ds 0 < r_{min} < r < \frac{1}{2 \widetilde{m} C}$. Next, (\ref{B-8.3}) implies that (\ref{B-8.8}) holds true and yields then 
	\begin{equation}\label{B-8.14}
	\norm{z}_{\xipqs} \leq 2M \norm{z_0}_{\Bto} \leq 2Mr_1.
	\end{equation}
	\noindent Substituting (\ref{B-8.14}) in estimate (\ref{B-8.7}) then yields
	\begin{align}
	\norm{z(t)}_{\Bto} &\leq M e^{-\gamma_0 t} \norm{z_0}_{\Bto} + 4C_1M^2 \norm{z_0}^2_{\Bto}\\
	&= M \bigg[ e^{-\gamma_0 t} + 4 M C_1 \norm{z_0}_{\Bto} \bigg] \norm{z_0}_{\Bto} \label{B-8.16}\\
	\norm{z(t)}_{\Bto} &\leq M \big[ e^{-\gamma_0 t} + 4 M C_1 r_1 \big] \norm{z_0}_{\Bto} \label{B-8.17}
	\end{align}
	\noindent recalling the constant $r_1 > 0$ in (\ref{B-8.12}).\\
	
	\noindent \underline{Step 4}: Now take $T$ sufficiently large and $r_1 > 0$ sufficiently small such that 
	\begin{equation}\label{B-8.18}
	\beta \equiv M e^{-\gamma_0 T} + 4M^2C_1r_1 < 1
	\end{equation}
	\noindent Then (\ref{B-8.16}) implies by (\ref{B-8.18})
	\begin{subequations}\label{B-8.19}
		\begin{align}
		\norm{z(T)}_{\Bto} &\leq \beta \norm{z_0}_{\Bto} \text{ and hence } \label{B-8.19a}\\
		\norm{z(nT)}_{\Bto} &\leq \beta \norm{z((n-1)T)}_{\Bto} \leq \beta^n \norm{z_0}_{\Bto}. \label{B-8.19b}
		\end{align}	
	\end{subequations}
	\noindent Since $\beta < 1$, the semigroup property of the evolution implies that there are constants $\widetilde{M} \geq 1, \widetilde{\gamma} > 0$ such that
	\begin{equation}\label{B-8.20}
	\norm{z(t)}_{\Bto} \leq \widetilde{M} e^{-\widetilde{\gamma} t} \norm{z_0}_{\Bto}, \quad t \geq 0
	\end{equation}
	\noindent This proves Theorem \ref{B-Thm-8.1}.	
\end{proof}

\begin{rmk}\label{B-rmk-8.1}
	The above computations - (\ref{B-8.18}) through (\ref{B-8.20}) - can be used to support qualitatively the intuitive expectation that ``the larger the decay rate $\gamma_0$ in (\ref{B-5.13}) of the linearized feedback $w$-dynamics (\ref{B-5.3}), the larger the decay rate $\wti{\gamma}$ in (\ref{B-8.20}) of the nonlinear feedback $z$-dynamics (\ref{B-1.29}) = (\ref{B-7.1}); hence the larger the rate $\wti{\gamma}$ in (\ref{B-1.22}) of the original $y$-dynamics in feedback form as in (\ref{B-1.18})".\\
	
	\noindent The following considerations are somewhat qualitative. Let $S(t)$ denote the non-linear semigroup in the space $\ds \Bto$, with infinitesimal generator $\ds \big[ \BA_{_{F,q}} - \calN_q \big]$ describing the feedback $z$-dynamics (\ref{B-1.29}) in the abstract form (\ref{B-7.1}), as guaranteed by the well posedness Theorem \hyperref[B-Thm-B]{B.(i)} = Theorem \ref{B-Thm-7.1}. Thus, $\ds z(t;z_0) = S(t)z_0$ on $\ds \Bto$. By (\ref{B-8.18}), we can rewrite (\ref{B-8.19a}) as:
	\begin{equation}\label{B-8.21}
	\norm{S(T)}_{\calL \big(\Bto \big)} \leq \beta < 1.
	\end{equation}
	\noindent It follows from \cite[p 178]{Bal:1981} via the semigroup property that
	\begin{equation}\label{B-8.22}
	- \wti{\gamma} \ \ \text{is just below} \ \ \frac{\ln \beta}{T} < 0.
	\end{equation}
	\noindent Pick $r_1 > 0$ in (\ref{B-8.18}) so small that $4M^2 C_1 r_1$ is negligible, so that $\beta$ is just above $\ds M e^{- \gamma_0 T}$, so $\ln \beta$ is just above $\ds \big[ \ln M - \gamma_0 T \big]$, hence
	\begin{equation}\label{B-8.23}
	\frac{\ln \beta}{T} \text{ is just above } (-\gamma_0) + \frac{\ln M}{T}.
	\end{equation}
	\noindent Hence, by (\ref{B-8.22}), (\ref{B-8.23}),
	\begin{equation}\label{B-8.24}
	\wti{\gamma} \sim \gamma_0 - \frac{\ln M}{T}
	\end{equation}
	\noindent and the larger $\gamma_0$, the larger is $\wti{\gamma}$, as desired.
\end{rmk}

\section{Well-posedness of the pressure $\chi$ for the $z$-problem (\ref{B-1.29}) in feedback form, and of the pressure $\pi$ for the $y$-problem (\ref{B-1.18}) in feedback form.}\label{B-Sec-9}

\noindent \underline{The $z$-problem in feedback form:} We return to the translated $z$ problem (\ref{B-1.29}) = (\ref{B-7.1}), with $\L_e(z)$ given by (\ref{B-1.9})	
\begin{subequations}\label{B-9.1}
	\begin{align}
	z_t - \nu \Delta z + L_e(z) + (z \cdot \nabla) z + \nabla \chi &= m(\wti{G}z)\tau   &\text{ in } Q \label{B-9.1a}\\ 
	\div \ z &= 0   &\text{ in } Q \\
	z &= Fz &\text{ on } \Sigma\\
	z(0,x) &= y_0(x) - y_e(x) &\text{ on } \Omega
	\end{align}	
	\noindent with $Fz$ and $m(\wti{G}z)\tau$ given in the feedback form as in (\hyperref[B-5.6a]{5.6a-b})
	\begin{equation}\label{B-9.1e}
	m(\wti{G}z)\tau = m \Bigg( \sum_{k = 1}^{K}(P_Nz,q_k)_{_{W^u_N}}u_k \Bigg) \tau, \quad Fz = \sum_{k = 1}^K \big< P_Nz, p_k \big>_{\Gamma} f_k,
	\end{equation}
\end{subequations}
\noindent for which Theorem \hyperref[B-Thm-B]{B(i)} = Theorem \ref{B-Thm-7.1} provides a local well-posedness result in (\ref{B-7.6}), (\ref{B-7.7}) for the $z$ variable. We now complement such well-posedness for $z$ with a corresponding local well-posedness result for the pressure $\chi$.\\

\noindent Here we recall maximal regularity result of the Stokes operator in Appendix (\ref{B-A.17}) for problem (\hyperref[B-A.10a]{A.10a-b-c-d}) which accounts for inhomogeneous no-slip Dirichlet boundary conditions \cite{PS:2016}.	We present it for convenience
\begin{subequations}\label{B-9.2}
	\begin{align}
	\psi_t - \nu_o \Delta \psi + \nabla \pi &= F &\text{ in } (0, T] \times \Omega \equiv Q \label{B-9.2a}\\		
	div \ \psi &\equiv 0 &\text{ in } Q\\
	\begin{picture}(0,0)
	\put(-75,5){$\left\{\rule{0pt}{35pt}\right.$}\end{picture}
	\left. \psi \right \rvert_{\Sigma} &\equiv h_0 &\text{ in } (0, T] \times \Gamma \equiv \Sigma\\
	\left. \psi \right \rvert_{t = 0} &= \psi_0 &\text{ in } \Omega,
	\end{align}
\end{subequations}
Then there exists a unique solution $\varphi \in \xtpqs, \pi \in \ytpq$ to the dynamic Stokes problem (\ref{B-9.2}) or Appendix (\ref{B-A.17}), continuously on the data: there exist constants $C_0, C_1$ independent of $T, \Fs = P_q F, \varphi_0$ such that via Appendix (\ref{B-A.14}), $0 < T \leq \infty$:
\begin{equation}\label{B-9.3}
\begin{aligned}
C_0 \norm{\varphi}_{C \big([0,T]; \Bso \big)} &\leq \norm{\varphi}_{\xtpqs} +  \norm{\pi}_{\ytpq}\\ &\equiv \norm{\varphi'}_{L^p(0,T;\lso)} + \norm{A_q \varphi}_{L^p(0,T;\lso)} +  \norm{\pi}_{\ytpq}\\
&\leq C_1 \bigg \{ \norm{\Fs}_{L^p(0,T;\lso)}  + \norm{\varphi_0}_{\big( \lso, \calD(A_q)\big)_{1-\frac{1}{p},p}} + \norm{h_0}_{L^p(0, \infty; W^{1-\rfrac{1}{q},q}(\Gamma))} \bigg \}.
\end{aligned}
\end{equation}

\begin{thm}
	Consider the setting of Theorem \hyperref[B-Thm-A]{A} for problem (\ref{B-1.18}). Then the following well-posedness result for the pressure $\chi$ holds true, where we recall the spaces $\ds \yipq$ for $T = \infty$ and $\ds \widehat{W}^{1,q}(\Omega)$ in Appendix (\ref{B-A.12}), (\ref{B-A.13}) as well as the steady state pressure $\pi_e$ from Theorem \ref{B-Thm-1.1}:
	\begin{equation}\label{B-9.4}
	\norm{\chi}_{\yipq} \leq \wti{C} \norm{y_0 - y_e}_{\Bto} \Big\{ \norm{y_0 - y_e}_{\Bto} + 1  \Big\}.
	\end{equation}
\end{thm}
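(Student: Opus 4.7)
The natural plan is to recast the $z$-equation \eqref{B-9.1a} as an inhomogeneous Stokes system of the form \eqref{B-9.2}, by placing all non-Stokes terms on the right-hand side. Concretely, view $(z, \chi)$ as the solution of \eqref{B-9.2} with data
\[
\mathcal{F} := -L_e(z) - (z \cdot \nabla)z + m(\wti{G}z)\tau, \qquad h_0 := Fz, \qquad \psi_0 := y_0 - y_e.
\]
Then the maximal-regularity estimate \eqref{B-9.3} applied on $[0,\infty)$ (permissible since the solution constructed in Theorem \ref{B-Thm-7.1} lies in $\xipqs(\BA_{_{F,q}})$) reduces the task to controlling the three quantities $\norm{P_q \mathcal{F}}_{\lplsq}$, $\norm{h_0}_{L^p(0,\infty; W^{1-\rfrac{1}{q},q}(\Gamma))}$ and $\norm{\psi_0}_{\Bto}$ by the right-hand side of \eqref{B-9.4}.

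The three forcing pieces are handled separately, each yielding a power of $\norm{z}_{\xipqs}$. The quadratic convective contribution $(z\cdot\nabla)z$ has already been estimated in \eqref{B-7.22}, giving $\norm{P_q[(z\cdot\nabla)z]}_{\lplsq} \leq C\norm{z}^2_{\xipqs}$. The Oseen term $L_e(z) = (y_e \cdot \nabla)z + (z \cdot \nabla)y_e$ is linear and first-order; with $y_e \in W^{2,q}(\Omega)\cap W^{1,q}_0(\Omega)$ and $q > 3 = d$, Lemma \ref{B-Lem-2.4} together with the embedding $W^{2,q}(\Omega) \hookrightarrow L^{\infty}(\Omega)$ yields the pointwise-in-$t$ estimate $\norm{L_e(z)(t)}_{\lso} \leq C\norm{z(t)}_{W^{1,q}(\Omega)}$, hence $\norm{L_e(z)}_{\lplsq} \leq C\norm{z}_{\xipqs}$. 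The internal feedback $m(\wti{G}z)\tau$ is finite-dimensional with smooth generators $u_k$, so trivially $\norm{m(\wti{G}z)\tau}_{\lplsq} \leq C\norm{z}_{L^p(0,\infty;\lso)} \leq C\norm{z}_{\xipqs}$.

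For the boundary datum $h_0 = Fz = \sum_{k=1}^K \big<P_N z, p_k\big>_{_{W^u_N}} f_k$, the key observation is that by \eqref{B-1.25} and \eqref{B-4.1} one has $f_k \in \calF \subset W^{2-\rfrac{1}{q},q}(\Gamma) \hookrightarrow W^{1-\rfrac{1}{q},q}(\Gamma)$. Since $P_N \in \calL(\lso)$ and $p_k \in (W^u_N)^* \subset \lo{q'}$, the finite-dimensional pairing is bounded uniformly by $\norm{z(t)}_{\lso}$, and therefore $\norm{Fz}_{L^p(0,\infty; W^{1-\rfrac{1}{q},q}(\Gamma))} \leq C\norm{z}_{L^p(0,\infty;\lso)} \leq C\norm{z}_{\xipqs}$. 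This is precisely where the sharp trace regularity $\rfrac{\partial \varphi^*_{ij}}{\partial\nu} \in W^{2-\rfrac{1}{q},q}(\Gamma)$ from Appendix \ref{B-app-B} (which required $q \geq 2$ in the construction of the feedback) is decisive, and is the one step where the bookkeeping must be checked carefully.

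Assembling these bounds and noting $\psi_0 = z_0 = y_0 - y_e$, the maximal regularity inequality \eqref{B-9.3} gives
\[
\norm{\chi}_{\yipq} \leq C\Big\{ \norm{y_0 - y_e}_{\Bto} + \norm{z}_{\xipqs} + \norm{z}^2_{\xipqs} \Big\}.
\]
To close, invoke the well-posedness estimate \eqref{B-8.14} from the proof of Theorem \ref{B-Thm-8.1}, namely $\norm{z}_{\xipqs} \leq 2M \norm{z_0}_{\Bto}$, valid under the smallness condition $\norm{z_0}_{\Bto} \leq r_1$ of Theorem \ref{B-Thm-7.1}. Substituting yields
\[
\norm{\chi}_{\yipq} \leq \wti{C}\, \norm{y_0 - y_e}_{\Bto}\Big\{ 1 + \norm{y_0 - y_e}_{\Bto} \Big\},
\]
which is \eqref{B-9.4}. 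The quadratic term in the final estimate traces exactly to the convective nonlinearity, while the linear contribution absorbs the Oseen, the interior feedback, the boundary feedback, and the initial data.
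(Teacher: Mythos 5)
Your proposal is correct and follows essentially the same route as the paper: both recast the feedback $z$-system as an inhomogeneous Stokes problem, apply the maximal regularity estimate \eqref{B-9.3} on $[0,\infty)$, bound the convective term via \eqref{B-7.22}, the Oseen and feedback terms linearly in $\norm{z}_{\xipqs}$, and close with $\norm{z}_{\xipqs}\lesssim\norm{z_0}_{\Bto}$ from the well-posedness analysis. Your citation of \eqref{B-8.14} for the closing step is in fact cleaner than the paper's own reference at that point.
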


\begin{proof}
	We first apply the full maximal-regularity up to $T = \infty$ (\ref{B-9.3}) to the Stokes component of problem (\ref{B-9.1}) with $F_q = P_q \big( mG(z) - L_e(z) - (z \cdot \nabla)z \big)$ and $h_0 = Fz$ to obtain
	\begin{align}
	\norm{z}_{\xipqs} + \norm{\chi}_{\yipq} &\leq C \Big\{ \norm{ P_q [m(Gz) - (z \cdot \nabla) z - L_e(z)]}_{\lplqs} + \norm{z_0}_{\Bto}  \nonumber \\
	& \hspace{7cm} + \norm{Fz}_{L^p(0, \infty; W^{1-\rfrac{1}{q},q}(\Gamma))} \Big\} \nonumber\\
	&\leq C \Big\{ \norm{P_q [m(Gz)]}_{\lplqs} + \norm{P_q (z \cdot \nabla) z}_{\lplqs} \nonumber\\
	& +\norm{P_q L_e(z)}_{\lplqs} + \norm{z_0}_{\Bto} + \norm{Fz}_{L^p(0, \infty; W^{1-\rfrac{1}{q},q}(\Gamma))} \Big\}. \label{B-9.5}
	\end{align}
	
	\noindent But $P_q [m G(z)] = m G(z)$ as the vectors $u_k$ in the definition of $\wti{G}$ in (\ref{B-9.1e}) are $\ds u_k \in W^u_N \subset \lso $. Moreover $G \in \calL (\lso)$, we obtain
	\begin{subequations}
		\begin{align}\label{B-9.6a}
		\norm{P_q[m(\wti{G}z)]}_{\lplqs} &\leq C_1 \norm{z}_{\xipqs}, \\ 
		\norm{Fz}_{L^p(0, \infty; W^{1-\rfrac{1}{q},q}(\Gamma))} & \leq C'_1 \norm{z}_{\xipqs}
		\end{align} 
	\end{subequations}
	
	\noindent recalling the space $\ds \xipqs$ from Appendix (\ref{B-A.12}). Next, recalling (\ref{B-7.22}) for $\ds \calN_q z = P_q \big[ (z \cdot \nabla) z\big]$, see (\ref{B-2.23}), we obtain
	\begin{equation}\label{B-9.7}
	\norm{P_q (z \cdot \nabla) z}_{\lplqs} \leq C_2 \norm{z}^2_{\xipqs}.
	\end{equation} 
	\noindent The equilibrium solution $\{y_e,\pi_e\}$ is given by Theorem \ref{B-Thm-1.1} as satisfying 
	\begin{equation}\label{B-9.8}
	\norm{y_e}_{W^{2,q}(\Omega)} + \norm{\pi_e}_{\widehat{W}^{q,1}} \leq c \norm{f}_{L^q(\Omega)}, \quad 1 < q < \infty.
	\end{equation}
	\noindent We next estimate the term $\ds P_q L_e(z) = P_q [(y_e \cdot \nabla)z + (z \cdot \nabla) y_e]$ in (\ref{B-9.5})
	\begin{align}
	\norm{P_q L_e(z)}_{\lplqs} &= \norm{P_q (y_e . \nabla) z + P_q (z. \nabla) y_e }_{\lplqs}\\
	&\leq \norm{P_q (y_e . \nabla) z}_{\lplqs} + \norm{P_q (z. \nabla) y_e }_{\lplqs}\\
	&\leq \norm{y_e}_{L^q(\Omega)} \norm{\nabla z}_{\lplqs} + \norm{z}_{\lplqs} \norm{\nabla y_e}_{L^q(\Omega)}\\
	&\leq 2C_2 \norm{f}_{L^q(\Omega)} \norm{z}_{\lplqs} \label{B-9.12}\\
	&\leq C_3 \norm{z}_{\xipqs} \label{B-9.13}
	\end{align}
	\noindent with the constant $C_3$ depending on the $L^q(\Omega)$-norm of the datum $f$. Setting now $C_4 = C \cdot \{ C_1, C_2, C_3 \} $ and substituting (\ref{B-9.6a}), (\ref{B-9.7}), (\ref{B-9.13}) in (\ref{B-9.5}), we obtain
	\begin{equation}\label{B-9.14}
	\norm{z}_{\xipqs} + \norm{\chi}_{\yipq} \leq C_4 \Big\{ \norm{z}^2_{\xipqs} + 2\norm{z}_{\xipqs} + \norm{z_0}_{\Bto}  \Big\}
	\end{equation}
	\noindent Next we drop the term $\ds \norm{z}_{\xipqs}$ on the left hand side of (\ref{B-9.14}) and invoking Appendix (\ref{B-A.10}) to estimate $\ds \norm{z}_{\xipqs}$. Thus we obtain
	\begin{align}
	\norm{\chi}_{\yipq} &\leq C_5 \Big\{ \norm{z_0}^2_{\Bto} + 2\norm{z_0}_{\Bto} + \norm{z_0}_{\Bto}  \Big\}\\
	&\leq \wti{C} \norm{z_0}_{\Bto} \Big\{ \norm{z_0}_{\Bto} + 1  \Big\}, \quad \wti{C} = 3C_5 \label{B-9.16}
	\end{align}
	\noindent and (\ref{B-9.16}) proves (\ref{B-9.4}), as desired, recalling (\ref{B-2.26}). 
\end{proof}	

\noindent \underline{The $y$-problem in feedback form} We return to the original $y$-problem however in feedback form as in (\ref{B-1.18}), (\ref{B-1.23}), (\ref{B-1.24}) for which Theorem \hyperref[B-Thm-A]{A} in Secntion \ref{B-Sec-1.7} proves a local well-posedness result. We now complement such well-posedness result for $y$ with the corresponding local well-posedness result for the pressure $\pi$.

\begin{thm}\label{B-Thm-9.2}
	Consider the setting of Theorem \hyperref[B-Thm-A]{A} for the $y$-problem in (\ref{B-1.18}), (\ref{B-1.23}), (\ref{B-1.24}). Then, the following well-posedness result for the pressure $\pi$ holds true.
	\begin{align}
	\norm{\pi - \pi_e}_{\ytpq} &\leq \norm{\pi - \pi_e}_{\yipq} \leq \wti{C} \norm{y_0 - y_e}_{\Bto} \Big\{ \norm{y_0 - y_e}_{\Bto} + 1  \Big\} \label{B-9.17}\\
	&\leq \widehat{C} \Big\{ \norm{y_0}_{\Bto} + \norm{y_e}_{W^{2,q}(\Omega)} \Big\} \Big\{ \norm{y_0}_{\Bto} + \norm{y_e}_{W^{2,q}(\Omega)} + 1  \Big\} \label{B-9.18}\\
	&\leq \widehat{C} \Big\{ \norm{y_0}_{\Bto} + \norm{f}_{L^q(\Omega)} \Big\} \Big\{ \norm{y_0}_{\Bto} + \norm{f}_{L^q(\Omega)} + 1  \Big\} \label{B-9.19}
	\end{align}
	\begin{align}
	\norm{\pi}_{\ytpq} &\leq \widehat{C} \norm{y_0 - y_e}_{\Bto} \Big\{ \norm{y_0 - y_e}_{\Bto} + 1  \Big\} + cT^{\rfrac{1}{p}} \norm{\pi_e}_{\widehat{W}^{1,q}(\Omega)}, \ 0 < T < \infty\\
	&\leq \widehat{C} \Big\{ \norm{y_0}_{\Bto} + \norm{f}_{L^q(\Omega)} \Big\} \Big\{ \norm{y_0}_{\Bto} + \norm{f}_{L^q(\Omega)} + 1  \Big\} \nonumber\\
	&\hspace{8cm}+ cT^{\rfrac{1}{p}} \norm{f}_{L^q(\Omega)}, \ 0 < T < \infty
	\end{align}
\end{thm}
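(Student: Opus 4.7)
\textbf{Proof proposal for Theorem \ref{B-Thm-9.2}.} The plan is to pivot on the translation identities $z = y - y_e$ and $\chi = \pi - \pi_e$ from (\ref{B-1.26}), and to leverage the pressure estimate (\ref{B-9.4}) already established for $\chi$, together with the steady-state estimate (\ref{B-9.8}) from Theorem \ref{B-Thm-1.1} for $\pi_e$ and $y_e$.

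\textbf{Step 1 (proof of (\ref{B-9.17})).} Since $\ytpq$ is defined on a sub-interval of $(0,\infty)$, monotonicity of the $L^p$-in-time integral gives $\norm{\cdot}_{\ytpq}\le\norm{\cdot}_{\yipq}$. The identity $\chi=\pi-\pi_e$ together with (\ref{B-9.4}) of the previous theorem then yields
\begin{equation*}
\norm{\pi-\pi_e}_{\ytpq}\le\norm{\pi-\pi_e}_{\yipq}=\norm{\chi}_{\yipq}\le \wti{C}\norm{y_0-y_e}_{\Bto}\bigl\{\norm{y_0-y_e}_{\Bto}+1\bigr\},
\end{equation*}
which is (\ref{B-9.17}).

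\textbf{Step 2 (proof of (\ref{B-9.18}) and (\ref{B-9.19})).} By the triangle inequality, $\norm{y_0-y_e}_{\Bto}\le\norm{y_0}_{\Bto}+\norm{y_e}_{\Bto}$. From Theorem \ref{B-Thm-1.1}, $y_e\in W^{2,q}(\Omega)\cap W^{1,q}_0(\Omega)$ with $\mathrm{div}\,y_e=0$ and $y_e|_\Gamma=0$, so $y_e\in\Bto$ via the characterization (\ref{B-1.15b}); moreover the embedding $W^{2,q}(\Omega)\hookrightarrow \Bso$ combined with $y_e\in\lso$ yields $\norm{y_e}_{\Bto}\le c\norm{y_e}_{W^{2,q}(\Omega)}$. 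Inserting this into (\ref{B-9.17}) produces (\ref{B-9.18}). Absorbing the $W^{2,q}$-norm of $y_e$ via the bound $\norm{y_e}_{W^{2,q}(\Omega)}\le c\norm{f}_{L^q(\Omega)}$ from (\ref{B-9.8}) then gives (\ref{B-9.19}).

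\textbf{Step 3 (bounds on $\norm{\pi}_{\ytpq}$).} Write $\pi=(\pi-\pi_e)+\pi_e$ and apply the triangle inequality in $\ytpq$. The first summand is handled by Step 2. For the second, since $\pi_e=\pi_e(x)$ is time-independent and $\ytpq=L^p\bigl(0,T;\widehat{W}^{1,q}(\Omega)\bigr)$ (per Appendix A), we compute $\norm{\pi_e}_{\ytpq}=T^{1/p}\norm{\pi_e}_{\widehat{W}^{1,q}(\Omega)}$, which is finite precisely because $T<\infty$. Using (\ref{B-9.8}) again to dominate $\norm{\pi_e}_{\widehat{W}^{1,q}(\Omega)}\le c\norm{f}_{L^q(\Omega)}$ produces both displayed bounds.

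\textbf{Expected obstacle.} The only genuinely delicate point is the last step: the ``stability up to $T=\infty$'' enjoyed by the $z$--$\chi$ pair via Theorem \hyperref[B-Thm-A]{A} does \emph{not} transfer to $\pi$ without a $T$-dependent penalty, because $\pi_e\ne 0$ contributes $T^{1/p}\norm{\pi_e}_{\widehat{W}^{1,q}(\Omega)}$, which diverges as $T\uparrow\infty$. This is why the bound on $\norm{\pi}_{\ytpq}$ must be stated only for finite horizons $0<T<\infty$, while the bound on $\norm{\pi-\pi_e}_{\ytpq}$ holds up to $T=\infty$. Once this distinction is recorded, the rest is bookkeeping with triangle inequalities and the already-available estimates (\ref{B-9.4}) and (\ref{B-9.8}).
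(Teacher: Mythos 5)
Your proof is correct and follows essentially the same route as the paper: estimate (\ref{B-9.4}) with $\chi=\pi-\pi_e$ gives (\ref{B-9.17}), the triangle inequality together with $\norm{y_e}_{\Bto}\le c\norm{y_e}_{W^{2,q}(\Omega)}$ gives (\ref{B-9.18}), estimate (\ref{B-9.8}) gives (\ref{B-9.19}), and the bounds on $\norm{\pi}_{\ytpq}$ follow by adding back the time-independent $\pi_e$ at the cost of the factor $T^{\rfrac{1}{p}}$. Your write-up is in fact more explicit than the paper's (which compresses all of this into three sentences), and your closing remark correctly pinpoints why the bound on $\pi$ itself must be restricted to finite horizons $0<T<\infty$.
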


\begin{proof}
	We return to the estimate (\ref{B-9.4}) for $\chi$ and recall $\chi = \pi - \pi_e$ from (\ref{B-2.26}) to obtain (\ref{B-9.17}). We next estimate $y-y_e$ by 
	\begin{equation}
	\norm{y_0 - y_e}_{\Bto} \leq C  \big\{ \norm{y_0}_{\Bto} + \norm{y_e}_{W^{2,q}(\Omega)} \big\}.
	\end{equation}
	\noindent which substituted in (\ref{B-9.17}) yields (\ref{B-9.18}). In turn, (\ref{B-9.18}) leads to (\ref{B-9.19}) by means of (\ref{B-9.8}).
\end{proof}

\section{Results on the real space setting.}\label{B-Sec-10}

\noindent Here we shall complement the results of Theorems \ref{B-Thm-5.1} through \ref{B-Thm-9.2} by giving their version in the real space setting. We shall quote from \cite{BT:2004}. In the complexified setting $\ds \lso + i \lso$
we have that the complex unstable subspace $W^u_N$ is, recall \eqref{B-3.10}:
\begin{align}
W^u_N &= W^1_N + i W^2_N \\
&= \text{space of generalized eigenfunctions } \{\phi_j\}_{j=1}^N \text{ of the operator } \calA_q (=\calA_q^u) \nonumber \\
& \quad \text{ corresponding to its } N \text{ unstable eigenvalues in \eqref{B-1.12}.}
\end{align}
\noindent Set $\ds \phi_j = \phi_j^1 + i \phi_j^2$ with $\phi_j^1,\phi_j^2$ real. Then:
\begin{equation}
W^1_N = \text{Re} \ W^u_N = \text{span} \{ \phi_j^1 \}_{j = 1}^N; \quad W^2_N = \text{Im} \ W^u_N = \text{span} \{ \phi_j^2 \}_{j = 1}^N.
\end{equation}
\noindent The stabilizing vectors $p_k, q_k, u_k, \ k = 1, \dots, K$ are complex valued, with $u_k \in W^u_N \subset \lso$, and $p_k, q_k \in (W^u_N)^* \subset \lo{q'}$ as in (\ref{B-5.1}), (\ref{B-5.2}) while $\ds f_k \in \calF \subset W^{1 - \rfrac{1}{q}, q}(\Gamma)$.\\

\noindent The complex-valued uniformly stable linear $w$-system in (\ref{B-2.26}) with $K$ complex valued stabilizing vectors admits the following real-valued uniformly stable counterpart
\begin{multline}
\frac{dw}{dt} = \calA_q w - \calA_q D \Bigg( \sum_{k=1}^{K} \text{ Re}\big( w_N(t), p_k \big)_{_{W^u_N}} \text{ Re}f_k - \sum_{k=1}^{K} \text{ Im}\big( w_N(t), p_k \big)_{_{W^u_N}} \text{ Im}f_k \Bigg) \\+ P_q \Bigg ( m \Bigg( \sum_{k=1}^{K} \text{Re} (w_N(t),q_k)_{_{W^u_N}} \text{ Re } u_k - \sum_{k=1}^{K} \text{Im }(w_N(t),q_k)_{_{W^u_N}} \text{ Im } u_k\Bigg) \cdot \tau \Bigg)
\end{multline}
\noindent with $2K \leq N$ real stabilizing vectors, see \cite[Eq 3.52a, p 1472]{BT:2004}.If $K = \text{sup } \{ \ell_i, i = 1, \dots,M \}$ is achieved for a real eigenvalue $\lambda_i$ (respectively, a complex eigenvalue $\lambda_i$), then the \textit{effective} number of stabilizing controllers is $K \leq N$, as the generalized functions are then real, since $y_e$ is real; respectively, $2K \leq N$, for, in this case, the complex conjugate eigenvalue $\bar{\lambda}_j$ contributes an equal number of components in terms of generalized eigenfunctions $\ds \phi_{\bar{\lambda}_j} = \bar{\phi}_{\lambda_j}$. In all cases, the actual (\textit{effective}) upper bound $2K$ is $2K \leq N$. For instance, if all unstable eigenvalues were real and simple then $K=1$, and only one stabilizing controller is actually needed.\\

\noindent Similarly, the complex-valued locally (near $y_e$) uniformly stable nonlinear $y$-system (\ref{B-1.18}) with $K$ complex-valued stabilizing vectors admits the following real-valued locally uniformly stable counterpart
\begin{multline}
\frac{dy}{dt} - \nu A_q y + \calN_q y = - \calA_q D \Bigg( \sum_{k=1}^{K} \text{ Re}\big( w_N(t), p_k \big)_{_{W^u_N}} \text{ Re}f_k - \sum_{k=1}^{K} \text{ Im}\big( w_N(t), p_k \big)_{_{W^u_N}} \text{ Im}f_k \Bigg) \\+ P_q \Bigg ( m \Bigg( \sum_{k=1}^{K} \text{Re }(y - y_e,q_k)_{\omega} \text{ Re } u_k - \sum_{k=1}^{K} \text{Im } (y - y_e,q_k)_{\omega} \text{ Im } u_k\Bigg) \cdot \tau \Bigg)
\end{multline} 
\noindent with $2K \leq N$ real stabilizing vectors, see \cite[p 43]{BLT1:2006}.

	\begin{appendices}
	\renewcommand{\appendixname}{Appendix}	
	\section{Some auxiliary results for the Stokes and Oseen operators: analytic semigroup generation, maximal regularity, domains of fractional powers.}\label{B-app-A}
	\setcounter{equation}{0}
	\renewcommand{\theequation}{{\rm A}.\arabic{equation}}
	\renewcommand{\thetheorem}{{\bf A}.\arabic{theorem}}
	
	In this subsection we collect mostly known results to be used in the sequel.
	
	\begin{enumerate}[(a)]
		\item \textbf{The Stokes and Oseen operators generate strongly continuous analytic semigroups on $\lso$, $1 < q < \infty$}.
		\begin{thm}\label{B-Thm-A.1}
			Let $d \geq 2, 1 < q < \infty$ and let $\Omega$ be a bounded domain in $\mathbb{R}^d$ of class $C^3$. Then
			\begin{enumerate}[(i)]
				\item the Stokes operator $-A_q = P_q \Delta$ in (\ref{B-2.20}), repeated here as 
				\begin{equation}\label{B-A.1}
				-A_q \psi  = P_q \Delta \psi , \quad
				\psi \in \mathcal{D}(A_q) = W^{2,q}(\Omega) \cap W^{1,q}_0(\Omega) \cap \lso
				\end{equation}
				generates a s.c analytic semigroup $e^{-A_qt}$ on $\lso$. See \cite{Gi:1981} and the review paper \cite[Theorem 2.8.5 p 17]{HS:2016}.			
				\item The Oseen operator $\calA_q$ in (\ref{B-2.22}) \label{B-Thm-3.1(ii)}
				\begin{equation}\label{B-A.2}
				\calA_q  = - (\nu_o A_q + A_{o,q}), \quad \calD(\calA_q) = \calD(A_q) \subset \lso
				\end{equation}
				generates a s.c analytic semigroup $e^{\calA_qt}$ on $\lso$. This follows as $A_{o,q}$ is relatively bounded with respect to $A^{\rfrac{1}{2}}_q$, to be formally defined in (\ref{B-A.6}): thus a standard theorem on perturbation of an analytic semigroup generator applies \cite[Corollary 2.4, p 81]{P:1983}.
				
				\item  \begin{subequations}\label{B-A.3} 
					\begin{align}
					0 \in \rho (A_q) &= \text{ the resolvent set of the Stokes operator } A_q\\
					\begin{picture}(0,0)
					\put(-40,10){ $\left\{\rule{0pt}{18pt}\right.$}\end{picture}
					A_q^{-1} &: \lso \longrightarrow \lso \text{ is compact}.
					\end{align}
				\end{subequations}		
			
			\item  The s.c. analytic Stokes semigroup $e^{-A_qt}$ is uniformly stable on $\lso$: there exist constants $M \geq 1, \delta > 0$ (possibly depending on $q$) such that 
			\begin{equation}\label{B-A.4}
			\norm{e^{-A_qt}}_{\calL(\lso)} \leq M e^{-\delta t}, \ t > 0.
			\end{equation}
		\end{enumerate}
		\end{thm}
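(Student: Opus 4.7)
The plan is to address the four parts of Theorem A.1 in turn, with parts (i)--(iii) essentially invoking classical results and part (iv) being the only one that requires a small spectral argument.

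For part (i), I would simply quote Giga's sectorial resolvent estimate \cite{Gi:1981}: on a bounded $C^3$ domain the Helmholtz projection and the $L^q$-theory for the stationary Stokes resolvent problem $(\lambda - P_q\Delta)\psi = f$, $\text{div}\,\psi = 0$, $\psi|_\Gamma = 0$, yield $\norm{(\lambda + A_q)^{-1}}_{\calL(\lso)} \leq M_\theta/|\lambda|$ on a sector $\{|\arg \lambda| < \pi - \theta\}$, $\theta < \pi/2$. By the standard criterion (e.g.\ \cite[Thm.\ 2.5.2]{P:1983}), this sectorial bound is equivalent to $-A_q$ generating a s.c.\ analytic semigroup on $\lso$. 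For part (ii), I would invoke the perturbation result \cite[Cor.\ 2.4, p.\ 81]{P:1983}, which asserts that analytic generation is preserved under $A_q^\alpha$-bounded perturbations with $\alpha < 1$. The required estimate $\norm{A_{o,q}\psi}_{\lso} \leq C \norm{A_q^{1/2}\psi}_{\lso}$ for $\psi \in \calD(A_q^{1/2}) = W^{1,q}_0 \cap \lso$ reduces, using $y_e \in W^{2,q} \cap W^{1,q}_0$ from Theorem~\ref{B-Thm-1.1} and the multiplier/embedding arguments already carried out in Lemma~\ref{B-Lem-2.4}, to controlling the pointwise products $(y_e \cdot \nabla)\psi$ and $(\psi \cdot \nabla)y_e$ in $L^q$ by $\norm{\nabla\psi}_{L^q}$, after which boundedness of $P_q$ on $L^q$ closes the estimate.

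For part (iii), invertibility $0 \in \rho(A_q)$ is the classical $L^q$-regularity statement for the stationary Stokes system (Cattabriga/Solonnikov/Galdi \cite[Thm.\ 1.1 p.\ 107, Thm.\ 1.2 p.\ 114]{Ga:2011}): for each $f \in \lso$ there is a unique $\psi \in \calD(A_q) \subset W^{2,q}(\Omega)$ with $A_q \psi = f$ and $\norm{\psi}_{W^{2,q}} \leq C\norm{f}_{L^q}$. Factoring $A_q^{-1} : \lso \to W^{2,q}(\Omega) \cap \lso \hookrightarrow \lso$ and using the Rellich--Kondrachov compact embedding $W^{2,q}(\Omega) \hookrightarrow L^q(\Omega)$ on the bounded domain $\Omega$ gives compactness of $A_q^{-1}$.

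Part (iv) is where some care is needed: analyticity alone does not give exponential decay. The plan is three-stage. First, in the Hilbert case $q=2$, the operator $A_2$ is self-adjoint and strictly positive, since integration by parts yields $(A_2\psi,\psi)_{L^2} = \nu_o \norm{\nabla\psi}_{L^2}^2$, and the Poincar\'e inequality on the bounded $\Omega$ gives $(A_2\psi,\psi) \geq \delta_0 \norm{\psi}_{L^2}^2$ with $\delta_0 > 0$; combined with compactness of $A_2^{-1}$, the spectrum of $A_2$ is a discrete set contained in $[\delta_0,\infty)$. Second, I would invoke the known consistency of the Stokes spectrum across the $L^q$-scale on bounded domains, $\sigma(A_q) = \sigma(A_2)$ for all $1<q<\infty$ (see e.g.\ the discussion collected in \cite[\S 2.8]{HS:2016}), which is derived from the fact that the resolvents agree on $L^q \cap L^2$ together with the compactness of the resolvent. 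Third, since $-A_q$ generates a s.c.\ analytic semigroup and its spectrum lies in the half-plane $\{\text{Re}\,\lambda \geq \delta_0\}$, the spectrum-determined growth property for analytic semigroups (a consequence of the Dunford functional calculus applied to the sectorial $-A_q$) yields $\norm{e^{-A_q t}}_{\calL(\lso)} \leq M e^{-\delta t}$ for any $\delta < \delta_0$. The main obstacle is precisely this third step: verifying that the consistency of the spectrum across $L^q$-scales holds in the present setting and that the spectral mapping/growth bound transfers to arbitrary $q$; parts (i)--(iii) are essentially citations to existing literature, whereas part (iv) is the only place where a genuine (if standard) spectral argument is required.
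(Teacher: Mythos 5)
Your proposal is correct and follows essentially the same route as the paper, which treats Theorem A.1 as a collection of known results: (i) is Giga's sectorial resolvent estimate, (ii) is the $A_q^{\rfrac{1}{2}}$-relative-boundedness perturbation argument via Pazy, and (iii) is stationary Stokes $L^q$-regularity plus the Rellich--Kondrachov compact embedding. For (iv) the paper offers no argument beyond the bare assertion, and your three-stage spectral reasoning (strict positivity of $A_2$ via Poincar\'e, $q$-independence of the Stokes spectrum on a bounded domain by resolvent consistency and compactness, and the spectrum-determined growth bound valid for analytic semigroups) is the standard and correct way to justify it.
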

		\item \textbf{Domains of fractional powers, $\calD(A_q^{\alpha}), 0 < \alpha < 1$ of the Stokes operator $A_q$ on $\lso, 1 < q < \infty$}, 
		\begin{thm}\label{B-Thm-A.2}
			For the domains of fractional powers $\calD(A_q^{\alpha}), 0 < \alpha < 1$, of the Stokes operator $A_q$ in (\ref{B-2.20}) = (\ref{B-A.1}), the following complex interpolation relation holds true \cite[Theorem 2.8.5, p 18]{HS:2016}, \cite{Gi:1985}
			\begin{equation}\label{B-A.5}
			[ \calD(A_q), \lso ]_{1-\alpha} = \calD(A_q^{\alpha}), \ 0 < \alpha < 1, \  1 < q < \infty;
			\end{equation}
			in particular, see (\ref{B-2.21})
			\begin{equation}\label{B-A.6}
			[ \calD(A_q), \lso ]_{\frac{1}{2}} = \calD(A_q^{\rfrac{1}{2}}) \equiv W_0^{1,q}(\Omega) \cap \lso.
			\end{equation}
			Thus, on the space $\calD(A_q^{\rfrac{1}{2}})$, the norms
			\begin{equation}\label{B-A.7}
			\norm{\nabla \ \cdot \ }_{L^q(\Omega)} \text{ and } \norm{ \ }_{L^q(\Omega)}
			\end{equation}
			are equivalent via the Poincar\'{e} inequality.
		\end{thm}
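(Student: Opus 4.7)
The plan is to obtain (A.5) as a direct consequence of the abstract complex-interpolation theorem for positive operators admitting bounded imaginary powers (BIP) on a UMD space. First I would invoke the classical result of Giga \cite{Gi:1985}: the Stokes operator $A_q$ on $\lso$, $1<q<\infty$, is a positive (sectorial) operator admitting a bounded $H^\infty$-calculus, and in particular satisfies the BIP bound
\[
\norm{A_q^{is}}_{\calL(\lso)} \leq M e^{\theta \abs{s}}, \qquad s \in \BR,
\]
for some angle $\theta < \rfrac{\pi}{2}$. Since $\lso$ is a closed subspace of the UMD space $L^q(\Omega)$, it is itself UMD, and the standard Triebel-type identification (see \cite[Thm. 1.15.3]{W:1985}) then produces
\[
\big[\calD(A_q), \lso\big]_{1-\alpha} = \calD(A_q^{\alpha}), \qquad 0 < \alpha < 1,
\]
with equivalence of norms. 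This is exactly (A.5). The entire depth of the argument is concentrated in Giga's BIP estimate, which rests on sharp resolvent bounds for $(\lambda I + A_q)^{-1}$ in sectors of $\BC$; I would merely quote it.

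For the sharper identification (A.6) at $\alpha = \rfrac{1}{2}$, I would combine (A.5) with an interpolation of Sobolev spaces with boundary conditions. Starting from the characterization $\calD(A_q) = W^{2,q}(\Omega) \cap W^{1,q}_0(\Omega) \cap \lso$ of (A.1), and using that $\lso$ is a complemented closed subspace of $L^q(\Omega)$ (via the Helmholtz projection $P_q$), the closed-subspace property of complex interpolation yields
\[
\calD \big(A_q^{\rfrac{1}{2}}\big) = \big[W^{2,q}(\Omega) \cap W^{1,q}_0(\Omega), L^q(\Omega)\big]_{\rfrac{1}{2}} \cap \lso.
\]
The interpolation bracket on the right is identified with $W^{1,q}_0(\Omega)$ by the classical Grisvard--Seeley interpolation result for Sobolev spaces with Dirichlet boundary conditions (valid precisely because the target smoothness $1$ is strictly above the critical exponent $\rfrac{1}{q}$ where the trace becomes indeterminate). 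This gives $\calD(A_q^{\rfrac{1}{2}}) = W^{1,q}_0(\Omega) \cap \lso$ with equivalence of the graph norm of $A_q^{\rfrac{1}{2}}$ and the $W^{1,q}(\Omega)$ norm.

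Finally, the norm equivalence (A.7) on $\calD(A_q^{\rfrac{1}{2}})$ is obtained from Poincaré's inequality: since $\Omega$ is bounded and every $u \in W^{1,q}_0(\Omega)$ has vanishing trace,
\[
\norm{u}_{L^q(\Omega)} \leq C_{\Omega} \norm{\nabla u}_{L^q(\Omega)}, \qquad u \in W^{1,q}_0(\Omega) \cap \lso,
\]
so $\norm{\nabla \cdot}_{L^q(\Omega)}$ is equivalent to the full $W^{1,q}$ norm on $\calD(A_q^{\rfrac{1}{2}})$. This step is routine once (A.6) is established.

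The principal obstacle is the first step: the BIP/$H^\infty$-calculus property of $A_q$ on $\lso$. Without this deep analytic input, the abstract interpolation machinery cannot be activated, and (A.5) does not follow from the definition of fractional powers alone. A secondary technical point is the Sobolev interpolation identification at $\alpha = \rfrac{1}{2}$, which must be invoked away from the critical trace exponent to ensure persistence of the Dirichlet condition under interpolation.
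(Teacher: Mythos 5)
The paper itself gives no proof of Theorem A.2: Appendix A explicitly ``collects mostly known results,'' and (A.5)--(A.6) are simply quoted from \cite[Theorem 2.8.5]{HS:2016} and \cite{Gi:1985}. Your main line for (A.5) --- bounded imaginary powers of $A_q$ on $\lso$ combined with the abstract identity $[\calD(A_q),\lso]_{1-\alpha}=\calD(A_q^{\alpha})$ for positive operators with BIP (Triebel, Thm.~1.15.3) --- is precisely the argument underlying those references, so that part is sound. Two minor remarks: the UMD property of $\lso$ is not needed for the complex-interpolation identity (BIP with some angle $\theta<\pi$ suffices; UMD enters only if you want Dore--Venni maximal regularity), and you correctly isolate Giga's BIP estimate as the one genuinely deep input.

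The gap is in your derivation of (A.6). You assert
\[
\calD\big(A_q^{\rfrac{1}{2}}\big) = \big[W^{2,q}(\Omega)\cap W^{1,q}_0(\Omega),\, L^q(\Omega)\big]_{\rfrac{1}{2}} \cap \lso
\]
by the complemented-subspace property of complex interpolation via $P_q$. That lemma requires $P_q$ to be a bounded projection on \emph{both} endpoints of the couple, with ranges $\lso$ and $\calD(A_q)$ respectively. It is bounded on $L^q(\Omega)$ with range $\lso$, but it does \emph{not} map $W^{2,q}(\Omega)\cap W^{1,q}_0(\Omega)$ into itself: writing $P_q u = u-\nabla\pi$ with $\pi$ solving a Neumann problem, one has $\nabla\pi|_{\Gamma}\neq 0$ in general, so the Dirichlet trace is destroyed and $P_q\big(W^{2,q}(\Omega)\cap W^{1,q}_0(\Omega)\big)\not\subset\calD(A_q)$. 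Hence the displayed identity does not follow from the abstract lemma, and the identification $\calD(A_q^{\rfrac{1}{2}})=W^{1,q}_0(\Omega)\cap\lso$ cannot be reduced to Grisvard--Seeley interpolation of scalar Sobolev spaces in this way. This identification is in fact one of the delicate points of \cite{Gi:1985} (and is known to fail for the Stokes operator on certain unbounded domains), so it must either be quoted directly or established by the dedicated argument there rather than treated as routine. The Poincar\'e step (A.7) is fine once (A.6) is granted.
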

		
		\item \textbf{The Stokes operator $-A_q$ and the Oseen operator $\calA_q, 1 < q < \infty$ generate s.c. analytic semigroups on the Besov space.}
		\begin{subequations}\label{B-A.8}
			\begin{align}
			\Big( \lso,\mathcal{D}(A_q) \Big)_{1-\frac{1}{p},p} &= \Big\{ g \in \Bso : \text{ div } g = 0, \ g|_{\Gamma} = 0 \Big\} \quad \text{if } \frac{1}{q} < 2 - \frac{2}{p} < 2; \label{B-A.8a}\\
			\Big( \lso,\mathcal{D}(A_q) \Big)_{1-\frac{1}{p},p} &= \Big\{ g \in \Bso : \text{ div } g = 0, \ g\cdot \nu|_{\Gamma} = 0 \Big\} \equiv \Bt(\Omega) \label{B-A.8b}\\
			&\hspace{4cm} \text{ if } 0 < 2 - \frac{2}{p} < \frac{1}{q}, \text{ or } 1 < p < \frac{2q}{2q-1}.\nonumber
			\end{align}	
		\end{subequations}
		In fact, Theorem \hyperref[B-Thm-A.1]{A.1(i)} states that the Stokes operator $-A_q$ generates a s.c analytic semigroup on the space $\lso, \ 1 < q < \infty$, hence on the space $\calD(A_q)$ in (\ref{B-A.1}), with norm $\ds \norm{ \ \cdot \ }_{\calD(A_q)} = \norm{ A_q \ \cdot \ }_{\lso}$ as $0 \in \rho(A_q)$.  Then, one obtains that the Stokes operator $-A_q$ generates a s.c. analytic semigroup on the real interpolation spaces in (\ref{B-A.8}). Next, the Oseen operator $\calA = -(\nu_o A_q + A_{o,q})$ likewise generates a s.c. analytic semigroup $\ds e^{\calA_q t}$ on $\ds \lso$ by Theorem \hyperref[B-Thm-A.1]{A.1(ii)}. Moreover $\calA_q$ generates a s.c. analytic semigroup on $\ds \calD(\calA_q) = \calD(A_q)$ (equivalent norms). Hence $\calA_q$ generates a s.c. analytic semigroup on the real interpolation spaces (\ref{B-A.11}). Here below, however, we shall formally state the result only in the case $\ds 2-\rfrac{2}{p} < \rfrac{1}{q}$. i.e. $\ds  1 < p < \rfrac{2q}{2q-1}$, in the space $\ds \Bto$, as this does not contain B.C. The objective of the present paper is precisely to obtain stabilization results on spaces that do not recognize B.C.
		
		\begin{thm}\label{B-Thm-A.3}
			Let $1 < q < \infty, 1 < p < \rfrac{2q}{2q-1}$
			\begin{enumerate}[(i)]
				\item The Stokes operator $-A_q$ in (\ref{B-A.1}) generates a s.c analytic semigroup $e^{-A_qt}$ on the space $\Bt(\Omega)$ defined in \eqref{B-1.15b} which moreover is uniformly stable, as in (\ref{B-A.4}),
				\begin{equation}\label{B-A.9}
				\norm{e^{-A_qt}}_{\calL \big(\Bt(\Omega)\big)} \leq M e^{-\delta t}, \quad t > 0.
				\end{equation}
				\item The Oseen operator $\calA_q$ in (\ref{B-A.2}) generates a s.c. analytic semigroup $e^{\calA_qt}$ on the space $\Bt(\Omega)$ in \eqref{B-1.15b}.
			\end{enumerate}
		\end{thm}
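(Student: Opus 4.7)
The strategy is to exploit the fact that both $-A_q$ and $\calA_q$ already generate s.c.\ analytic semigroups on the endpoint spaces $\lso$ and $\calD(A_q) = \calD(\calA_q)$, and then transport this property to the intermediate real interpolation space $\lqaq$ via standard interpolation theory. Restricting to the range $1 < p < \rfrac{2q}{2q-1}$ guarantees, by the characterization \eqref{B-1.15b} recalled in (\ref{B-A.8b}), that this interpolation space coincides with $\Bto$, the space that does not see boundary conditions.

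For part (i), the plan is as follows. First, Theorem \hyperref[B-Thm-A.1]{A.1(i)} yields that $-A_q$ generates a s.c.\ analytic semigroup $e^{-A_qt}$ on $\lso$. Since $0 \in \rho(A_q)$ by \eqref{B-A.3}, the graph norm on $\calD(A_q)$ is equivalent to $\|A_q \cdot\|_{\lso}$; because $A_q$ commutes with its own semigroup, $e^{-A_qt}$ leaves $\calD(A_q)$ invariant and, conjugating by $A_q$, is again s.c.\ analytic on $\calD(A_q)$. Real interpolation between these two endpoint semigroups produces a s.c.\ analytic semigroup on $\big(\lso, \calD(A_q)\big)_{1-\rfrac{1}{p},p}$, and by (\ref{B-A.8b}) this last space equals $\Bto$ under the assumed range of $p$. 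This gives the generation statement in (i). For the uniform stability \eqref{B-A.9}, one first notes that \eqref{B-A.4} on $\lso$ transfers, via the commutation of $A_q$ with $e^{-A_qt}$, to the bound $\|e^{-A_qt}\|_{\calL(\calD(A_q))} \leq M e^{-\delta t}$ on $\calD(A_q)$ with the same exponent $\delta > 0$. Then the real-interpolation functor applied to the pair of bounded linear maps $e^{-A_qt}:\lso \to \lso$ and $e^{-A_qt}:\calD(A_q) \to \calD(A_q)$ yields the corresponding bound on $\Bto$ with the same decay rate $\delta$, possibly after adjusting the constant $M$.

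For part (ii), the argument is identical modulo replacing $-A_q$ by $\calA_q$. Theorem \hyperref[B-Thm-A.1]{A.1(ii)} already gives that $\calA_q$ generates a s.c.\ analytic semigroup on $\lso$, obtained by treating $A_{o,q}$ as an $A_q^{\rfrac{1}{2}}$-bounded perturbation of $-\nu_o A_q$. Since $\calD(\calA_q) = \calD(A_q)$ with equivalent norms, $e^{\calA_q t}$ is also s.c.\ analytic on $\calD(A_q)$, and the same real interpolation argument promotes analyticity to $\Bto$. Here no stability claim is required: indeed the underlying instability of $\calA_q$ expressed in \eqref{B-1.3} is exactly the feature that motivates the entire stabilization problem.

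The only non-routine point is the standard but crucial interpolation fact used twice above: if a bounded linear operator $T$ acts continuously on both endpoints of a compatible Banach couple $(X_0, X_1)$ with analytic-semigroup (resp.\ exponential decay) properties, these properties persist on the real interpolation space $(X_0, X_1)_{\theta, p}$. This, together with the identification of $\lqaq$ with $\Bto$ coming from (\ref{B-A.8b}), reduces Theorem \ref{B-Thm-A.3} to already-established facts on $\lso$ and $\calD(A_q)$. No further technical obstacle is expected.
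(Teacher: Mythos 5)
Your proposal is correct and follows essentially the same route as the paper: the paper's own justification (given in the text of Appendix A(c) preceding the theorem) is precisely that $-A_q$ (resp.\ $\calA_q$) generates a s.c.\ analytic semigroup on both $\lso$ and on $\calD(A_q)$ (using $0 \in \rho(A_q)$ and $\calD(\calA_q)=\calD(A_q)$ with equivalent norms), and then real interpolation transfers generation to $\lqaq$, which coincides with $\Bto$ for $1<p<\rfrac{2q}{2q-1}$ by (\ref{B-A.8b}). Your additional remark on transferring the decay rate $\delta$ in \eqref{B-A.4} to $\calD(A_q)$ by commutation and then interpolating is exactly the natural way to obtain \eqref{B-A.9}, which the paper asserts without further elaboration.
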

		\item \textbf{Space of maximal $L^p$ regularity on $\lso$ of the Stokes operator $-A_q, \ 1 < p < \infty, \ 1 < q < \infty $ up to $T = \infty$.} We shall use the notation of \cite{Dore:2000} and write $\ds -A_q \in MReg (\lplqs)$ .	We return to the dynamic Stokes problem in $\{\varphi(t,x), \pi(t,x) \}$\label{B-A.d}
		\begin{subequations}\label{B-A.10}
			\begin{align}
			\varphi_t - \Delta \varphi + \nabla \pi &= F &\text{ in } (0, T] \times \Omega \equiv Q \label{B-A.10a}\\		
			div \ \varphi &\equiv 0 &\text{ in } Q\\
			\begin{picture}(0,0)
			\put(-80,5){ $\left\{\rule{0pt}{35pt}\right.$}\end{picture}
			\left. \varphi \right \rvert_{\Sigma} &\equiv h_0 &\text{ in } (0, T] \times \Gamma \equiv \Sigma\\
			\left. \varphi \right \rvert_{t = 0} &= \varphi_0 &\text{ in } \Omega,
			\end{align}
		\end{subequations}
		
		rewritten in abstract form, after applying the Helmholtz projection $P_q$ to (\ref{B-A.10a}) and recalling $A_q$ in (\ref{B-A.1}) as 
		\begin{equation}\label{B-A.11}
		\varphi' + A_q \varphi = \Fs \equiv P_q F, \quad \varphi_0 \in \lqaq,
		\end{equation}
		
		recall (\ref{B-A.8}). Next, we introduce the space of maximal regularity for $\{\varphi, \varphi'\}$, i.e. for $-A_q$, as \cite[p 2; Theorem 2.8.5.iii, p 17]{HS:2016}, \cite[p 1404-5]{GGH:2012}, with $T$ up to $\infty$:
		\begin{equation}\label{B-A.12}
		X^T_{p,q, \sigma} (A_q) = L^p(0,T;\calD(A_q)) \cap W^{1,p}(0,T;\lso)
		\end{equation}
		(recall (\ref{B-2.20}) for $\calD(A_q)$) and the corresponding space for the pressure as 
		\begin{equation}\label{B-A.13}
		Y^T_{p,q} = L^p(0,T;\widehat{W}^{1,q}(\Omega)), \quad \widehat{W}^{1,q}(\Omega) = W^{1,q}(\Omega) / \mathbb{R}. 
		\end{equation}
		The following embedding, also called trace theorem, holds true \cite[Theorem 4.10.2, p 180, BUC for $T=\infty$]{HA:2000}.
		\begin{equation}\label{B-A.14}
		\xtpqs \subset \xtpq \equiv L^p(0,T; W^{2,q}(\Omega)) \cap W^{1,p}(0,T; L^q(\Omega)) \hookrightarrow C \Big([0,T]; \Bso \Big).
		\end{equation}
		For a function $g$ such that $div \ g \equiv 0, \ g|_{\Gamma} = 0$ we have $g \in \xtpq \iff g \in \xtpqs$, by (\ref{B-1.4}).\\
		The solution of Eq(\ref{B-A.11}) is 
		\begin{equation}\label{B-A.15}
		\varphi(t) = e^{-A_qt} \varphi_0 + \int_{0}^{t} e^{-A_q(t-\tau)} \Fs(\tau) d \tau.
		\end{equation}
		The following is the celebrated result on maximal regularity on $\lso$ of the Stokes problem due originally to Solonnikov \cite{VAS:1977} reported in \cite[Theorem 2.8.5(iii) for $\varphi_0 = 0$ and Theorem 2.10.1 p24]{HS:2016}, \cite{S:2006}, \cite[Proposition 4.1 , p 1405]{GGH:2012}, \cite{PS:2016}. See also \cite[Theorem 3.1 p 31 for $p = q =2$ case]{BLT1:2006}. See also \cite{VAS:1968}, \cite{VAS:1981}, \cite{VAS:1996},  \cite{VAS:2001}. 
		\begin{thm}\label{B-Thm-A.4} 
			Let $1 < p,q < \infty, T \leq \infty$. With reference to problem (\ref{B-A.10}), (\ref{B-A.11}), assume
			\begin{equation}\label{B-A.16}
			\Fs \in L^p(0,T;\lso), \ \varphi_0 \in \Big( \lso, \calD(A_q)\Big)_{1-\frac{1}{p},p}.
			\end{equation}
			Then there exists a unique solution $\varphi \in \xtpqs$ continuously on the data: there exist constants $C_0, C_1$ independent of $T, \Fs, \varphi_0$ such that via (\ref{B-A.14})
			\begin{equation}\label{B-A.17}
			\begin{aligned}
			C_0 \norm{\varphi}_{C \big([0,T]; \Bso \big)} &\leq \norm{\varphi}_{\xtpqs} + \norm{\pi}_{\ytpq}\\
			&= \norm{\varphi'}_{L^p(0,T;\lso)} + \norm{A_q \varphi}_{L^p(0,T;\lso)} + \norm{\pi}_{\ytpq}\\
			&\leq C_1 \bigg \{ \norm{\Fs}_{L^p(0,T;\lso)}  + \norm{\varphi_0}_{\big( \lso, \calD(A_q)\big)_{1-\frac{1}{p},p}} + \norm{h_0}_{L^p(0,\infty; W^{1-\rfrac{1}{q},q}(\Gamma))} \bigg \}.
			\end{aligned}
			\end{equation}
			In particular,
			\begin{enumerate}[(i)]
				\item With reference to the variation of parameters formula (\ref{B-A.15}) of problem (\ref{B-A.11}) arising from the Stokes problem (\ref{B-A.10}), we have recalling (\ref{B-A.12}): the map
				\begin{align}
				\Fs &\longrightarrow \int_{0}^{t} e^{-A_q(t-\tau)}\Fs(\tau) d\tau \ : \text{continuous} \label{B-A.18}\\
				L^p(0,T;\lso) &\longrightarrow \xtpqs(A_q) \equiv L^p(0,T; \calD(A_q)) \cap W^{1,p}(0,T; \lso). \label{B-A.19}				
				\end{align}			
				\item The s.c. analytic semigroup $e^{-A_q t}$ generated by the Stokes operator $-A_q$ (see (\ref{B-A.1})) on the space $\ds \Big( \lso, \calD(A_q)\Big)_{1-\frac{1}{p},p}$ satisfies
				\begin{subequations}\label{B-A.20}
					\begin{multline}
					e^{-A_q t}: \ \text{continuous} \quad \Big( \lso, \calD(A_q)\Big)_{1-\frac{1}{p},p} \longrightarrow \\ \xtpqs(A_q) \equiv L^p(0,T; \calD(A_q)) \cap W^{1,p}(0,T; \lso). \label{B-A.20a}
					\end{multline}
					In particular via (\ref{B-A.8b}), for future use, for $1 < q < \infty, 1 < p < \frac{2q}{2q - 1}$, the s.c. analytic semigroup $\ds e^{-A_q t}$ on the space $\ds \Bto$, satisfies
					\begin{equation}
					e^{-A_q t}: \ \text{continuous} \quad \Bto \longrightarrow \xtpqs. \label{B-A.20b}
					\end{equation}
				\end{subequations}				 
				\item Moreover, setting $\nabla \pi = (Id - P_q)(\Delta + F)$, it follows that $\{ \varphi, \pi \} \in \xtpqs \times \ytpq$, see (\ref{B-A.13}), solves problem (\ref{B-A.10}) and there is a constant $C>0$ independent of $T,\Fs,\phi_0$ s.t.
				\begin{subequations}
					\begin{equation}\label{B-A.21a}
					\norm{\varphi}_{\xtpqs} + \norm{\pi}_{\ytpq} \leq C \bigg \{ \norm{\Fs}_{L^p(0,T;\lso)} + \norm{\varphi_0}_{\lqaq} \bigg \}
					\end{equation}
					while, for future use, for $1 < q < \infty, 1 < p < \frac{2q}{2q - 1}$, then (\ref{B-A.21a}) specializes to
					\begin{equation}\label{B-A.21b}
					\norm{\varphi}_{\xtpqs} + \norm{\pi}_{\ytpq} \leq C \bigg \{ \norm{\Fs}_{L^p(0,T;\lso)} + \norm{\varphi_0}_{\Bto} \bigg \}.
					\end{equation}
				\end{subequations}				
			\end{enumerate}		
		\end{thm}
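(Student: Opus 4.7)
\textbf{Proof proposal for Theorem \ref{B-Thm-A.4}.}

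The plan is to treat the three data sources---the forcing $\Fs$, the initial datum $\varphi_0$, and the boundary datum $h_0$---separately by linearity, then combine. First I would eliminate the nonhomogeneous boundary condition by lifting: using a time-dependent Dirichlet map $D$ (as introduced in Section \ref{B-Sec-2.1}) applied pointwise in $t$ to $h_0(t,\cdot)$, construct $w(t) = D h_0(t)$ with $w\big|_{\Gamma} = h_0$, $\div w = 0$, and $w \in L^p(0,T; W^{1,q}(\Omega) \cap \lso)$ with an additional time-regularity on $w_t$ inherited from that of $h_0$. Subtracting $w$ from $\varphi$ reduces the problem to the homogeneous-boundary case with a modified forcing $\wti{\Fs} = \Fs - w_t + P_q \Delta w \in L^p(0,T;\lso)$ and adjusted initial datum. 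The boundary norm $\|h_0\|_{L^p(0,T;W^{1-1/q,q}(\Gamma))}$ then enters the final estimate through the continuity of the Dirichlet map in these topologies.

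Second, for the pure initial-value part $\varphi' + A_q \varphi = 0$, $\varphi(0) = \varphi_0$, I would invoke Theorem \hyperref[B-Thm-A.1]{A.1(i)}: $-A_q$ generates a bounded analytic semigroup on $\lso$, which is uniformly stable by (\ref{B-A.4}). The characterization of the trace space of the maximal regularity class $\xtpqs(A_q)$ as the real interpolation space $\big(\lso, \calD(A_q)\big)_{1-\frac{1}{p},p}$ is a general fact for analytic semigroups on Banach spaces (see e.g.\ Amann, \cite[Theorem 4.10.2]{HA:2000}), which gives the desired continuous map $\varphi_0 \longmapsto e^{-A_q t}\varphi_0$ into $\xtpqs(A_q)$. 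Uniform stability is exactly what allows the estimate to hold with a constant independent of $T$ up to $T = \infty$.

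Third, and this is the crux, I must establish maximal $L^p$-regularity of the convolution $\Fs \longmapsto \int_0^t e^{-A_q(t-\tau)} \Fs(\tau)\,d\tau$, i.e.\ $-A_q \in MReg(L^p(0,\infty;\lso))$. The modern route is via the theorem of L.\ Weis \cite{We:2001}: since $\lso$ is a UMD space (as a complemented subspace of $L^q(\Omega)$ for $1<q<\infty$) and $-A_q$ generates a bounded analytic semigroup, maximal $L^p$-regularity is equivalent to the $R$-boundedness of the family $\{\lambda R(\lambda, -A_q):\lambda \in i\BR\setminus\{0\}\}$ in $\calL(\lso)$. This $R$-boundedness for the Stokes operator on a bounded $C^3$ domain is proved in \cite{GGH:2012}, \cite{HS:2016}, \cite{PS:2016}, going back to Solonnikov's classical resolvent estimates \cite{VAS:1977}. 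I expect this step to be the main obstacle: it requires nontrivial harmonic analysis combined with the precise resolvent bounds for the Stokes system. For $T < \infty$ one can pass via restriction and retraction, but the version with $T = \infty$ requires in addition uniform stability of $e^{-A_q t}$ to replace $\lambda \in i\BR\setminus\{0\}$ by the closed right half plane, which is available by Theorem \hyperref[B-Thm-A.1]{A.1(iv)}.

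Finally, combining the three contributions yields a candidate solution $\varphi \in \xtpqs$ with the estimate (\ref{B-A.17}) on the velocity. To recover the pressure, I would apply $I - P_q$ to (\ref{B-A.10a}), which yields $\nabla \pi = (I - P_q)(\Delta \varphi + F)$, and then estimate in $\ytpq = L^p(0,T; \widehat{W}^{1,q}(\Omega))$ via the continuity of $I - P_q$ from $L^q(\Omega)$ into $G^q(\Omega)$ together with the bound on $\Delta \varphi$ already obtained and the assumption $F \in L^p(0,T;L^q(\Omega))$. The embedding (\ref{B-A.14}) into $C([0,T];\Bso)$ is standard trace theory for anisotropic Sobolev spaces and gives the lower bound $C_0 \|\varphi\|_{C([0,T];\Bso)}$. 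Uniqueness follows from the dissipative energy identity applied to the difference of two solutions, using $\calD(A_q)$ as test class.
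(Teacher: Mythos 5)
The paper offers no proof of Theorem \ref{B-Thm-A.4}: it is quoted as a known result, attributed to Solonnikov \cite{VAS:1977} and to the modern accounts \cite{HS:2016}, \cite{GGH:2012}, \cite{S:2006}, \cite{PS:2016}. So any honest proof you supply is automatically a different route. Your handling of the two data sources $\Fs$ and $\varphi_0$ is the standard modern argument and is sound in outline: real-interpolation trace theory identifies the admissible initial data for the class $\xtpqs(A_q)$ as $\big(\lso,\calD(A_q)\big)_{1-\frac{1}{p},p}$, and the Weis characterization (UMD space plus $R$-boundedness of $\{\lambda R(\lambda,-A_q):\lambda\in i\BR\setminus\{0\}\}$) handles the convolution with $\Fs$; for $T=\infty$ you correctly observe that one needs in addition $0\in\rho(A_q)$ and uniform stability, which is (\ref{B-A.3})--(\ref{B-A.4}). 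Be aware, though, that the $R$-boundedness of the Stokes resolvent on a bounded domain is itself the deep input, so at that step you are in substance citing the same references the paper cites rather than proving anything.

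The genuine gap is in your reduction of the inhomogeneous boundary condition. You propose the lift $w(t)=Dh_0(t)$ with the \emph{stationary} Dirichlet map of Section \ref{B-Sec-2.1} and then absorb $-w_t+P_q\Delta w$ into the forcing. Neither term is controlled by the hypothesis $h_0\in L^p(0,T;W^{1-\rfrac{1}{q},q}(\Gamma))$: that norm carries no time regularity at all, so $w_t$ need not belong to $L^p(0,T;L^q(\Omega))$ (it need not even be a function), and by Theorem \ref{B-Thm-2.1} the lift satisfies only $Dh_0\in W^{1,q}(\Omega)$, so $\Delta w\in W^{-1,q}(\Omega)$, which is not an admissible forcing in $L^p(0,T;\lso)$. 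The correct treatment of inhomogeneous Dirichlet data in the maximal $L^p$--$L^q$ framework (Pr\"uss--Simonett \cite{PS:2016}, Weidemaier \cite{PW:2002}) places $h_0$ in an anisotropic space--time trace class, of order roughly $1-\rfrac{1}{2q}$ in time and $2-\rfrac{1}{q}$ in space, with compatibility conditions on the normal component, and uses a parabolic rather than elliptic extension. Your scheme therefore cannot deliver the estimate (\ref{B-A.17}) from the stated hypotheses; the defect is partly inherited from the looseness of the theorem statement itself (the abstract form (\ref{B-A.11}) is the homogeneous-boundary problem, yet $h_0$ appears in the right-hand side of (\ref{B-A.17}) with only an $L^p$-in-time norm), but your proof as written does not close this step.
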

		
		\item \textbf{Maximal $L^p$ regularity on $\lso$ of the Oseen operator $\ds \calA_q, \ 1 < p < \infty, \ 1 < q < \infty$: $\ds \calA_q \in MReg (L^p(0,T;\lso))$, $T$ finite arbitrary.} We next transfer the maximal regularity of the Stokes operator $(-A_q)$ on $\lso$-asserted in Theorem \ref{B-Thm-A.4} into the maximal regularity of the Oseen operator $\calA_q = -\nu_o A_q - A_{o,q}$ exactly on the same space $\xtpqs$ defined in (\ref{B-A.12}), except now only on $T < \infty$.\\
		
		\noindent Thus, consider the dynamic Oseen problem in $\{ \psi(t,x), \pi(t,x) \}$ with equilibrium solution $y_e$, see Theorem \ref{B-Thm-1.1} on (\ref{B-1.2})  :		
		\begin{subequations}\label{B-A.22}
			\begin{align}
			\psi_t - \nu_o \Delta \psi + L_e(\psi) + \nabla \pi &= F &\text{ in } (0, T] \times \Omega \equiv Q \label{B-A.22a}\\		
			div \ \psi &\equiv 0 &\text{ in } Q\\
			\begin{picture}(0,0)
			\put(-120,5){$\left\{\rule{0pt}{35pt}\right.$}\end{picture}
			\left. \psi \right \rvert_{\Sigma} &\equiv 0 &\text{ in } (0, T] \times \Gamma \equiv \Sigma\\
			\left. \psi \right \rvert_{t = 0} &= \psi_0 &\text{ in } \Omega,
			\end{align}
		\end{subequations}
		\begin{equation}
		L_e(\psi) = (y_e . \nabla) \psi + (\psi. \nabla) y_e \hspace{6cm} \label{B-A.23}
		\end{equation}
		rewritten in abstract form, after applying the Helmholtz projector $P_q$ to (\ref{B-A.22a}) and recalling $\calA_q$ in (\ref{B-A.2})
		\begin{equation}\label{B-A.24}
		\psi_t = \calA_q \psi + P_q F = -\nu_o A_q \psi - A_{o,q} \psi + \Fs, \quad \psi_0 \in \big( \lso, \calD(A_q)\big)_{1-\frac{1}{p},p}
		\end{equation}
		whose solution is 
		\begin{equation}\label{B-A.25}
		\psi(t) = e^{\calA_qt} \psi_0 + \int_{0}^{t} e^{\calA_q(t-\tau)} \Fs(\tau) d \tau.
		\end{equation}
		\begin{equation}\label{B-A.26}
		\psi(t) = e^{- \nu_o A_qt} \psi_0 + \int_{0}^{t} e^{-\nu_o A_q(t-\tau)} \Fs(\tau) d \tau - \int_{0}^{t} e^{- \nu_o A_q(t-\tau)} A_{o,q} \psi(\tau) d \tau.
		\end{equation}
		
		\begin{thm}\label{B-Thm-A.5}
			Let $1 < p,q < \infty$, $0 < T < \infty$. Assume (as in (\ref{B-A.16})) 
			\begin{equation}\label{B-A.27}
			\Fs \in L^p \big( 0, T; L^q_{\sigma} (\Omega) \big), \quad \psi_0 \in \lqaq
			\end{equation}
			where $\calD(A_q) = \calD(\calA_q)$, see (\ref{B-A.2}). Then there exists a unique solution $\psi \in \xtpqs$ of the dynamic Oseen problem (\ref{B-A.22}), continuously on the data: that is, there exist constants $C_{0T}, C_{1T}$ independent of $\Fs, \psi_0$ such that (recall (\ref{B-A.14})):
			\begin{align}
			C_{0T} \norm{\psi}_{C \big([0,T]; \Bso \big)} &\leq \norm{\psi}_{\xtpqs} + \norm{\pi}_{\ytpq} \nonumber\\ 
			&\equiv \norm{\psi'}_{L^p(0,T;L^q(\Omega))} + \norm{A_q \psi}_{L^p(0,T;L^q(\Omega))} +  \norm{\pi}_{\ytpq}\\
			&\leq C_{1T} \bigg \{ \norm{\Fs}_{L^p(0,T;\lso)}  + \norm{\psi_0}_{\lqaq} \bigg \}
			\end{align}
			Equivalently, for $1 < p, q < \infty$
			\begin{enumerate}[i.]
				\item The map
				\begin{equation}
				\begin{aligned}
				\Fs \longrightarrow \int_{0}^{t} e^{\calA_q(t-\tau)}\Fs(\tau) d\tau \ : \text{continuous}&\\
				L^p(0,T;\lso) &\longrightarrow L^p \big(0,T;\calD(\calA_q) = \calD(A_q) \big)\label{B-A.30}
				\end{aligned}			
				\end{equation}
				where then automatically, see (\ref{B-A.24}) 
				\begin{equation}
				L^p(0,T;\lso) \longrightarrow W^{1,p}(0,T;\lso) \label{B-A.31}
				\end{equation}
				and ultimately
				\begin{subequations}\label{B-A.32}
					\begin{equation}\label{B-A.32a}
					L^p(0,T;\lso) \longrightarrow \xtpqs(A_q) \equiv L^p \big(0,T;\calD(A_q) \big) \cap W^{1,p}(0,T;\lso). 
					\end{equation}
					\noindent Thus, 
					\begin{equation}\label{B-A.32b}
					\calA_q \in MReg (L^p(0,T;\lso)), \ 1 < T < \infty
					\end{equation}
					and the operator $\ds \calA_q$ has maximal $L^p$ regularity on $\lso$.
				\end{subequations}
				
				\item The s.c. analytic semigroup $e^{\calA_q t}$ generated by the Oseen operator $\calA_q$ (see (\ref{B-A.2})) on the space $\ds \lqaq $ satisfies for $1 < p, q < \infty$
				\begin{equation}
				e^{\calA_q t}: \ \text{continuous} \quad \lqaq \longrightarrow L^p \big(0,T;\calD(\calA_q) = \calD(A_q)  \big) \label{B-A.33}
				\end{equation}
				and hence automatically
				\begin{equation}
				e^{ \calA_q t}: \ \text{continuous} \quad \lqaq \longrightarrow \xtpqs. \label{B-A.34}
				\end{equation}
				In particular, for future use, for $1 < q < \infty, 1 < p < \frac{2q}{2q - 1}$, we have that the s.c. analytic semigroup $\ds e^{\calA_q t}$ on the space $\ds \Bto$, satisfies 
				\begin{equation}
				e^{\calA_q t}: \ \text{continuous} \quad \Bto \longrightarrow L^p \big(0,T;\calD(\calA_q) = \calD(A_q)  \big), \ T < \infty, \label{B-A.35}
				\end{equation}
				and hence automatically
				\begin{equation}
				e^{ \calA_q t}: \ \text{continuous} \quad \Bto \longrightarrow \xtpqs(A_q), \ T < \infty. \label{B-A.36}
				\end{equation}
				\item An estimate such as the one in (\ref{B-A.21a}) applies to the pressure $\pi$, where now $\ds \nabla \pi (Id - P_q)(\Delta - L_e + F)$. 
			\end{enumerate}
		\end{thm}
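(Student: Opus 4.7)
\textbf{Plan of proof for Theorem \ref{B-Thm-A.5}.} The entire argument is a perturbation of Theorem \ref{B-Thm-A.4} (maximal regularity of the Stokes operator $-A_q$ on $\lso$) by the first-order Oseen term $A_{o,q}$, which is dominated by $A_q^{1/2}$. Since we allow only finite $T$, we do not need to control decay rates at infinity and a Gronwall-type closure on $[0,T]$ is permitted.

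The starting point is representation (\ref{B-A.26}): regard the solution $\psi$ of the Oseen problem (\ref{B-A.24}) as a solution of the Stokes equation driven by the modified forcing $\Fs - A_{o,q}\psi$. Applying $A_q$ throughout and invoking the maximal regularity estimate (\ref{B-A.17}) for $-A_q$ (Theorem \ref{B-Thm-A.4}) term-by-term yields
\begin{equation*}
\|\psi\|_{\xtpqs} \leq C_T\Big\{ \|\Fs\|_{L^p(0,T;\lso)} + \|\psi_0\|_{\lqaq} + \|A_{o,q}\psi\|_{L^p(0,T;\lso)} \Big\}.
\end{equation*}
Here the constant $C_T$ is the maximal-regularity constant for the Stokes semigroup, which is uniform in $T$ on any finite interval.

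Next, factor $A_{o,q} = A_q^{1/2} \big(A_q^{-1/2} A_{o,q}\big)$. By definition (\ref{B-A.2}), recalling (\ref{B-2.21}), the operator $A_q^{-1/2} A_{o,q}$ is bounded on $\lso$, so $\|A_{o,q}\psi\|_{L^p(0,T;\lso)} \leq C\|A_q^{1/2}\psi\|_{L^p(0,T;\lso)}$. Interpolating between $\calD(A_q)$ and $\lso$ via the standard Young inequality for fractional powers (with $\theta = 1/2$, cf. the argument used in \eqref{B-6.36}--\eqref{B-6.40}) gives, for any $\varepsilon > 0$,
\begin{equation*}
\|A_q^{1/2}\psi\|_{L^p(0,T;\lso)} \leq \varepsilon \|A_q \psi\|_{L^p(0,T;\lso)} + C_\varepsilon \|\psi\|_{L^p(0,T;\lso)}.
\end{equation*}
Choosing $\varepsilon$ small enough that $C_T \cdot C \cdot \varepsilon < 1/2$, we absorb the top-order term $\|A_q\psi\|_{L^p(0,T;\lso)}$ into the left-hand side, obtaining
\begin{equation*}
\|\psi\|_{\xtpqs} \leq \wti{C}_T\Big\{ \|\Fs\|_{L^p(0,T;\lso)} + \|\psi_0\|_{\lqaq} + \|\psi\|_{L^p(0,T;\lso)} \Big\}.
\end{equation*}

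To close the estimate, I will apply Gronwall's inequality in integral form on the finite interval $[0,T]$: writing the same estimate on $[0,t]$ for each $t \leq T$ and using the embedding (\ref{B-A.14}) to bound $\|\psi(s)\|_{\lso}$ pointwise, one gets a linear Volterra inequality for $t \mapsto \|\psi\|_{X^t_{p,q,\sigma}}$ whose solution satisfies the desired continuous dependence on $\{\Fs,\psi_0\}$. This yields \eqref{B-A.30}--\eqref{B-A.32}. The semigroup statements (\ref{B-A.33})--(\ref{B-A.36}) follow by taking $\Fs = 0$; the Besov specialization (\ref{B-A.35})--(\ref{B-A.36}) then uses the identification $\calD\big(\BA_{F,q}\big) \supset \calD(A_q)$ and Proposition \ref{B-Prop-9.3} characterizing $\lqaq = \Bto$ for the range $1 < p < \rfrac{2q}{2q-1}$. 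Finally, the pressure estimate of part (iii) is obtained by setting $\nabla\pi = (I - P_q)(\nu_o \Delta\psi - L_e(\psi) + F)$ and estimating in $\widehat{W}^{1,q}(\Omega)$ exactly as in Theorem \ref{B-Thm-A.4}.

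The main obstacle, technically, is the absorption step: one must verify that the maximal-regularity constant $C_T$ for $-A_q$ is indeed bounded on finite intervals (which is classical, since on $[0,T]$ it is equivalent to maximal regularity of the translated operator $-A_q - \omega I$ for any $\omega \geq 0$), and that the Gronwall closure does not blow up $\wti{C}_T$ in an uncontrolled way for fixed $T < \infty$. This is precisely why the statement restricts to $T < \infty$: the perturbation $A_{o,q}$ need not preserve uniform stability of the Stokes semigroup needed for global-in-time maximal regularity, so one cannot hope to take $T = \infty$ here by this elementary perturbation route.
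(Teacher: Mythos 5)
Your perturbation strategy is sound and is essentially the one the paper itself relies on: the text defers the proof of Theorem \ref{B-Thm-A.5} to \cite{LPT.1}, but the same scheme is carried out in detail for the closely analogous Theorem \ref{B-Thm-6.4} (Steps 1--3, Eqts (\ref{B-6.31})--(\ref{B-6.47})): write the solution by variation of parameters off the Stokes semigroup, apply $A_q$ throughout, invoke Theorem \ref{B-Thm-A.4} term by term, factor $A_{o,q}=A_q^{\rfrac{1}{2}}\big(A_q^{-\rfrac{1}{2}}A_{o,q}\big)$ with the bounded factor from (\ref{B-1.10}), and absorb $\varepsilon\norm{A_q\psi}_{L^p(0,T;\lso)}$ after the interpolation inequality. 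The one place where you genuinely diverge is the closure: you invoke Gronwall on $t\mapsto\norm{\psi}_{X^t_{p,q,\sigma}}$, whereas the paper's analogous argument closes by bounding $\norm{\psi}_{L^p(0,T;\lso)}$ directly from the variation-of-parameters formula and the boundedness of the (already known) Oseen semigroup $e^{\calA_q t}$ on $[0,T]$ (Theorem \hyperref[B-Thm-A.1]{A.1(ii)}); that route is simpler and sidesteps the question of whether the trace-embedding constant in (\ref{B-A.14}) is uniform over subintervals $[0,t]$, which your Volterra inequality implicitly requires. Two small points to tidy: your estimate is an a priori estimate, so existence of the solution in $\xtpqs$ should be secured first (e.g.\ by a fixed point in $\xtpqs$ using the same bounds, or by density from smooth data); and for the Besov specialization (\ref{B-A.35})--(\ref{B-A.36}) the relevant identification is (\ref{B-A.8b}), i.e.\ $\lqaq=\Bto$ for $1<p<\rfrac{2q}{2q-1}$, rather than Proposition \ref{B-Prop-9.3}, which concerns the feedback operator $\BA_{_{F,q}}$. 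Your closing observation -- that $T<\infty$ is forced because $A_{o,q}$ destroys the uniform stability underlying global-in-time maximal regularity of the Stokes semigroup -- is exactly the right diagnosis and is consistent with the paper's restriction in (\ref{B-A.32b}).
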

	\end{enumerate}
	\noindent A proof of Theorem \ref{B-Thm-A.5} is given in \cite{LPT.1}.

\section{The eigenvectors $\ds \varphi^*_{ij} \in W^{2,q'}(\Omega) \cap W^{1,q'}_0(\Omega) \cap \lo{q'}$ of $\ds \calA^* (=\calA_q^*)$ in $\ds L^{q'}(\Omega)$ may be viewed also as $\ds \varphi^*_{ij} \in W^{3,q}(\Omega)$, so that $\ds \frac{\partial \varphi^*_{ij}}{\partial \nu} \bigg|_{\Gamma} \in W^{2-\rfrac{1}{q},q}(\Gamma), \ q \geq 2$.}\label{B-app-B}
	\setcounter{equation}{0}
	\renewcommand{\theequation}{{\rm B}.\arabic{equation}}
	\renewcommand{\thetheorem}{{\bf B}.\arabic{theorem}}
	
	\noindent The eigenvectors $\ds \varphi^*_{ij}$ of $\ds \calA^* (=\calA_q^*)$, defined in (\ref{B-4.8a}), are in $\ds \calD((\calA_q^*)^n)$ for any $n$, so the are arbitrarily smooth in $\lo{q'}$, say $\ds \varphi^*_{ij} \in W^{s,q'}(\Omega)$, with $s$ as large as we please. We seek to view $\ds \varphi^*_{ij}$ in an $\ds L^q(\Omega)$-based space. To this end, we recall a Sobolev embedding theorem.
	
	\begin{thm}\label{B-Thm-B.1}\cite[p328]{HT:1980}, For a more restricted version \cite{A:1975}
		Let $\Omega$ be an arbitrary bounded domain, dim $\Omega = d$. Let $0 \leq t \leq s < \infty$ and $ \infty > q \geq \wti{q} > 1$. Then, the following embedding holds true:
		\begin{equation}
			W^{s,\wti{q}}(\Omega) \subset W^{t, q}(\Omega), \ s - \frac{d}{\wti{q}} \geq t - \frac{d}{q} \quad \qedsymbol \label{B-B.1}
		\end{equation}
	\end{thm}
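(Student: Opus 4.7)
The plan is to reduce the claimed embedding on $\Omega$ to the corresponding embedding on $\mathbb{R}^d$, then treat the integer‐index case by the classical Gagliardo--Nirenberg--Sobolev inequality together with a bootstrap argument, and finally pass to general (possibly fractional) indices by interpolation/Bessel potential theory. First, for the ``arbitrary bounded domain'' generality of \cite{HT:1980}, one defines $W^{s,\tilde{q}}(\Omega)$ as the space of restrictions $u\big|_\Omega$ with $u\in W^{s,\tilde{q}}(\mathbb{R}^d)$, normed by the infimum over all such extensions. With this definition, any embedding on $\mathbb{R}^d$ automatically descends to $\Omega$, and the problem is reduced to proving
\begin{equation*}
W^{s,\tilde{q}}(\mathbb{R}^d)\hookrightarrow W^{t,q}(\mathbb{R}^d),\qquad 0\le t\le s,\ 1<\tilde{q}\le q<\infty,\ s-\tfrac{d}{\tilde{q}}\ge t-\tfrac{d}{q}.
\end{equation*}

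For integer indices I would start from the base case $W^{1,\tilde{q}}(\mathbb{R}^d)\hookrightarrow L^{\tilde{q}^*}(\mathbb{R}^d)$ with $\tilde{q}^*=d\tilde{q}/(d-\tilde{q})$ for $\tilde{q}<d$, proved by the standard slicing identity $u(x)=-\int_{x_i}^{\infty}\partial_i u\, dt$, taking the geometric mean over $i=1,\ldots,d$ and applying Hölder iteratively; the limit case $\tilde{q}=d$ gives embedding into BMO / $L^r$ for any $r<\infty$, and $\tilde{q}>d$ gives embedding into $C^{0,\alpha}$ by Morrey. A direct iteration on the derivative order, combined with the trivial monotonicity $L^{q_1}(\Omega)\supseteq L^{q_2}(\Omega)$ for $q_1\le q_2$ on the bounded set $\Omega$ to absorb the inequality ``$\ge$'' (in place of equality) in the index condition, yields the embedding for every pair of integers $(s,t)$ satisfying the hypotheses.

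For fractional $s,t$ I would invoke, on $\mathbb{R}^d$, the identification $W^{s,p}(\mathbb{R}^d)=H^{s,p}(\mathbb{R}^d)$ (Bessel potential space) valid for $1<p<\infty$ and $s\ge 0$, and then deduce the embedding either (a) by applying the Hardy--Littlewood--Sobolev inequality to the Bessel kernel $G_{s-t}$ so that convolution with $G_{s-t}$ maps $L^{\tilde{q}}\to L^{q}$ when $s-t-d/\tilde{q}\ge -d/q$, or (b) by real interpolation between the integer‐index embeddings already established, using the reiteration theorem in the Besov scale and the embedding $B^{s}_{p,p}\hookrightarrow F^{s}_{p,2}=W^{s,p}$ for $1<p<\infty$. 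Either route recovers the full statement on $\mathbb{R}^d$, and the restriction definition delivers it on $\Omega$.

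The main obstacle is the bookkeeping at the fractional level: one must either commit to the Bessel potential picture (which requires Fourier multiplier / Mikhlin-type arguments on $\mathbb{R}^d$) or set up the interpolation couple carefully so that the strict inequality case $s-d/\tilde q>t-d/q$ is covered along with the sharp case of equality. The generality of the bounded domain is a secondary difficulty, but is bypassed by adopting the restriction definition from the outset, as in the framework of \cite{HT:1980}.
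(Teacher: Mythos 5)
First, a point of comparison: the paper does not prove Theorem B.1 at all — it is quoted from Triebel \cite{HT:1980} (with \cite{A:1975} for a restricted version) — so there is no internal argument to measure your proposal against; it has to stand on its own as a proof of the cited classical result.

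Your reduction to $\mathbb{R}^d$ via the restriction definition is the right move for an ``arbitrary bounded domain'' (the intrinsic definition can fail on cusp domains), and the integer-index part — Gagliardo--Nirenberg--Sobolev, iteration on the derivative order, and absorption of the non-sharp case of the index inequality via $L^{q_2}(\Omega)\subset L^{q_1}(\Omega)$ for $q_1\le q_2$ on the bounded set — is sound. The genuine gap is in the fractional step. The identification $W^{s,p}(\mathbb{R}^d)=H^{s,p}(\mathbb{R}^d)$ on which both of your routes lean is \emph{false} for non-integer $s$ and $p\neq 2$: for fractional $s$ the Sobolev--Slobodeckij space is the Besov space $B^{s}_{p,p}$, not the Bessel potential (Triebel--Lizorkin) space $F^{s}_{p,2}=H^{s,p}$, and the inclusion $B^{s}_{p,p}\hookrightarrow F^{s}_{p,2}$ you invoke in route (b) holds only for $p\le 2$ (for $p\ge 2$ it reverses). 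Consequently route (a), Hardy--Littlewood--Sobolev applied to the Bessel kernel, proves the embedding in the $H^{s,p}$ scale but not, as written, in the $W^{s,p}$ scale of the theorem. The repair is to argue in the correct scale: for non-integer indices use the Besov embedding $B^{s}_{\wti{q},r}\hookrightarrow B^{t}_{q,r}$ (same microscopic index $r$) under $s-d/\wti{q}\ge t-d/q$, $\wti{q}\le q$, followed by the monotonicity $B^{t}_{q,\wti{q}}\hookrightarrow B^{t}_{q,q}$; the mixed integer/fractional cases in the limiting situation $s-d/\wti{q}=t-d/q$ then require the one-sided inclusions between the $B$- and $F$-scales and are exactly where the ``bookkeeping'' you flag can actually break, so they must be checked rather than waved through. (For the use the paper makes of Theorem B.1 in Corollary B.2, only integer smoothness indices $s=r+m$, $t=r=3$ are needed, so the integer part of your argument already covers the application.)
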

	
	\begin{clr}\label{B-Clr-B.2}
		With $\ds 2 \leq q < \infty, \ \rfrac{1}{q} + \rfrac{1}{q'} = 1$, so that $\ds 1 < q' \leq 2 \leq q, \ 0 \leq r$, we have
		\begin{enumerate}[(i)]
			\item \begin{equation}
				\varphi^*_{ij} \in W^{r+m, q'}(\Omega) \subset W^{r,q}(\Omega), \quad m \geq d \left( \frac{1}{q} + \frac{1}{q'} \right) = \left\{\begin{array}{ll}
				0, & q' = q = 2 \\[2mm]
				d, & q' = 1, q = \infty
				\end{array}\right.  \label{B-B.2}
			\end{equation}
			\item \begin{equation}
				\frac{\partial \varphi^*_{ij}}{\partial \nu} \bigg|_{\Gamma} \in W^{r-1-\rfrac{1}{q},q}(\Gamma), \quad r > 1 + \frac{1}{q} \hspace{6.5cm} \label{B-B.3}
			\end{equation}
			\item With reference to the sub-space $\calF$ based on $\Gamma$, as defined in (\ref{B-1.25}), we have
			\begin{equation}
				\calF \equiv \mbox{span}\left\{\frac{\partial}{\partial\nu} \varphi^*_{ij}, \ i = 1,\ldots,M; \ j = 1,\ldots,\ell_i\right\} \subset W^{r-1-\rfrac{1}{q},q}(\Gamma), \ r > 1 + \frac{1}{q} \label{B-B.4}
			\end{equation}
			\noindent In particular, for our purposes, if will suffice to take $r=3$ in (\ref{B-B.2}), so that (\ref{B-B.2})-(\ref{B-B.4}) become 
			\begin{equation}
				\varphi^*_{ij} \in W^{3,q}(\Omega), \quad \frac{\partial \varphi^*_{ij}}{\partial \nu} \bigg|_{\Gamma} \in W^{2-\rfrac{1}{q},q}(\Gamma), \quad \calF \subset W^{2-\rfrac{1}{q},q}(\Gamma). \label{B-B.5}
			\end{equation}
			\item Thus, with reference to the boundary vector $v = v_N$ introduced in (\ref{B-5.1}) = (\ref{B-6.10}), we have 
			\begin{equation}
				v = \sum_{k=1}^{K} \nu_k(t) f_k \in W^{2-\rfrac{1}{q},q}(\Gamma), \ f_k \in \calF, \ f_k \cdot \nu |_{\Gamma} = 0, \ v \cdot \nu |_{\Gamma} = 0 \label{B-B.6}
			\end{equation}
			\item Recalling the Dirichlet map $D$ introduced to describe the solution of problem (\ref{B-2.1}), we have 
			\begin{equation}
				Dv \in W^{2,q}(\Omega), \quad 2 \leq q < \infty \hspace{5.3cm} \label{B-B.7} 
			\end{equation}
		\end{enumerate}
	\end{clr}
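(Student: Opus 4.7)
\textbf{Proof proposal for Corollary \ref{B-Clr-B.2}.} The plan is to combine the higher Sobolev regularity of the adjoint Oseen eigenfunctions (in $L^{q'}$-based spaces) with the embedding of Theorem \ref{B-Thm-B.1} to pass to $L^q$-based spaces, then invoke standard trace theory and the regularity theory of the Dirichlet map $D$.

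\emph{Part (i).} Since each $\varphi^*_{ij}$ satisfies $\calA^*_q \varphi^*_{ij} = \bar\lambda_i \varphi^*_{ij}$, iterating gives $\varphi^*_{ij} \in \calD\big((\calA^*_q)^n\big)$ for every $n \in \BN$. Elliptic regularity for the (Stokes + bounded first-order) Oseen system in the $L^{q'}$-setting, combined with the bootstrap $\calD((\calA^*_q)^n) \hookrightarrow W^{2n,q'}(\Omega)$ up to the smoothness of $\Gamma$ and of $y_e$, yields $\varphi^*_{ij} \in W^{s,q'}(\Omega)$ for $s$ arbitrarily large. Apply Theorem \ref{B-Thm-B.1} with $\wti q = q'$, $t = r$, and $s = r+m$: the hypothesis $s - d/\wti q \geq t - d/q$ becomes $m \geq d(1/q' - 1/q)$, and since $1/q' - 1/q \leq 1/q' + 1/q = 1$, taking any $m \geq d$ is amply sufficient. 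This proves \eqref{B-B.2}.

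\emph{Parts (ii)--(iii).} Once \eqref{B-B.2} is established, the boundary-trace statement \eqref{B-B.3} is the standard trace theorem applied to the normal derivative: for $\psi \in W^{r,q}(\Omega)$ with $r > 1 + 1/q$, the map $\psi \mapsto \partial_\nu \psi|_\Gamma$ is continuous into $W^{r-1-1/q,q}(\Gamma)$. Taking linear combinations over $i,j$ gives \eqref{B-B.4}. Specializing to the convenient value $r = 3$ (which exceeds $1 + 1/q$ for any $q > 1$) yields the concrete statement \eqref{B-B.5}, where $\varphi^*_{ij} \in W^{3,q}(\Omega)$ and $\partial_\nu \varphi^*_{ij}|_\Gamma \in W^{2-1/q,q}(\Gamma)$.

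\emph{Part (iv).} The boundary vector $v = v_N = \sum_k \nu_k(t) f_k$ is, for each $t$, a finite linear combination of elements of $\calF$, hence belongs to $W^{2-1/q,q}(\Gamma)$ by \eqref{B-B.5}. The tangentiality $v \cdot \nu|_\Gamma = 0$ reduces to the claim $f_k \cdot \nu|_\Gamma = 0$ for each $k$, which in turn follows from Proposition \ref{B-Prop-2.5}: each $\varphi^*_{ij}$ satisfies $\varphi^*_{ij}|_\Gamma = 0$ (membership in $W^{1,q'}_0$) and $\operatorname{div} \varphi^*_{ij} = 0$ in $\Omega$, so $\partial_\nu \varphi^*_{ij}|_\Gamma$ is tangential, and the same holds for any linear combination.

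\emph{Part (v).} This is the subtlest step and, I expect, the main obstacle, because Theorems \ref{B-Thm-2.1}--\ref{B-Thm-2.2} only supply lower-regularity versions of the Dirichlet map. One must lift the regularity of $D$ from the boundary datum in $W^{2-1/q,q}(\Gamma)$ with $g \cdot \nu = 0$ to the interior solution in $W^{2,q}(\Omega) \cap \lso$. The argument is to write $\psi = Dg$ as the solution of the stationary Oseen problem \eqref{B-2.1}, and to invoke higher-order elliptic regularity for the Stokes system with the perturbation $L_e(\psi) = (y_e \cdot \nabla)\psi + (\psi \cdot \nabla)y_e$ treated as a lower-order term. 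Because $y_e \in W^{2,q}(\Omega) \hookrightarrow C(\overline\Omega)$ (for $q > d/2$) and $\Gamma \in C^2$, the classical Cattabriga/Agmon--Douglis--Nirenberg regularity theory for the Stokes system with non-homogeneous Dirichlet data in $W^{2-1/q,q}(\Gamma)$ yields $\psi \in W^{2,q}(\Omega)$ continuously, after absorbing the first-order term $L_e(\psi)$ on the right-hand side by an iteration/bootstrap from $W^{1,q}(\Omega)$ (the regularity of Theorem \ref{B-Thm-2.1}) up to $W^{2,q}(\Omega)$. This gives \eqref{B-B.7}, completing the corollary.
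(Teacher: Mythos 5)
Your proposal is correct and follows essentially the same route as the paper: Theorem \ref{B-Thm-B.1} with $\wti{q}=q'$, $s=r+m$, $t=r$ giving the condition $m\geq d\left(\rfrac{1}{q'}-\rfrac{1}{q}\right)$ (which, as you implicitly noted, is the condition actually needed — the factor $\rfrac{1}{q}+\rfrac{1}{q'}$ displayed in \eqref{B-B.2} is evidently a typo, since it is inconsistent with the value $0$ claimed for $q=q'=2$), then standard trace theory for (ii)--(iv) and the regularity of the Dirichlet map for (v). Your part (v) is in fact more detailed than the paper's own one-line justification ("$D$ improves regularity by $\rfrac{1}{q}$ from the boundary to the interior"), and your bootstrap via Cattabriga/ADN theory is a legitimate way to supply the $W^{2-\rfrac{1}{q},q}(\Gamma)\to W^{2,q}(\Omega)$ level that \eqref{B-2.12} does not literally cover.
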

\begin{proof}
	\noindent (i) Apply Theorem \ref{B-Thm-B.1} with $\ds s = r + m \geq t = r, \ \wti{q} = q', \ \rfrac{1}{q} + \rfrac{1}{q'} = 1, \ q \geq 2$, so that $\ds q' = \wti{q} \leq q$, to verify that the required condition (\ref{B-B.1})
	\begin{equation}
		s - \frac{d}{\wti{q}} = r + m - \frac{d}{q'} \geq t - \frac{d}{q} = r - \frac{d}{q}, \ \text{or} \ m \geq d \left( \frac{1}{q'} - \frac{1}{q} \right) \geq 0 \label{B-B.8}
	\end{equation}
	\noindent can always be satisfied by taking $m \geq 0$ suitable as in (\ref{B-B.8}). This is possible, since $\ds \varphi^*_{ij}$ is arbitrarily smooth.\\
	
	\noindent (ii) Then (\ref{B-B.3}) follows by the usual trace theory \cite{A:1975}.\\
	
	\noindent Then, (iii)-(v) readily follow, as $D$ improves regularity by $\rfrac{1}{q}$ from the boundary to the interior.\\
\end{proof}	
	\noindent Next, we return to the operator $\ds F: L^q(\Omega) \subset \lso \longrightarrow L^q(\Gamma)$ in \eqref{B-5.6}. Its adjoint $F^*$ is
	
	\begin{equation}\label{B-B.9}
	F^*g = \sum_{k = 1}^{K} (f_k, g)_{_{\Gamma}} p_k \in (W^u_N)^* \subset \lo{q'}, \quad g \in L^q(\Gamma)
	\end{equation}
	\noindent where we have seen in (\ref{B-3.41}) that $\ds D: L^q(\Gamma) \supset U_q \longrightarrow W^{\rfrac{1}{q},q}(\Omega) \cap \lso \subset \calD \Big( A^{\rfrac{1}{2q} - \varepsilon}_q \Big)$
	\begin{equation}\label{B-B.10}
	F^*D^*h = \sum_{k = 1}^{K} (f_k, D^*h)_{_{\Gamma}} p_k = \sum_{k = 1}^{K} \big< Df_k, h \big>_{_{W^u_N}} p_k \in (W^u_N)^*
	\end{equation}
	\noindent where we have conservatively: $\ds f_k \in L^{q'}(\Gamma), \ f_k \cdot \nu = 0$ on $\Gamma$, thus by (\ref{B-2.13}), $\ds Df_k \in W^{\rfrac{1}{q'},q'}(\Omega) = W^{\rfrac{1}{q'},q'}_0(\Omega)$ by (\ref{B-2.69a}) since $\ds \rfrac{1}{q'} \leq q'$ for $1 < q' \leq 2$. Thus, in (\ref{B-B.10}) we can take $\ds h \in W^{-\rfrac{1}{q},q}(\Omega)$. In particular 
	\begin{equation}\label{B-B.11}
		F^*D^* \in \calL \big(L^{q'}(\Omega)\big), \quad 1 < q' \leq 2.
	\end{equation}
	
	\section{Relevant unique continuation properties for overdetermined Oseen eigenvalue problems}\label{B-app-C}
	\setcounter{equation}{0}
	\renewcommand{\theequation}{{\rm C}.\arabic{equation}}
	\renewcommand{\thetheorem}{{\bf C}.\arabic{theorem}}
	
	In this Appendix \ref{B-app-C}, we assemble a comprehensive account of unique continuation problems for Oseen eigenproblems, as they pertain to the problem of controllability of finite dimensional projected system (\hyperref[B-4.8a]{4.8a-b}) of the linearized $w$-problem (\ref{B-1.28}) (with interior, tangential-like localized control $u \equiv 0 $). Positive solution, or lack thereof, of this finite dimensional problem is a key step, or obstruction, for the uniform stabilization of the Navier Stokes equations. This issue has been known since the study of boundary feedback stabilization of a parabolic equation with Dirichlet boundary trace in the feedback loop, as acting on the Neumann boundary conditions \cite{LT:1983}. We return to the bounded domain $\Omega, \ d = 2,3,$ with boundary $\Gamma = \partial \Omega$. As before, $\wti{\Gamma}$ is a subportion of $\Gamma$.\\
	
	\noindent \textbf{Problem \#1} (over-determination only on a portion $\wti{\Gamma}$ of $\Gamma$) Let $\{\varphi, p\} \in W^{2,q}(\Omega) \times W^{1,q}(\Omega)$ solve the over-determined problem 
	\begin{subequations}\label{B-C.1}
	\begin{align}
	(- \nu_o \Delta) \varphi + L_e(\varphi) + \nabla \pi & = \lambda \varphi  &\text{ in } \Omega \label{B-C.1a}\\ 
	\div \ \varphi &= 0  &\text{ in } \Omega \label{B-C.1b}\\
	\begin{picture}(100,0)
	\put(50,20){$\left\{\rule{0pt}{35pt}\right.$}\end{picture}
	\varphi|_{\wti{\Gamma}} \equiv 0, \quad \left. \frac{\partial \varphi}{\partial \nu} \right|_{\wti{\Gamma}} &\equiv 0 &\text{ on } \wti{\Gamma} \label{B-C.1c}
	\end{align}
	\end{subequations}
	\noindent with over-determination only on the portion $\wti{\Gamma}$ of $\Gamma$. Does (\hyperref[B-C.1a]{C.1a-b-c}) imply
	\begin{equation}
		\varphi \equiv 0 \quad \text{and} \quad p \equiv \text{const} \quad \text{in } \Omega \text{ ?} \label{B-C.2}
	\end{equation}
	\noindent The answer is \underline{negative} even in the Stokes case: $L_e(\varphi) \equiv 0$. This follows from \cite{FL:1996}, where the following counterexample is given in the 2-dimensional half-space $\Omega = \{(x,y) : x \in \BR^+, y \in \BR \}$ with boundary $\Gamma = \{ x = 0 \}$. On $\Omega$ take
	\begin{equation}
		u_1(x,y) \equiv 0, \quad u_2(x,y) = ax^2, \quad p = 2ay, \quad a \neq 0, \label{B-C.3}
	\end{equation}
	\noindent so that with $u = \{u_1, u_2\}$, it follows that
	\begin{equation}
		\Delta u = \nabla p \text{ in } \Omega, \quad u|_{\Gamma} = \nabla u|_{\Gamma} = 0, \label{B-C.4}
	\end{equation}
	\noindent to obtain a nontrivial solution of the Stokes overdetermined eigenproblem with $\lambda = 0$. Such half-space example can then be transformed into a counterexample over the bounded domain $\Omega$ where the over-determination is active on any subset $\wti{\Gamma}$ of the boundary $\Gamma = \partial \Omega$.\\
	
	\noindent \textbf{Implications of failure of unique continuation under Problem \hyperref[B-C.1]{\#1}:} A negative consequence of the lack of unique continuation (\ref{B-C.1}) $\implies$ (\ref{B-C.2}) with over-determination only in a portion $\wti{\Gamma}$ of the boundary $\Gamma$ is as follows: that global uniform stabilization of the linearized $w$-problem \eqref{B-1.28} by means of a purely tangential (finite or infinite dimensional) feedback boundary control $v$ (as given by (\ref{B-5.1}) in the finite dimensional case) acting only on a small subportion $\wti{\Gamma}$ of the boundary $\Gamma$ (and thus with localized interior tangential-like control $u \equiv 0$) is \underline{not possible}. This is so since the algebraic rank condition \eqref{B-4.11b} (with $u \equiv 0$) fails, as boundary traces 
	\begin{equation}\label{B-C.5}
		\left\{ \left.\frac{\partial \varphi^*_{i1}}{\partial \nu}\right|_{\wti{\Gamma}}, \left.\frac{\partial \varphi^*_{i2}}{\partial \nu}\right|_{\wti{\Gamma}}, \dots, \left.\frac{\partial \varphi^*_{i \ell_i}}{\partial \nu}\right|_{\wti{\Gamma}} \right\} \text{ fail to be linearly independent on } \wti{\Gamma}
	\end{equation}
	\noindent since, equivalently, the implication (\ref{B-C.1})$\implies$(\ref{B-C.2}) fails. See Orientation.\\
	
	\noindent \textbf{Problem \#2} (dual of the statement of Lemma \ref{B-lem-4.3}): necessity to complement the localized control $v$ on $\wti{\Gamma}$ with a localized interior tangential-like control $u$ supported on $\omega$ in terms of $\wti{\Gamma}$. Let now $\{ \varphi , p \} \in W^{2,q}(\Omega) \times W^{1,q}(\Omega)$ solve the problem	
	\begin{subequations}\label{B-C.6}
		\begin{align}
		(- \nu_o \Delta) \varphi + L_e(\varphi) + \nabla \pi & = \lambda \varphi  &\text{ in } \Omega \label{B-C.6a}\\ 
		\div \ \varphi &= 0  &\text{ in } \Omega \label{B-C.6b}\\
		\begin{picture}(100,0)
		\put(80,20){$\left\{\rule{0pt}{35pt}\right.$}\end{picture}
		\varphi|_{\wti{\Gamma}} \equiv 0, \quad \left. \frac{\partial \varphi}{\partial \nu} \right|_{\wti{\Gamma}} \equiv 0, \quad \varphi \cdot \tau & \equiv 0 &\text{ in } \omega \label{B-C.6c}
		\end{align}
	\end{subequations}
	\noindent Then, \cite[Theorem 6.2]{LT2:2015},
	\begin{equation}\label{B-C.7}
		\varphi \equiv 0 \quad \text{and} \quad p \equiv \text{const} \quad \text{in } \Omega.
	\end{equation}
	\noindent It is as a consequence of such unique continuation property that the Kalman algebraic rank conditions (\hyperref[B-6.28b]{6.28b}) are satisfied. This is the basic result upon which the uniform stabilization of the present paper relies. Thus we can conclude that the results of the present paper (as in \cite{LT2:2015}) are \underline{optimal} in terms of the required extra condition of the localized interior, tangential-like control needed to supplement the insufficient role of the localized tangential boundary control $v$ on $\wti{\Gamma}$. Optimality is in terms of the smallness of the required control action for $v$ and $u$.\\
	
	\noindent \textbf{Problem \#3} (over-determination on the entire boundary $\Gamma = \partial \Omega$). Let now $\{ \varphi , p \} \in W^{2,q}(\Omega) \times W^{1,q}(\Omega)$ solve the over-determined problem	
	\begin{subequations}\label{B-C.8}
		\begin{align}
		(- \nu_o \Delta) \varphi + L_e(\varphi) + \nabla \pi & = \lambda \varphi  &\text{ in } \Omega \label{B-C.8a}\\ 
		\div \ \varphi &= 0  &\text{ in } \Omega \label{B-C.8b}\\
		\begin{picture}(100,0)
		\put(50,20){$\left\{\rule{0pt}{35pt}\right.$}\end{picture}
		\varphi|_{\Gamma} \equiv 0, \quad \left. \frac{\partial \varphi}{\partial \nu} \right|_{\Gamma} &\equiv 0 &\text{ on } \Gamma \label{B-C.8c}
		\end{align}
	\end{subequations}
	\noindent with over-determination on all of $\Gamma$. Then, does (\hyperref[B-C.8a]{C.8a-b-c}) imply
	\begin{equation}\label{B-C.9}
	\varphi \equiv 0 \quad \text{and} \quad p \equiv \text{const} \quad \text{in } \Omega \text{ ?}
	\end{equation}
	\noindent It seems that a general definitive answer is not known at present. Only partial results are known.\\
	
	\noindent The desired unique continuation (\ref{B-C.8})$\implies$(\ref{B-C.9}) holds true, if the equilibrium solution $y_e \equiv 0$ (Stokes eigenproblem) or, more generally, if $y_e$ is sufficiently small in the $W^{1,q}(\Omega)$-norm. Several different proofs are given in \cite{RT:2008} and \cite{RT1:2009}. \\
	
	\noindent The case $y_e \equiv 0$ is actually physically quite important as it occurs for instance when the forcing function in (\ref{B-1.1a}) or (\ref{B-1.2a}) is a conservative vector field (say an electrostatic or gravitational field) $f = \nabla g$. In this case, a solution (\hyperref[B-1.2a]{1.2a-b-c}) is: $\ds y_e \equiv 0, \ \pi_e = g$.\\
	
	\noindent \textbf{When $y_e \equiv 0$ (or $y_e$ small) the tangential boundary feedback control $v$ alone, in the form such as (\ref{B-5.1}), as acting on the entire boundary $\Gamma$ produces enhancement of stability at will for the linearized $w$-problem.}\\
	
	\noindent Of course, with $y_e \equiv 0$, the corresponding Oseen problems reduces to the Stokes problem. The Stokes semigroup is already uniformly stable, see (\ref{B-3.7}), with margin of stability $\delta > 0$. When $y_e \equiv 0$ a most valuable variation of the problem under investigation of the present paper is to \uline{enhance the original margin of stability} $\delta > 0$ of the original linearized uncontrolled $w$-problem (\ref{B-1.11}) (with $u \equiv 0, \ v \equiv 0$) to obtain \uline{an arbitrary decay rate}, say $k^2$, by means of only a tangential boundary finite dimensional feedback control, of the same form as the operator $F$ in (\ref{B-5.6b}) but applied to all of $\Gamma$. To this, it suffices to apply the procedure of the present paper to a finite dimensional projected space spanned by the eigenvectors of the Stokes operator corresponding to finitely many eigenvalues $\lambda_i, \ i = 1, \dots, I$, 
	\begin{equation}\label{B-C.10}
		-k^2 \leq - Re \ \lambda_I \leq \dots \leq Re \ \lambda_1 \leq -\delta
	\end{equation}
	
	\noindent \textbf{Problem \#4} over-determination on a portion of the boundary $\wti{\Gamma}$ involving also the pressure $p$. Let $\{\varphi, p\} \in W^{2,q}(\Omega) \cap W^{1,q}(\Omega)$ solve the over-determined problem
	\begin{subequations}\label{B-C.11}
		\begin{align}
		(- \nu_o \Delta) \varphi + L_e(\varphi) + \nabla \pi & = \lambda \varphi  &\text{ in } \Omega \label{B-C.11a}\\ 
		\div \ \varphi &= 0  &\text{ in } \Omega \label{B-C.11b}\\
		\begin{picture}(100,0)
		\put(80,20){$\left\{\rule{0pt}{35pt}\right.$}\end{picture}
		\varphi|_{\Gamma} \equiv 0, \quad \left[ \frac{\partial \varphi}{\partial \nu} - p\nu \right]_{\Gamma} & \equiv 0 & \ \label{B-C.11c}
		\end{align}
	\end{subequations}
	\noindent Does this imply
	\begin{equation}
		\varphi \equiv 0 \quad \text{and} \quad p \equiv \text{const} \quad \text{in } \Omega \text{ ?}
	\end{equation}
	\noindent This answer is in the affirmative. The argument, given in the \cite{RT:2008} is along more classical elliptic arguments \cite{Ko:1994}. Here however the new condition in (\ref{B-C.11c}) contains the pressure, which must be viewed as unknown in general. Application of this result to the present paper will result in substituting $\ds \partial_{\nu} \varphi_{ij}^*|_{\wti{\Gamma}}$ with $\ds [\partial_{\nu} \varphi_{ij}^* - p_i \nu]|_{\wti{\Gamma}}$ in the matrix $W_i$ in (\ref{B-4.9}), which then - with this modification - becomes full rank, as desired. Thus, the stabilizing control will be expressed in terms of the pressure on the boundary, which is typically unknown.
\end{appendices}


	\medskip
	Received xxxx 20xx; revised xxxx 20xx.
	\medskip
\end{document}